\newtheorem{thm}{Theorem}[section]
\newtheorem{lem}[thm]{Lemma}
\newtheorem{cor}[thm]{Corollary}
\newtheorem{rmk}[thm]{Remark}
\newtheorem{prop}[thm]{Proposition}
\newtheorem{defi}[thm]{Definition}
\numberwithin{equation}{section}
\newcounter{cnstcnt}
\def\mbb#1{\mathbb{#1}}
\def\mcc#1{\mathcal{#1}}
\def\mss#1{\mathscr{#1}}
\def\pari{\partial_{int}}
\def\pare{\partial_{ext}}
\def\eps{\epsilon}
\def\lamn{\Lambda_N}
\def\P{\mathbb{P}}
\def\cA{\mathcal{A}}
\newcommand{\1}{\mathbf{1}}
\newcommand{\cC}{\mathcal{C}}
\newcommand{\cH}{\mathcal{H}}
\newcommand{\cV}{\mathcal{V}}
\newcommand{\cZ}{\mathcal{Z}}
\newcommand{\cB}{\mathcal{B}}
\newcommand{\cE}{\mathcal{E}}
\newcommand{\edge}{\rm{ E}}
\newcommand{\E}{{\mathbb E}}
\def\0{\textbf{0}}
\def\var{\mathsf {var}}
\def\dist{\mathsf{dist}}
\def\cF{\mathcal{F}}
\def\cov{\mathsf {cov}}
\newcommand{\R}{{\mathbb R}}
\def\C{\mathcal{C}}
\newcommand{\Z}{\mathbb{Z}}
\def \fkh#1{\phi_{p,\lamn}^{{#1},\epsilon h}}
\def \fk#1{\phi_{p,\lamn}^{#1,0}}
\def \fkc#1{\phi_{p_c,\lamn}^{#1,0}}
\def \fkco#1{\phi_{p_c,\Omega}^{#1,0}}
\def \fkhco#1{\phi_{p_c,\Omega}^{#1,\eps h}}
\def \fkhlow{\phi_{p,\Omega}^{\w,\eps h}}
\def \fklow{\phi_{p,\Omega}^{\w,0}}
\def \fkhhof{\phi_{p,\Omega}^{\f,\eps h}}
\def \fkhof{\phi_{p,\Omega}^{\f,0}}
\def\f {\mathrm f}
\def\w {\mathrm{w}}
\def\fC {\mathfrak{C}}
\def\sS {\mathscr S}
\def\sT {\mathscr T}
\def\calO {\mathcal O}
\def\cV {\mathcal V}
\def\cN {\mathcal N}
\def\cR {\mathcal R}
\def\cB {\mathcal B}
\def\sL {\mss L}
\def\TV#1#2{\Vert #1-#2 \Vert_{\mathrm {TV}}}
\def\cT{\mathcal{T}}
\DeclareSymbolFont{sfoperators}{OT1}{ptm}{m}{n}
\DeclareSymbolFontAlphabet{\mathsf}{sfoperators}
\def\operator@font{\mathgroup\symsfoperators}
\title{A phase transition for the two-dimensional random field Ising/FK-Ising model}
\author{Chenxu Hao  \\ Peking University \and Fenglin Huang  \\ Peking University \and Aoteng Xia \\ Peking University}
\begin{document}
\maketitle

\begin{abstract}
    We study the total variation (TV) distance between the laws of the 2D Ising/FK-Ising model in a box of side-length $N$ with and without an i.i.d.\ Gaussian external field with variance $\epsilon^2$.
    Letting the external field strength $\epsilon = \epsilon(N)$ depend on the size of the box, we derive a phase transition for each model depending on the order of $\epsilon(N)$. For the random field Ising model, the critical order for $\epsilon$ is $N^{-1}$. For the random field FK-Ising model, the critical order depends on the temperature regime: for $T>T_c$, $T=T_c$ and $T\in (0, T_c)$ the critical order for $\epsilon$ is, respectively, $N^{-\frac{1}{2}}$, $N^{-\frac{15}{16}}$ and $N^{-1}$. In each case, as $N \to \infty$ the TV distance under consideration converges to $1$ when $\epsilon$ is above the respective critical order and converges to $0$ when below.
\end{abstract}

\section{Introduction}

The Ising model is one of the most significant models in statistical physics exhibiting the phenomenon of phase transition. The impact of minor perturbations on  the phase transition phenomenon stands as an important question in disordered systems, and extensive study has been dedicated to exploring this topic within the context of the Ising model. Specifically, Imry-Ma \cite{IM75} predicts that the {\bf long-range order}---or, equivalently, a positive lower bound for the {\bf boundary influence} (as defined subsequently)---exists for the random field Ising model (RFIM) with weak disorder when the temperature $T$ is small in dimensions three and higher, but not in two dimensions for all temperatures $T$. 
Regarding the two-dimensional behavior, the absence of long-range order was first proved in \cite{AW90}. Some estimations on the decay rate of the boundary influence were first given in \cite{AP19, Cha18}, and it was finally shown in \cite{AHP20,DX21} that the boundary influence decays exponentially, again at all temperatures. For dimensions $d\geq 3$, let $T_c(d)$ denote the critical temperature in $d$ dimensions of the usual Ising model, with no external field. The existence of long-range order was proved at low enough temperature and small enough disorder in \cite{Imbrie85, BK88}. The existence of long-range order at low enough temperature was recently reproved in \cite{DZ21} and then extended to all temperatures below the critical temperature $T_c(d)$ in \cite{DLX24}.
Additionally, a correlation inequality (from \cite{DSS22}) indicates that the long-range order does not exist for any temperatures at or above $T_c(d)$.

Despite the significant progress regarding the Imry-Ma predictions, some relevant properties of the RFIM are still not fully comprehended. One such property is the near-critical behavior of the two-dimensional RFIM. More specifically, examining what happens when the disorder strength $\eps$ varies with the box size $N$ at different rates. In this direction, \cite{DW20}, \cite{DZ21} and \cite{DHX23} study the {\bf correlation length}---roughly speaking, when critical order that the boundary influence ``perceive'' the external field, see Section~\ref{sec: Boundary influence and correlation length} for detailed definitions---of the two-dimensional RFIM for all $T\le T_c.$ Furthermore, in \cite{CGN15,CGN16,CJN20}, they study the Ising model with a uniform external field and show the phase transition of boundary influence with respect to the strength of the external field.

 However, despite its significance, the boundary influence is merely a statistic of the random field Ising measure. This naturally leads us to ponder upon the following question: when does the measure itself ``perceive'' the external field? More formally, how does the total variation distance between the Ising measures with and without external field vary with $\eps$? The primary objective of this paper is to provide a definitive answer to this question.

\subsection{The random field Ising model: a warm-up}

Let $\mu^{\xi,\eps h}_{T,\lamn}$
and $\mu^{\xi,0}_{T,\lamn}$ stand for the two-dimensional Ising measures (see Section \ref{Definition-model-RFIM} for detailed definitions) with and without random external field $\eps h$, where $h=\{h_x:x\in\mbb Z^d\}$ is a family of i.i.d. standard normal variables with law $\mbb P$. Our first result concerns the phase transition of the total variation (denoted by $\Vert\cdot\Vert_{\mathrm{TV}}$) distance between them.

\begin{thm} \label{thm:RFIM}
    For any temperature $T>0$, there exists a positive constant $c=c(T)$ such that for any box size $N$, external field strength $\eps>0$ and boundary condition $\xi$:
    \begin{equation}\label{eq:RFIM-absolute-continuity}
        \mbb P \left(\Vert \mu^{\xi,\eps h}_{T,\lamn}-\mu^{\xi,0}_{T,\lamn}\Vert_{\mathrm{TV}}>c^{-1}\sqrt{\eps N}\right)\leq c^{-1}\exp\left(-c\eps^{-\frac{1}{4}}N^{-\frac{1}{4}}\right).
    \end{equation}
        \begin{equation}\label{eq:RFIM-singularity}
        \mbb P \left(\Vert \mu^{\xi,\eps h}_{T,\lamn}-\mu^{\xi,0}_{T,\lamn}\Vert_{\mathrm{TV}}<1-c^{-1}\sqrt{\eps^{-1}N^{-1}}\right)\leq c^{-1}\exp\left(-c\eps N\right).
    \end{equation}
\end{thm}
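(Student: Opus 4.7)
The plan is to reduce both parts of Theorem~\ref{thm:RFIM} to the analysis of a single nonnegative quantity, the second difference
\[
G(h) \;:=\; \log Z^{\xi,2\eps h}_{T,\lamn} + \log Z^{\xi,0}_{T,\lamn} - 2\log Z^{\xi,\eps h}_{T,\lamn},
\]
which is nonnegative by convexity of $t\mapsto \log Z^{\xi,t\eps h}_{T,\lamn}$ and admits the Taylor representation
\[
G(h) \;=\; \eps^{2}\int_{0}^{1}\!\!\int_{s}^{s+1} \var_{\mu^{\xi,u\eps h}_{T,\lamn}}\!\Bigl(\sum_{x}h_{x}\sigma_{x}\Bigr)\,du\,ds.
\]
The integrand is the quadratic form $h^{\top}C(u\eps h)h$ in the Gaussian $h$, where the matrix $C_{x,y}(u\eps h):=\cov_{\mu^{\xi,u\eps h}_{T,\lamn}}(\sigma_{x},\sigma_{y})$ has entries bounded by $1$ in absolute value and trace $\operatorname{tr}C(u\eps h)\le|\lamn|=N^{2}$ uniformly in $h$.

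For \eqref{eq:RFIM-absolute-continuity}, the starting point is the $\chi^{2}$ bound
\[
\Vert\mu^{\xi,\eps h}_{T,\lamn}-\mu^{\xi,0}_{T,\lamn}\Vert_{\mathrm{TV}}^{2} \;\le\; \tfrac{1}{4}\chi^{2}(\mu^{\xi,\eps h}_{T,\lamn}\|\mu^{\xi,0}_{T,\lamn}) \;=\; \tfrac{1}{4}\bigl(e^{G(h)}-1\bigr),
\]
obtained from the Radon--Nikodym derivative $\tfrac{d\mu^{\xi,\eps h}}{d\mu^{\xi,0}}(\sigma) = \tfrac{Z^{\xi,0}}{Z^{\xi,\eps h}}e^{\eps\sum_{x} h_{x}\sigma_{x}}$ and the identity $\E_{\mu^{\xi,0}}[e^{2\eps\sum_{x}h_{x}\sigma_{x}}]=Z^{\xi,2\eps h}/Z^{\xi,0}$. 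It then suffices to show $G(h)\le c^{-2}\eps N$ outside the stated exceptional event. I would (i) bound $\E_{h}[G(h)]\lesssim(\eps N)^{2}$ using $\operatorname{tr}C\le N^{2}$ together with Gaussian integration by parts, and (ii) apply Borell--TIS to the $\eps N$-Lipschitz map $h\mapsto\log Z^{\xi,t\eps h}_{T,\lamn}$ (Lipschitz constant coming from $\|\nabla_{h}\log Z\|\le \eps\sqrt{|\lamn|}$) to promote this into concentration at scale $\eps N$ for the three terms in $G(h)$. Combining these yields $G(h)\le c^{-2}\eps N$ outside an $h$-event of probability $\le c^{-1}\exp(-c(\eps N)^{-1/4})$, the exponent arising from the trade-off between the mean $\asymp(\eps N)^{2}$ and the Gaussian fluctuation scale $\eps N$, and hence the claimed TV bound via $e^{G(h)}-1\lesssim G(h)$.

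For \eqref{eq:RFIM-singularity}, the plan is to produce explicit disagreement by testing against the linear statistic $X(\sigma):=\sum_{x}h_{x}\sigma_{x}$. The mean shift
\[
\E_{\mu^{\xi,\eps h}_{T,\lamn}}[X]-\E_{\mu^{\xi,0}_{T,\lamn}}[X] \;=\; \int_{0}^{\eps}\!\var_{\mu^{\xi,sh}_{T,\lamn}}(X)\,ds
\]
is a non-negative quadratic chaos in $h$ whose expectation is, by Gaussian integration by parts, $\asymp\eps\cdot\operatorname{tr}C\asymp\eps N^{2}$; a Hanson--Wright-type estimate then gives a matching lower bound $\gtrsim\eps N^{2}$ on an $h$-event of probability $\ge 1-\exp(-c\eps N)$. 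On the same event, the marginal variances $\var_{\mu^{\xi,0}}(X)$ and $\var_{\mu^{\xi,\eps h}}(X)$ are each $\lesssim N^{2}$ (again from $\operatorname{tr}C\le N^{2}$ and Hanson--Wright). Placing the threshold $\tau$ midway between the two means and applying Chebyshev to the event $\{X>\tau\}$ under each measure produces a wrong-side probability of $O((\eps N)^{-2})$ on both sides, and the data-processing inequality for TV along the one-dimensional projection $X$ then gives $\Vert\mu^{\xi,\eps h}_{T,\lamn}-\mu^{\xi,0}_{T,\lamn}\Vert_{\mathrm{TV}}\ge 1-O((\eps N)^{-1})$, comfortably inside the claimed $1-c^{-1}\sqrt{(\eps N)^{-1}}$.

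The principal technical obstacle in both parts is that the covariance matrix $C(u\eps h)$ depends on the very field $h$ whose Gaussian averages are being exploited, so $h^{\top}C(u\eps h)h$ is not a genuine chaos in independent Gaussians. I would handle this through a sandwich step: Griffiths/GHS-type monotonicity of $\cov_{\mu^{\xi,\eta}}(\sigma_{x},\sigma_{y})$ in the external field $\eta$ controls the entries of $C(u\eps h)$ by those of a deterministic matrix (essentially $C(0)$) whose trace is still $\le N^{2}$, after which the Hanson--Wright bound applies cleanly. Making this decoupling quantitative---and tracking its interaction with the Borell--TIS fluctuations of $\log Z^{\xi,\eps h}_{T,\lamn}$---is what drives the precise tail exponents stated in the theorem.
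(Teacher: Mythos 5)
Your reduction of the first bound to the Bregman-type quantity $G(h)=\log Z^{\xi,2\eps h}+\log Z^{\xi,0}-2\log Z^{\xi,\eps h}$ via the clean identity $\chi^{2}(\mu^{\xi,\eps h}\|\mu^{\xi,0})=e^{G(h)}-1$ is genuinely different from the paper's route (which first applies a decoupling lemma to replace the random Gibbs measure by the product measure $\P\otimes\mu^{\xi,0}$ and then treats the partition-function ratio and the exponential weight separately). However, there is a concrete gap in your concentration step. To conclude $TV\lesssim\sqrt{\eps N}$ you need $G(h)\lesssim\eps N$ off a small $h$-event. Each of the three log-partition functions is Lipschitz in $h$ with constant of order $\eps N$, so Borell--TIS gives deviations of \emph{typical} size $\eps N$ for each term, and hence for $G(h)$ as well. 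The mean of $G(h)$ is $\asymp(\eps N)^{2}\ll\eps N$, so the event $G(h)>\eps N$ is a deviation of exactly one fluctuation scale --- Borell--TIS therefore yields only a \emph{constant} upper bound on its probability, not the decaying $\exp(-c(\eps N)^{-1/4})$ you want. The bound you wrote ("the exponent arising from the trade-off between the mean $\asymp(\eps N)^{2}$ and the Gaussian fluctuation scale $\eps N$") does not produce a decaying exponent at all when the target threshold sits at the fluctuation scale. What is really needed is that $G(h)$, being a second difference, has a much smaller effective fluctuation than its Lipschitz constant suggests: the linear (degree-one Gaussian chaos) parts of $\log Z^{\xi,2\eps h}$ and $\log Z^{\xi,\eps h}$ cancel, and the leading term is a quadratic Gaussian chaos $\eps^{2}h^{\top}C h$ of size $(\eps N)^{2}$, for which a Hanson--Wright-type bound gives tails $\exp(-c\min(t^{2}/\eps^{4}\|C\|_{F}^{2},\,t/\eps^{2}\|C\|_{op}))$ and does cross the threshold $t\asymp\eps N$ with exponentially small probability. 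Your write-up does not perform this cancellation; without it, the Lipschitz/Borell--TIS argument is insufficient. (Alternatively, one could settle for the milder $G(h)\lesssim\sqrt{\eps N}$, which Borell--TIS at the $\eps N$ Lipschitz scale does give with probability $1-\exp(-c(\eps N)^{-1})$, yielding $TV\lesssim(\eps N)^{1/4}$; this is in fact the threshold $b=c_{3}(\eps N)^{1/4}$ used in the paper's own proof.) The paper sidesteps all of this by instead expanding the partition-function ratio, $1/Z(h)=\prod_{v}\cosh(\eps h_{v}/T)\,\langle\prod_{v}(1+\sigma_{v}\tanh(\eps h_{v}/T))\rangle^{\xi,0}$, and invoking a chaos-expansion concentration bound for the bracketed term.

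For the second bound, your plan --- testing against $X(\sigma)=\sum_{x}h_{x}\sigma_{x}$, estimating the mean shift $\tfrac1T\int_{0}^{\eps}\var_{\mu^{\xi,sh}}(X)\,ds$, bounding the two variances, and applying Chebyshev plus data processing --- is plausible but carries substantial technical debt that you do not discharge. Crucially, both the mean shift and the variances are $h$-dependent quadratic forms $h^{\top}C(u\eps h)h$, and near the coexistence regime (low $T$, free $\xi$) the covariance matrix is close to $(1-m^{2})I+m^{2}\mathbf{1}\mathbf{1}^{\top}$, so $\|C\|_{op}$ and $\|C\|_{F}$ are both $\asymp N^{2}$. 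Then $\var_{\mu}(X)$ can reach order $\eps N^{3}$ (not $N^{2}$) on events of $h$-probability $\exp(-c\eps N)$, so your stated variance bound $\lesssim N^{2}$ needs to be relaxed, and the Chebyshev constants rechecked (they still close, but barely: mean shift $\asymp\eps N^{2}$, SD $\lesssim\sqrt{\eps N^{3}}$, ratio $\sqrt{\eps N}$, giving $TV\ge1-O((\eps N)^{-1})$). Your "sandwich step" --- controlling $C(u\eps h)$ by a deterministic matrix via GHS/Griffiths monotonicity --- is exactly where the argument must be made quantitative, and that is nontrivial: the GHS inequality controls $\langle\sigma_{x}\rangle$ concavity, but translating it into uniform two-sided control of $\cov_{\mu^{\xi,u\eps h}}(\sigma_{x},\sigma_{y})$ at all signs of $h_{v}$ is not immediate. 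The paper's proof of singularity is far simpler and avoids all of this: condition on the spins on the odd sublattice $V_{N}$, observe that the spins on the even sublattice $U_{N}=(2\Z)^{2}\cap\Lambda_{N}$ are then \emph{independent} single-site variables (by DMP), bound each single-site TV distance below by $c\eps|h_{x}|$ through an elementary derivative estimate, and sum these contributions via a product-measure TV lower bound. This works uniformly over all $T>0$ and all $\xi$ with no correlation-decay input whatsoever, and is robust in a way your linear-statistic argument is not.
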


 Writing $a(N)\ll b(N)$ if $\lim\limits_{N\rightarrow\infty}\frac{a(N)}{b(N)}=0$ and $a(N)\gg b(N)$ if $\lim\limits_{N\rightarrow\infty}\frac{a(N)}{b(N)}=\infty$, we have the following corollary immediately:

\begin{cor}\label{cor: RFIM} Fix any temperature $T>0,$ then for any boundary condition $\xi$:
    \begin{enumerate}[(i)]
        \item if $\eps=\eps(N)\ll N^{-1}$, we have $\Vert \mu^{\xi,\eps h}_{T,\lamn}-\mu^{\xi,0}_{T,\lamn}\Vert_{\mathrm{TV}}\rightarrow 0$ in probability as $N\to\infty$;
        \item if $\eps=\eps(N)\gg N^{-1}$, we have $\Vert \mu^{\xi,\eps h}_{T,\lamn}-\mu^{\xi,0}_{T,\lamn}\Vert_{\mathrm{TV}}\rightarrow 1$ in probability as $N\to\infty$.
    \end{enumerate}
\end{cor}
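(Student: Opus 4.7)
The plan is to deduce Corollary \ref{cor: RFIM} directly from Theorem \ref{thm:RFIM} by observing that the phase transition occurs exactly where the two upper bounds in the theorem degenerate. The key reformulation is that $\eps(N)\ll N^{-1}$ is equivalent to $\eps N\to 0$, and $\eps(N)\gg N^{-1}$ is equivalent to $\eps N\to\infty$, so both inequalities in Theorem \ref{thm:RFIM} are driven by the single scalar parameter $\eps N$.

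For part (i), I would fix any $\delta>0$ and use \eqref{eq:RFIM-absolute-continuity}. Since $\eps N\to 0$, for all sufficiently large $N$ we have $c^{-1}\sqrt{\eps N}<\delta$, so the event $\{\Vert \mu^{\xi,\eps h}_{T,\lamn}-\mu^{\xi,0}_{T,\lamn}\Vert_{\mathrm{TV}}>\delta\}$ is contained in the event $\{\Vert \mu^{\xi,\eps h}_{T,\lamn}-\mu^{\xi,0}_{T,\lamn}\Vert_{\mathrm{TV}}>c^{-1}\sqrt{\eps N}\}$. Inequality \eqref{eq:RFIM-absolute-continuity} then bounds the probability of the latter by $c^{-1}\exp(-c(\eps N)^{-1/4})$, which tends to $0$ since $(\eps N)^{-1/4}\to\infty$. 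This gives convergence to $0$ in probability.

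For part (ii), I would fix any $\delta>0$ and use \eqref{eq:RFIM-singularity}. Since $\eps N\to\infty$, for all sufficiently large $N$ we have $c^{-1}\sqrt{(\eps N)^{-1}}<\delta$, so the event $\{\Vert \mu^{\xi,\eps h}_{T,\lamn}-\mu^{\xi,0}_{T,\lamn}\Vert_{\mathrm{TV}}<1-\delta\}$ is contained in the event $\{\Vert \mu^{\xi,\eps h}_{T,\lamn}-\mu^{\xi,0}_{T,\lamn}\Vert_{\mathrm{TV}}<1-c^{-1}\sqrt{(\eps N)^{-1}}\}$. Inequality \eqref{eq:RFIM-singularity} bounds the probability of the latter by $c^{-1}\exp(-c\eps N)$, which also tends to $0$. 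This gives convergence to $1$ in probability.

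There is essentially no obstacle here: the substantive content lies in Theorem \ref{thm:RFIM}, and the corollary is simply the reformulation of its two tail estimates as a $0$/$1$ dichotomy at the critical scale $\eps\asymp N^{-1}$. The only care required is checking that $\eps N$ controls both the deterministic threshold and the probabilistic error on each side, which is immediate by inspection of the two bounds.
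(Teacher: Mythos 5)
Your proof is correct and is exactly the intended deduction: the paper presents Corollary \ref{cor: RFIM} as following ``immediately'' from Theorem \ref{thm:RFIM}, and your argument—containment of the fixed-threshold event in the theorem's event once $c^{-1}\sqrt{\eps N}$ (resp.\ $c^{-1}\sqrt{(\eps N)^{-1}}$) drops below $\delta$, followed by the vanishing of the probabilistic error—is precisely what is meant. No gap.
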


In a word, the total variation distance is negligible for $\eps\ll N^{-1},$ and significant for $\eps\gg N^{-1}.$ To heuristically understand this result, one may note that there are $N^2$ Gaussian variables in $\lamn$ thus the fluctuation of the Radon-Nikodym derivative between the measures should be of order $\eps^2N^2.$

\subsection{Main results}

Thanks to the power of the Edward-Sokal coupling, the FK-Ising model frequently plays a central role in the study of the Ising model, even with the presence of a random external field. Our main result studies the behavior of 
the total variation distance between the
two-dimensional FK-Ising model and its random field variation, where we call it the random field FK-Ising model (RFFKIM).  In this setting, we observe a more interesting phenomenon, showing that the behavior differs according to the temperature regime.
More precisely, let $T_c=T_c(2)$ denote the critical temperature of the two-dimensional Ising model. Let $\phi^{\gamma,\eps h}_{p,\lamn}$ and $\phi^{\gamma,0}_{p,\lamn}$ denote the FK-Ising measures with and without external field, where $\gamma$ stands for the boundary condition and $p=1-\exp(-\frac{2}{T})$ is the parameter for the FK-Ising model, see Section \ref{Definition-model-RFFM} for detailed definitions. Throughout this paper, we will keep the convention $$p=1-\exp\left(-\frac{2}{T}\right)$$ thereby treating $p$ and $T$ as two interchangeable representations of a single parameter.  We introduce the following notation for simplicity:

 $$\alpha(T)=\left\{\begin{aligned}
         &1~~~&0<T<T_c\,,\\
    &\frac{15}{16}~~~&T=T_c\,,\\
    &\frac{1}{2}~~~&T>T_c\,.
 \end{aligned}
\right.
$$
\begin{thm}\label{thm-all-temperature}
For any temperature $T>0$ and corresponding parameter $p=1-\exp(-\frac{2}{T}),$
there exists a constant $c=c(T)>0$ such that  for any boundary condition $\gamma$:
\begin{equation}\label{eq:RFFKIM-absolute-continuity}
\P\left(\TV{\fk\gamma}{\fkh\gamma}>c^{-1}\sqrt{\epsilon N^{\alpha(T)}}\right)\le c^{-1}\exp\left(-c\eps^{-\frac{1}{4}}N^{-\frac{\alpha(T)}{4}}\right).
\end{equation}
\begin{equation}\label{eq:RFFKIM-singularity}
\P\left(\TV{\fk\gamma}{\fkh\gamma}<1-c^{-1}\sqrt{\epsilon^{-1} N^{-\alpha(T)}}\right)\le c^{-1}\exp\left(-c\epsilon N^{\alpha(T)}\right).
\end{equation}
\end{thm}

\begin{cor}\label{cor-all-temperature}
    Fix $T\in(0,+\infty)$. Then for any boundary condition $\gamma$, the followings hold:
    \begin{enumerate}[(i)]
         \item if $\eps=\eps(N)\ll N^{-\alpha(T)}$, we have $\TV{\fk\gamma}{\fkh\gamma}\rightarrow 0$ in probability as $N\to\infty$;
        \item if $\eps=\eps(N)\gg N^{-\alpha(T)}$, we have $\TV{\fk\gamma}{\fkh\gamma}\rightarrow 1$ in probability as $N\to\infty$.
    \end{enumerate}
\end{cor}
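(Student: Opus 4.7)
The plan is that Corollary~\ref{cor-all-temperature} is a direct consequence of Theorem~\ref{thm-all-temperature}: one simply unpacks the two tail inequalities in the regimes $\epsilon \ll N^{-\alpha(T)}$ and $\epsilon \gg N^{-\alpha(T)}$. The convenient organizing variable is $u(N) := \epsilon(N)\, N^{\alpha(T)}$, in terms of which the threshold and the exponent in both inequalities of Theorem~\ref{thm-all-temperature} become simple algebraic functions of $u(N)$ alone. Specifically, $\sqrt{\epsilon N^{\alpha(T)}} = u(N)^{1/2}$, $\epsilon^{-1/4}N^{-\alpha(T)/4} = u(N)^{-1/4}$, $\sqrt{\epsilon^{-1}N^{-\alpha(T)}} = u(N)^{-1/2}$, and $\epsilon N^{\alpha(T)} = u(N)$. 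In the two regimes of the corollary, $u(N)$ tends to $0$ or to $\infty$, respectively, which is all that is needed.

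For part (i), assume $\epsilon \ll N^{-\alpha(T)}$, i.e.\ $u(N) \to 0$. The first inequality of Theorem~\ref{thm-all-temperature} reads
$$\P\left(\TV{\fk\gamma}{\fkh\gamma} > c^{-1}\,u(N)^{1/2}\right) \le c^{-1}\exp\left(-c\,u(N)^{-1/4}\right).$$
Since $u(N)^{-1/4}\to\infty$, the right-hand side tends to $0$, so the event that the TV distance exceeds $c^{-1}u(N)^{1/2}$ has vanishing probability. On the complementary event, $\TV{\fk\gamma}{\fkh\gamma} \le c^{-1}u(N)^{1/2} = o(1)$. Hence the TV distance converges to $0$ in probability.

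For part (ii), assume $\epsilon \gg N^{-\alpha(T)}$, i.e.\ $u(N)\to\infty$. The second inequality reads
$$\P\left(\TV{\fk\gamma}{\fkh\gamma} < 1 - c^{-1}\,u(N)^{-1/2}\right) \le c^{-1}\exp\left(-c\,u(N)\right).$$
Both $u(N)^{-1/2}$ and $\exp(-c\,u(N))$ tend to $0$, so with probability tending to $1$ the TV distance is at least $1 - c^{-1}u(N)^{-1/2} = 1 - o(1)$. Since the TV distance is bounded above by $1$, it converges to $1$ in probability.

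There is no genuine obstacle here: all of the substance is in Theorem~\ref{thm-all-temperature} itself, and the corollary is the routine translation of two-sided exponential tail bounds into convergence in probability, exactly parallel to the way Corollary~\ref{cor: RFIM} is deduced from Theorem~\ref{thm:RFIM}. The one thing worth emphasizing is that $\alpha(T)$ enters the two inequalities symmetrically around the critical order $N^{-\alpha(T)}$, so the argument is completely uniform across the three temperature regimes $T<T_c$, $T=T_c$, and $T>T_c$.
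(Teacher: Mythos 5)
Your proof is correct and is exactly the routine deduction the paper intends when it states the corollary "immediately" follows from Theorem~\ref{thm-all-temperature}; the substitution $u(N)=\epsilon(N)N^{\alpha(T)}$ is precisely what makes both tail bounds collapse to the right thing in each regime. Nothing further is needed.
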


\begin{rmk}
    We leave out the case $T=0$ in both Theorems~\ref{thm:RFIM} and \ref{thm-all-temperature} for the following reasons. For the former case, when $T=0$ the Ising measure is supported on the ground states which leads to a different phase transition point as $N^{-1}$ (see \cite{DW20} for a related work). For the latter case, the definition of the FK-Ising model with external field does not directly extend to the $T=0$ case. Although in principle it is possible to give a definition using the Edward-Sokal coupling via the RFIM, the resulting object is not a natural extension of the FK-Ising model and is less interesting in our case.
 \end{rmk}

\subsection{Organization of the paper}

In Section~\ref{sec:preliminaries-and-notation}, we first present some notations and precise definitions of the RFIM and the RFFKIM, along with the fundamental properties of these models. Subsequently, in Section~\ref{sec:warm-up}, we will prove Theorem~\ref{thm:RFIM} as a warm-up, since it reveals the core concepts of deriving upper and lower bounds of total variation distance through relatively straightforward calculations. Section~\ref{sec: TV exponential decay to 0} will be dedicated to proving the upper bound of the total variation distance between the FK-Ising model with and without  external field (i.e. \eqref{eq:RFFKIM-absolute-continuity}). The corresponding lower bounds (i.e. \eqref{eq:RFFKIM-singularity}) will be postponed to 
Section~\ref{sec:singularity}, utilizing a coarse-graining framework to accumulate small differences on small boxes. Before that, we demonstrate that the differences in small boxes indeed exist due to the effect of the external field on boxes of an appropriate size in Section~\ref{sec: near-critical behavior}. In Appendix~\ref{sec: LDP at critical}, we prove an LDP result for the (discrete) FK-Ising model at critical temperature. While similar results exist for the continuous scaling limit \cite[Theorem 3]{CJN20appendix}, the discrete case cannot be derived directly from it. 

Throughout our proof, we have used several estimates on the FK-Ising model and Gaussian concentration inequalities, which may be considered intuitive or even elementary for experts in these fields; however, despite our best efforts, we could not locate precise references for these results. For the sake of completeness, we have included proofs of them in the Appendices~\ref{sec: analytic results} and \ref{sec: FK Ising not critical}.

\section{Preliminaries and notation}
\label{sec:preliminaries-and-notation}

In this section, we introduce some basic definitions and notation.

\subsection{Notation}
We use $c,C,c_i,C_i $ to represent positive constants whose actual values may vary from line to line. For a finite set $A$, we denote by $|A|$ the cardinality of $A$.
We use $A^c$ to denote the complement of the set (or event) $A$. If $A$ is an event, we denote its indicator by $\1_A$. 
Throughout the paper, we use $\lfloor a\rfloor$ to denote the floor of a real number $a$, i.e., the largest integer less than or equal to $a$.

Let $\mbb Z^d:=\{u=(u_1,u_2,\cdots,u_d):u_1,u_2,\cdots,u_d\in\mbb Z\}$ denote the $d$-dimensional integer lattice.
For two different vertices $u,v\in\mbb Z^d,$ we say $u,v$ are adjacent (which we denote as $u\sim v$) if their $l_1$-distance is $1$. For a vertex set $V\subset \mbb Z^2,$ let $G=(V,E)$ denote the induced subgraph of $V$ in $\mbb Z^2$ (and sometimes we will slightly abuse the notation that we also use $V$ to denote the induced graph), that is $E=\{e=\{x,y\}:x,y\in V, x\sim y\}.$
Let $diam(G)=\max_{u,v\in {V}}\{||u-v||_{\infty}\}.$
We use the notation $\pari G$ to denote its interior boundary and $\pare G$ to denote its exterior boundary. That is, $\pari G=\{u\in {V}:u\sim v\mbox{~for~some~}v\in {V^c}\}$ and $\pare G=\{v\in { V}^c:v\sim u\mbox{~for~some~}u\in { V}\}$. For any configuration $\omega\in\mathbb{R}^{G}$ and $G'\subset G$ we use the notation $\omega_{|G'}$ to denote its restriction on $G'$, i.e., $\omega_{|G'}(u)=\omega_u$ for any $u\in G'$.
We define a sequence of different vertices $u_1,u_2,\cdots,u_n$ to be a circuit if  $\{u_i,u_{i+1}\}\in E$ for any $1\le i\le n$ where $u_1=u_{n+1}$. Let $\Gamma$ be the region enclosed by a circuit $u_1,u_2,\cdots,u_n$, then we say $u_1,u_2,\cdots,u_n$ is the circuit boundary of $\Gamma$. We also define a circuit to be open if $\{u_i,u_{i+1}\}$ is open for any $1\le i\le n$.

For two (discrete) probability measures $\mu$ and $\nu$ on the same measurable space $(\Xi,\mcc F)$, the total variation distance between them is defined as $\TV\mu\nu=\frac{1}{2}\sum_{\omega\in\Xi}\vert\mu(\omega)-\nu(\omega)\vert.$


For simplicity, we denote $f(x)=\ln(\cosh(x))$ throughout the paper, where $\cosh(x)=\frac{e^x+e^{-x}}{2}$, as this function appears frequently.

\subsection{The random field Ising model (RFIM)}\label{Definition-model-RFIM}

For $\epsilon>0$, we define the random field Ising model (RFIM) Hamiltonian $H_{T,G}^{\xi,\epsilon h}$ on a finite subgraph $G=(V,E)$ of $\Z^d$ with the boundary condition ($\xi\in \{1,0,-1\}^{\Z^d})$, and with the external field $\epsilon h=\{\epsilon h_v:v\in \mathbb{Z}^d\}$ by
\begin{equation}\label{def-Hamiltonian}
H_{T,G}^{\xi,\epsilon h}(\sigma)=-\left(\sum_{{\substack{u,v\in V\\ u\sim v}}}\sigma_u\sigma_v+\sum_{{\substack{u\in V\\ v\notin V\\ u\sim v}}}\sigma_u\xi_v+\sum_{u\in V}\epsilon h_u\sigma_u\right)~~\mbox{for}~~\sigma\in\Sigma,
\end{equation}
where $\Sigma:=\{-1,1\}^{V}$.
For $T>0$, we define $\mu_{T,G}^{\xi,\epsilon h}$ to be a Gibbs measure on $\Sigma$ at temperature $T$ by 

\begin{equation}\label{def-Ising-measure}
\mu_{T,G}^{\xi,\epsilon h}(\sigma)=\dfrac{1}{\cZ_{\mu,T,G}^{\xi,\epsilon h}}e^{-\frac{1}{T}H_{T,G}^{\xi,\epsilon h}(\sigma)}~~\mbox{for}~~\sigma\in\Sigma,
\end{equation}
where $\cZ_{\mu,T,G}^{\xi,\epsilon h}$ is the partition function given by $\cZ_{\mu,T,G}^{\xi,\epsilon h}=\sum_{\sigma\in\Sigma}e^{-\frac{1}{T}H_{T,G}^{\xi,\epsilon h}(\sigma)}.$
Note that $\mu_{T,G}^{\xi,\epsilon h}$ and $\cZ_{\mu,T,G}^{\xi,\epsilon h}$ are random variables depending on the external field $\epsilon h$. We will denote by $\langle\cdot\rangle_{T,G}^{\xi,\epsilon h}$ the expectation taken with respect to $\mu_{T,G}^{\xi,\epsilon h}$.
By taking $\eps=0$ we get the pure Ising model.

\subsection{The random field FK-Ising model (RFFKIM)}\label{Definition-model-RFFM}

In this subsection, we briefly review the FK-Ising model with external field, which was introduced in \cite{ES88}. For a subgraph $G=(V,E)$ of $\mathbb{Z}^d$, we can consider an edge configuration $\omega\in\Xi=\{0,1\}^E$ where $0$ indicates that the edge is {\bf closed} and $1$ indicates the edge is {\bf open}.
Throughout this paper, a boundary condition $\gamma$ is an edge configuration on $G^c$, i.e., $\gamma\in\{0,1\}^{\mathrm{E}(\mbb Z^d)\setminus E},$ where $\mathrm{E}(\mbb Z^d)$ is the edge set of $\mbb Z^d.$ We use $\w$ and $\f$ to denote the wired and free boundary conditions, that is, all the edges in $\gamma$ are open or closed, respectively. 

For any $x \neq y\in V$, we say $x$ is {\bf connected} to $y$ in $\omega$ under boundary condition $\gamma$ if there exists a sequence $x_0=x,x_1,\cdots,x_{n}=y$ such that  $\{x_i,x_{i+1}\}\in \mathrm{E}(\mbb Z^d)$ and $\{x_i,x_{i+1}\}$ is open for all $0\leq i\leq n-1$; this sequence is called an open path in $\omega$. So given any $\omega\in\Xi,$ the graph $G$ is divided into a disjoint union of connected components, we also call them open clusters. 
Let $\kappa(\omega^\gamma)$ be the number of open clusters in $\omega$ under boundary condition $\gamma$. We use the notation $\fC(\omega^\gamma)=\{\cC_1,\cC_2,\cdots, \cC_{\kappa(\omega^\gamma)}\}$ to denote the collection of all clusters. For notation clarity, we drop the superscript $\gamma$ in $\omega^\gamma$ when $\gamma$ is clear in the context.

Then the FK-Ising model on $G$ with parameter $p\in(0,1)$ (and corresponding temperature $T\in(0,\infty)$ with $p= 1-\exp(-\frac{2}{T})$) is a probability measure on  $\Xi$ given by
\begin{equation}\label{eq-def-FK-external-field}
\phi^{\gamma,\eps h}_{p,G}(\omega)=\frac{1}{\cZ_{\phi,p,G}^{\gamma,\eps h} } \prod_{e\in E}\hspace{0.5em}p^{\omega(e)}(1-p)^{1-\omega(e)}\prod_{j=1}^{\kappa(\omega^\gamma)}\hspace{0.5em}2 \cosh\left(\frac{\eps h_{\cC_j}}{T}\right),
\end{equation}
where $$\cZ_{\phi,p,G}^{\gamma,\eps h}=\sum_{\omega\in\Xi}\prod_{e\in E}\hspace{0.5em}p^{\omega(e)}(1-p)^{1-\omega(e)}\prod_{j=1}^{\kappa(\omega^\gamma)}\hspace{0.5em}2 \cosh\left(\frac{\eps h_{\cC_j}}{T}\right)$$ is the normalizing constant (partition function) of $\phi^{\gamma,\eps h}_{p,G}$ and $h_{\mcc C_j}=\sum_{x\in\mcc C_j}h_x.$
Similarly, we obtain the FK-Ising model without external field and the corresponding partition function by taking $\eps=0.$

For a subgraph $G=(V,E)$ of $\mathbb{Z}^d$, an edge configuration $\omega\in\Xi=\{0,1\}^E$ and a subset $E'\subset E$, recall that  $\omega_{|{E'}}$ is the restriction of $\omega$ on $E'$. In addition, if there exists $V'\subset V$ such that $E'=\{(x,y)\in E: x,y\in V'\}$, then we also write $\omega_{|V'}$ to denote the restriction of $\omega$ on $E'$.

Throughout this paper, we focus on the case where $G$ is a subgraph of $\mathbb{Z}^2$, particularly when $G=\Lambda_N:=[-N, N]^2\cap\mathbb{Z}^2$, which represents a box of side length $2N$ centered at the origin. Note that the boundary condition $\gamma$ influences the measure $\phi^{\gamma,\eps h}_{p,\lamn}$ only through the extra connections using edges outside $\lamn$. Thus, we can also choose the boundary condition as a partition on the interior boundary $\pari\lamn.$ 
The external field $\eps h$ will always be chosen as $\eps h=\{\eps h_x: x\in G\}$ where $h_x$'s are i.i.d. standard Gaussian variables which law will be denoted as $\mbb P$ and $\eps$ stands for the strength of the external field.  In this setup,  the model will be called the {\bf two-dimensional random field FK-Ising model}.

\subsection{The Edward-Sokal coupling}

The Edwards-Sokal coupling due to the paper \cite{ES88}, enables us to express the correlation functions of the Ising model in terms of the FK-Ising model.
The Edward-Sokal coupling between the FK-Ising model and the Ising model with parameter $p=1-\exp(-\frac{2}{T})$ can be described as follows: for $G = (V,E)$, consider a probability measure $\pi$ on $\Sigma\otimes\Xi$ defined by
$$
\pi(\sigma,\omega)=\frac{1}{\cZ_{\pi}}\prod_{e=\{x,y\}\in E}\Big[(1-p)\delta(\omega(e),0)+p\delta(\omega(e),1)\delta(\sigma_x,\sigma_y)\Big],
$$
where $\delta(\cdot,\cdot)$ is a $\{0,1\}$-valued function such that $\delta(a,b)=1$ if and only if $a=b$ and $\cZ_{\pi}$ is the normalizing constant. Then we have the following properties:
\begin{enumerate}[(i)]
\item {\bf Marginal on $\Sigma$}: for a fixed $\sigma\in\Sigma$, summing over all $\omega\in\Xi$ we have
\begin{eqnarray*}
&\pi(\sigma)=\displaystyle\sum_{\omega\in\Xi}\pi(\sigma,\omega)=\dfrac{1}{\cZ_{\pi}}\prod_{e=\{x,y\}\in E}\Big[(1-p)+p\delta(\sigma_x,\sigma_y)\Big]
\propto(1-p)^{\sum_{\{x,y\}\in E}\1_{\{\sigma_x\neq\sigma_y\}}}.
\end{eqnarray*}
This coincides with the Ising measure $\mu_{T,G}^{0,0}$ with $0$ boundary condition and $1-p=\exp(-\frac{2}{T})$.

\item {\bf Marginal on $\Xi$}: for a fixed $\omega\in\Xi$, summing over all $\sigma\in\Sigma$ gives
\begin{eqnarray*}
\pi(\omega)=\sum_{\sigma\in\Sigma}\pi(\sigma,\omega)=\dfrac{1}{\cZ_{\pi}}\prod_{\omega(e)=0}(1-p)\sum_{\sigma\in\Sigma}\prod_{\omega(e)=1}p\delta(\sigma_x,\sigma_y).
\end{eqnarray*}
In order for the preceding product to be non-zero, the two spins on the endpoints of every open edge must agree and thus the spins on every open cluster are the same. As a result, there are $2^{\kappa(\omega)}$ different choices for $\sigma$, thus we have
\begin{eqnarray*}
\pi(\omega)=\dfrac{1}{\cZ_{\pi}}\prod_{e\in E}\Big[p^{\omega(e)}(1-p)^{1-\omega(e)}\Big]\times 2^{\kappa(\omega)},
\end{eqnarray*}
which corresponds to the measure $\phi^{\f,0}_{p,G}$ of the FK-Ising model on $\{0,1\}^E$.

\item {\bf Conditioned on a spin configuration}: for a fixed $\sigma\in\Sigma$, let $E_1=\{\{x,y\}\in E:\sigma_x=\sigma_y\}$ and $E_2=E\setminus E_1$. Dividing $\pi(\sigma,\omega)$ by $\pi(\sigma)$ yields that
\begin{eqnarray*}
\pi_{|\sigma}(\omega)\propto\prod_{e\in E_2}\delta(\omega(e),0)\prod_{e\in E_1}\Big[(1-p)\delta(\omega(e),0)+p\delta(\omega(e),1)\Big].
\end{eqnarray*}
Thus, under $\pi_{|\sigma}$, the edges in $E_2$ must be closed and in addition, edges in $E_1$ are open with probability $p$ and closed with probability $1-p$ independently. In other words, the conditional measure for $\omega$ can be viewed as a Bernoulli bond percolation within each spin cluster.

\item {\bf Conditioned on an edge configuration}: for a fixed $\omega\in\Xi$ let $E_3=\{e\in E: \omega(e)=1\}$ and $E_4=E\setminus E_3$. Then we have
\begin{eqnarray*}
\pi_{|\omega}(\sigma)\propto\prod_{e\in\{x,y\}\in E_3}p\delta(\sigma_x,\sigma_y).
\end{eqnarray*}
Thus, the conditional measure $\pi_{|\omega}$ is uniform among spin configurations where each open cluster in $\omega$ receives the same spin.
\end{enumerate}

Furthermore, the above coupling can be extended to the case with external field $\{\epsilon h_x:x\in V\}$. To this end, we define
\begin{eqnarray*}
\pi_h(\sigma,\omega)=\frac{1}{\cZ_{\pi_h}}\prod_{e=\{x,y\}\in E}\Big[(1-p)\delta(\omega(e),0)+p\delta(\omega(e),1)\delta(\sigma_x,\sigma_y)\Big]\cdot\exp\left(-\frac1T\sum_{x\in V}\epsilon h_x\sigma_x\right),
\end{eqnarray*}
where $\cZ_{\pi_h}$ is the normalizing constant. By a straightforward computation, we see
\begin{eqnarray*}
\cZ_{\pi_h}=\sum_{\sigma\in\Sigma}\sum_{\omega\in\Xi}\pi_h(\sigma,\omega)=\sum_{\sigma\in\Sigma}\exp\left[\frac1T\left(\sum_{\{x,y\}\in E}\sigma_x\sigma_y+\sum_{x\in V}\epsilon h_x\sigma_x\right)\right]\cdot \exp\left(-\frac1T|E|\right),
\end{eqnarray*}
implying that the normalizing constant $\cZ_{\pi_h}$ and the partition function $\cZ_{\mu_h}$ for the Ising measure $\mu_{T,G}^{\xi,\epsilon h}$ are
equal up to a factor of $\exp(-\frac1T|E|)$, which in particular does not depend on the external field $\epsilon h$. One can verify that the aforementioned properties (in the case without external field) can be extended as follows:

\begin{enumerate}[(i)]
\item For a fixed $\sigma\in\Sigma$, the additional term $\exp(\frac1T\sum_{x\in V}\epsilon h_x\sigma_x)$ is a constant in $\omega$. As a result, the marginal distribution of $\pi_h$ on $\Sigma$ is exactly the Ising measure with external field $\epsilon h$. In addition, the conditional distribution for percolation configuration given a fixed $\sigma$ is also equivalent to a Bernoulli bond percolation within each spin cluster.
\item For a fixed $\omega\in\Xi$, in order for $\pi_h(\sigma,\omega)$ to be non-zero, each open cluster must have the same spin. Thus, writing $h_A=\sum_{x\in A}h_x$ for any subset $A\subset V$, we have
\begin{eqnarray*}
\sum_{\sigma\in\Sigma}\pi_h(\sigma,\omega)=\dfrac{1}{\cZ_{\pi_h}}\prod_{e\in E}p^{\omega(e)}(1-p)^{1-\omega(e)}\prod_{j=1}^{\kappa(\omega)}2\cosh (h_{\C_j} /T).
\end{eqnarray*}
Therefore, conditioned on a fixed $\omega\in\Xi$, each open cluster $\C_j$ must have the same spin and this spin is plus with probability $\frac{\exp(h_{\C_j}/T)}{2\cosh(h_{\C_j}/T)}$.
\end{enumerate}
With the Edward-Sokal coupling in mind, we may sometimes abuse the notation $\langle\cdot\rangle$ to be the expectation operator for both the Ising and FK-Ising measures.

\subsection{Basic properties of the model(s)}

Here we state some of the standard and well-known properties that will be used repeatedly.

 \begin{enumerate}[(i)]
        \item {\bf FKG inequality.} This was introduced in \cite{FKG71} and named after the three authors.
        
        If $A, B$ are both increasing (or decreasing) events, then we have
$$P(A\cap B)\ge P(A)\times P(B).$$
        
        \item {\bf Comparison of boundary conditions.} If two boundary conditions $\xi_1\le\xi_2$, then for any increasing event $A$ we have
        $$P^{\xi_1}(A)\le P^{\xi_2}(A).$$
        Here $\xi_1\le\xi_2$ stands for a partial order in the set of boundary conditions: for spin configurations, we say $\xi_1\le\xi_2$ if every plus spin in $\xi_1$ is also plus in $\xi_2$; for edge configurations, we say $\xi_1\le\xi_2$ if every open edge in $\xi_1$ is also open in $\xi_2$.

        \item {\bf Domain Markov property.} For two domains $\Gamma_1\subset\Gamma_2$, given the configuration $\xi$ on $\Gamma_2/\Gamma_1$, the influence on the measure in $\Gamma_1$ behaves like a boundary condition:
        $$P_{\Gamma_2}(\,\cdot\,|\xi)=P_{\Gamma_1}^{\xi_{|\partial \Gamma_1}}(\cdot),$$
        where on the right-hand side $\xi_{|\partial \Gamma_1}$ is the restriction on $\pare\Gamma_1$ if $P$ is an Ising measure and $\xi_{|\partial \Gamma_1}$ is a partition on $\pari\Gamma_{ 1}$ if $P$ is an FK-Ising measure.
\item {\bf Dual path.} We review the dual theory for the FK-Ising model initiated in \cite{KW42}, and our presentation follows that in \cite{BD12}. Define $(\mathbb{Z}^2)^{\diamond}=\mathbb{Z}^2+(\frac12,\frac12)$ to be the dual of $\mathbb{Z}^2$, which can be viewed as a translation of $\mathbb{Z}^2$ in $\mathbb{R}^2$. We see that every vertex in $(\mathbb{Z}^2)^{\diamond}$ is the center of a unit square in $\mathbb{Z}^2$ and vice versa. In other words, every edge $e\in\mathbb{Z}^2$ intersects with a unique edge $e^{\diamond}\in(\mathbb{Z}^2)^{\diamond}$. For a configuration $\omega\in\{0,1\}^{E(\mathbb{Z}^2)}$, we define its dual configuration $\omega^{\diamond}$ by setting
$$\omega^{\diamond}(e^{\diamond})=1-\omega(e),~~~\mbox{for~all~}e\in E(\mathbb{Z}^2).$$
Importantly, the measure of $\omega^{\diamond}$ is also an FK-Ising measure with a dual boundary condition and dual parameter $p^{\diamond}$ (see \cite{BD12} for more details), whereby \cite{On44}
$$p_c^{\diamond}=p_c=\dfrac{\sqrt{2}}{1+\sqrt{2}}.$$
Now for a rectangle $R=[a,b]\times[c,d]$, let ${\mcc H}^{\diamond}(R)$ denote the event that there exists an open dual path crossing R horizontally. This is a slight abuse of notation since the dual path lives in $[a-\frac12,b+\frac12]\times[c+\frac12,d-\frac12]\subset(\mathbb{Z}^2)^{\diamond}$
. Similarly, we define the event of vertical dual crossing $\mathcal{V^{\diamond}(R)}$,
and we also define the event $\mathcal{H(R)}$ and $\mathcal{V(R)}$ which correspond to crossings by $\omega$-paths. By duality, we have $\mathcal{V(R)}$ happens if and only if $\mathcal{H^{\diamond}(R)}$ fails, and $\mathcal{H(R)}$ happens if and only if $\mathcal{V^{\diamond}(R)}$ fails.

    \end{enumerate}
    In (i), (ii), and (iii), $P$ may stand for the Ising measure with or without the external field and with or without the boundary condition and may also stand for the FK-Ising measure without the external field. However, the FK-Ising measure with the external field only satisfies (i) and (ii). We will write FKG, CBC, and DMP to represent the properties (i), (ii), and (iii) in the following for convenience.

\subsection{Boundary influence and correlation length}\label{sec: Boundary influence and correlation length}

One of the key quantities of interest for the RFIM is the {\bf boundary influence} defined as follows:
$$m(T,N,\eps ):=\mbb E(\tilde{m}(T,N,\eps h))=\frac{1}{2}\mbb E\left(\langle \sigma_o \rangle^{+,\eps h}_{T,\lamn}-\langle \sigma_o \rangle^{-,\eps h}_{T,\lamn}\right),$$
 where $\sigma\in\{-1,1\}^{\lamn}$ is the spin configuration, $\eps h=\{\eps h_v:v\in\mbb Z^d\}$ is the external field with strength $\eps>0$.
 And we will say the {\bf long-range order} exists for the RFIM if the boundary influence has a positive lower bound as the box size $N$ goes to infinity.
 The absence of long-range order in two dimensions for any temperature $T$ and any disorder strength $\eps>0$ raises an intriguing question: can we still observe a phase transition phenomenon for the 2D RFIM?  The answer is positive, the behavior of boundary influence alters when $\eps$ decays with $N$ in different rates. 
 
 More precisely, Define the {\bf correlation length of two-dimensional RFIM} as follows:
$$ {\psi_\star(T, \epsilon)} = \min\{N: m(T, N, \eps) \leq m(T, N, 0)/2\}\,.$$ By a sequence of works \cite{DW20,DZ21,DHX23}, they prove that the correlation length has order $c e^{c\epsilon^{-4/3}}$ at low temperature $T<T_c$ and has order $c \epsilon^{-8/7}$ at critical temperature $T=T_c$.
Furthermore, \cite{DHX23} proves when $T=T_c(2)$:
 \begin{itemize}
    \item For $\eps\ll N^{-\frac{7}{8}},$ we have $m(T,N,\eps )=(1+o(1))m(T,N,0).$
    \item For $\eps\gg N^{-\frac{7}{8}}$, the boundary influence $m(T,N,\eps )$ decays exponentially in $N$, moreover, the decay rate is of order $\eps^{\frac{8}{7}}.$
\end{itemize}

In summary, at the critical temperature, the boundary influence can ``perceive'' the random external field if and only if $\eps\gg N^{-7/8}$. In contrast to this threshold, our result Theorem~\ref{thm:RFIM} implies that for any statistic of the Ising measure at any temperature, as long as $\eps\ll N^{-1}$, it will behave similarly in both the cases with and without random field. Furthermore, Theorem~\ref{thm-all-temperature} implies that for any statistic of the FK-Ising measure at the critical-temperature, as long as $\eps\ll N^{-\frac{15}{16}}$, it will behave similarly in both the cases with and without random field.

\section{Proof of Theorem~\ref{thm:RFIM}}
\label{sec:warm-up}

We begin by proving Theorem~\ref{thm:RFIM}. While its proof is significantly less complex than those of Theorem~\ref{thm-all-temperature}, it introduces key concepts that we will repeatedly utilize throughout our discussion.

We will refer to inequalities such as \eqref{eq:RFIM-absolute-continuity} as ``absolute continuity" and \eqref{eq:RFIM-singularity} as ``singularity"  for the sake of simplicity, although this terminology is not mathematically rigorous since all the measures discussed here are discrete. Notice that the total variation distance is always between 0 and 1, thus we can always assume $\eps N$ is small enough while proving \eqref{eq:RFIM-absolute-continuity} and large enough while proving \eqref{eq:RFIM-singularity}.

\subsection{Obtaining absolute continuity from decoupling}\label{sec: warm up AC}

It is easy to calculate that 
\begin{equation}
    \frac{\mu^{\xi,\eps h}_{T,\lamn}(\sigma)}{\mu^{\xi,0}_{T,\lamn}(\sigma)}=\frac{\mcc Z^{\xi,0 }_{\mu,T,\lamn}}{\mcc Z^{\xi,\eps h}_{\mu,T,\lamn}}\exp\left(\eps\sum_{x\in\lamn}\sigma_x h_x\right).
\end{equation}
For any $\sigma$ fixed, 
we have that $\eps\sum_{x\in\lamn}\sigma_x h_x$ is a Gaussian variable with variance $\eps^2N^2$, therefore it concentrates well when $\eps N$ is sufficiently small.
But we have to deal with all possible $\sigma$ for a fixed external field $\eps h.$ To address this, we have the following decoupling lemma:

\begin{lem}\label{lem:decoupling}
    Let $(\Omega,\mcc F)$ be a measurable space and $\nu$ is a probability measure on it. Let $\eta$ be a random field with law $\mbb Q$ and $\nu^{\eta}$ is a random probability measure on $(\Omega,\mcc F)$. For any $\omega\in\Xi,$ define $G(\eta,\omega)=\frac{\nu^\eta(\omega)}{\nu(\omega)},$
    then we have for any constants $0<a<b<1$:
    $$\mbb Q(\Vert \nu^{\eta}-\nu \Vert_\mathrm{TV}\geq b)\leq \frac{\mbb Q\otimes\nu\left(G(\eta,\omega)<1-a\right)}{b-a}.$$
\end{lem}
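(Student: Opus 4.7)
The plan is to reduce everything to the identity
\[
\|\nu^\eta - \nu\|_{\mathrm{TV}} = \sum_{\omega \in \Omega} \nu(\omega)\,\bigl(1 - G(\eta,\omega)\bigr)^{+},
\]
which follows from the standard rewriting of TV as the positive-part integral of the density difference, using $\nu^\eta(\omega) = \nu(\omega) G(\eta,\omega)$. From this representation, split the sum according to whether $G(\eta,\omega) < 1-a$ or $G(\eta,\omega) \ge 1-a$. On the second set the integrand is bounded by $a$, so that piece contributes at most $a$; on the first set the integrand is bounded by $1$, so that piece contributes at most $\nu\bigl(G(\eta,\cdot) < 1-a\bigr)$.

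Combining, the pointwise deterministic bound
\[
\|\nu^\eta - \nu\|_{\mathrm{TV}} \le a + \nu\bigl(G(\eta,\cdot) < 1-a\bigr)
\]
holds for every realization of $\eta$. Consequently, if $\|\nu^\eta - \nu\|_{\mathrm{TV}} \ge b$, then necessarily $\nu\bigl(G(\eta,\cdot)<1-a\bigr) \ge b-a$. Applying Markov's inequality to the nonnegative random variable $\nu(G(\eta,\cdot) < 1-a)$ under $\mathbb{Q}$, and using Fubini (or the definition of the product measure) to rewrite
\[
\mathbb{E}_{\mathbb{Q}}\bigl[\nu\bigl(G(\eta,\cdot) < 1-a\bigr)\bigr] = \mathbb{Q}\otimes\nu\bigl(G(\eta,\omega) < 1-a\bigr),
\]
gives the claimed bound.

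There is no real obstacle here: the only minor point to be careful about is the measurability of $\omega \mapsto G(\eta,\omega)$ jointly in $(\eta,\omega)$, which is needed to apply Fubini, but in the applications throughout the paper $\Omega$ is a finite set (e.g.\ spin or edge configurations on $\Lambda_N$) and $\eta$ is a Gaussian field, so joint measurability is automatic. The real content of the lemma is the deterministic pointwise inequality $\|\nu^\eta-\nu\|_{\mathrm{TV}}\le a + \nu(G<1-a)$, which converts a statement about TV distance into a statement about the sublevel set of the Radon--Nikodym derivative; the probabilistic upgrade is then just Markov's inequality.
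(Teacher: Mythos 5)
Your proof is correct and follows essentially the same argument as the paper's: the same rewriting of TV as $\sum_\omega \nu(\omega)(1-G(\eta,\omega))^+$, the same split according to whether $G \ge 1-a$, the same deterministic bound $\|\nu^\eta-\nu\|_{\mathrm{TV}} \le a + \nu(G<1-a)$, and then Markov's inequality together with Fubini. The remark on joint measurability is a reasonable clarification but not a substantive departure.
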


\begin{proof}
    For any fixed external field $\eta$, we can first rewrite the total variation distance as follows:
    \begin{align*}
        \Vert \nu^{\eta}-\nu \Vert_\mathrm{TV}&=\sum_{\omega\in\Xi}\left(\nu(\omega)-\nu^\eta(\omega)\right)_+=\sum_{\omega\in\Xi}\nu(\omega)
        \left(1-G(\eta,\omega)\right)_+\\
    &=\sum_{\omega:G(\eta,\omega)\geq 1-a}\nu(\omega)
        \left(1-G(\eta,\omega)\right)_++\sum_{\omega:G(\eta,\omega)< 1-a}\nu(\omega)
        \left(1-G(\eta,\omega)\right)_+\\&\leq a+\nu\left(G(\eta,\omega)<1-a\right),
    \end{align*}
       where $(x)_+=\max\{x,0\}$.
    Thus, we have 
    \begin{equation*}
        \mbb Q(\Vert \nu^{\eta}-\nu \Vert_\mathrm{TV}\geq b)\leq \mbb Q(\nu(G(\eta,\omega)<1-a)\geq b-a)\leq \frac{\mbb Q\otimes\nu\left(G(\eta,\omega)<1-a\right)}{b-a}.\qedhere
    \end{equation*}
\end{proof}

In a word, by losing acceptable accuracy, we can focus on the product measure $\mbb Q\otimes \nu$ instead of the random measure $\nu^\eta.$ This is sufficient to prove the absolute continuity.

\begin{proof}[Proof of \eqref{eq:RFIM-absolute-continuity}] Taking $\nu=\mu^{\xi,0}_{T,\lamn}$ and $\nu^{\eta}=\mu^{\xi,\eps h}_{T,\lamn}$ in Lemma~\ref{lem:decoupling}, all we need is to upper-bound 
$$G(h,\sigma)=\frac{\mu^{\xi,\eps h}_{T,\lamn}(\sigma)}{\mu^{\xi,0}_{T,\lamn}(\sigma)}=\frac{\mcc Z^{\xi,0 }_{\mu,T,\lamn}}{\mcc Z^{\xi,\eps h}_{\mu,T,\lamn}}\exp\left(\eps\sum_{x\in\lamn}\sigma_x h_x\right).$$
Write $$Z(h)=\frac{\mcc Z^{\xi,0 }_{\mu,T,\lamn}}{\mcc Z^{\xi,\eps h}_{\mu,T,\lamn}}\text{ and }E(h,\sigma)=\exp\left(\eps\sum_{x\in\lamn}\sigma_x h_x\right)$$ for short.
Then by chaos expansion technique (\cite[Theorem 3.14]{FSZ16}) for Ising (see Section~\ref{sec:concentration-of-Z} for a detailed proof), we have
  $$\mbb P\left(|Z(h)-1|\leq \sqrt{\eps N}\right)\geq 1-c_1\exp\left(-c_1\sqrt{\eps^{-1}N^{-1}}\right).$$

 Meanwhile, for any fixed $\sigma,$ we have 
 $\eps \underset{_{x\in\lamn}}{\sum}\sigma_xh_x$ is a Gaussian variable with variance $\eps^2N^2.$
 Therefore, $$\mbb P\left(E(h,\sigma)\leq 1-c_2\sqrt{\eps N}\right)\leq c_2\exp\left(-c_2^{-1}\eps^{-1} N^{-1}\right).$$
 Thus, we have 
 $$\mbb P\otimes\mu^{{\xi ,0}}_{{T,\lamn}}\left(G(h,\sigma)\leq 1-c_3\sqrt{\eps N}\right)\leq c_3\exp\left(-c_3\sqrt{\eps^{-1}N^{-1}}\right),$$
 by choosing $a=c_3\sqrt{\eps N}< b=c_3(\eps N)^{1/4}$ in Lemma~\ref{lem:decoupling} and recalling $\eps N$ is small enough, the proof is completed.
\end{proof}

\subsection{Obtaining singularity from independency}\label{sec:RFIM-singularity}

The idea behind proving singularity for the RFIM  primarily involves leveraging the independence obtained from the DMP. First, we have the following natural result for the total variation distances between product measures.

\begin{defi}
    Let $\{\mu_i:i=1,2,\cdots,n\}$ be a sequence of distributions on measurable spaces $\{(\cE_i,\mcc F_i):i=1,2,\cdots,n\}.$
    Then for any event $\cA_i\in\mcc F_i$, we define $\hat\cA_i=\cE_1\otimes\cdots\otimes\cE_{i-1}\otimes\cA_i\otimes\cE_{i+1}\otimes\cdots\otimes\cE_n$ to be a cylinder set.   
\end{defi}
\begin{lem}\label{lem: product measure total variation distance lower bound}
    Let $\mu_1,\mu_2,\cdots,\mu_n$ and $\nu_1,\nu_2,\cdots,\nu_n$ be two sequences of distributions with $\TV{\mu_i}{\nu_i}\ge a>0$ and $\mu=\otimes_{i=1}^n\mu_i,~ \nu=\otimes_{i=1}^n\nu_i$ be their product measure. Then there exists an absolute constant $c_1>0$ such that $$\TV{\mu}{\nu}\ge 1-c_1\exp\left(-{c_1}^{-1}a^2n\right).$$
\end{lem}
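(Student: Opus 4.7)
The plan is to use a test function (an indicator of a well-chosen event) to witness the lower bound on total variation distance, exploiting independence via Hoeffding's inequality.

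First, I would invoke the dual characterization of total variation distance: for each $i$ there exists an event $A_i \in \mcc F_i$ with $\mu_i(A_i) - \nu_i(A_i) \ge a$. Let $p_i = \mu_i(A_i)$ and $q_i = \nu_i(A_i)$, so $p_i - q_i \ge a$ for every $i$. Then I would introduce the Bernoulli random variables $X_i(\omega) = \1_{\hat A_i}(\omega) = \1_{A_i}(\omega_i)$, which are independent under both $\mu$ and $\nu$ because these measures are products. Set $S = \sum_{i=1}^n X_i$.

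By independence, $\E_\mu[S] = \sum p_i$ and $\E_\nu[S] = \sum q_i$, so $\E_\mu[S] - \E_\nu[S] \ge n a$. Choose the threshold $t = \tfrac{1}{2}\sum_i (p_i + q_i)$, which sits exactly in the middle of the two means with at least $na/2$ of room on either side. Since $X_i \in \{0,1\}$ is bounded and the $X_i$ are independent, Hoeffding's inequality gives
\begin{equation*}
\mu(S \le t) \le \exp\!\left(-\tfrac{2(na/2)^2}{n}\right) = \exp\!\left(-\tfrac{n a^2}{2}\right),
\end{equation*}
and symmetrically $\nu(S > t) \le \exp\!\left(-\tfrac{n a^2}{2}\right)$. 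Taking the event $B = \{S > t\}$ and applying the dual formula for TV distance yields
\begin{equation*}
\TV{\mu}{\nu} \ge \mu(B) - \nu(B) \ge 1 - 2\exp\!\left(-\tfrac{n a^2}{2}\right),
\end{equation*}
which is exactly the desired bound with explicit constants.

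There is really no obstacle here; the only point requiring a little care is the reduction from an arbitrary TV lower bound on each factor to a \emph{common} indicator test we can tensor, which is handled by the dual definition of TV distance. I would also remark that the same argument can be packaged via the Hellinger (Bhattacharyya) inequality $1 - \TV{\mu}{\nu} \le \prod_i \sqrt{1 - \TV{\mu_i}{\nu_i}^2/? }$-type bounds, but the indicator-plus-Hoeffding route is more transparent and produces the stated sub-Gaussian decay with minimal overhead.
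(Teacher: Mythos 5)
Your proof is correct and essentially the same as the paper's: both pick witnessing events $A_i$ from the dual TV characterization, form the indicator sum $S=\sum_i\1_{\hat A_i}$, exploit independence under both product measures to concentrate $S$ around its two distinct means, and use a midpoint threshold to separate them. The only cosmetic difference is that you invoke Hoeffding's inequality directly for the bounded $\{0,1\}$-valued summands (yielding explicit constants), whereas the paper appeals to its general Bernstein-type Lemma~\ref{lem: concentration inequality with finite exponential moment}; both give the same sub-Gaussian decay in $a^2n$.
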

\begin{proof}
    Since $\TV{\mu_i}{\nu_i}\ge a$, there exists a event $\cA_i\in\mcc F_i$ such that $\mu_i(\cA_i)-\nu_i(\cA_i)\ge a$. Since $\cA_i(i=1,2,\cdots,n)$ are independent under $\mu$,  by Lemma~\ref{lem: concentration inequality with finite exponential moment}, we obtain that \begin{align}
    \mu\left(\Big|\sum_{i=1}^n\1_{\hat\cA_i}-\mu(\hat\cA_i)\Big|{\geq}\frac{na}{2}\right)&\le 2\exp\left(-C_1a^2n\right),\label{eq:hoeffding 1}\\ \nu\left(\Big|\sum_{i=1}^n\1_{\hat\cA_i}-\nu(\hat\cA_i)\Big|{\geq}\frac{na}{2}\right)&\le 2\exp\left(-C_1a^2n\right).\label{eq:hoeffding 2}
    \end{align}
    Since $\mu(\hat \cA_i)-\nu(\hat \cA_i)=\mu_i(\cA_i)-\nu_i(\cA_i)\ge a$, we get that $$\left\{\Big|\sum_{i=1}^n\1_{\hat\cA_i}-\mu(\hat\cA_i)\Big|{<}\frac{na}{2}\right\}\cap\left\{\Big|\sum_{i=1}^n\1_{\hat\cA_i}-\nu(\hat\cA_i)\Big|{<}\frac{na}{2}\right\}=\emptyset.$$ Combined with \eqref{eq:hoeffding 1} and \eqref{eq:hoeffding 2}, it yields that $\TV{\mu}{\nu}\ge 1-2\exp(-C_1a^2n)$.
\end{proof}
\begin{cor}\label{cor: product measure total variation distance lower bound}
     Let $\mu_1,\mu_2,\cdots,\mu_n$ and $\nu_1,\nu_2,\cdots,\nu_n$ be two sequences of independent random distributions with joint law $\mbb P$. Assume $\E\TV{\mu_i}{\nu_i}\ge a>0$ for all $1\leq i\leq n$ where $\mbb E$ is the expectation w.r.t. $\mbb P$, and $\mu=\otimes_{i=1}^n\mu_i,~ \nu=\otimes_{i=1}^n\nu_i$ are product measures. Then there exists an absolute constant $c_1>0$ such that $$\P\Big(\TV{\mu}{\nu}\ge 1-c_1\exp(-c_1^{-1}{a^2}n)\Big)\ge 1-c_1\exp\left({-}c_1^{-1}a^2n\right).$$
\end{cor}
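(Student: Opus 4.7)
The plan is to condition on the realization of the random measures $\mu_i,\nu_i$ and reduce to the deterministic Lemma~\ref{lem: product measure total variation distance lower bound}. Set $X_i := \TV{\mu_i}{\nu_i}$; under $\P$ these are independent, take values in $[0,1]$, and satisfy $\E X_i \ge a$. The key point is that, although we cannot guarantee each individual $X_i$ is substantial on a typical realization, a positive proportion of them must be.

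First I would apply Hoeffding's inequality (Lemma~\ref{lem: concentration inequality with finite exponential moment}) to the independent bounded variables $X_i$ to obtain
\begin{equation*}
\P\Big(\sum_{i=1}^n X_i \ge \tfrac{na}{2}\Big) \ge 1 - 2\exp(-c\,a^2 n).
\end{equation*}
Call this event $\mcc G$. On $\mcc G$, using $X_i\le 1$, the elementary bound $\sum_i X_i \le \#\{i:X_i\ge a/4\} + (a/4)n$ rearranges to
$\#\{i:X_i\ge a/4\} \ge na/4$. Hence on $\mcc G$ there is a (random) subset $I\subset\{1,\dots,n\}$ with $|I|\ge na/4$ on which $\TV{\mu_i}{\nu_i}\ge a/4$ holds pointwise.

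On $\mcc G$, I would then project onto the coordinates in $I$: since the TV distance is non-decreasing under the addition of independent coordinates (any coupling of the full product measures induces a coupling of the marginals over $I$), one has $\TV{\mu}{\nu}\ge \TV{\bigotimes_{i\in I}\mu_i}{\bigotimes_{i\in I}\nu_i}$. Applying Lemma~\ref{lem: product measure total variation distance lower bound} to the right-hand side with parameter $a/4$ and cardinality $|I|\ge na/4$ gives, on $\mcc G$,
\begin{equation*}
\TV{\mu}{\nu} \ge 1 - 2\exp\bigl(-C_1 (a/4)^2 \cdot (na/4)\bigr) = 1 - 2\exp(-c'a^3 n).
\end{equation*}
Absorbing the $a$-dependent prefactor into the constant $c_1$ yields the stated inner bound $1 - c_1\exp(-c_1^{-1}n)$, while the outer probability of $\mcc G^c$ is controlled by the Hoeffding estimate above.

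The only mildly delicate point — and the step I would flag as the main one — is turning the \emph{expectation} lower bound $\E X_i\ge a$ into a \emph{realization-wise} count of substantial indices; without the $X_i\in[0,1]$ boundedness there would be no such conversion in general, but here Hoeffding on the sum $\sum_i X_i$ combined with the pigeonhole observation cleanly handles it. Once the "many good indices" event $\mcc G$ has been secured, the invocation of Lemma~\ref{lem: product measure total variation distance lower bound} after conditioning on the realization is immediate.
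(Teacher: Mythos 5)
Your proof is correct and follows essentially the same strategy as the paper: convert the expectation lower bound into a high-probability claim that a positive fraction of coordinates have $\TV{\mu_i}{\nu_i}$ bounded below, then invoke Lemma~\ref{lem: product measure total variation distance lower bound} on that subset. The only superficial difference is the order of operations — the paper first pigeonholes each coordinate to get $\P(\TV{\mu_i}{\nu_i}\ge a/2)\ge a/2$ and then applies the concentration inequality to the indicator sum $|I|$, whereas you apply the concentration inequality to $\sum_i \TV{\mu_i}{\nu_i}$ and pigeonhole afterward; both routes are equivalent.
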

\begin{proof}
    Let $I\subset\{1,2,\cdots,n\}$ denote the random set of indices such that $\TV{\mu_i}{\nu_i}\ge \frac{a}{2}$.
    Since $\E\TV{\mu_i}{\nu_i}\ge a$, we get that $\P(\TV{\mu_i}{\nu_i}\ge \frac{a}{2})\ge \frac{a}{2}$. Combined with the fact that $\{\mu_i\}_{1\leq i\leq n}$ and $\{\nu_i\}_{1\leq i\leq n}$ are two independent sequences of distributions, it yields by Lemma~\ref{lem: concentration inequality with finite exponential moment} that \begin{equation}
        \P\left(|I|\ge \frac{an}{4}\right)\ge 1-C_1\exp(-C_1^{-1}a^2n).
    \end{equation} The rest of the proof is the same as that of Lemma~\ref{lem: product measure total variation distance lower bound}.
\end{proof}

Now we are ready to prove the singularity result.

\begin{proof}[Proof of \eqref{eq:RFIM-singularity}]
    Write $$U_N=(2\mbb Z)^2\cap \lamn,\quad V_N=\lamn\setminus U_N; \text{ and } \Sigma_U=\{-1,1\}^{U_N}, \Sigma_V=\{-1,1\}^{V_N}.$$ Thus, for any $x\in U_N,$ all the neighboring points of $x$ are contained in $V_N.$
    For any $\sigma_U\in \Sigma_U$ and $\sigma_V\in\Sigma_V$, we will write $\sigma=\sigma_U\oplus\sigma_V$ for $\sigma\in\Sigma=\{-1,1\}^{\lamn}$ if the restriction of $\sigma$ on $U_N$ and $V_N$ coincides with $\sigma_U$ and $\sigma_V$ respectively.
    Then we can decompose the total variation distance as follows:
    \begin{align}
        &1-\Vert \mu^{\xi,\eps h}_{T,\lamn}-\mu^{\xi,0}_{T,\lamn}\Vert_{\mathrm{TV}}=\sum_{\sigma\in\Sigma}\min\left\{\mu^{\xi,\eps h}_{T,\lamn}(\sigma),\mu^{\xi,0}_{T,\lamn}(\sigma)\right\}\nonumber \\=~&\sum_{\sigma_V\in\Sigma_V}\sum_{\sigma_U\in\Sigma_U}\min\left\{\mu^{\xi,\eps h}_{T,\lamn}(\sigma_U\oplus\sigma_V),\mu^{\xi,0}_{T,\lamn}(\sigma_U\oplus\sigma_V)\right\}.\label{eq: ising singularity decouple}
    \end{align}
    Note that we can first couple the configuration on $V_N$ and throw the cases that the coupling failed, thus we get \begin{align}
        &\min\left\{\mu^{\xi,\eps h}_{T,\lamn}(\sigma_U\oplus\sigma_V),\mu^{\xi,0}_{T,\lamn}(\sigma_U\oplus\sigma_V)\right\}\nonumber\\ \le~& \Big(\mu^{\xi,0}_{T,\lamn}(\sigma_{\mid V_N}=\sigma_V)+\mu^{\xi,\eps h}_{T,\lamn}(\sigma_{\mid V_N}=\sigma_V)\Big)\min\left\{\mu_{T,U_N}^{\sigma_{V},\eps h}(\sigma_U),\mu_{T,U_N}^{\sigma_{V},0}(\sigma_U)\right\}.\nonumber
    \end{align} Combined with \eqref{eq: ising singularity decouple}, it yields that\begin{align}
        &1-\Vert \mu^{\xi,\eps h}_{T,\lamn}-\mu^{\xi,0}_{T,\lamn}\Vert_{\mathrm{TV}}\nonumber \\\leq~& \sum_{\sigma_V\in\Sigma_V}\Big(\mu^{\xi,0}_{T,\lamn}(\sigma_{\mid V_N}=\sigma_V)+\mu^{\xi,\eps h}_{T,\lamn}(\sigma_{\mid V_N}=\sigma_V)\Big)\sum_{\sigma_U\in\Sigma_U}\min\left\{\mu_{T,U_N}^{\sigma_{V},\eps h}(\sigma_U),\mu_{T,U_N}^{\sigma_{V},0}(\sigma_U)\right\}\nonumber\\
        \leq~& 2\max_{\sigma_V\in\Sigma_V}\left(1-\Vert\mu_{T,U_N}^{\sigma_{V},\eps h}-\mu_{T,U_N}^{\sigma_{V},0}\Vert_{\mathrm{TV}}\right).\label{eq: ising singularity final}
    \end{align}
    By DMP, the distribution of points in $U_N$ are independent of each other. Let $V_x$ denote the set of neighbours of $x$, and write $\sigma_{V_x}=(\sigma_V)_{\mid V_x}$
    then we have
    $$\mu_{T,U_N}^{\sigma_{V},\eps h}=\underset{x\in U_N}{\otimes}\mu_{T,\{x\}}^{\sigma_{V_x},\eps h_x}~~\mbox{and}~~\mu_{T,U_N}^{\sigma_{V},0}=\underset{x\in U_N}{\otimes}\mu_{T,\{x\}}^{\sigma_{V_x},0}.$$
    Note that  \begin{align*}
    \Big|\mu_{T,\{x\}}^{\sigma_{V_x},\eps h_x}(\sigma_x=1)-\mu_{T,\{x\}}^{\sigma_{V_x},0}(\sigma_x=1)\Big|
    =\Big|g\left(\frac{N_x}{T}\right)-g\left(\frac{N_x+\eps h_x}{T}\right)\Big|
    =\Big|\frac{\eps h_x}{T}g'\left(\frac{N_x}{T}+\theta\right)\Big|,
    \end{align*}
    where we write $N_x=\sum_{y\sim x}\sigma_{V_x}(y)$,  $g(t)=\frac{e^t}{e^t+e^{-t}},$ and $\theta$ is a constant between 0 and $\frac{\eps h_x}{T}$.
    Since
    $|g'(t)|$ decays in $|t|$ and $|N_x|\leq 4,$ we have 
    $$\Vert\mu_{T,\{x\}}^{\sigma_{V_x},\eps h_x}-\mu_{T,\{x\}}^{\sigma_{V_x},0}\Vert_\mathrm{TV}\geq \frac{\eps |h_x|}{4T}e^{-4-\eps |h_x|}.$$

Combined with Corollary~\ref{cor: product measure total variation distance lower bound} and the fact that $|h_x|\in[1,2]$ with positive probability, it yields the desired result. 
\end{proof}

\section{Absolute continuity in weak-disorder regime}\label{sec: TV exponential decay to 0}
From now on, we focus on the FK-Ising model and the random field FK-Ising model.
In this section, we will prove the absolute continuity results for all temperature regimes. The proof is similar to that in Section \ref{sec: warm up AC}, but some new ideas are required. Since many calculations in high-temperature, low-temperature, and critical-temperature regimes are similar, we introduce the following notation for simplicity:
$$\beta(T)=\left\{\begin{aligned}
    &1~~~&0<T<T_c\,,\\
    &\frac{7}{8}~~~&T=T_c\,,\\
    &\frac{1}{2}~~~&T>T_c\,.
\end{aligned}\right.$$

Our main goal is to show that
\begin{equation}\label{eq:RN-of-RFFK}
    \frac{\phi^{\gamma,\eps h}_{p,\lamn}(\omega)}{\phi^{\gamma,0}_{p,\lamn}(\omega)}=Z(h)\exp\left(F(h,\omega)\right)
    \end{equation} is close to 1 for typical disorder when $\eps\ll N^{-\beta(T)}$ where 
\begin{equation}\label{Def-f(h,omega)}
Z(h):=\frac{\cZ^{\gamma,0}_{\phi,p,\lamn}}{\cZ^{\gamma,\eps h}_{\phi,p,\lamn}},\quad F(h,\omega)=\sum_{\cC\in\fC(\omega)}f{\left(\frac{\eps h_\cC}{T}\right)}.
\end{equation}
In Section \ref{sec:concentration-of-Z}, we show the concentration of $Z(h)$ using a similar technique as in \cite[Theorem 3.14]{FSZ16} and in Section \ref{sec: Concentration for the Radon Nikodym derivative}, we apply Lemma~\ref{lem:decoupling} to show the absolute continuity.

\subsection{Concentration of $Z(h)$}
\label{sec:concentration-of-Z}
We start by controlling the ratio of partition functions with and without external field. We expand the ratio between the partition functions with and without disorder as follows:
\begin{align}
       \frac{1}{Z(h)}=\frac{\cZ^{\gamma,\eps h}_{\phi,p,\lamn}}{\cZ^{\gamma,0}_{\phi,p,\lamn}}=\frac{\cZ^{\gamma,\eps h}_{\mu,T,\lamn}}{\cZ^{\gamma,0}_{\mu,T,\lamn}}
       &= \Bigg\langle ~\exp\left(\sum_{v\in \lamn}\frac{\eps h_v \sigma_v}{T}\right) ~ \Bigg\rangle_{T,\lamn}^{\gamma,0}\nonumber\\ &= \Bigg\langle ~\prod_{v\in \lamn}\left(\cosh\left(\frac{\eps h_v }{T}\right)+ \sigma_v\sinh\left(\frac{\eps h_v }{T}\right)\right)~\Bigg\rangle_{T,\lamn}^{\gamma,0}\nonumber\\ &= \prod_{v\in \lamn} \cosh\left(\frac{\eps h_v }{T}\right) \Bigg\langle ~\prod_{v\in \lamn}\left(1+ \sigma_v\tanh\left(\frac{\eps h_v }{T}\right)\right)~\Bigg\rangle_{T,\lamn}^{\gamma,0}.\label{eq: partition function expansion}
\end{align}
To control the right-hand side of \eqref{eq: partition function expansion}, we recall the following techniques from \cite{BS22} and \cite{DHX23}.

\begin{defi}\label{def: good external field for partition function}
    Let $\Omega$ be a region such that $\lamn\subset\Omega\subset\Lambda_{2N}$ and $\gamma\in \{-1,1\}^{\pari\Omega}$ be a boundary condition on $\pari\Omega$. 
    Let $\cA_0\subset\R^{\Omega}\otimes\Xi$ denote the collection of pairs $(h,\omega)$ such that the property \emph{(P0)} holds, \begin{equation}\label{eq: good external field for partition function}
        (\text{\emph{P}}0):\quad \Bigg|\Bigg\langle ~ \prod_{v\in\Omega}\Bigg[1+ \sigma_v\tanh\left(\frac{\eps h_v}{T}\right)\Bigg]~\Bigg\rangle^{\gamma,0}_{T,\Omega }-1\Bigg|\le \sqrt{\eps N^{\beta(T)}}.
    \end{equation} Note that actually $\cA_0$ depends only on $h$, so we could define $\cA_0\subset\R^{\Omega}$ to be the collection of the external field $h$ such that  the property \emph{(P0)} holds. However, for further convenience, we keep $\omega$ in the definition of $\cA_0$. 
\end{defi}
\begin{rmk}
    Of course, the set $\cA_0$ relies on $\gamma,\Omega,T$ and $\eps,$ but we omit them from the notation for concision. The same conventions will be used several times without further notice.
\end{rmk}
\begin{lem}\label{lem: small perturbation for partition function}
   Take $\Omega=\Lambda_N,$
   then there exist constants $c_1,c_2>0$ depending only on $T$ such that $\P\otimes\phi_{p,\Omega}^{\gamma,0}(\cA^0)\ge 1-c_1\exp\left(-c_1^{-1}\sqrt{\eps^{-1}N^{-\beta(T)}}\right)$ for any disorder strength  {$\epsilon\leq c_2N^{-\beta(T)}$}  and any boundary condition $\gamma$  on $\pari\lamn$.
\end{lem}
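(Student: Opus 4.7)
The plan is to concentrate the random variable
\[
X(h) \;:=\; \Big\langle \prod_{v\in\lamn}\bigl(1 + \sigma_v \tanh(\tfrac{\eps h_v}{T})\bigr) \Big\rangle^{\gamma,0}_{T,\lamn} - 1
\]
around $0$ at scale $\sqrt{\eps N^{\beta(T)}}$ with sub-Gaussian tail, by exploiting its chaos expansion in the i.i.d.\ Gaussians $(h_v)$. Setting $t_v := \tanh(\eps h_v/T)$ and expanding the product yields
\[
X(h) \;=\; \sum_{\emptyset \neq A \subset \lamn} \langle \sigma_A \rangle^{\gamma,0}_{T,\lamn} \prod_{v\in A} t_v,
\]
where $\sigma_A := \prod_{v\in A}\sigma_v$. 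Since the $h_v$ are symmetric and $\tanh$ is odd, $\E[t_v] = 0$; by independence $\E[t_A] = 0$ for every $\emptyset \neq A$, so $\E[X] = 0$ and the monomials $\{t_A\}_A$ are pairwise orthogonal in $L^2(\P)$.

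The next step is a second-moment bound. Using $\E[t_v^2] \leq (\eps/T)^2$ and orthogonality,
\[
\E[X^2] \;\leq\; \sum_{\emptyset\neq A} r^{|A|}(\langle \sigma_A\rangle)^2 \;=\; \Big\langle\!\Big\langle \prod_{v}\bigl(1 + r\,\sigma_v \sigma_v'\bigr) \Big\rangle\!\Big\rangle - 1,
\]
with $r := (\eps/T)^2$ and $\sigma,\sigma'$ two independent Ising copies (both with boundary condition $\gamma$). For small $r$ this behaves, up to the $|A|=1$ contribution $r\sum_v (\langle \sigma_v\rangle)^2$, like $\langle\!\langle \exp(r\sum_v \sigma_v\sigma_v') \rangle\!\rangle$, which I will control by classical correlation estimates on $\lamn$: exponential decay of both $\langle \sigma_v\rangle$ from $\partial\lamn$ and of the two-point function for $T>T_c$; critical decays $\langle \sigma_v\rangle \lesssim \dist(v,\partial)^{-1/8}$ and $\langle \sigma_v\sigma_w\rangle \lesssim |v-w|^{-1/4}$ for $T=T_c$; and the spontaneous magnetization bound $|\langle \sigma_v\rangle|\leq m^*$ for $T<T_c$. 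In each case, summing over $v\in\lamn$ and handling higher-degree terms in $|A|$ gives
\[
\E[X^2] \;\leq\; C_T \,(\eps N^{\beta(T)})^2.
\]

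The third step upgrades $L^2$ control to an exponential tail. Since $X$ is an analytic function of the Gaussian vector $h$, and the chaos decomposition is dominated in $\eps$ by its degree-one piece $X^{(1)} = (\eps/T)\sum_v \langle \sigma_v\rangle h_v + O(\eps^3)$, Wiener-chaos hypercontractivity, combined with summability of higher-degree variances, yields a sub-Gaussian bound
\[
\P\bigl(|X|>t\bigr) \;\leq\; C\exp\!\Big(-c\,\tfrac{t^2}{\E[X^2]}\Big).
\]
Substituting $t=\sqrt{\eps N^{\beta(T)}}$ and $\E[X^2]\lesssim (\eps N^{\beta(T)})^2$ produces $\exp(-c/(\eps N^{\beta(T)}))$; because the hypothesis $\eps\leq c_2 N^{-\beta(T)}$ forces $\eps N^{\beta(T)}\leq c_2<1$, this dominates the claimed bound $\exp(-c\sqrt{\eps^{-1}N^{-\beta(T)}})$.

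The main obstacle is the regime-by-regime second-moment estimate. At $T=T_c$ in particular one must combine polynomial boundary-magnetization decay with the critical two-point function to obtain $\sum_v (\langle \sigma_v\rangle)^2 \lesssim N^{7/4}$ underlying $\beta(T_c)=\tfrac{7}{8}$, which ultimately rests on RSW-type inputs for critical FK-Ising. The concentration step of the third paragraph is largely standard once the expansion of $X$ in Hermite (Wiener) chaos is under control.
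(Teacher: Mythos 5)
Your approach is the same in spirit as the paper's: write $X(h)$ via the chaos expansion $\sum_{\emptyset\neq A\subset\lamn}\langle\sigma^A\rangle\prod_{v\in A}\tanh(\eps h_v/T)$ (the paper's equation for the partition-function ratio) and then invoke Wiener-chaos concentration. The correlation estimates you invoke in each temperature regime are exactly the ones the paper uses (exponential decay for $T>T_c$, the $\dist^{-1/8}$ magnetization and $|u-v|^{-1/4}$ two-point critical exponents for $T=T_c$, the trivial bound $|\sigma_x|=1$ for $T<T_c$). For $T=T_c$ the paper simply cites \cite[Lemma 3.4]{DHX23}; for $T\neq T_c$ it reproduces the argument by bounding the quantities $A_k=\eps\big[\sum_{|J|=k}(\langle\sigma^J\rangle)^2\big]^{1/(2k)}$ for each $k$ and plugging them into the concentration lemma \cite[Lemma B.1]{DHX23}.

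The material gap is in your third step. For a random variable that lives in Wiener chaos of \emph{all} degrees, a bound on $\E[X^2]$ \emph{alone} does not yield any exponential tail, let alone a sub-Gaussian one. If you only know $\sigma_d^2:=\var(\hat X_d)\le\E[X^2]\lesssim(\eps N^{\beta})^2$ for every chaos degree $d$, then degree-$d$ hypercontractivity gives only $\P(|\hat X_d|>\tau)\le\exp\big(-c(\tau/\sigma_d)^{2/d}\big)$, and at $\tau\sim\sqrt{\eps N^{\beta}}$ this exponent is $\ge c\,(\eps N^{\beta})^{-1/d}$, which tends to a constant as $d\to\infty$. What is actually needed is the geometric decay $\sigma_d^2\lesssim(\eps N^{\beta})^{2d}$ (up to $1/d!$ factors), degree by degree. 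That is precisely the content of the paper's per-degree bounds: $A_k\le c_2/(k!)^{1/(2k)}$ for $T<T_c$ and $A_k\lesssim\eps\sqrt{N/k}+\eps$ for $T>T_c$, which amount to $\sigma_k^2\lesssim(\eps N^{\beta})^{2k}/k!$. So the phrase ``summability of higher-degree variances'' is not just a technical afterthought – it is the whole argument, and it has to be established via the same correlation estimates of your Step 2 applied separately to each $\sum_{|J|=k}(\langle\sigma^J\rangle)^2$, not via a single replica second-moment computation. I also note that your claimed sub-Gaussian tail $\exp(-c/(\eps N^{\beta}))$ is strictly stronger than the bound $\exp(-c\sqrt{\eps^{-1}N^{-\beta}})$ that the lemma states and the paper actually proves (via \cite[Lemma B.1]{DHX23}); the stronger form is plausible once the per-degree variances are controlled, since the degree-one Gaussian piece dominates at the relevant threshold, but you would need to make that degree-by-degree union bound explicit rather than asserting a sub-Gaussian inequality for the full multi-chaos $X$.
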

\begin{proof}
    The case $T=T_c$ is covered by \cite[Lemma 3.4]{DHX23}. The cases that $T<T_c$ and $T>T_c$ follow a similar proof strategy.
    Here, we only highlight the key adaptations required.
    Note that 
    $$\Bigg\langle ~ \prod_{v\in\lamn}\Bigg[1+ \sigma_v\tanh\left(\frac{\eps h_v}{T}\right)\Bigg]~\Bigg\rangle^{\gamma,0}_{T,\lamn }-1=\sum_{k=1}^\infty\sum_{J\subset \lamn, |J|=k}\langle\sigma^J\rangle^{\gamma,0}_{T,\lamn}\prod_{v\in J}\tanh\left(\frac{\eps h_v}{T}\right).$$
To control the right-hand side, using \cite[Lemma~B.1]{DHX23}, the core is to bound 
$$A_k=\eps\Big[\sum_{J\subset \lamn, |J|=k}\left(\langle\sigma^J\rangle^{\gamma,0}_{T,\lamn}\right)^2\Big]^{\frac{1}{2k}}.$$

For $T<T_c$, using $|\sigma_x|=1$, we can obtain
$$A_k\leq \eps \binom{N^2}{k}^{\frac{1}{2k}}\leq c_2N^{-1}\left(\frac{N^{2k}}{k!}\right)^{\frac{1}{2k}}=\frac{c_2}{(k!)^{\frac{1}{2k}}}.$$

For $T>T_c$, by the Edward-Sokal coupling between the Ising model and the FK-Ising model, we have 
$\langle\sigma^J\rangle^{\gamma,0}_{T,\lamn}= \phi^{\gamma,0}_{p,\lamn}(\mathtt{Even}_J),$
where $\mathtt{Even}_J$ stands for the event that
each cluster in $\omega^{ \gamma}$  contains an even number of points in $J$. 

Furthermore, the probability of a cluster being large decays exponentially when $p<p_c,$ i.e., when $T>T_c$, we have for any boundary condition $\gamma$  ({see for example \cite{DT20}}):
\begin{equation}\label{eq: exponential decay for high temperature}
\phi_{p,\Omega}^{\gamma,0}\left(x\longleftrightarrow \pari\Lambda_m(x)\right)\leq e^{-cm}.
\end{equation}
 For any $J \subset\lamn$, let $d_x=\frac{1}{2}\cdot dist(x,\pari\lamn\cup J\setminus\{x\})$ and let $\cR_{x}=\Lambda_{d_x}(x)$. Then it is easy to get that \begin{equation}\label{eq: even event reduction}
     \mathtt{Even}_J\subset\bigcap_{x\in J}\{x\longleftrightarrow\pari\cR_{x}\}.
 \end{equation} Combining \eqref{eq: exponential decay for high temperature} and \eqref{eq: even event reduction}, we get that \begin{equation}
\langle\sigma^J\rangle^{\gamma,0}_{T,\lamn}\le \prod_{x\in J}\phi_{p,\mcc R_x}^{\mathrm{w},0}\left(\{x\longleftrightarrow\pari\cR_{x}\}\right)\le\exp{\Big(-\sum_{x\in J}c\cdot d_x\Big)}.\label{eq: second moment for Ising connecting probability}
    \end{equation} Then we get from the calculation in \cite[Lemmas 8.1 and 8.3]{FSZ16} that \begin{equation}
        \sum_{J\subset \lamn, |J|=k}\left(\langle\sigma^J\rangle^{\gamma,0}_{T,\lamn}\right)^2\le \frac{\prod_{j=1}^k(C_1N+C_1k)}{k!}.\label{eq: second moment for Ising connecting probability 2}
    \end{equation} Thus, we get that
    \begin{align*}
        A_k&\leq \eps \left(\frac{\prod_{j=1}^k(C_1N+C_1k)}{k!}\right)^{\frac{1}{2k}}\leq \eps\left(\frac{(C_1N+C_1k)^k}{k!}\right)^{\frac{1}{2k}}\\&\leq\eps\cdot\frac{\sqrt{C_1N+C_1k}}{\sqrt{k/e}}\leq\frac{\sqrt{C_1N}\eps }{\sqrt{k/e}}+\sqrt{eC_1}\eps.\qedhere
    \end{align*}
With the upper bound for $A_k$, the rest of the proof is the same to that in  \cite[Lemma 3.4]{DHX23}.
\end{proof}
In order to meet further needs, we prove a stronger version of Lemma~\ref{lem: small perturbation for partition function} even though it will not be used in this section.
\begin{lem}\label{lem: small perturbation for partition function 2}
   Let $\Omega$ be a region such that $\lamn\subset\Omega\subset\Lambda_{2N}$ and $\gamma\in \{-1,1\}^{\pari\Omega}$ be a boundary condition on $\pari\Omega$.\begin{enumerate}
       \item If $T\ge T_c$ and $\gamma=\f$ is the free boundary condition, then there exist constants $c_1,c_2>0$ depending only on $T$ such that $\P(\cH^0)\ge 1-c_1\exp\left(-c_1^{-1}\sqrt{\eps^{-1}N^{-\beta(T)}}\right)$ for all disorder strength  {$\epsilon\leq c_2N^{-\beta(T)}$}.
       \item If $T< T_c$, then  there exist constants $c_3,c_4>0$ depending only on $T$ such that $\P(\cH^0)\ge 1-c_3\exp\left(-c_3^{-1}\sqrt{\eps^{-1}N^{-\beta(T)}}\right)$ for all disorder strength  {$\epsilon\leq c_4N^{-\beta(T)}$} and any boundary condition $\gamma$.
   \end{enumerate} 
\end{lem}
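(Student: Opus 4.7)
The plan is to adapt the proof of Lemma~\ref{lem: small perturbation for partition function} essentially verbatim, tracking what must change when $\lamn$ is enlarged to a general $\Omega$ with $\lamn\subset\Omega\subset\Lambda_{2N}$. Running through the same power-series expansion gives
\begin{equation*}
\Bigl\langle \prod_{v\in\Omega}\bigl[1 + \sigma_v \tanh(\eps h_v/T)\bigr]\Bigr\rangle^{\gamma,0}_{T,\Omega} - 1 = \sum_{k\ge 1}\sum_{\substack{J\subset\Omega\\ |J|=k}} \langle \sigma^J\rangle^{\gamma,0}_{T,\Omega}\prod_{v\in J}\tanh\!\Bigl(\tfrac{\eps h_v}{T}\Bigr),
\end{equation*}
and the Gaussian hypercontractivity input \cite[Lemma~B.1]{DHX23} reduces matters to a uniform bound on
\begin{equation*}
A_k = \eps\Bigl[\sum_{\substack{J\subset\Omega\\ |J|=k}}\bigl(\langle\sigma^J\rangle^{\gamma,0}_{T,\Omega}\bigr)^2\Bigr]^{\frac{1}{2k}}.
\end{equation*}
The remaining work is therefore to check that each case in the proof of Lemma~\ref{lem: small perturbation for partition function} yields the same summability estimate on $A_k$, uniformly in the admissible $\Omega$ and $\gamma$.

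Part~(2), the case $T<T_c$ with arbitrary $\gamma$, is almost immediate: the trivial bound $|\langle\sigma^J\rangle^{\gamma,0}_{T,\Omega}|\le 1$ is insensitive to the boundary condition, and $|\Omega|\le(4N+1)^2\le 4|\lamn|$, so the estimate $A_k\le c(T)/(k!)^{1/(2k)}$ derived for $\Omega=\lamn$ persists with constants enlarged only by a numerical factor; the concentration argument then closes as before. Part~(1) splits into two subcases. For $T>T_c$ and $\gamma=\f$, the Edward--Sokal identity $\langle\sigma^J\rangle^{\f,0}_{T,\Omega}=\phi^{\f,0}_{p,\Omega}(\mathtt{Even}_J)$ together with the containment $\mathtt{Even}_J\subset\bigcap_{x\in I}\{x\leftrightarrow\pari\cR_x\}$ with $\cR_x\subset\Omega$ again reduces the bound on $\langle\sigma^J\rangle$ to a one-arm estimate; at $p<p_c$ this decays exponentially uniformly in the surrounding configuration (by DMP and CBC against the wired infinite-volume measure), so the summation identities of \cite[Lemmas~8.1 and 8.3]{FSZ16} give the same bound on $A_k$ as in Lemma~\ref{lem: small perturbation for partition function}, with $\Omega$ everywhere in place of $\lamn$. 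For the critical subcase $T=T_c$, we invoke the bound underlying \cite[Lemma~3.4]{DHX23}, whose proof is built on RSW crossings and the critical one-arm estimates for the free FK-Ising measure; both extend from $\lamn$ to $\Omega\subset\Lambda_{2N}$ via DMP, CBC and FKG applied to the free measure.

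The main obstacle is precisely this critical subcase: the bound on $\phi^{\f,0}_{p_c,\Omega}(\mathtt{Even}_J)$ is the delicate ingredient and relies crucially on the free boundary condition, which guarantees that crossing and arm estimates transfer cleanly from $\lamn$ to $\Omega$ under the standard monotonicity tools. With a non-free BC at criticality one loses the uniform decay needed to control the series, which explains why part~(1) is restricted to $\gamma=\f$ while part~(2) permits an arbitrary boundary condition.
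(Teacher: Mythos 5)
Your proposal is more laborious than the paper's proof and, in the critical subcase, leaves a real gap. The paper does not re-run the RSW/arm-exponent machinery on the general domain $\Omega$; instead, it makes a single observation that sidesteps all of that. Since $\mathtt{Even}_J$ is an increasing event and the free boundary condition on $\Omega$ is dominated (as an FK boundary condition on $\Lambda_{2N}$) by the wired condition on $\Lambda_{2N}$, CBC gives
\[
\langle\sigma^J\rangle^{\f,0}_{T,\Omega}
=\phi^{\f,0}_{p,\Omega}\bigl(\mathtt{Even}_J\bigr)
\le \phi^{\w,0}_{p,\Lambda_{2N}}\bigl(\mathtt{Even}_J\bigr)
=\langle\sigma^J\rangle^{\w,0}_{T,\Lambda_{2N}}
\]
for every $J\subset\Omega$. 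This replaces the general domain $\Omega$ by the \emph{box} $\Lambda_{2N}$ with wired boundary at the level of each $k$-point correlation, so the bound on $A_k$ is inherited directly from the proof of Lemma~\ref{lem: small perturbation for partition function} applied with $N$ replaced by $2N$. That is the entire proof for $T\ge T_c$; no $\Omega$-adapted RSW or arm estimates are needed.

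By contrast, you attempt to re-derive the estimate of \cite[Lemma~3.4]{DHX23} directly on $\Omega$. At $T>T_c$ this is harmless because the one-arm probability decays exponentially uniformly in the domain. But at $T=T_c$ you assert that the critical arm and crossing estimates ``extend from $\lamn$ to $\Omega$ via DMP, CBC and FKG applied to the free measure'' without verifying it. The summation bounds in \cite[Lemmas 8.1 and 8.3]{FSZ16} and the argument in \cite[Lemma~3.4]{DHX23} are written for a square box and exploit its geometry; transporting them to an arbitrary $\Omega$ with $\lamn\subset\Omega\subset\Lambda_{2N}$ would require checking that the relevant arm exponents and their additivity hold uniformly over such domains — which is exactly the extra work the CBC reduction is designed to avoid. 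The cleanest way to close your gap is to observe, as the paper does, that the monotonicity step can be applied once at the level of $\langle\sigma^J\rangle$ rather than propagated through every geometric estimate; then everything is reduced to the already-established box case and the free-boundary hypothesis is seen to enter only through this single CBC application. For $T<T_c$ your argument agrees with the paper's since the trivial bound $|\langle\sigma^J\rangle|\le 1$ is oblivious to $\Omega$ and $\gamma$.
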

\begin{proof}
   The proof is similar to the proof of Lemma~\ref{lem: small perturbation for partition function}, so we follow the notations there. The case
    that $T<T_c$ follows directly from the proof of Lemma~\ref{lem: small perturbation for partition function}.
    
    If $T\ge T_c$ and $\gamma=\f$ is the free boundary, then we get from CBC that $\langle\sigma^J\rangle^{\f,0}_{T,\Omega}\leq \langle\sigma^J\rangle^{\w,0}_{T,\Lambda_{2N}}.$ The desired bound on $A_k$ then follows from the proof of Lemma~\ref{lem: small perturbation for partition function}. 
\end{proof}

Further, in order to control the product of hyperbolic cosine in the right-hand side of \eqref{eq: partition function expansion}, we give the following definition.
\begin{defi}
Let $\cA_1\subset\R^{\Omega}\otimes\Xi$ denote the collection of pairs $(h,\omega)$ such that the property \emph{(P1)} holds \begin{equation}\label{eq: good external field for product of consine hyperbolics}
      (\text{\emph{P}}1):  \quad \Big|\sum_{v\in\lamn}f\left(\frac{\eps h_v }{T}\right)-\frac{2\eps^2N^2}{T^2}\Big|\le \sqrt{\eps^2N}.
    \end{equation}
\end{defi} 
\begin{lem}\label{lem: good external field for product of hyperbolic cosine}
    There exist constants $c_1>0$ and $c_2>0$ depending only on $T$ such that for any disorder strength $\eps\le  c_1 N^{-\frac{1}{2}}$, we have $\P\otimes\phi_{p,\Omega}^{\gamma,0}(\cA_1)\ge 1-c_2\exp\left(-c_2^{-1}\eps^{-1}N^{-\frac{1}{2}}\right)$.
\end{lem}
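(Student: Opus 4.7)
Write $X_v = f(\eps h_v/T)$ and $S = \sum_{v\in\lamn} X_v$. The $X_v$ are i.i.d.\ nonnegative random variables (nonnegativity from $\cosh \ge 1$). My plan is to pin down $\E[S]$ via a Taylor expansion of $\ln\cosh$, reduce to concentration of $S - \E[S]$, and then apply a Bernstein-type inequality exploiting the fact that $X_v$ is essentially a scaled chi-square.

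\emph{Step 1 (computing the mean).} From the expansion $f(x) = \tfrac12 x^2 - \tfrac1{12}x^4 + O(x^6)$ and $\E[h_v^{2k}] = (2k-1)!!$, one computes $\E[X_v] = \eps^2/(2T^2) + O(\eps^4/T^4)$. Multiplying by $|\lamn| = (2N+1)^2 = 4N^2 + 4N + 1$ gives
\begin{equation*}
\Bigl|\E[S] - \tfrac{2\eps^2 N^2}{T^2}\Bigr| \;\le\; C(T)\,(\eps^2 N + \eps^4 N^2),
\end{equation*}
which is bounded by $\tfrac12\sqrt{\eps^2 N}$ as soon as $\eps \le c_1 N^{-1/2}$ with $c_1 = c_1(T)$ small. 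It therefore suffices to prove
\begin{equation*}
\P\bigl(|S - \E[S]| \ge \tfrac12 \eps N^{1/2}\bigr) \;\le\; c\,\exp(-c^{-1}\eps^{-1} N^{-1/2}).
\end{equation*}

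\emph{Step 2 (concentration).} The elementary bound $0 \le f(x) \le x^2/2$ (valid for all $x$, since $f(0)=f'(0)=0$ and $f''(x) = \operatorname{sech}^2 x \le 1 = (x^2/2)''$) gives $|X_v - \E[X_v]| \le \tfrac{\eps^2}{2T^2}(h_v^2 + 1)$. Since $h_v$ is standard Gaussian, $X_v - \E[X_v]$ is sub-exponential with Orlicz norm $\|X_v - \E[X_v]\|_{\psi_1} \le C\eps^2/T^2$ and variance $\sigma^2 \le C\eps^4/T^4$. Bernstein's inequality for independent sub-exponential summands then yields, for every $u>0$,
\begin{equation*}
\P\bigl(|S - \E[S]| \ge u\bigr) \;\le\; 2\exp\!\left(-c\min\!\Bigl(\tfrac{u^2 T^4}{|\lamn|\,\eps^4},\;\tfrac{u T^2}{\eps^2}\Bigr)\right).
\end{equation*}
Plugging in $u = \tfrac12 \eps N^{1/2}$, the two expressions inside the minimum become $\asymp T^4/(\eps^2 N)$ and $\asymp T^2 N^{1/2}/\eps$. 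Using $\eps N^{1/2} \le c_1$, the first is at least $c T^4\cdot (\eps^{-1} N^{-1/2})$, while the second is even larger; with $c_1=c_1(T)$ sufficiently small, both exceed $c_2^{-1}\eps^{-1} N^{-1/2}$ for some $c_2=c_2(T)>0$.

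\emph{Where the work is.} There is no genuine obstacle; this is essentially a chi-square concentration dressed up as a $\ln\cosh$-concentration. The only subtlety is quantitative: one must choose $c_1(T)$ small enough that (i) the $O(\eps^2 N + \eps^4 N^2)$ Taylor remainder in the mean is absorbed into $\tfrac12\eps N^{1/2}$, and (ii) the two terms in the Bernstein min both dominate the target rate $\eps^{-1} N^{-1/2}$. An alternative (slightly longer) route, if one prefers to avoid invoking sub-exponential Bernstein directly, is to split $f(x) = x^2/2 + (f(x)-x^2/2)$, concentrate the resulting chi-square sum $(\eps^2/2T^2)\sum_v h_v^2$ via the Laurent--Massart inequality, and absorb the fourth-order remainder using $|f(x)-x^2/2|\le x^4/12$ together with a tail bound on $\sum_v h_v^4$; the arithmetic is identical.
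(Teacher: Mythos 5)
Your proof is correct, and it takes a genuinely different route from the paper's. The paper sandwiches $f(x)$ between $\tfrac{x^2-x^4}{2}$ and $\tfrac{x^2}{2}$ and then separately tail-bounds the centered chi-square sum $\sum_v \tfrac{\eps^2}{2T^2}(h_v^2-1)$ and the quartic remainder $\sum_v\tfrac{\eps^4}{2T^4}h_v^4$, each via the paper's Wiener-chaos estimate (Lemma~\ref{lem: chi fourth power tail bound}) — this is precisely the ``alternative route'' you sketch at the end. Your primary route instead observes that $f(\eps h_v/T)$ itself is a nonnegative sub-exponential random variable squeezed between $0$ and $\eps^2 h_v^2/(2T^2)$, so you can run a single sub-exponential Bernstein (the same Lemma~\ref{lem: concentration inequality with finite exponential moment} the paper already uses elsewhere) directly on $S=\sum_v f(\eps h_v/T)$. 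This is cleaner: one concentration inequality, no splitting, and the only bookkeeping left is absorbing the $O(\eps^2 N+\eps^4 N^2)$ mean deviation and the $(2N+1)^2\ne 4N^2$ offset into $\tfrac12\eps N^{1/2}$ via $\eps N^{1/2}\le c_1$, exactly as you do. It is worth noting that your argument actually yields the tail $\exp(-c\,\eps^{-1}N^{-1/2})$ stated in the lemma, whereas the paper's own displayed calculation (via Lemma~\ref{lem: chi fourth power tail bound}, whose exponent $x^{1/2}/(\sum a_i^2)^{1/4}$ is the square root of the Gaussian Bernstein term and so is sub-optimal) only reaches $\exp(-c\,\eps^{-1/2}N^{-1/4})$; the two agree in spirit and the discrepancy is harmless downstream since the combined rate used in the Theorem is weaker still, but your bound is the one that literally matches the statement.
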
\begin{proof}
Recall that $f(x)=\ln\cosh(x)$. Given the inequality $\frac{x^2-x^4}{2}\le f(x)\le \frac{x^2}{2}$, we calculate that \begin{align}
    &\P\left(\Big|\sum_{v\in\lamn}f\left(\frac{\eps h_v}{T}\right)-\frac{2\eps^2N^2}{T^2}\Big|> \sqrt{\eps^2N}\right)\nonumber\\\le~&\P\left(\Big|\sum_{v\in\lamn}\frac{\eps^2h_v^2}{2T^2}-\frac{2\eps^2N^2}{T^2}\Big| >\frac{\sqrt{\eps^2N}}{2}\right)+\P\left(\Big|\sum_{v\in\lamn}\frac{\eps^4h_v^4}{2T^4}\Big| >\frac{\sqrt{\eps^2N}}{2}\right)\nonumber\\\le~&c_2\exp\left(-c_2^{-1}\eps^{-\frac{1}{2}}N^{-\frac{1}{4}}\right)
\end{align} where the last inequality comes from Lemma~\ref{lem: chi fourth power tail bound} and letting $c_1>0$ small enough.
\end{proof}
Combining \eqref{eq: partition function expansion}, \eqref{eq: good external field for partition function} and \eqref{eq: good external field for product of consine hyperbolics}, we obtain that for $(h,\omega)\in\cA_0\cap\cA_1,$ 
\begin{equation}\label{eq: two-sided bound for partition function}
         \begin{aligned}
\left(1-\sqrt{\eps N^{\beta(T)}}\right)\left(1-\sqrt{\eps^2 N}\right)&\le Z(h)\cdot\exp\left(\frac{2\eps^2N^2}{T^2}\right)&\le\left(1+\sqrt{\eps N^{\beta(T)}}\right)\left(1+\sqrt{\eps^2 N}\right).
         \end{aligned}
     \end{equation}

\subsection{Concentration of $F(h,\omega)$}
\label{sec: Concentration for the Radon Nikodym derivative}

Recalling the definition of $F(h,\omega)$ in \eqref{Def-f(h,omega)} and noting that $\frac{x^2-x^4}{2}\leq f(x)\leq \frac{x^2}{2}$, 
we have 
\begin{equation}\label{eq: f two sided bound}
    \sum_{\cC\in\fC}\frac{\eps^2 h_{\cC}^2}{2T^2}-\frac{\eps^4 h_{\cC}^4}{2T^4}\leq F(h,\omega)\leq \sum_{\cC\in\fC}\frac{\eps^2 h_{\cC}^2}{2T^2}.
\end{equation}
Combined with Lemma~\ref{lem:decoupling}, we can consider the ``typical'' event such that $F(h,\omega)$ concentrates well under the measure $\P\otimes\fk\gamma$.
\begin{defi}\label{def:good external field for upper bound}
We denote by $\cA_2$ and $\cA_3$ the collections of pairs $(h,\omega)$ such that the following properties \emph{(P2)} and \emph{(P3)} hold respectively.
    \begin{equation}\label{eq:good external field and clusters}
    (\text{\emph{P}}2):\quad F(h,\omega)-\frac{2\eps^2N^2}{T^2}\le \eps N^{\alpha(T)},
    \end{equation}
    \begin{equation}(\text{\emph{P}}3):\quad\label{eq:perfect external field and clusters}
        F(h,\omega)-\frac{2\eps^2N^2}{T^2}\ge -\eps N^{\alpha(T)}.
    \end{equation}


\end{defi}

\begin{lem}\label{lem:good external field probability}
There exist constants $c_1=c_1(T)>0$ and $c_2=c_2(T)>0$ such that for any disorder strength $\eps<c_1N^{-\alpha(T)}$ we have
    $$\P\otimes\fk\gamma(\cA_2\cap \cA_3)\ge 1-c_2\exp\big(-c_2^{-1}\eps^{-\frac{1}{2}} N^{-\frac{\alpha(T)}{2}}\big).$$
\end{lem}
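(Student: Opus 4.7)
The plan is to reduce the $\P$-probability bound to a joint probability bound under $\P\otimes\fk\gamma$, and then to prove concentration of $F(h,\omega)$ at the required scale under that joint measure. By a union bound it suffices to control $(\mcc H^2)^c$ (the argument for $(\mcc H^3)^c$ is symmetric); setting $Y(h):=\fk\gamma\bigl(F(h,\omega) - 2\eps^2 N^2/T^2 > \eps N^{\alpha(T)}\bigr)$ and applying Markov's inequality at threshold $\exp(-\eps^{-1/4}N^{-\alpha(T)/4})$ reduces matters to showing
\[
(\P\otimes\fk\gamma)\bigl(F(h,\omega) - 2\eps^2 N^2/T^2 > \eps N^{\alpha(T)}\bigr) \le C\exp\bigl(-c\eps^{-1/2}N^{-\alpha(T)/2}\bigr),
\]
since $\eps^{-1/4}N^{-\alpha(T)/4} \ll \eps^{-1/2}N^{-\alpha(T)/2}$ whenever $\eps N^{\alpha(T)}$ is small.

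To establish this joint bound, I would exploit the Taylor sandwich \eqref{eq: f two sided bound}, writing
\[
\tfrac{\eps^2}{2T^2}X(h,\omega) - \tfrac{\eps^4}{2T^4}Y(h,\omega) \le F(h,\omega) \le \tfrac{\eps^2}{2T^2}X(h,\omega),
\]
where $X = \sum_{\cC} h_\cC^2$ and $Y = \sum_{\cC} h_\cC^4$. The structural key is that, conditionally on $\omega$, the variables $\{h_\cC\}_{\cC\in\fC(\omega)}$ are independent centered Gaussians of variance $|\cC|$; hence $\E_h[X\mid\omega] = \sum_\cC|\cC| = |V|$ is \emph{deterministic} in $\omega$, and $\tfrac{\eps^2 |V|}{2T^2} = \tfrac{2\eps^2 N^2}{T^2} + O(\eps^2 N)$ recovers the centering in the statement. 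It suffices then to show (i) concentration of $X$ around $|V|$ and (ii) negligibility of the quartic remainder.

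For (i), the Laurent--Massart inequality for weighted sums of i.i.d.\ $\chi^2_1$ variables applied conditionally on $\omega$ yields
\[
\P_h\bigl(|X - |V|| > t \,\big|\, \omega\bigr) \le 2\exp\bigl(-c\min(t^2/S_2(\omega),\, t/M_\infty(\omega))\bigr),
\]
with $S_2(\omega) := \sum_\cC|\cC|^2$ and $M_\infty(\omega) := \max_\cC|\cC|$. Choosing $t = 2T^2 N^{\alpha(T)}/\eps$, so that $\tfrac{\eps^2 t}{2T^2} = \eps N^{\alpha(T)}$, I would integrate over $\omega$ by splitting on a ``good configuration'' event $\cG = \{S_2 \le M_2,\; M_\infty \le M_\infty^*\}$: on $\cG$ the conditional Laurent--Massart bound delivers $\exp(-c\eps^{-1/2}N^{-\alpha(T)/2})$, and $\fk\gamma(\cG^c)$ is controlled by FK-Ising cluster-size estimates at the corresponding scale. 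The thresholds $(M_2, M_\infty^*)$ are matched to the typical cluster-size scales in each regime: $(N^2, \log N)$ for $T > T_c$ using exponential decay \eqref{eq: exponential decay for high temperature}, $(N^{15/4}, N^{15/8})$ for $T = T_c$ using the magnetization exponent $1/4$ and the one-arm exponent $1/8$, and $(N^4, N^2)$ for $T < T_c$ via trivial deterministic bounds. Part (ii) is handled by $\E_h[Y\mid\omega] = 3 S_2(\omega)$: on $\cG$, Markov together with the $\chi_4$-tail of Lemma~\ref{lem: chi fourth power tail bound} yields $\tfrac{\eps^4}{2T^4} Y \le \eps N^{\alpha(T)}$ with the required probability precisely when $\eps \le c\, N^{-\alpha(T)}$.

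The principal obstacle is the critical regime $T = T_c$, where the cluster-size tail estimates $\fk\gamma(S_2 > M_2)$ and $\fk\gamma(M_\infty > M_\infty^*)$ must achieve subexponential decay (not merely control of expectations), and where the numerology for the Laurent--Massart threshold $\min(t^2/M_2, t/M_\infty^*)$ meets $\eps^{-1/2}N^{-\alpha(T_c)/2}$ exactly at the borderline. The exponent $\alpha(T_c) = \tfrac{15}{16}$ is chosen so that this borderline is attained, which is why the sharp near-critical FK-Ising cluster-size analysis (developed in Section~\ref{sec: near-critical behavior} together with crossing estimates) is essential for closing the argument.
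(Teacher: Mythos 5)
Your framework matches the paper's: reduce via Markov to a $\P\otimes\fk\gamma$ estimate (exactly the paper's display \eqref{eq-l-level-good-bound-2}), sandwich $F$ by the quadratic and quartic terms, observe that $\E_h[\sum_\cC h_\cC^2\mid\omega]=|V|$ is deterministic, apply Laurent--Massart conditionally on $\omega$, and split on a good configuration event. The centering identification $\tfrac{\eps^2|V|}{2T^2}=\tfrac{2\eps^2N^2}{T^2}+O(\eps^2 N)$ is also correct.

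However, there is a genuine gap in the good-event specification. You fix the thresholds $(M_2,M_\infty^*)$ at the \emph{typical} cluster-size scales (for instance $M_2=N^{15/4}$, $M_\infty^*=N^{15/8}$ at $T_c$). With a threshold at the typical scale, the bad event has probability $\Theta(1)$: indeed, Theorem~\ref{thm:LDP lower bound for maiximal cluster} shows $\fk\gamma\bigl(\max_\cC|\cC|\ge x N^{15/8}\bigr)\ge c\exp(-c^{-1}x^{16})$, which for any fixed $x$ is a positive constant. That destroys the lemma's bound of $1-c_2\exp(-c_2^{-1}\eps^{-1/2}N^{-\alpha(T)/2})$. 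The paper instead uses the $\eps$-\emph{dependent} threshold $\mss S=\{\sum_\cC|\cC|^2\le\eps^{-1}N^{3\alpha(T)}\}$, which exceeds the typical scale by the diverging factor $\eps^{-1}N^{-\alpha(T)}>c_1^{-1}$, and the required decay $\fk\gamma(\mss S^c)\le c\exp(-c^{-1}\eps^{-1}N^{-\alpha(T)})$ then follows from Corollary~\ref{cor: sum of squares of clusters all temperature}. At $T_c$ this rests crucially on the stretched-exponential LDP of Theorem~\ref{thm:LDP for sum of cluster squares}, $\fk\gamma(\sum_\cC|\cC|^2\ge N^{15/4}x)\le c\exp(-c^{-1}x^8)$, which is a new result established in Appendix~\ref{sec: LDP at critical} via a multiscale coarse-graining of Kiss-type moment bounds --- not via the near-critical crossing analysis of Section~\ref{sec: near-critical behavior} as you suggest. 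You correctly flag the need for a subexponential tail at criticality as "the principal obstacle," but you neither move the threshold to the $\eps$-dependent scale nor indicate how to obtain the required decay rate, so the argument cannot close as written.
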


\begin{proof} 

For any $\omega$ fixed, we write $X_\mcc C=\frac{h_\mcc C}{\sqrt{|\mcc C|}}$, then we get that $X_\mcc C~(\mcc C\in \mathfrak{C})$'s are i.i.d. Gaussian variables with mean 0 and variance 1 and we denote the law as $\mbb P_X$. Now we can rewrite the second moment term in \eqref{eq: f two sided bound} as 
$$\sum_{\cC\in\fC}\frac{\eps^2 h_{\cC}^2}{2T^2}=\sum_{\cC\in \fC(\omega)}\frac{\eps^2|\cC|}{2T^2}(X_{\cC}^2-1)+\frac{2\eps^2 N^2}{T^2}.$$
The first term $\sum_{\cC\in \fC(\omega)}\frac{\eps^2|\cC|}{2T^2}(X_{\cC}^2-1)$ is a centralized summation of $\chi$-square distributions which can be estimated  using Lemma~\ref{lem: chi fourth power tail bound}.
Let $\mss S=\{\omega:\sum_{\cC\in\fC{(\omega)}}|\cC|^2\le \eps^{-1}N^{3\alpha(T)}\}.$ 
Then for any $\omega\in\mss S$, we have by \eqref{eq: f two sided bound}
\begin{align}
        &\P\left(\sum_{\cC\in\fC} f\left({\frac{\eps h_{\cC}}{T}}\right)-\frac{2\eps^2N^2}{T^2}> \eps N^{\alpha(T)}\right)\nonumber\\
        \leq~& {\mbb \P_X}\left(\sum_{\cC\in\fC{(\omega)}} \frac{\eps^2|\cC|(X_{\cC}^2-1)}{2T^2}> \eps N^{\alpha(T)}\right)
        \leq C_1\exp\left(-C_1^{-1}\eps^{-\frac{1}{2}} N^{-\frac{\alpha(T)}{2}}\right)\,.\label{eq-l-level-good-bound}
\end{align} 
The last inequality follows from Lemma~\ref{lem: chi fourth power tail bound}.
Similarly, we have by \eqref{eq: f two sided bound}
 \begin{align}
~&\P\left(\sum_{\cC\in\fC} f\left({\frac{\eps h_{\cC}}{T}}\right)-\frac{2\eps^2N^2}{T^2}< -\eps N^{\alpha(T)}\right)\nonumber\\
\le~&{\P}\left(\sum_{\cC\in\fC}\frac{\eps^2 h_{\cC}^2}{2T^2}-\frac{\eps^4 h_{\cC}^4}{2T^4}-\frac{2\eps^2N^2}{T^2}< -\eps N^{\alpha(T)}\right)\nonumber\\
        \le~& {\P}\left(\sum_{\cC\in\fC}\frac{\eps^2 h_{\cC}^2-\eps^2|\cC|}{2T^2}< -\frac{1}{2}\eps N^{\alpha(T)}\right)+{\P}\left(\sum_{\cC\in\fC}\frac{\eps^4 h_{\cC}^4}{2T^4}> \frac{1}{2}\eps N^{\alpha(T)}\right)\nonumber\\
        =~& {\P_X}\left(\sum_{\cC\in\fC} \frac{\eps^2|\cC|}{2T^2}(X_{\cC}^2-1)< -\frac{1}{2}\eps N^{\alpha(T)}\right)+{\P_X}\left(\sum_{\cC\in\fC}\frac{\eps^4|\cC|^2}{2T^4}X_{\cC}^4> \frac{1}{2}\eps N^{\alpha(T)}\right).\label{eq-perfect-bound 0}
\end{align}
Again by Lemma~\ref{lem: chi fourth power tail bound}, we have \begin{align}
    {\P_X}\left(\sum_{\cC\in\fC} \frac{\eps^2|\cC|}{2T^2}(X_{\cC}^2-1)< -\frac{1}{2}\eps N^{\alpha(T)}\right)&\le C_1\exp\left(-C_1^{-1}\eps^{-\frac{1}{2}} N^{-\frac{\alpha(T)}{2}} \right),\label{eq-perfect-bound 1}\\
    \mbox{and }~~~ {\P_X}\left(\sum_{\cC\in\fC}\frac{\eps^4|\cC|^2}{2T^4}X_{\cC}^4> \frac{1}{2}\eps N^{\alpha(T)}\right)&\le C_1\exp\left(-C_1^{-1}\eps^{-\frac{1}{2}} N^{-\frac{\alpha(T)}{2}} \right).\label{eq-perfect-bound 2}
\end{align}  Combining \eqref{eq-perfect-bound 1} and \eqref{eq-perfect-bound 2} with \eqref{eq-perfect-bound 0}, we get that \begin{equation}
    \P\left(\sum_{\cC\in\fC} f\left({\frac{\eps h_{\cC}}{T}}\right)-\frac{2\eps^2N^2}{T^2}< -\eps N^{\alpha(T)}\right)\le 2C_1\exp\left(-C_1^{-1}\eps^{-\frac{1}{2}} N^{-\frac{\alpha(T)}{2}} \right).\label{eq-}
\end{equation}

In addition, we have by Corollary~\ref{cor: sum of squares of clusters all temperature} that
\begin{equation}\label{eq-l-level-good-bound-for-cluster-deviation}
    \fk\gamma(\sS)\ge 1-C_2\exp\left(-C_2^{-1}\eps^{-1}N^{-\alpha(T)}\right).
\end{equation}
Combining \eqref{eq-l-level-good-bound}, \eqref{eq-} with \eqref{eq-l-level-good-bound-for-cluster-deviation} shows that \begin{equation}\label{eq-l-level-bound-typical}
    \P\otimes\fk\gamma(\cA_i^c)\le C_3\exp\left(-C_3^{-1}\eps^{-\frac{1}{2}} N^{-\frac{\alpha(T)}{2}}\right)
\end{equation} for $i=2,3,$ some $C_3>0$ and sufficiently large $N$. Thus we complete the proof.
\end{proof}

\begin{proof}[Proof of Theorem~\ref{thm-all-temperature}: part one]
       Let $\cA=\cA_0\cap\cA_1\cap\cA_2\cap\cA_3$, then combining Lemmas~\ref{lem: small perturbation for partition function}, \ref{lem: good external field for product of hyperbolic cosine} and \ref{lem:good external field probability}  shows $$\P\otimes\fk\gamma(\cA)\ge 1-C_1\exp\left(-C_1^{-1}\eps^{-\frac{1}{2}} N^{-\frac{\alpha(T)}{2}}\right)$$ for some constant $C_1>0$.
    For $(h,\omega)\in\cA$, by  \eqref{eq:good external field and clusters} and \eqref{eq:perfect external field and clusters}, we compute that \begin{equation}\label{eq: two-sided bound for energy function}
         \exp(-\eps N^{\alpha(T)})\le\frac{\prod_{\cC\in\fC}\cosh(\frac{\eps h_{\cC}}{T})}{\exp(\frac{2\eps^2N^2}{T^2})}\le \exp\left(\eps N^{\alpha(T)}\right).
    \end{equation} 
    Plugging \eqref{eq: two-sided bound for partition function} and \eqref{eq: two-sided bound for energy function} into Lemma~\ref{lem:decoupling} gives the desired result.
\end{proof}

\section{Near-critical behaviour}\label{sec: near-critical behavior}

The crucial input for Section~\ref{sec:singularity} is to lower-bound the influence of external field on an $M$ box (with $M\sim \eps^{-\frac{1}{\alpha(T)}}$) and we will do it in this section for all temperature regimes. To be precise, we will show that as long as $\eps > cN^{-\alpha(T)}$ for some constant $c>0$,  $\E\TV{\phi_{p,\lamn}^{\gamma,0}}{\phi_{p,\lamn}^{\gamma,\eps h}}$ is bounded away from $0$ for $T\geq T_c$
in Sections~\ref{sec: anti concentration critical} and \ref{sec: anti concentration high}. A similar lower bound in the low-temperature regime will be obtained in
Section \ref{sec: anti concentration low} with different emphasis according to the ingredients that will be used in Section~\ref{sec: coarse graining at low temperature}.
Finally, as a byproduct, we will show that
$\alpha(T)$ is the critical threshold of the choice of exponent
by proving the upper bound for total variation distance on an $M$ box in all temperature regimes in Section~\ref{sec: upper bound at near critical disorder}. 

We point out that the primary computational effort of this work is concentrated in this section. Readers less interested in technical derivations may proceed directly to Section~\ref{sec:singularity}, as the continuity of the exposition remains intact when assuming the core results from Section~\ref{sec: near-critical behavior}.

\subsection{The critical-temperature case}\label{sec: anti concentration critical}
We first consider the critical-temperature case.   We prove the following result on general domains that have been used in Section~\ref{sec: crit+high coarse-graining}.
\begin{prop}\label{prop: anti-absolute continuity at critical temperature}
    Fix $T=T_c(2)$ and thus $p=p_c$. For any constant $\theta>0$, there exists a constant $c=c(\theta)>0$ such that $ \E\TV{\fkco\f}{\fkhco\f}\ge c$ for any disorder strength $\theta N^{-\frac{15}{16}}\le \eps\le 2\theta N^{-\frac{15}{16}}$, and any domain $\lamn\subset\Omega\subset\Lambda_{2N}$.
\end{prop}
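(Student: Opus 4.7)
The strategy is to exhibit a bounded observable $X$ whose $\fkco\f$-expectation and $\fkhco\f$-expectation differ by a positive constant on a $\P$-set of positive measure. The total variation bound then follows from the elementary estimate $|\E_\mu[X]-\E_\nu[X]|\le\|X\|_\infty\TV{\mu}{\nu}$ together with integration over $h$.

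The canonical candidate is built from the tilting exponent itself. Writing
\[
F(h,\omega)=\sum_{\cC\in\fC(\omega)}f\bigl(\eps h_\cC/T_c\bigr),\qquad \frac{d\fkhco\f}{d\fkco\f}(\omega)=Z(h)\,e^{F(h,\omega)},
\]
and Taylor-expanding $f(x)=x^2/2+O(x^4)$, the $\omega$-dependent part of $F$ is, up to a controllable quartic remainder, the bilinear form
\[
\tilde G(h,\omega):=\frac{\eps^2}{2T_c^2}\sum_{\substack{x,y\in\Omega\\ x\neq y}}h_xh_y\,\1_{\{x\leftrightarrow y\text{ in }\omega\}}.
\]
A direct computation using $\E_\P[h_xh_yh_{x'}h_{y'}]=\mathbf{1}_{\{\{x,y\}=\{x',y'\}\}}$ and the critical two-point asymptotic $\phi^{\f,0}_{p_c,\Omega}(x\leftrightarrow y)\asymp|x-y|^{-1/4}$ shows
\[
\E_{\P\otimes\fkco\f}\bigl[\tilde G\bigr]=0,\qquad \E_{\P\otimes\fkco\f}\bigl[\tilde G^2\bigr]\;\asymp\;\eps^4\!\!\sum_{x\neq y}\phi^{\f,0}_{p_c,\Omega}(x\leftrightarrow y)\;\asymp\;\eps^4\,N^{15/4},
\]
which is $\Theta(1)$ precisely when $\eps\asymp N^{-15/16}$. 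This is the arithmetic origin of the exponent $\alpha(T_c)=15/16$.

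The plan is then to establish two properties for $\tilde G$ on a $\P$-set of positive measure: (i) under $\fkco\f$, $\tilde G$ has $O(1)$-scale fluctuation around a value close to $0$; (ii) under $\fkhco\f$, the $\fkco\f$-mean of $\tilde G$ is shifted upward by a positive constant. The shift admits the exact representation
\[
\E_{\fkhco\f}\bigl[\tilde G\bigr]-\E_{\fkco\f}\bigl[\tilde G\bigr]=\frac{\mathrm{Cov}_{\fkco\f}(\tilde G,e^F)}{\E_{\fkco\f}[e^F]},
\]
and since $F$ differs from $\tilde G$ by an $\omega$-dependent correction of the same or smaller order, this ratio is governed by $\mathrm{Var}_{\fkco\f}(\tilde G\mid h)$, which is also $\Theta(1)$ by the second-moment computation above combined with a matching fourth-moment upper bound. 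Once (i) and (ii) are in hand, an event of the form $\{\tilde G\ge t_h\}$ for a properly chosen $h$-dependent threshold $t_h$ separates the two laws by a constant, yielding the claim.

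The main obstacle is the uniform control of the fluctuations of the quadratic form $\tilde G$ at criticality. Unlike the off-critical regimes, where exponential cluster-size decay furnishes tight concentration, at $p_c$ the polynomial decay of connection probabilities together with the possibility of macroscopic clusters forces one to analyze the four-point connection probabilities entering $\mathrm{Var}_{\fkco\f}(\tilde G\mid h)$ and, crucially, to rule out a degenerate limit law for $\tilde G$ under $\fkco\f$. This is precisely the content of the LDP for the critical FK-Ising model developed in Appendix~\ref{sec: LDP at critical}. A minor technical point is that uniformity over the domain $\lamn\subset\Omega\subset\Lambda_{2N}$ should follow from CBC together with the intrinsic scale-invariance of critical two- and four-point estimates on macroscopic observables.
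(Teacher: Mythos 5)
Your proposal is a genuinely different route from the paper's critical-temperature argument, and it is essentially the strategy the paper uses at \emph{high} temperature (Section~\ref{sec: anti concentration high}) transplanted to criticality: compute the second moment of a tilting observable, show its conditional variance is $\Theta(1)$ on a positive-measure set of $h$ by Paley--Zygmund, and separate the two laws by a constant. Your arithmetic correctly locates the exponent: $\sum_{x\neq y}\phi^{\f,0}_{p_c,\Omega}(x\leftrightarrow y)\asymp N^{15/4}$ and hence $\eps^4 N^{15/4}\asymp 1$ iff $\eps\asymp N^{-15/16}$. But the paper deliberately does \emph{not} run this argument at criticality, and the gap you leave is exactly the one the paper avoids.

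Concretely: Paley--Zygmund for $\var_{\fkco\f}\bigl(\tilde G\mid h\bigr)$ requires a second-moment (in $\P$) upper bound, i.e., effectively a fourth moment of $\tilde G$ over $\P\otimes\fkco\f$. That fourth moment is governed by $\eps^8\langle(\sum_{\cC}|\cC|^2)^2\rangle$ and by $\eps^8\langle\sum_{\cC}|\cC|^4\rangle$, both of which are dominated by the macroscopic cluster $\cC_\diamond$ with $|\cC_\diamond|\sim N^{15/8}$. At high temperature the paper can discard this object entirely by conditioning on the event $\cF=\{\max|\cC|\le N^{0.1},\ \sum|\cC|^4\le c_1N^2\}$ (Definition~\ref{def: anticoncentrate external field at T big0}), which holds with overwhelming probability; at $p_c$ that event is void, and the moment bounds you defer to ``a matching fourth-moment upper bound'' are precisely where the heavy-tailed critical geometry enters. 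You would need the LDP of Appendix~\ref{sec: LDP at critical} in a quantitative form to close this, and even then the argument is not merely a transcription of the high-temperature case. Your stated use of the LDP---``to rule out a degenerate limit law for $\tilde G$''---misidentifies its role: the paper uses the LDP \emph{lower} bound (Theorem~\ref{thm:LDP lower bound for maiximal cluster}) to \emph{guarantee} $|\cC_\diamond|\ge 100T_c^2/\eps^2$ with uniformly positive $\fkco\f$-probability, i.e., to force a macroscopic cluster, not to control fluctuations.

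The paper's actual critical argument is shorter and circumvents all of this: once $|\cC_\diamond|\gtrsim\eps^{-2}$ is guaranteed, the single Gaussian $h_{\cC_\diamond}$ already pushes $f(\eps h_{\cC_\diamond}/T_c)-\eps^2|\cC_\diamond|/2T_c^2\le -2$ with probability $\ge 0.9$, while the contribution of all other clusters is controlled by an exponential Markov inequality. Combined with the concentration of $Z(h)$ from chaos expansion (Lemma~\ref{lem: small perturbation for partition function 2}), this directly yields that the Radon--Nikodym derivative is $\le 1/2$ on a set of $\fkco\f$-measure $\ge c_1$, whence $\TV{\fkco\f}{\fkhco\f}\ge c_1/2$. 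In other words, the critical maximal cluster is not an obstacle to be averaged away (as your second-moment scheme requires); it is the mechanism. To rescue your approach you would have to re-import the LDP tail estimates precisely where you claim the difficulty is only a ``minor technical point,'' and at that stage you have essentially rederived the paper's maximal-cluster argument in disguise.
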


To obtain the lower bound, recall that in Section~\ref{sec:concentration-of-Z} we have shown  that the ratio between the partition functions of the FK-Ising model on the domain $\Omega$ with and without disorder concentrates well.
Therefore, 
it suffices to show that the product of $\cosh(\frac{\eps h_{\cC}}{T_c})$ does not always concentrate at the product of $\exp(\eps^2|\cC|/2T_c^2)$. Moreover, let $\cC_{\diamond}$ denote the maximal cluster and note that it has typical volume $cN^{\frac{15}{8}}$, thus we obtain from $\eps\ge \theta N^{-\frac{15}{16}}$ that $\eps h_{\cC_{\diamond}}$ is typically $O(1)$.  Hence $\cosh{(\frac{\eps h_{\mcc C_\diamond}}{T_c})}$ will have a positive probability to be much smaller than $\exp(\eps^2|\cC_{\diamond}|/2T_c^2)$, which leads to the anti-concentration result. To carry this out, we introduce the following definition with idea similar to 
Definition \ref{def:good external field for upper bound}.
\begin{defi}\label{def: anticoncentrate external field at Tc}
    For any constant $c>0$, we use the notation $\cH_{\mathrm{cri}}(c)$ to denote the set of external field such that 
    \begin{equation*}
        \fkco\f\left(\sum_{\cC\in\mathfrak{C}} f\left({\frac{\eps h_{\cC}}{T_c}}\right)-\frac{\eps^2|\Omega|}{2T_c^2}\le -1\right) \geq c\,.
    \end{equation*}
\end{defi}
\begin{lem}\label{lem: anticoncentrate external field at Tc}
There exist constants $c_1,c_2>0$ depending only on $\theta$ such that for any disorder strength $\theta N^{-\frac{15}{16}}\le \eps\le 2\theta N^{-\frac{15}{16}}$, domain $\lamn\subset\Omega\subset\Lambda_{2N}$ and boundary condition $\gamma$ we have $\P(\cH_{\mathrm{cri}}(c_1))\ge c_2$.
\end{lem}
\begin{proof}
We still consider the product measure $\mbb P\otimes \phi_{p_c,\Omega}^{f,0}.$
    Let $\cB\subset \mbb R^{\Omega}\otimes\{0,1\}^{\mathrm{E}(\Omega)}$ denote the event that \begin{equation}\label{eq: def of cB}
        \sum\limits_{\cC\in\mathfrak{C}} f\left({\frac{\eps h_{\cC}}{T_c}}\right)-\frac{\eps^2|\Omega|}{2T_c^2}\le -1.
    \end{equation} Further, we define $\cB_1$ and $\cB_2$ to be the following events respectively: $$\sum\limits_{\cC\in\mathfrak{C}\setminus\{\cC_{\diamond}\}} \left(f\left({\frac{\eps h_{\cC}}{T_c}}\right)-\frac{\eps^2|\cC|}{2T_c^2}\right)\le 1\mbox{  and  } f\left({\frac{\eps h_{\cC_{\diamond}}}{T_c}}\right)-\frac{\eps^2|\cC_{\diamond}|}{2T_c^2}\le -2.$$ Thus, $\cB_1\cap\cB_2\subset\cB$. Now we are going to lower-bound the probability of $\cB_1$ and $\cB_2$.
    Applying the exponential Markov inequality, we obtain that for any configuration $\omega$ \begin{equation}\label{eq: fluctuate at Tc 1}
        \P\otimes\fkco\f(\cB_1\mid\omega)\ge 1-e^{-1}\prod_{\cC\in\mathfrak{C}\setminus\{\cC_{\diamond}\}}\E\cosh\left(\frac{\eps h_{\cC}}{T_c}\right)\exp\left(\frac{-\eps^2|\cC|}{2T_c^2}\right)=1-e^{-1}.
    \end{equation} To lower-bound $\cB_2$, for any configuration $\omega\in\sS_{\diamond}=\left\{|\cC_{\diamond}|\ge \frac{100T_c^2}{\eps^2}\right\}$, we apply the inequality $\cosh(x)\le \exp(|x|)$ to get that \begin{equation}\label{eq: fluctuate at Tc 2}
        \P\otimes\fkco\f(\cB_2\mid\omega)\ge \P_X\left(\sqrt{\frac{\eps^2|\cC_{\diamond}|}{T_c^2}}|X|-\frac{\eps^2|\cC_{\diamond}|}{2T_c^2}\le -2\right)\ge 0.9
    \end{equation} where $X$ denotes a normal random variable with mean $0$ and variance $1$ with law $\mbb P_X$. Combining \eqref{eq: fluctuate at Tc 1} and \eqref{eq: fluctuate at Tc 2} shows that \begin{equation*}
        \P\otimes\fkco\f(\cB)\ge \sum_{\omega\in\sS_\diamond}\P\otimes\fkco\f(\cB\mid\omega)\cdot\fkco\f(\omega)\ge (1-e^{-1}-0.1)\fkco\f(\sS_{\diamond}).
    \end{equation*} The desired result comes from Theorem~\ref{thm:LDP lower bound for maiximal cluster} and Markov inequality.
\end{proof}
With Lemma~\ref{lem: anticoncentrate external field at Tc}, we are ready to prove Proposition~\ref{prop: anti-absolute continuity at critical temperature}.
\begin{proof}[Proof of Proposition~\ref{prop: anti-absolute continuity at critical temperature}]
    Let $c_1,c_2$ be defined in Lemma~\ref{lem: anticoncentrate external field at Tc}. For $h\in \cH^0\cap\cH_{\mathrm{cri}}(c_1)$ (recall $\mcc H^0$ from Definition~\ref{def: good external field for partition function})  and $(h,\omega)\in \cB$ (recall $\cB$ from \eqref{eq: def of cB}), we have by Lemma~\ref{lem: small perturbation for partition function 2}
    \begin{equation*}
        \frac{\fkhco\f}{\fkco\f}=\frac{\cZ^{\f,0}_{\phi,p_c,\Omega}}{\cZ^{\f,\eps h}_{\phi,p_c,\Omega}}\cdot \prod_{\cC\in\mathfrak{C}}\cosh\left(\frac{\eps h_{\cC}}{T_c}\right)\le e^{-1}\left(1+\sqrt{\eps N^{{\frac{7}{8}}}}\right)\le \frac{1}{2}
    \end{equation*} for sufficiently large $N$. Thus, we obtain that \begin{equation*}
        \TV{\fkhco\f}{\fkco\f}\ge \frac{1}{2}\fkco\f(\cB\mid h)=\frac{1}{2}\fkco\f\left(\sum_{\cC\in\mathfrak{C}} f\left({\frac{\eps h_{\cC}}{T_c}}\right)-\frac{\eps^2|\Omega|}{2T_c^2}\le -1\right)\ge\frac{c_1}{2}.
    \end{equation*}
    Combining Lemmas \ref{lem: small perturbation for partition function} and \ref{lem: anticoncentrate external field at Tc} and integrating over $\cH^0\cap\cH_{\mathrm{cri}}(c_1)$, we obtain that \begin{equation*}
        \E\TV{\fkhco\f}{\fkco\f}\ge \frac{c_1}{2}\P\big(\cH^0\cap\cH_{\mathrm{cri}}(c_1)\big)\ge \frac{c_1}{2}\left(c_2-c\exp\left(-c^{-1}\sqrt{\eps^{-1}N^{-\frac{7}{8}}}\right)\right)\ge \frac{c_1c_2}{3}
    \end{equation*} for sufficiently large $N$.
\end{proof}

\subsection{The high-temperature case}\label{sec: anti concentration high}
In this subsection, we consider the high-temperature case.
\begin{prop}\label{prop: anti-absolute continuity at high temperature}
    Fix $T>T_c(2)$ and thus $p<p_c$. Then there exists a constant $\theta_0=\theta_0(p)>0$ such that  for any constant $\theta\in (0,\theta_0)$, there exists a constant $c=c(\theta,p)>0$ such that $ \E\TV{\fkhhof}{\fkhof}\ge c$ for any disorder strength $\theta N^{-\frac{1}{2}}\le \eps\le 2\theta N^{-\frac{1}{2}}$.
\end{prop}



\begin{rmk}
One could easily remove the restriction of $\theta\in(0,\theta_0)$ via a coarse graining method, see  
Section \ref{sec: crit+high coarse-graining} for a detailed coarse graining framework (but with a different purpose). 
\end{rmk}

Unlike the critical-temperature case, we do not have a good concentration bound for the ratio between partition functions of the FK-Ising model with and without disorder in the high-temperature case. So we focus on the fluctuation coming from all the clusters and show that this will lead to an anticoncentration of the measure. We start with some definitions.
\begin{defi}\label{def: anticoncentrate external field at T big0}
    Let $\cF\subset\{0,1\}^{\mathrm{E}(\Omega)}$ denote the set of edge configurations that $\max_{\cC\in\mathfrak{C}}|\cC|\le N^{0.1}$ and $\sum_{\cC\in\mathfrak{C}}|\cC|^4\le 2c_1N^2$ where $c_1$ comes from Lemma~\ref{lem: LDP for high temp}. We use the notation $\cH^0_{\mathrm{hig}}$ to denote the set of external field such that \begin{equation}\label{eq: good external field for partition function at big T}
        \Bigg\langle\left(\sum_{\cC\in\mathfrak{C}}\frac{\eps^2h_{\cC}^2}{2T^2}-\frac{\eps^2|\Omega|}{2T^2}\right)^4\Bigg|\cF\Bigg\rangle\le \exp\left(\eps^{-1}N^{-1/2}\right).
    \end{equation}
    Here and in the following, we write 
    $\left\langle  X\mid \mcc F\right\rangle:=\frac{\left\langle X\cdot\1_{\mcc F}\right\rangle_{p,\Omega}^{\f,0}}{\left\langle  \1_{\mcc F}\right\rangle_{p,\Omega}^{\f,0}}$
    for any variable $X$ and event $\mcc F$.
\end{defi}

\begin{defi}\label{def: anticoncentrate external field at T big}
    For any constant $c>0$, we use the notation $\cH_{\mathrm{hig}}^1(c)$ to denote the set of external field such that 
    \begin{equation}\label{eq: fluctuation of all clusters} 
        \fkhof\left(\sum_{\cC\in\mathfrak{C}}f\left(\frac{\eps h_{\cC}}{T}\right)-\frac{\eps^2h_{\cC}^2}{2T^2}\ge -c\eps^4N^2\right)\ge 1-N^{-1}.
    \end{equation}
    
    Further, for any constant $c>0$, let $\cH_{\mathrm{hig}}^2(c)$ denote the collection of external field such that \begin{equation*}
    \var_{\phi}\left(\sum_{\cC\in\mathfrak{C}}\frac{\eps^2h_{\cC}^2}{2T^2}\Bigg|\cF\right)\ge c
    \end{equation*} where $\var_{\phi}$ denotes the variance operator under $\fkhof$ and $\var_{\phi}(\cdot|\cF)$ denotes the conditional variance operator with respect to $\cF$.
\end{defi}
\begin{lem}\label{lem: anticoncentrate external field at T big0}
    There exists a constant $c>0$ depending only on $p$ such that for any disorder strength $\eps>0$ and domain $\lamn\subset\Omega\subset\Lambda_{2N}$, we have \begin{equation*}
        \P\left(\cH^0_{\mathrm{hig}}\right)\ge 1 - c^{-1}\eps^8N^4\exp\left(-c{\eps^{-1}N^{-1/2}}\right).
    \end{equation*}
\end{lem}

\begin{lem}\label{lem: anticoncentrate external field at T big1}
    There exist constants $c_1,c_2>0$ depending only on $p$ such that for any disorder strength $\eps>0$ and domain $\lamn\subset\Omega\subset\Lambda_{2N}$, we have \begin{equation*}
        \P\left(\cH_{\mathrm{hig}}^1(c_1)\right){\ge 1-\frac{c_2}{N}}.
    \end{equation*}
\end{lem}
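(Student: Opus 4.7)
The plan is to reduce, via Markov in $h$, to controlling the joint expectation $\E[\fkhhof(B)]$ for $B=\{\omega:\sum_\cC h_\cC^4>c_3N^2\}$ with $c_3=2c_1T^4$. Since $\log\cosh(x)\ge(x^2-x^4)/2$, we have the pointwise bound $\sum_\cC[f(\eps h_\cC/T)-\eps^2h_\cC^2/(2T^2)]\ge-\sum_\cC\eps^4h_\cC^4/(2T^4)$, so the bad event in the definition of $\cH_{\mathrm{hig}}^1(c_1)^c$ is contained in $B$. Markov then gives $\P(\cH_{\mathrm{hig}}^1(c_1)^c)\le N\cdot\E[\fkhhof(B)]$, and it therefore suffices to show the joint expectation is $O(N^{-2})$.

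I would bound this joint expectation via the change-of-measure identity
\begin{equation*}
\fkhhof(A)=\frac{\fkhof[\1_A\prod_\cC\cosh(\eps h_\cC/T)]}{\tilde Z(h)},\qquad\tilde Z(h):=\fkhof\Big[\prod_\cC\cosh(\eps h_\cC/T)\Big],
\end{equation*}
splitting $B$ according to whether $\omega\in\cF$ (from Definition~\ref{def: anticoncentrate external field at T big0}). The $\cF^c$-piece is controlled by the high-temperature exponential decay $\fkhof(\cF^c)\le e^{-cN^{0.1}}$. On $\cF$, for each fixed $\omega$ the Gaussian identity $\E[F(h)\prod_\cC\cosh(\eps h_\cC/T)]=\exp(\eps^2|\Omega|/(2T^2))\cdot\tilde\P_\omega[F]$ expresses the cosine-weighted expectation via a tilted measure $\tilde\P_\omega$ under which the $h_\cC$ are independent equal-weight mixtures of $\cN(\pm\eps|\cC|/T,|\cC|)$. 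A direct moment computation then gives both $\tilde\E_\omega[\sum_\cC h_\cC^4]$ and $\mathrm{Var}_{\tilde\P_\omega}(\sum_\cC h_\cC^4)$ of order $N^2$ on $\cF$---using $\sum_\cC|\cC|^4\le c_1N^2$ together with the Cauchy--Schwarz bounds $\sum_\cC|\cC|^2\le 2\sqrt{c_1}N^2$ and $\sum_\cC|\cC|^3\le(\sum_\cC|\cC|^2\sum_\cC|\cC|^4)^{1/2}=O(N^2)$---so Chebyshev yields $\tilde\P_\omega(B)=O(N^{-2})$ once $c_3$ is chosen several times larger than the leading mean constant.

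It remains to cancel the prefactor $\exp(\eps^2|\Omega|/(2T^2))$ against the denominator $\tilde Z(h)$. I would establish the matching lower bound $\tilde Z(h)\ge c_4\exp(\eps^2|\Omega|/(2T^2))$ on a $\P$-event of probability $\ge 1-N^{-2}$; combined with the trivial $\fkhhof\le 1$ on the complement and with the Chebyshev estimate above, this yields the desired $O(N^{-2})$. To prove this lower bound, Jensen gives $\log\tilde Z(h)\ge\fkhof[\sum_\cC f(\eps h_\cC/T)]$; expanding with $f(x)\ge(x^2-x^4)/2$ together with the representation $\fkhof[\sum_\cC h_\cC^2]=\|h\|_\Omega^2+2\sum_{x\ne y}h_xh_y\fkhof(x\leftrightarrow y)$, the high-temperature susceptibility bound $\sum_y\fkhof(x\leftrightarrow y)=O(1)$, and sub-exponential concentration of $\|h\|_\Omega^2$ around $|\Omega|$ then delivers the claim.

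\textbf{The main obstacle} is exactly this last step. In the regime $\eps\sim N^{-1/2}$ the normalization $\tilde Z$ is of size $\exp(\Theta(N))$ and fluctuates multiplicatively on the scale $\exp(O(1))$, so naive bounds such as $\tilde Z\ge 1$ would lose the entire exponential factor and ruin the argument. The high-temperature hypothesis $T>T_c$ enters crucially through the finiteness of the susceptibility, which is what allows the Jensen-plus-susceptibility route above to produce an $N$-independent constant $c_4>0$ rather than a quantity decaying in $N$.
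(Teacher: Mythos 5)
There is a genuine gap, and it stems from a misreading of Definition~\ref{def: anticoncentrate external field at T big}. The event defining $\cH_{\mathrm{hig}}^1(c)$ is stated in terms of the \emph{disorder-free} measure $\phi_{p,\Omega}^{\f,0}$ (the macro $\fkhof$ unpacks to superscript ``$0$'', not ``$\eps h$''), whereas you have interpreted it as being stated for the measure with disorder $\phi_{p,\Omega}^{\f,\eps h}$. This single misreading is what forces you into the change-of-measure identity and the tilted measures $\tilde\P_\omega$, and it is the sole source of the ``main obstacle'' you flag: the need to get a matching lower bound on $\tilde Z(h)$ on a set of $\P$-probability $1-O(N^{-2})$. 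None of that machinery is required. With the correct measure, Markov on the random variable $\phi_{p,\Omega}^{\f,0}\!\bigl(\sum_{\cC}[f(\eps h_{\cC}/T)-\eps^2 h_{\cC}^2/(2T^2)]<-c_1\eps^4N^2\bigr)$ reduces the claim directly to showing that the \emph{product} measure $\P\otimes\phi_{p,\Omega}^{\f,0}$ assigns probability $O(N^{-2})$ to this bad event, and that is elementary: fix $\omega\in\cF$, write $h_{\cC}=|\cC|^{1/2}X_{\cC}$ with $X_{\cC}$ i.i.d.\ standard Gaussians, bound the bad event above by $\bigl\{\sum_{\cC}\eps^4|\cC|^2X_\cC^4/(2T^4)>c_1\eps^4N^2\bigr\}$ via $f(x)\ge(x^2-x^4)/2$, and apply Chebyshev using $\E_X X_\cC^4=3$ and $\var_X(X_\cC^4)=O(1)$ with the $\cF$-bound $\sum|\cC|^4\le c\,N^2$, which gives both mean and variance of order $\eps^4N^2$ and $\eps^8N^2$ respectively; the contribution of $\cF^c$ is killed by the exponential decay of $\phi_{p,\Omega}^{\f,0}(\cF^c)$ under the pure measure. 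This is the paper's argument, and it is self-contained precisely because no tilting of the measure by the Gaussian field ever enters. Your proposal, as written, is not a correct proof of this lemma; if you re-read the definition and switch to $\phi^{\f,0}$ throughout, the difficulty you correctly identified evaporates and the argument collapses to the paper's.
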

\begin{lem}\label{lem: anticoncentrate external field at T big2}
    There exists a constant $\theta_0<1$ such that the following holds. For any $\theta\le \theta_0$, there exist constants $c_3=c_3(p),c_4=c_4(p)>0$ such that for any disorder strength $\theta N^{-1/2}\le \eps\le 2\theta N^{-1/2}$, domain $\lamn\subset\Omega\subset\Lambda_{2N}$, we have \begin{equation*}
        \P\left(\cH_{\mathrm{hig}}^2(c_3\theta^4)\right)\ge c_4.
    \end{equation*}
\end{lem}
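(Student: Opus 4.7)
The strategy is to apply the second-moment (Paley--Zygmund) method to the $h$-random variable $V(h):=\var_\phi\big(\sum_{\cC\in\fC}\tfrac{\eps^2 h_\cC^2}{2T^2}\,\big|\,\cF\big)$, viewed as a polynomial of degree four in the i.i.d.\ Gaussians $(h_x)_{x\in\Omega}$. Expanding $h_\cC^2=\sum_{x,y\in\cC}h_x h_y$ and dropping the $\omega$-deterministic term $\sum_x h_x^2$ (which does not contribute to the variance), one gets
\[
V(h)=\frac{\eps^4}{4T^4}\sum_{\substack{x\neq y\\ u\neq v}}h_xh_yh_uh_v\,\cov_\phi\big(\1_{x\leftrightarrow y},\1_{u\leftrightarrow v}\,\big|\,\cF\big).
\]

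First, I would compute $\E_h V$. For $x\neq y$ and $u\neq v$, Wick's formula shows $\E_h[h_xh_yh_uh_v]$ vanishes unless $\{x,y\}=\{u,v\}$ as sets, in which case it equals $1$; the two ordered pairings $(u,v)=(x,y)$ and $(u,v)=(y,x)$ give
\[
\E_h V=\frac{\eps^4}{2T^4}\sum_{x\neq y}\var_\phi\big(\1_{x\leftrightarrow y}\,\big|\,\cF\big).
\]
For nearest-neighbor pairs $x\sim y$, opening the edge $\{x,y\}$ gives $\phi(x\leftrightarrow y)\geq p$ and closing the four edges at $x$ gives $\phi(x\not\leftrightarrow y)\geq(1-p)^4$, so $\var_\phi(\1_{x\leftrightarrow y})\geq c_*>0$ uniformly. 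Combined with the fact that $\phi(\cF)$ is bounded away from zero (by the subcritical exponential decay of cluster sizes in \eqref{eq: exponential decay for high temperature} and the Markov-type bound implicit in Definition~\ref{def: anticoncentrate external field at T big0}), this yields $\var_\phi(\1_{x\leftrightarrow y}\mid\cF)\geq c_*/2$ uniformly for $x\sim y$. Summing over the $\Theta(N^2)$ nearest-neighbor pairs in $\Omega$ gives $\E_h V\geq c\,\eps^4 N^2\geq c\,\theta^4$ in the regime $\theta N^{-1/2}\leq\eps\leq 2\theta N^{-1/2}$.

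Second, I would bound $\E_h V^2$ by a constant multiple of $(\E_h V)^2$. Expanding $V^2$ as a degree-eight polynomial in $h$ and applying Wick, the contractions split into \emph{intra-quadruple} pairings, which pair $(x,y,u,v)$ within themselves and $(x',y',u',v')$ within themselves, and \emph{inter-quadruple} pairings, which pair at least one index across the two factors of $V$. The constraints $x\neq y$ and $u\neq v$ kill exactly the ``diagonal'' sub-pairings $(x,y)$ and $(u,v)$, so the intra-quadruple contribution reproduces exactly $(\E_h V)^2$. Each of the bounded number of inter-quadruple pairings can be controlled, after Cauchy--Schwarz, by a multiple of $(\eps^4/T^4)^2\sum_{a\neq b,\,c\neq d}\cov_\phi(\1_{a\leftrightarrow b},\1_{c\leftrightarrow d}\mid\cF)^2$. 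Using $|\cov|^2\leq\var\cdot\var\leq C\phi(a\leftrightarrow b)\phi(c\leftrightarrow d)$ together with subcritical exponential decay $\phi(a\leftrightarrow b)\leq Ce^{-c|a-b|}$ (see e.g.\ \cite{DT20}), this double sum is bounded by $C\big(\sum_{a,b}e^{-c|a-b|}\big)^2\leq CN^4$. Hence the inter-quadruple contribution is at most $C\eps^8 N^4\leq C'(\E_h V)^2$ (using the first-moment lower bound $\E_h V\geq c\eps^4 N^2$), and altogether $\E_h V^2\leq (1+C')(\E_h V)^2$.

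Paley--Zygmund then gives $\P\big(V(h)\geq\tfrac12\E_h V\big)\geq\tfrac14(\E_h V)^2/\E_h V^2\geq c_4>0$, which combined with $\E_h V\geq c\theta^4$ yields the claim with $c_3=c/2$. The main obstacle is the second-moment computation: one must carefully enumerate the Wick contractions of $V^2$ under the constraints $x\neq y$ and $u\neq v$ (these constraints are what validate the identity ``intra-quadruple contribution $=(\E_h V)^2$''), and control the inter-quadruple pairings uniformly via the subcritical connectivity decay, all while handling the conditioning on $\cF$ robustly enough that the resulting constants $c_3,c_4$ are independent of $N$.
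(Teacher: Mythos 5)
Your strategy is the same as the paper's: apply Paley--Zygmund to the $h$-randomness of the conditional $\phi$-variance, with a first-moment lower bound of order $\eps^4N^2$ and a second-moment upper bound of order $\eps^8 N^4$. The difference is in how you compute these moments: you expand $h_\cC^2=\sum_{x,y\in\cC}h_xh_y$, pass to the connection indicators $\1_{x\leftrightarrow y}$, and organize the Gaussian expectations as Wick contractions; the paper instead works at the level of cluster sizes, using the identities $\E\,h_A^2=|A|$ and $\E\,h_A^2h_B^2=|A||B|+2|A\cap B|^2$ to express $\E_h V$ through $\langle\sum_\cC|\cC|^2-\sum_{\cC,\tilde\cC}|\cC\cap\tilde\cC|^2\rangle$ (Lemma~\ref{lem: high temperature all cluster deviation bound}, which itself reduces to exactly your $\sum_{x\sim y}\var_\phi(\1_{x\leftrightarrow y})\gtrsim N^2$ bound), and controls $\E_hV^2$ by exploiting that $\sum_\cC|\cC|^2\le c_1N^2$ holds deterministically on $\cF$. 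One thing you should be careful about: your claim that ``each of the bounded number of inter-quadruple pairings can be controlled, after Cauchy--Schwarz, by a multiple of $\sum_{a\neq b,c\neq d}\cov^2$'' is not literally accurate for all pairing types. For instance, the pairing $x=x',\,u=u'$ with $(y,v)$ and $(y',v')$ paired internally contributes $\sum_{x,u}\big(\sum_y\cov_\phi(\1_{xy},\1_{uy}\mid\cF)\big)^2$, which is not of the stated form; it can still be bounded by $CN^4$ using $\big|\cov_\phi(\1_{xy},\1_{uy}\mid\cF)\big|\le\min\{\phi(x\leftrightarrow y\mid\cF),\phi(u\leftrightarrow y\mid\cF)\}$ and the exponential tail of the two-point function, but a complete proof needs to enumerate the handful of distinct pairing types and handle each. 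You should also make explicit that the one-arm/two-point decay and the pointwise variance lower bound transfer to the $\cF$-conditional measure via $\phi(\cF)\ge 1/2$ (a consequence of Lemma~\ref{lem: LDP for high temp}) rather than referring loosely to a ``Markov-type bound implicit in Definition~\ref{def: anticoncentrate external field at T big0}''. With these points spelled out, your Wick-pairing version gives a correct and slightly more self-contained alternative to the paper's argument, and it even yields the stronger conclusion $\E_hV^2\le C(\E_hV)^2$ rather than the paper's one-sided bounds on each moment separately.
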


With Lemmas~\ref{lem: anticoncentrate external field at T big0}, \ref{lem: anticoncentrate external field at T big1} and \ref{lem: anticoncentrate external field at T big2}, we are ready to prove Proposition~\ref{prop: anti-absolute continuity at high temperature}.
\begin{proof}[Proof of Proposition~\ref{prop: anti-absolute continuity at high temperature}]
To show the lower bound, we consider $h\in \cH^0_{\mathrm{hig}}\cap\cH_{\mathrm{hig}}^1(c_1)\cap\cH_{\mathrm{hig}}^2(c_3\theta^4)$ where $c_1$ and $c_3$ are the constants defined in Lemmas~\ref{lem: anticoncentrate external field at T big1} and \ref{lem: anticoncentrate external field at T big2}. Applying Lemmas \ref{lem: anticoncentrate external field at T big0}, \ref{lem: anticoncentrate external field at T big1} and \ref{lem: anticoncentrate external field at T big2}, it suffices to show that $\TV{\fkhhof}{\fkhof}\ge C$ for any  $h\in\cH_{\mathrm{hig}}^0\cap\cH_{\mathrm{hig}}^1(c_1)\cap\cH_{\mathrm{hig}}^2(c_3\theta^4)$ and some constant $C>0$ independent of $h$. Let $X(\omega)=\sum_{\cC\in\mathfrak{C}}\frac{\eps^2h_{\cC}^2}{2T^2}-\frac{\eps^2|\Omega|}{2T^2}$. Applying \eqref{eq: good external field for partition function at big T}, we have 
\begin{equation}\label{eq: B3 property 1 high}
        \langle X^4\mid\cF\rangle\le C_1.
    \end{equation} Since $h\in \cH_{\mathrm{hig}}^2(c_3\theta^4)$, we have that \begin{equation}\label{eq: B3 property 2 high}
        \var_{\phi}(X\mid\cF)\ge c_3\theta^4.
    \end{equation} 
    Combining \eqref{eq: B3 property 1 high}, \eqref{eq: B3 property 2 high} and Corollary \ref{cor: dis concentration using variance}, we get that there exists a constant $C_2>0$ such that for any real number $A$ there exists a set of configurations $\sS_1=\sS_1(A)$ with $\fkhof(\sS_1\mid\cF)\ge C_2$ such that for any $\omega\in\sS_1(A)$, we have $|X(\omega)-A|\ge \frac{\sqrt{c_3}\theta^2}{4}$. Applying Lemma~\ref{lem: LDP for high temp}, we get that $\fkhof(\cF)\ge \frac{1}{2}$ and thus $\fkhof(\sS_1)\ge \frac{C_2}{2}$. Let $\sS_2$ denote the collection of configurations such that $0\ge \sum_{\cC\in\mathfrak{C}}f(\frac{\eps h_{\cC}}{T})-\frac{\eps^2|h_{\cC}^2|}{2T^2}\ge -c_1\eps^4N^2$. Then for any $\omega\in\sS_2$, we have \begin{align}
        &\Bigg|\ln\left(\frac{\fkhhof(\omega)}{\fkhof(\omega)}\right)\Bigg|= \Bigg|\sum_{\cC\in\mathfrak{C}}f\left(\frac{\eps h_{\cC}}{T}\right)+\ln\left(\frac{\cZ^{\f,0}_{\phi,p,\Omega}}{\cZ^{\f,\eps h}_{\phi,p,\Omega}}\right)\Bigg|\nonumber\\&\ge-\Bigg|\sum_{\cC\in\mathfrak{C}}f\left(\frac{\eps h_{\cC}}{T}\right)-\frac{\eps^2h_{\cC}^2}{2T^2}\Bigg|+\Bigg|X(\omega)+\ln\left(\frac{\cZ^{\f,0}_{\phi,p,\Omega}}{\cZ^{\f,\eps h}_{\phi,p,\Omega}}\right)+\frac{\eps^2|\Omega|}{2T^2}\Bigg|.\nonumber
    \end{align}
    Let    $A=-\ln\left(\frac{\cZ^{\f,0}_{\phi,p,\Omega}}{\cZ^{\f,\epsilon h}_{\phi,p,\Omega}}\right)-\frac{\eps^2|\Omega|}{2T^2}$, then for any $\omega\in \sS_1(A)\cap\sS_2$, we have  $$\Bigg|\ln\left(\frac{\fkhhof(\omega)}{\phi^{\f,0}_{p,\Omega}
        (\omega)}\right)\Bigg|\ge \frac{\sqrt{c_3}\theta^2}{4}-c_1\eps^4N^2\ge \frac{\sqrt{c_3}\theta^2}{4}-16c_1\theta^4.$$
     Note that $f(x)\le \frac{x^2}{2}$, then \eqref{eq: fluctuation of all clusters} shows that ${\phi_{p,\Omega}^{\f,0}}\left(\sS_2\right)\ge 1-N^{-1}$ and thus
     ${\phi_{p,\Omega}^{\f,0}}\left(\sS_1(A)\cap\sS_2\right)\ge C_3>0$ for $N$ large enough. Hence we complete the proof of Proposition~{\ref{prop: anti-absolute continuity at high temperature}} by letting $\theta_0>0$ small enough.    
\end{proof}

Now we turn to the proofs of Lemmas~\ref{lem: anticoncentrate external field at T big0}, \ref{lem: anticoncentrate external field at T big1} and \ref{lem: anticoncentrate external field at T big2}.
\begin{proof}[Proof of Lemma~\ref{lem: anticoncentrate external field at T big0}]
    For any configuration $\omega\in \cF$, we obtain that \begin{align*}
        &\E\left(\sum_{\cC\in\mathfrak{C}}\frac{\eps^2h_{\cC}^2}{2T^2}-\frac{\eps^2|\Omega|}{2T^2}\right)^4\\=~&\E\sum_{\cC\in\fC}\left(\frac{\eps^2(h_{\cC}^2-|\cC|)}{2T^2}\right)^4+\E\sum_{\cC_1\neq\cC_2\in\fC}\left(\frac{\eps^2(h_{\cC_1}^2-|\cC_1|)}{2T^2}\right)^2\cdot\left(\frac{\eps^2(h_{\cC_2}^2-|\cC_2|)}{2T^2}\right)^2\\=~&\sum_{\cC\in\fC}\frac{C_1\eps^8|\cC|^4}{16T^8}+\sum_{\cC_1\neq\cC_2\in\fC}\frac{\eps^8|\cC_1|^2|\cC_2|^2}{4T^8}\le C_2\eps^8N^4.
    \end{align*} Summing over all configurations over $\cF$ shows that \begin{equation*}
\E~\Bigg\langle\left(\sum_{\cC\in\mathfrak{C}}\frac{\eps^2h_{\cC}^2}{2T^2}-\frac{\eps^2|\Omega|}{2T^2}\right)^4\Bigg|\cF\Bigg\rangle\le C_2\eps^8N^4,
    \end{equation*} and the desired result comes from Markov inequality.
\end{proof}
\begin{proof}[Proof of Lemma~\ref{lem: anticoncentrate external field at T big1}]
    Let $\cB\subset \mbb R^{\Omega}\otimes \{0,1\}^{\mathrm{E}(\Omega)}$ denote the event that $\sum_{\cC\in\mathfrak{C}}f(\frac{\eps h_{\cC}}{T})-\frac{\eps^2h_{\cC}^2}{2T^2}\ge -C_1\eps^4N^2$ where $C_1>0$ is some constant to be determined.
    Since $f(x)\ge \frac{x^2-x^4}{2}$, we obtain that for any $\omega\in\mcc F$, 
    \begin{align}
\P\otimes\phi_{p,\Omega}^{\f,0}(\cB ^c\mid\omega)&\le \P\left(\sum_{\cC\in\mathfrak{C}}\frac{\eps^4h_{\cC}^4}{2T^4}>C_1\eps^4N^2\right)=\P_X\left(\sum_{\cC\in\mathfrak{C}}\frac{\eps^4|\cC|^2X_{\cC}^4}{2T^4}>C_1\eps^4N^2\right)\label{eq: split of cB1 into cB2 and error term}
    \end{align} where $\{X_{\cC}\}_{\cC\in\mathfrak{C}}$ are independent Gaussian variable with mean $0$ and variance $1$.  To compute the right-hand side of \eqref{eq: split of cB1 into cB2 and error term}, we compute its first moment and variance. Since $\E_X X_{\cC}^4=3$ and recall $\omega\in \mcc F$, we get that \begin{equation}\label{eq: first moment bound on error term}
        \E_X\sum_{\cC\in\mathfrak{C}}\frac{\eps^4|\cC|^2X_{\cC}^4}{2T^4}=\sum_{\cC\in\mathfrak{C}}\frac{3\eps^4|\cC|^2}{2T^4}\le C_2\eps^4N^2.
    \end{equation} Moreover, we have \begin{equation}\label{eq: variance bound on error term}
        \var_X\left(\sum_{\cC\in\mathfrak{C}}\frac{\eps^4|\cC|^2X_{\cC}^4}{2T^4}\right)=\sum_{\cC\in\mathfrak{C}}\E_X
        ~\frac{\eps^8|\cC|^4(X_{\cC}^4-3)^2}{4T^8}\le C_3\eps^8N^2.
    \end{equation} Combining \eqref{eq: first moment bound on error term}, \eqref{eq: variance bound on error term} with \eqref{eq: split of cB1 into cB2 and error term} and letting $C_1>2C_2$, we get that \begin{align*}
        \P\otimes\phi_{p,\Omega}^{\f,0}(\cB^c\mid\omega)\le \frac{C_4}{N^2}.
    \end{align*}
Integrating over $\omega\in\mcc F$ and applying Lemma~\ref{lem: LDP for high temp} shows that $\P\otimes\phi_{p,\Omega}^{\f,0}(\cB^c)\le C_5\exp\left(-C_5^{-1}N^{0.05}\right)+\frac{C_4}{N^2}.$ The desired result on $\P(\cH_{\mathrm{hig}}^1(C_1))$ thus comes from the Markov inequality and letting $N$ big enough. 
\end{proof}
\begin{proof}[Proof of Lemma~\ref{lem: anticoncentrate external field at T big2}]
    Let $c_1>0$ be a constant to be defined later. By Lemma~\ref{lem: LDP for high temp}, we  get that \begin{equation}
        \fkhof(\cF)\ge 1-\exp(-CN^{0.05})\label{eq: cF bound in high temp}
    \end{equation} for some constant $C>0.$ To lower-bound $\P(\cH_{\mathrm{hig}}^2(c))$, we proceed by lower-bounding the first moment of $\var_{\phi}\left(\sum_{\cC\in\mathfrak{C}}\frac{\eps^2h_{\cC}^2}{2T^2}\mid\cF\right)$ and also upper-bounding its second moment.  We first expand the variance into \begin{align*}
        &\var_{\phi}\left(\sum_{\cC\in\mathfrak{C}}\frac{\eps^2h_{\cC}^2}{2T^2}\Bigg|\cF\right)=\Bigg\langle \left(\sum_{\cC\in\mathfrak{C}}\frac{\eps^2h_{\cC}^2}{2T^2}\right)^2\Bigg|\cF\Bigg\rangle-\Bigg\langle \left(\sum_{\cC\in\mathfrak{C}}\frac{\eps^2h_{\cC}^2}{2T^2}\right)\cdot \left(\sum_{\tilde{\cC}\in\tilde{\mathfrak{C}}}\frac{\eps^2h_{\tilde{\cC}}^2}{2T^2}\right)\Bigg|\cF\Bigg\rangle
    \end{align*} where ${\mathfrak{C}},\tilde{\mathfrak{C}}$ are the collections of all clusters of two independently sampled configurations according to $\fkhof(\cdot\mid\cF)$. Thus, we compute \begin{align}
    &\E~\var_{\phi}\left(\sum_{\cC\in\mathfrak{C}}\frac{\eps^2h_{\cC}^2}{2T^2}\Bigg|\cF\right)
    =\E~\Bigg\langle \left(\sum_{\cC\in\mathfrak{C}}\frac{\eps^2h_{\cC}^2}{2T^2}\right)^2\Bigg|\cF\Bigg\rangle-\E\Bigg\langle \left(\sum_{\cC\in\mathfrak{C}}\frac{\eps^2h_{\cC}^2}{2T^2}\right)\cdot \left(\sum_{\tilde{\cC}\in\tilde{\mathfrak{C}}}\frac{\eps^2h_{\tilde{\cC}}^2}{2T^2}\right)\Bigg|\cF\Bigg\rangle\nonumber\\=~&\Bigg\langle\left(\sum_{\cC\in\mathfrak{C}}\frac{\eps^2|\cC|}{2T^2}\right)^2+\sum_{\cC\in\mathfrak{C}}\frac{\eps^4|\cC|^2}{2T^4}\Bigg|\cF\Bigg\rangle-\left(\Bigg\langle\sum_{\cC\in\mathfrak{C}}\frac{\eps^2|\cC|}{2T^2}\Bigg|\cF\Bigg\rangle\right)^2-\Bigg\langle\sum_{\cC\in\mathfrak{C},\tilde{\cC}\in\tilde{\mathfrak{C}}}\frac{\eps^4}{2T^4}|\cC\cap\tilde{\cC}|^2\Bigg|\cF\Bigg\rangle\nonumber
    \end{align} where the last equation comes from the fact that $\E h_{A}^2=|A|$ and $\E h_A^2h_B^2=|A|\cdot|B|+2|A\cap B|^2$. Combined with Cauchy inequality, it yields that \begin{align}
\E~\var_{\phi}\left(\sum_{\cC\in\mathfrak{C}}\frac{\eps^2h_{\cC}^2}{2T^2}\Bigg|\cF\right)&\ge C_1\eps^4\left\langle\sum_{\cC\in\mathfrak{C}}|\cC|^2-\sum_{\cC\in\mathfrak{C},\tilde{\cC}\in\tilde{\mathfrak{C}}}|\cC\cap\tilde\cC|^2\Big|\cF\right\rangle\nonumber\\&\ge C_1\eps^4\left\langle\left[\sum_{\cC\in\mathfrak{C}}|\cC|^2-\sum_{\cC\in\mathfrak{C},\tilde{\cC}\in\tilde{\mathfrak{C}}}|\cC\cap\tilde\cC|^2\right]\cdot \1_{\cF}\right\rangle\label{eq: variance first moment bound high 0}
    \end{align}
    Note that for $\omega\in\cF^{c}$, we have the bound $\sum_{\cC\in\mathfrak{C}}|\cC|^2-\sum_{\cC\in\mathfrak{C},\tilde{\cC}\in\tilde{\mathfrak{C}}}|\cC\cap\tilde\cC|^2\le \sum_{\cC\in\mathfrak{C}}|\cC|^2\le (2N)^4.$ Combined with \eqref{eq: variance first moment bound high 0}, it yields that \begin{equation}
        \E~\var_{\phi}\left(\sum_{\cC\in\mathfrak{C}}\frac{\eps^2h_{\cC}^2}{2T^2}\Bigg|\cF\right)\ge  C_1\eps^4\left\langle\sum_{\cC\in\mathfrak{C}}|\cC|^2-\sum_{\cC\in\mathfrak{C},\tilde{\cC}\in\tilde{\mathfrak{C}}}|\cC\cap\tilde\cC|^2\right\rangle-C_1\eps^4(2N)^4\fkhof(\cF^c).\label{eq: variance first moment bound high 1}
    \end{equation} Plugging \eqref{eq: cF bound in high temp} and  Lemma~\ref{lem: high temperature all cluster deviation bound} into \eqref{eq: variance first moment bound high 1}, we get that \begin{equation}
        \E~\var_{\phi}\left(\sum_{\cC\in\mathfrak{C}}\frac{\eps^2h_{\cC}^2}{2T^2}\Bigg|\cF\right)\ge C_2\eps^4N^2.\label{eq: variance first moment bound high}
    \end{equation}
    Next, we compute the upper bound of the second moment. We upper-bound the variance by $$\var_{\phi}\left(\sum_{\cC\in\mathfrak{C}}\frac{\eps^2h_{\cC}^2}{2T^2}\Bigg|\cF\right)\le\Bigg\langle \left(\sum_{\cC\in\mathfrak{C}}\frac{\eps^2h_{\cC}^2}{2T^2}- \frac{\eps^2|\Omega|}{2T^2}\right)^2\Bigg|\cF\Bigg\rangle.$$ Applying Cauchy inequality, we obtain that \begin{align}
    &\E~\left(\var_{\phi}\left(\sum_{\cC\in\mathfrak{C}}\frac{\eps^2h_{\cC}^2}{2T^2}\Big|\cF\right)\right)^2\le\Bigg\langle \E~\left(\sum_{\cC\in\mathfrak{C}}\frac{\eps^2h_{\cC}^2}{2T^2}- \frac{\eps^2|\Omega|}{2T^2}\right)^4\Big|\cF\Bigg\rangle \nonumber\\=~& \Bigg\langle\sum_{\cC\in\mathfrak{C}}\E~\left(\frac{\eps^2(h_{\cC}^2-|\cC|)}{2T^2}\right)^4+\sum_{\substack{\cC_1,\cC_2\in\mathfrak{C},\\\cC_1\neq\cC_2}}\E~\left(\frac{\eps^2h_{\cC_1}^2}{2T^2}- \frac{\eps^2|\cC_1|}{2T^2}\right)^2\cdot\left(\frac{\eps^2h_{\cC_1}^2}{2T^2}- \frac{\eps^2|\cC_2|}{2T^2}\right)^2\Bigg|\cF\Bigg\rangle\nonumber
    \end{align}
    Note that $\E~(h_{A}^{2}-|A|)^4=60|A|^4$ and $\E~(h_{A}^{2}-|A|)^2=2|A|^2$. Thus, we get that \begin{align}
        \E~\left(\var_{\phi}\left(\sum_{\cC\in\mathfrak{C}}\frac{\eps^2h_{\cC}^2}{2T^2}\right)\Bigg|\cF\right)^2&\le C_3\eps^8\Bigg\langle\left(\sum_{\cC\in\mathfrak{C}}|\cC|^2\right)^2\Bigg|\cF\Bigg\rangle\label{eq: variance second moment bound high 0}
    \end{align}Recall the definition of $\cF$ in Definition~\ref{def: anticoncentrate external field at T big0} , we get for any $\omega\in\cF$ that $\sum_{\cC\in\mathfrak{C}}|\cC|^2\le \sum_{\cC\in\mathfrak{C}}|\cC|^4\\\le cN^2.$ Combined with \eqref{eq: variance second moment bound high 0}, it yields that \begin{equation}\label{eq: variance second moment bound high}
\E~\left(\var_{\phi}\left(\sum_{\cC\in\mathfrak{C}}\frac{\eps^2h_{\cC}^2}{2T^2}\Bigg|\cF\right)\right)^2\le C_4\eps^8N^4.
    \end{equation}
    Combining \eqref{eq: variance first moment bound high}, \eqref{eq: variance second moment bound high} and Paley-Zygmund inequality, we obtain that \begin{align}
    &\P\left(\var_{\phi}\left(\sum_{\cC\in\mathfrak{C}}\frac{\eps^2h_{\cC}^2}{2T^2}\Bigg|\cF\right)\ge \frac{1}{2}\E~\var_{\phi}\left(\sum_{\cC\in\mathfrak{C}}\frac{\eps^2h_{\cC}^2}{2T^2}\Bigg|\cF\right)\right)\nonumber\ge~\frac{\frac{1}{4}\left[\E~\var_{\phi}\left(\sum_{\cC\in\mathfrak{C}}\frac{\eps^2h_{\cC}^2}{2T^2}\Bigg|\cF\right)\right]^2}{\E~\left(\var_{\phi}\left(\sum_{\cC\in\mathfrak{C}}\frac{\eps^2h_{\cC}^2}{2T^2}\Bigg|\cF\right)\right)^2}\ge\frac{C_2^2}{4C_4}.\nonumber
    \end{align}The desired comes from \eqref{eq: variance first moment bound high} and the fact that $\theta N^{-\frac{1}{2}}\le \eps\le 2\theta N^{-\frac{1}{2}}$.
\end{proof}

\subsection{The low-temperature case}\label{sec: anti concentration low}

In this section, we consider the low-temperature case and fix $p>p_c$.
The main goal of this subsection is to prove that the good external field in Definition~\ref{def: good external field for product measure low decomposition} has a positive $\mbb P$ probability not depending on its size. 
The strategy we use in  the low-temperature regime is similar to that in the high-temperature regime. The difference is that in the high-temperature case, all the clusters are small such that none of them has a typical fluctuation, so they will contribute together and lead to the fluctuation. However, in the low-temperature case, the maximal cluster itself can lead to a big fluctuation. We start with some definitions as in Definitions~\ref{def: anticoncentrate external field at T big0} and \ref{def: anticoncentrate external field at T big}, which was hinted in Definition \ref{def: good external field for product measure low decomposition}. 
For the sake of convenience, we divide the definition of a good external field into the following three sets.

\begin{defi}\label{def: anticoncentrate external field at T small0}

    We use the notation $\cH_{\mathrm{low}}^0$ to denote the set of external field such that \begin{align}
        &\Big\langle\exp\left(\frac{\eps|h_{\cC_*}|}{T}\right)\Big\rangle\le \exp(\eps^{-1}N^{-1})~\text{and}~\label{eq: good external field for partition function at small T}\\ &\Big\langle\exp\left(\frac{\eps|h_{\cC_*}|}{T}\right)\cdot\prod_{\cC\in\fC\setminus\{\cC_*\}}\cosh\left(\frac{\eps h_{\cC}}{T}\right)\Big\rangle\le \exp(\eps^{-1}N^{-1}).\label{eq: good external field for partition function at small T2}
    \end{align}
    Recall that $f(x)=\ln\cosh(x)$ and $\mcc C_*$ is the boundary cluster. We use the notation $\cH_{\mathrm{low}}^1$ to denote the set of external field such that 
    \begin{equation}\label{eq: fluctuation of small clusters}
        \fklow\left(\Bigg|\sum_{\cC\in\mathfrak{C}\setminus\{\cC_*\}}f\left(\frac{\eps h_{\cC}}{T}\right)-\frac{\eps^2(|\Omega|-|\cC_*|)}{2T^2}~\Bigg|\le N^{-0.1}\right)\ge 1-N^{-0.1}.
    \end{equation}   
    For any constant $c>0$, let $\cH_{\mathrm{low}}^2(c)$ denote the set of external field such that $\var_{\phi}\big(\eps h_{\cC_*}\big)\ge c$ where $\var_{\phi}$ denotes the variance operator under $\fklow$.
\end{defi}
\begin{lem}\label{lem: anticoncentrate external field at T small0}
    There exists a constant $c>0$ depending only on $p$ such that for any disorder strength $\eps>0$ and domain $\lamn\subset\Omega\subset\Lambda_{2N}$, we have \begin{equation*}
        \P(\cH^0_{\mathrm{low}})\ge 1 - c\exp\left(-\eps^{-1}N^{-1}+c\eps^2N^2\right).
    \end{equation*}
\end{lem}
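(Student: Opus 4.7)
The plan is to prove both inequalities defining $\cH^0_{\mathrm{low}}$ by a direct application of Markov's inequality, with the moment bound coming from swapping the order of $\mathbb P$ and $\langle\cdot\rangle^{\w,0}_{p,\Omega}$ and then using the Gaussian moment generating function. The key structural input is simply that conditional on an edge configuration $\omega$ the variables $\{h_\cC\}_{\cC\in\fC}$ are independent centered Gaussians with variances $|\cC|$, together with the partition identity $\sum_{\cC\in\fC}|\cC|=|\Omega|\le|\Lambda_{2N}|\le C_1 N^2$.

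For the first inequality in \eqref{eq: good external field for partition function at small T}, I would use $\exp(|x|)\le \exp(x)+\exp(-x)$ together with the symmetry $h\stackrel{d}{=}-h$ to write
\[
\mathbb E\bigl\langle \exp(\eps|h_{\cC_*}|/T)\bigr\rangle^{\w,0}_{p,\Omega}\le 2\bigl\langle \mathbb E\exp(\eps h_{\cC_*}/T)\bigr\rangle^{\w,0}_{p,\Omega}=2\bigl\langle \exp(\eps^2|\cC_*|/(2T^2))\bigr\rangle^{\w,0}_{p,\Omega}\le 2\exp(c_2\eps^2N^2),
\]
since $|\cC_*|\le|\Omega|\le C_1 N^2$. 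Markov's inequality then gives the bound
\[
\mathbb P\!\left(\bigl\langle \exp(\eps|h_{\cC_*}|/T)\bigr\rangle>\exp(\eps^{-1}N^{-1})\right)\le 2\exp\!\bigl(-\eps^{-1}N^{-1}+c_2\eps^2N^2\bigr).
\]

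For the second inequality in \eqref{eq: good external field for partition function at small T2}, the same idea applies: conditional on $\omega$ the variable $h_{\cC_*}$ and the collection $\{h_\cC\}_{\cC\neq\cC_*}$ are jointly independent Gaussians (the clusters partition $\Omega$), so Fubini yields
\[
\mathbb E\Bigl\langle \exp(\eps|h_{\cC_*}|/T)\prod_{\cC\neq\cC_*}\cosh(\eps h_\cC/T)\Bigr\rangle\le 2\Bigl\langle \exp(\eps^2|\cC_*|/(2T^2))\prod_{\cC\neq\cC_*}\exp(\eps^2|\cC|/(2T^2))\Bigr\rangle\le 2\exp(c_2\eps^2N^2),
\]
using $\mathbb E\cosh(\eps h_\cC/T)=\exp(\eps^2|\cC|/(2T^2))$ and $\sum_{\cC\in\fC}|\cC|=|\Omega|\le C_1N^2$. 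A second application of Markov's inequality gives the same tail bound, and a union bound over the two events concludes.

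There is no serious obstacle: the whole argument is two applications of Markov combined with elementary Gaussian MGF computations, and the partition identity $\sum_\cC|\cC|=|\Omega|$ is what produces the $\eps^2 N^2$ term in the exponent on the right-hand side. The only mild subtlety worth flagging is the factor $2$ from replacing $\exp(|\cdot|)$ by $\exp(\cdot)+\exp(-\cdot)$, which gets absorbed into the constant $c$ in the statement.
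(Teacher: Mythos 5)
Your proposal is correct and follows essentially the same argument as the paper: condition on the edge configuration so that the $h_\cC$ are independent centered Gaussians with variances $|\cC|$, use Fubini to bring $\mathbb E$ inside $\langle\cdot\rangle$, bound the Gaussian moment generating function, exploit $\sum_{\cC}|\cC|=|\Omega|\le C N^2$, and finish with Markov. The only cosmetic difference is that the paper notes $\langle\exp(\eps|h_{\cC_*}|/T)\rangle\le\langle\exp(\eps|h_{\cC_*}|/T)\prod_{\cC\neq\cC_*}\cosh(\eps h_\cC/T)\rangle$ pointwise in $h$ (since $\cosh\ge 1$), so one Markov bound on the larger quantity covers both events, whereas you handle the two events with separate Markov bounds and a union bound; both are fine and differ only by a constant factor.
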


\begin{proof}
    For any configuration $\omega$, let $X$ denote a standard Gaussian variable and we have
    \begin{align*}
        &\E~\exp\left(\frac{\eps |h_{\cC_*}|}{T}\right)=\E_X~\exp\left(\frac{\eps\sqrt{|\cC_*|}|X|}{T}\right)\le \exp\left( C_1\eps^2|\cC_*|\right)\mbox{  and  }\\
        &\E~\cosh\left(\frac{\eps h_{\cC}}{T}\right) =\E_X~\cosh\left(\frac{\eps\sqrt{|\cC|}X}{T}\right)\le \exp( C_1\eps^2|\cC|)
    \end{align*} for some absolute constant $C_1>0.$ Summing over all configurations shows that \begin{align*}
        \E~\Bigg\langle\exp\left(\frac{\eps|h_{\cC_*}|}{T}\right)\Bigg\rangle&\leq
        \E~\Bigg\langle\exp\left(\frac{\eps|h_{\cC_*}|}{T}\right)\cdot\prod_{\cC\in\fC\setminus\{\cC_*\}}\cosh\left(\frac{\eps h_{\cC}}{T}\right)\Bigg\rangle\\
        &\le \Big\langle\exp( C_1\eps^2|\cC_*|)\cdot\prod_{\cC\in\fC\setminus\{\cC_*\}}\exp(C_1\eps^2|\cC|)\Big\rangle \le \exp(4C_1\eps^2N^2).
    \end{align*} The desired results come from an application of the Markov inequality.
\end{proof}
\begin{lem}\label{lem: anticoncentrate external field at T small1}
There exist constants $\theta_0>0$ and $c=c(p)>0$ such that the following holds. For any $0<\theta\le \theta_0$, any disorder strength $\theta N^{-1}\le \eps\le 2\theta N^{-1}$ and domain $\lamn\subset\Omega\subset\Lambda_{2N}$, we have \begin{equation*}
        \P(\cH_{\mathrm{low}}^1)\le {1-c^{-1}\exp(-cN^{c})}.
    \end{equation*}
\end{lem}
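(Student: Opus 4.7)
The stated inequality is equivalent to the lower bound $\P\bigl((\cH_{\mathrm{low}}^1)^c\bigr)\ge c^{-1}\exp(-cN^{c})$ on the failure probability, so the plan is to exhibit a single explicit event $B$ in the Gaussian space that (a) has stretched-exponentially small $\P$-mass and (b) makes the FK-condition in $\cH_{\mathrm{low}}^1$ fail with room to spare. Writing
$$S(\omega,h):=\sum_{\cC\in\mathfrak C\setminus\{\cC_*\}} f\!\Bigl(\frac{\eps h_\cC}{T}\Bigr)-\frac{\eps^2(|\Omega|-|\cC_*|)}{2T^2},$$
the goal reduces to finding $B$ with $\P(B)\ge\exp(-c_1 N^2)$ on which $\fklow(|S|>N^{-0.1})\ge c_2>0$, since $c_2\gg N^{-0.1}$ forces $B\subset(\cH_{\mathrm{low}}^1)^c$.

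I would take $B:=\{h\in\mathbb R^{\Omega}:2\le h_x\le 3\text{ for every }x\in\Omega\}$. By independence of $(h_x)_{x\in\Omega}$ and $c_0:=\P(\mathcal N(0,1)\in[2,3])>0$, one has $\P(B)=c_0^{|\Omega|}\ge\exp(-c_1 N^2)$, which matches the required form.

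Fix $h\in B$. For any configuration $\omega$ and any cluster $\cC$, $h_\cC=\sum_{x\in\cC}h_x\in[2|\cC|,3|\cC|]$, so $h_\cC^2\ge 4|\cC|^2\ge 4|\cC|$. Next I would restrict to the FK-event $\cT$ on which (i) every non-boundary cluster satisfies $|\cC|\le\log^2 N$ and (ii) $|\Omega|-|\cC_*|\ge c_3|\Omega|$. Both hold with $\fklow(\cT)\ge c_2>0$ uniformly in $\Omega$: (i) follows from the exponential decay of dual connectivities in the subcritical dual regime $p^*<p_c$, and (ii) from the Edwards--Sokal translation of the bound $\langle\sigma_x\rangle^{\f,0}_{T,\Omega}<1-c$, which forces a positive density of vertices disconnected from the wired boundary. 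On $\cT$ the estimate $|\eps h_\cC/T|\le 3\eps\log^2 N/T=O(\log^2 N/N)$ permits the uniform inequality $f(x)\ge x^2/2-x^4/12$ (valid on all of $\mathbb R$ by convexity of $x^2-\tanh^2 x$), yielding
$$\sum_{\cC\ne\cC_*}f\!\Bigl(\frac{\eps h_\cC}{T}\Bigr)\ge \frac{\eps^2}{2T^2}\sum_{\cC\ne\cC_*}h_\cC^2-\frac{\eps^4}{12T^4}\sum_{\cC\ne\cC_*}h_\cC^4\ge\frac{2\eps^2(|\Omega|-|\cC_*|)}{T^2}-o(1),$$
since $h_\cC^4\le 81|\cC|^4\le 81\log^6 N\cdot|\cC|$ makes the Taylor remainder $O(\eps^4\log^6 N\cdot N^2)=O(\theta^4\log^6 N/N^2)=o(1)$. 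Subtracting the centering term produces
$$S(\omega,h)\ge\frac{3\eps^2(|\Omega|-|\cC_*|)}{2T^2}-o(1)\ge\frac{3c_3\theta^2}{2T^2}-o(1),$$
a strictly positive constant uniform in $\omega\in\cT$ and $h\in B$. Since this constant far exceeds $N^{-0.1}$, $B\subset(\cH_{\mathrm{low}}^1)^c$, giving $\P((\cH_{\mathrm{low}}^1)^c)\ge\exp(-c_1 N^2)\ge c^{-1}\exp(-cN^c)$ with $c:=\max(2,c_1)$.

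The main obstacle will be establishing (ii): a uniform positive-density lower bound on $|\Omega\setminus\cC_*|$ at $p>p_c$ across all admissible $\Omega$ with wired boundary. I would settle it via a first--second moment argument for $|\cC_*|$ combined with Edwards--Sokal and the strict bound $m^*(T)<1$ (the spontaneous magnetization is less than one for every $T>0$), which guarantees a linear-in-$|\Omega|$ number of vertices in minority spin clusters; these are exactly the sites outside $\cC_*$. Ingredient (i) is routine, amounting to a union bound over $\Omega$ of the subcritical tail of non-boundary cluster sizes, and the Taylor control is straightforward given the polylogarithmic diameter bound. Note that the lemma as stated is very weak in that $c$ may be taken arbitrarily large; this is why the easy construction $B$ with $\P(B)=\exp(-\Theta(N^2))$ already suffices.
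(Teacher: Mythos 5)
There is a fundamental mismatch between what you proved and what this lemma actually asserts. The printed inequality $\P(\cH_{\mathrm{low}}^1)\le 1-c^{-1}\exp(-cN^{c})$ is a typo for the high-probability bound $\P(\cH_{\mathrm{low}}^1)\ge 1-c^{-1}\exp(-cN^{c})$. This is visible both from how the lemma is used — in the proof of Lemma~\ref{lem: good external field for number of xi} the field must lie in $\cH_{\mathrm{low}}^0\cap\cH_{\mathrm{low}}^1\cap\cH_{\mathrm{low}}^2(c_1\theta^2)$ simultaneously, and since Lemma~\ref{lem: anticoncentrate external field at T small2} only gives $\cH_{\mathrm{low}}^2$ probability $c_2>0$, the intersection has positive probability only if $\P(\cH_{\mathrm{low}}^1)$ is close to $1$ — and from the paper's own proof, which bounds $\P\otimes\fklow\big((\cB^{\pm})^c\big)$ by a stretched-exponential quantity and then concludes ``the desired result on $\P(\cH_{\mathrm{low}}^1)$ thus comes from Markov inequality'', i.e.\ a lower bound of the form $1-(\text{stretched exponential})$; compare also the analogous statements $\P(\cH_{\mathrm{low}}^0)\ge 1-\dots$ and $\P(\cH_{\mathrm{hig}}^1(c_1))\ge 1-c_2/N$. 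You read the statement literally as a lower bound $\P\big((\cH_{\mathrm{low}}^1)^c\big)\ge c^{-1}\exp(-cN^c)$ on the failure probability — a bound you yourself note is essentially vacuous — and your entire construction (forcing $h_x\in[2,3]$ everywhere, an event of probability $e^{-\Theta(N^2)}$, on which the cluster sum overshoots) is aimed at that vacuous reading. It goes in the wrong direction and cannot be adapted to the intended statement, since it provides no control whatsoever on what happens for typical Gaussian fields.

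The missing content is a concentration argument under the product measure: restrict to configurations $\omega$ with $\max_{\cC\in\fC\setminus\{\cC_*\}}|\cC|\le N^{0.1}$ (Lemma~\ref{lem: low temperature cluster bound}), use $\frac{x^2-x^4}{2}\le f(x)\le\frac{x^2}{2}$ to reduce the two one-sided deviations of $\sum_{\cC\neq\cC_*}f(\eps h_{\cC}/T)$ around $\eps^2(|\Omega|-|\cC_*|)/2T^2$ to a centered weighted chi-square sum plus a fourth-power error term, bound these by the Laurent--Massart tail estimate and Lemma~\ref{lem: chi fourth power tail bound} (the weights $\eps^2|\cC|$ are uniformly tiny precisely because the non-boundary clusters are small and $\eps\asymp\theta N^{-1}$), and finally transfer the resulting bound on $\P\otimes\fklow(\cB^c)$ to a bound on $\P\big((\cH_{\mathrm{low}}^1)^c\big)$ via Markov's inequality. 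None of these steps appear in your proposal. (Your auxiliary observations — the inequality $f(x)\ge\frac{x^2}{2}-\frac{x^4}{12}$ and the positive density of $\Omega\setminus\cC_*$ under the wired supercritical measure — are correct in themselves, but they address a statement the paper neither proves nor needs.)
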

\begin{proof}
We choose $\theta_0=1$.
    Let $\cB^+$ denote the event that $\sum_{\cC\in\mathfrak{C}\setminus\{\cC_*\}}f(\frac{\eps h_{\cC}}{T})-\frac{\eps^2|\Omega|-|\cC_*|}{2T^2}\le N^{-0.1}$ and $\cB^-$ denote the event that $\sum_{\cC\in\mathfrak{C}\setminus\{\cC_*\}}f(\frac{\eps h_{\cC}}{T})-\frac{\eps^2|\Omega|-|\cC_*|}{2T^2}\ge -N^{-0.1}$. We first control the probability of $\cB^+$ under $\P\otimes\fklow$. For any configuration $\omega\in\sS_{\mathrm{low},*}:=\{\max_{\cC\in\mathfrak{C}\setminus\{\cC_*\}}|\cC|\le N^{0.1}\}$, 
    we have \begin{align*}
\P\otimes\fklow\big((\cB^+)^c\mid\omega\big)&=\P_X\left(\sum_{\cC\in\mathfrak{C}\setminus\{\cC_*\}}\left(f\left(\frac{\eps\sqrt{|\cC|}}{T}X_{\cC}\right)-\frac{\eps^2|\cC|}{2T^2}\right)>N^{-0.1}\right)\\&\le \P_X\left(\sum_{\cC\in\mathfrak{C}\setminus\{\cC_*\}}\frac{\eps^2|\cC|}{2T^2}(X_{\cC}^2-1)>N^{-0.1}\right)
    \end{align*} where the inequality comes from $f(x)\le \frac{x^2}{2}$. Combined with Lemma~\ref{lem: chi fourth power tail bound}, it yields that \begin{align*}
        \P\otimes\fklow\left((\cB^+)^c\mid\omega\right)\le c_1^{-1}\exp(-C_1N^{0.1}).
    \end{align*} Integrating over $\omega$ and applying Lemma~\ref{lem: low temperature cluster bound} 
    shows that $$\P\otimes\fklow\big((\cB^+)^c\big)\le C_1^{-1}\exp(-C_1N^{0.1})+\fklow(\sS_{\mathrm{low},*}^c)\le C_2^{-1}\exp(-C_2N^{C_2}).$$Next we control $\cB^-$. For any configuration $\omega\in\sS_{\mathrm{low},*}$, we have  \begin{align*}
&\P\otimes\fklow\big((\cB^-)^c\mid\omega\big)=\P_X\left(\sum_{\cC\in\mathfrak{C}\setminus\{\cC_*\}}f\left(\frac{\eps\sqrt{|\cC|}}{T}X_{\cC}\right)-\frac{\eps^2|\cC|}{2T^2}<-N^{-0.1}\right)\\\le~& \P_X\left(\sum_{\cC\in\mathfrak{C}\setminus\{\cC_*\}}\frac{\eps^2|\cC|}{2T^2}(X_{\cC}^2-1)<-\frac{N^{-0.1}}{2}\right)+\P_X\left(\sum_{\cC\in\mathfrak{C}\setminus\{\cC_*\}}\frac{\eps^4|\cC|^2}{2T^4}X_{\cC}^4>\frac{N^{-0.1}}{2}\right)
    \end{align*} where the inequality comes from $f(x)\ge \frac{x^2-x^4}{2}$. Combined with  Lemma~\ref{lem: chi fourth power tail bound}, it yields that \begin{align*}
        \P\otimes\fklow\big((\cB^-)^c\mid\omega\big)\le C_3^{-1}\exp(-C_3N^{0.1}).
    \end{align*} Integrating over $\omega$ and applying Lemma \ref{lem: low temperature cluster bound} shows that $$\P\otimes\fklow\big((\cB^-)^c\big)\le C_3^{-1}\exp(-C_3N^{0.1})+\fklow(\sS_{\mathrm{low},*}^c)\le C_4^{-1}\exp(-C_4N^{C_2}).$$
    The desired result on $\P(\cH_{\mathrm{low}}^1)$ thus comes from Markov inequality. 
\end{proof}
\begin{lem}\label{lem: anticoncentrate external field at T small2}
     For any $\theta>0$, there exist constants $c_1=c_1(p),c_2=c_2(p)>0$ such that for any disorder strength $\theta N^{-1}\le \eps\le 2\theta N^{-1}$, domain $\lamn\subset\Omega\subset\Lambda_{2N}$, we have \begin{equation*}
        \P(\cH_{\mathrm{low}}^2(c_1\theta^{ 2}))\ge c_2.
    \end{equation*}
\end{lem}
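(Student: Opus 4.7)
The plan is to apply the Paley--Zygmund inequality to the nonnegative random variable $V(h) := \var_{\phi}(\eps h_{\cC_*}) = \eps^2 \var_\phi(h_{\cC_*})$ viewed as a function of the quenched field $h$ under $\P$. I will prove $\E V \ge c(T)\theta^2$ and $\E V^2 \le C(T)\theta^4$; Paley--Zygmund then gives $\P(V \ge \tfrac12 \E V) \ge (\E V)^2/(4\E V^2) \ge c_2(T)$, which is exactly the claim with $c_1 = c(T)/2$ and $c_2 = c_2(T)$.

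For the first moment, expanding $h_{\cC_*} = \sum_{x\in\Omega} h_x\1_{x\in\cC_*}$ and using $\E[h_x h_y] = \delta_{xy}$,
\begin{equation*}
\E V = \eps^2 \sum_{x\in\Omega} \fklow(x\in\cC_*)\bigl(1 - \fklow(x\in\cC_*)\bigr).
\end{equation*}
The Edwards--Sokal coupling identifies $\fklow(x\in\cC_*) = \expe{\sigma_x}^{+,0}_{T,\Omega}$. Two standard facts give a uniform two-sided bound on this: the finite-energy argument (flipping $\sigma_x$ shifts the Hamiltonian by at most $8$) gives $\expe{\sigma_x}^{+,0}_{T,\Omega} \le 1 - 2(1+e^{8/T})^{-1}$, while monotonicity of the plus-magnetization in the volume gives $\expe{\sigma_x}^{+,0}_{T,\Omega} \ge m^*(T) > 0$, where $m^*(T)$ is the infinite-volume spontaneous magnetization (positive precisely because $T<T_c$). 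Hence every summand is $\ge c(T) > 0$, and since $|\Omega|\ge N^2$ we obtain $\E V \ge c(T)\eps^2 N^2 \ge c(T)\theta^2$ throughout the hypothesized range $\eps\ge\theta N^{-1}$.

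For the second moment, using $\var_\phi(X) \le \expe{X^2}$ and Wick's formula $\E(h_A^2 h_B^2) = |A||B| + 2|A\cap B|^2$ for any deterministic $A, B\subset\Omega$, and letting $\tilde\omega$ be an i.i.d.\ copy of $\omega$ under $\fklow$, Fubini yields
\begin{equation*}
\E V^2 \;\le\; \eps^4\,\E\expe{h_{\cC_*(\omega)}^2}^2 \;=\; \eps^4\,\expe{\E\bigl[h_{\cC_*(\omega)}^2 h_{\cC_*(\tilde\omega)}^2\bigr]} \;\le\; 3\eps^4|\Omega|^2 \;\le\; C(T)\theta^4,
\end{equation*}
where the outer $\expe{\cdot}$ is taken under $\fklow\otimes\fklow$ and the final bound uses $\eps N\le 2\theta$ together with $|\Omega|\le|\Lambda_{2N}|\le 25N^2$.

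Combining these two estimates via Paley--Zygmund finishes the proof. The only nontrivial input is the uniform lower bound on the product $\expe{\sigma_x}^{+,0}_{T,\Omega}\bigl(1-\expe{\sigma_x}^{+,0}_{T,\Omega}\bigr)$: its upper side is robust (finite energy alone), while its lower side crucially uses $T<T_c$ through the positivity of $m^*(T)$---which is precisely the same input that singles out $\alpha(T)=1$ as the correct scaling in the low-temperature regime.
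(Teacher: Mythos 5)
Your proof is correct and follows essentially the same route as the paper's: apply Paley--Zygmund to $V=\var_\phi(\eps h_{\cC_*})$, with the first moment bounded below by $\eps^2\sum_x \fklow(x\in\cC_*)\bigl(1-\fklow(x\in\cC_*)\bigr)\ge c\theta^2$ (each factor uniformly bounded away from $0$ and $1$ via $p>p_c$ resp.\ finite energy, which is exactly what the paper's Lemma on $\langle|\cC_*|\rangle-\langle|\cC_*\cap\tilde\cC_*|\rangle$ proves), and the second moment $O(\theta^4)$ via Gaussian moment identities. The only cosmetic difference is in the second moment: you apply Fubini and Wick's fourth-moment formula directly to $\langle h_{\cC_*}^2\rangle^2$, whereas the paper first applies Jensen to pass to $\langle h_{\cC_*}^4\rangle$ and then uses $\E h_A^4=3|A|^2$ -- these give the same bound.
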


\begin{proof}[Proof of Lemma~\ref{lem: anticoncentrate external field at T small2}]
    Let $c>0$ be a constant to be defined later. To lower-bound $\P(\cH_{\mathrm{low}}^2(c_1\theta))$, we proceed by lower-bounding the first moment of $\var_{\phi}\big(\eps h_{\cC_*}\big)$ and also upper-bounding its second moment.  We first expand the variance into \begin{align*}
        &\var_{\phi}\big(\eps h_{\cC_*}\big)=\big\langle \eps^2 h_{\cC_*}^2\big\rangle-\big\langle \eps h_{\cC_*}\cdot \eps h_{\tilde{\cC_*}}\big\rangle
    \end{align*} where $\cC_*,\tilde\cC_*$ are the maximal clusters of two independently sampled configurations according to $\fklow$. Thus, we compute \begin{align}
    \E~\var_{\phi}\big(\eps h_{\cC_*}\big)&=\big\langle \E~\eps^2 h_{\cC_*}^2\big\rangle-\big\langle \E~\eps h_{\cC_*}\cdot \eps h_{\tilde{\cC_*}}\big\rangle=\big\langle \eps^2 |\cC_*|\big\rangle-\big\langle \eps^2 |\cC_*\cap\tilde{\cC_*}|\big\rangle\ge C_1\eps^2N^2.\label{eq: variance first moment bound}
    \end{align} where the last inequality comes from Lemma~\ref{lem: low temperature maximal cluster deviation bound}.
    Next, we compute the upper bound of the second moment
    \begin{align}
        \E~\Big[\var_{\phi}\big(\eps h_{\cC_*}\big)\Big]^2&\le 4\E~\big\langle \eps^2 h_{\cC_*}^2\big\rangle^2\le 4\E~\big\langle \eps^4 h_{\cC_*}^4\big\rangle=12\big\langle \eps^4 |\cC_*|^2\big\rangle\le C_2\eps^4N^4.\label{eq: variance second moment bound}
    \end{align}
    Combining \eqref{eq: variance first moment bound}, \eqref{eq: variance second moment bound} and Paley-Zygmund inequality, we obtain that \begin{align}
        &\P\Big(\var_{\phi}\big(\eps h_{\cC_*}\big)\ge \frac{1}{2}\E~\var_{\phi}\big(\eps h_{\cC_*}\big)\Big)\nonumber\ge\frac{\frac{1}{4}\Big[\E~\var_{\phi}\big(\eps h_{\cC_*}\big)\Big]^2}{\E~\Big[\var_{\phi}\big(\eps h_{\cC_*}\big)\Big]^2}\ge\frac{C_1^2}{4C_2}.\nonumber
    \end{align}The desired comes since $\theta N^{-1}\le \eps\le 2\theta N^{-1}$.
\end{proof}
Combining Lemmas~\ref{lem: anticoncentrate external field at T small0}, \ref{lem: anticoncentrate external field at T small2} and \ref{lem: dis concentration using variance},  a similar version of Propositions~\ref{prop: anti-absolute continuity at critical temperature} and \ref{prop: anti-absolute continuity at high temperature} could be proved, and the proof is the same as that in Section~\ref{sec: coarse graining at low temperature}. Thus, we just state the Proposition here and omit further details:

\begin{prop}\label{prop: anti-absolute continuity at low temperature}
    Fix $T<T_c(2)$ and thus that $p>p_c$.
   For any $\theta>0$, there exists a constant $c=c(\theta,p)>0$ such that $ \E\TV{\fkhlow}{\fklow}\ge c$ for any disorder strength $\theta N^{-1}\le \eps\le 2\theta N^{-1}$.
\end{prop}


\subsection{Criticality of $\alpha(T)$}\label{sec: upper bound at near critical disorder}
In this subsection, we will show that in order to make $\E\TV{\phi_{p,\Omega}^{\gamma,0}}{\phi_{p,\Omega}^{\gamma,\eps h}}$ converging to $1$, a necessary condition is that $\eps \gg N^{-\alpha(T)}$. More precisely, we will prove:
\begin{prop}\label{prop: upper bound near critical disorder}
    Fix $d=2$ and $T>0$. For any constant $\theta>0$, there exists a constant $c=c(\theta)>0$ such that $\E\TV{\phi_{p,\lamn}^{\gamma,0}}{\phi_{p,\lamn}^{\gamma,\eps h}}\le 1-c$ for any disorder strength $\eps\le \theta N^{-\alpha(T)}$ and boundary condition $\gamma$.
\end{prop}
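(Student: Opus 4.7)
The plan is to adapt the absolute-continuity proof of Theorem~\ref{thm-all-temperature} (part one) from Section~\ref{sec: TV exponential decay to 0}, asking only for a lower bound on the Radon--Nikodym derivative $G(h,\omega):=\fkh\gamma(\omega)/\fk\gamma(\omega)=Z(h)\exp(F(h,\omega))$ on a set of positive $\P\otimes\fk\gamma$-measure, rather than for the full concentration around $1$ used in Section~4. The starting point is the elementary inequality
$$1-\TV{\fkh\gamma}{\fk\gamma} \;=\; \mathbb{E}_{\fk\gamma}\min(1,G(h,\omega)) \;\ge\; \delta\cdot\fk\gamma\big(G(h,\omega)\ge \delta\big),\qquad \delta\in(0,1),$$
so it suffices to exhibit $\delta=\delta(\theta)>0$ and $c=c(\theta)>0$ with $(\P\otimes\fk\gamma)(G\ge\delta)\ge c$ uniformly in $N$, in $\gamma$, and in $\eps\le\theta N^{-\alpha(T)}$.

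The key observation is that throughout this range, every error term appearing in the Section~\ref{sec: TV exponential decay to 0} analysis is bounded by an explicit function of $\theta$: indeed $\eps N^{\alpha(T)}\le \theta$ by assumption, $\sqrt{\eps N^{\beta(T)}}\le\sqrt{\theta}$ because $\alpha(T)\ge \beta(T)$, and $\eps\sqrt{N}\le\theta N^{1/2-\alpha(T)}\le\theta$ because $\alpha(T)\ge 1/2$. Rerunning the arguments underlying Lemmas~\ref{lem: small perturbation for partition function}, \ref{lem: good external field for product of hyperbolic cosine} and \ref{lem:good external field probability} with these near-critical inputs then produces a single $\P$-event $\mcc H$ of probability at least $c_1(\theta)>0$ on which
$$\log Z(h) \;\ge\; -\tfrac{2\eps^2 N^2}{T^2}-K(\theta), \qquad \fk\gamma\Big(F(h,\omega)\ge \tfrac{2\eps^2 N^2}{T^2}-K(\theta)\Big)\ge c_2(\theta)>0,$$
where $K(\theta)<\infty$ absorbs the error terms listed above. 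Adding the two inequalities and exponentiating gives $G(h,\omega)\ge e^{-2K(\theta)}$ on a subset of $\P\otimes\fk\gamma$-mass at least $c_1(\theta)c_2(\theta)$, which plugged into the opening display completes the proof with $\delta(\theta)=e^{-2K(\theta)}$ and $c(\theta)=\delta(\theta)\cdot c_1(\theta)c_2(\theta)$.

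The principal technical obstacle is verifying that the positive-probability bounds of the Section~\ref{sec: TV exponential decay to 0} lemmas persist near the boundary of their stated hypothesis $\eps\le c(T)N^{-\alpha(T)}$ (which imposes a specific small constant $c(T)$ not necessarily exceeding $\theta$). Inspection shows that every such bound has the shape $1-C\exp\!\big({-}C^{-1}(\eps N^{\alpha(T)})^{-\kappa}\big)$ for some $C,\kappa>0$, which at $\eps N^{\alpha(T)}=\theta$ degenerates only to the strictly positive constant $1-C\exp(-C^{-1}\theta^{-\kappa})$---still adequate, since we only require positivity rather than $1-o(1)$. The same remark applies to the $\chi^2$-tail bounds and the cluster-sum estimates from the Appendices that are invoked in those proofs; these are uniform in $\eps$ in the required sense, so the extension to all $\eps\le\theta N^{-\alpha(T)}$ is routine bookkeeping.
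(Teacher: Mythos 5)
Your framing—replacing the tight concentration of $G(h,\omega)$ used in Section~\ref{sec: TV exponential decay to 0} by a lower bound on a positive-probability event—is correct and matches the spirit of the paper's proof. However, the claim that ``rerunning the arguments underlying Lemmas~\ref{lem: small perturbation for partition function}, \ref{lem: good external field for product of hyperbolic cosine}, and \ref{lem:good external field probability}'' is routine bookkeeping has a genuine gap at the partition-function step, Lemma~\ref{lem: small perturbation for partition function}.

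Lemma~\ref{lem: small perturbation for partition function} and its proof via the chaos expansion are only valid under the hypothesis $\eps\le c_2N^{-\beta(T)}$ for a specific small constant $c_2=c_2(T)$. You note $\eps N^{\beta(T)}\le\theta$, but when $T\neq T_c$ one has $\alpha(T)=\beta(T)$, so the hypothesis $\eps\le c_2 N^{-\beta(T)}$ is simply violated throughout the range $\eps\in(c_2N^{-\alpha(T)},\theta N^{-\alpha(T)}]$ whenever $\theta>c_2$, which you must allow since $\theta>0$ is arbitrary. There is nothing to ``rerun'': the $A_k$ bounds that make the Wiener chaos sum converge and concentrate need $\eps N^{\beta(T)}$ below a fixed threshold, and at $\eps N^{\beta(T)}=\theta$ that threshold may be crossed. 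Moreover, even where Lemma~\ref{lem: small perturbation for partition function} formally applies, the resulting lower bound $1-c_1\exp(-c_1^{-1}\sqrt{\eps^{-1}N^{-\beta(T)}})$ can be negative (hence vacuous) when $\eps N^{\beta(T)}$ is of order $\theta$, since nothing pins down $c_1$ relative to $\theta$. The $T=T_c$ case escapes both problems because $\alpha>\beta$ forces $\eps N^{\beta(T)}\le\theta N^{-1/16}\to 0$; but the off-critical cases do not.

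The paper handles exactly this issue by abandoning the single-shot chaos bound in favor of a coarse-graining: Definition~\ref{def: good external field for partition function near} only asks for a two-sided bound by a possibly huge constant (not a $\sqrt{\eps N^{\beta(T)}}$ error), and Lemma~\ref{lem: small perturbation for partition function new} applies the single-box perturbation argument to each box in a partition of $\Lambda_N$ by $M$-boxes with $M=\lfloor C_1\eps^{-1/\alpha(T)}\rfloor$. Choosing $C_1$ small makes the per-box hypothesis $\eps\le c_2M^{-\beta(T)}$ hold, and the number of boxes $|I|=N^2/M^2$ is bounded in terms of $\theta$ and $C_1$, so the cumulative distortion $2^{|I|}$ is a finite constant. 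That construction, not a restatement of Lemma~\ref{lem: small perturbation for partition function}, is the missing ingredient in your plan.
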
 We remark here that Proposition~\ref{prop: upper bound near critical disorder} is not a straightforward consequence of Section~\ref{sec: TV exponential decay to 0}, because in Section~\ref{sec: TV exponential decay to 0}, we focus on the region $\eps \ll N^{-\alpha(T)}$. Nevertheless, the proof idea of Proposition~\ref{prop: upper bound near critical disorder} is similar to that in Section~\ref{sec: TV exponential decay to 0}. In order to overcome the difference that $\eps N^{\alpha(T)}$ might be big, we need to modify the definitions in Section~\ref{sec: TV exponential decay to 0}. 
\begin{defi}\label{def: good external field for partition function near}
    Let $\gamma\in \{-1,1\}^{\pari\lamn}$ be a boundary condition on $\pari\lamn$. Let $\cH^i_{\diamond}(c)\subset\R^{\lamn}~(i=1,2,3)$ denote the collection of the  external field $h$ such that the followings hold \begin{enumerate}
        \item \begin{equation}\label{eq: good external field for partition function near}
         c^{-1}\le \frac{\cZ^{\gamma,\eps h}_{\phi,p,\Omega}}{\cZ^{\gamma,0}_{\phi,p,\Omega}}\cdot \frac{1}{\exp(\eps^2|\Omega|/2T^2)}\le c.
    \end{equation} 
    \item \begin{equation}\label{eq:good external field and clusters near}
    \fk\gamma\left(\sum_{\cC\in\mathfrak{C}} f\left({\frac{\eps h_{\cC}}{T}}\right)-\frac{2\eps^2N^2}{T^2}\le c\right) \geq 1-c^{-1}\,.
    \end{equation}
    \item \begin{equation}\label{eq:perfect external field and clusters near}
        \fk\gamma\left(\sum_{\cC\in\fC} f\left({\frac{\eps h_{\cC}}{T}}\right)-\frac{2\eps^2N^2}{T^2}\ge -c\right) \geq 1-c^{-1}\,.
    \end{equation}
    \end{enumerate}
\end{defi}
Following the idea in Section~\ref{sec: Concentration for the Radon Nikodym derivative}, it is easy to prove that there exists a constant $c>0$ depending on $\theta$ and $T$ such that for $i=2,3$ we have $$\P\big(\cH^i_\diamond(c)\big)\ge 0.9\,.$$For $\cH^1_\diamond$, it is a little more complicated since \eqref{eq: good external field for partition function} no longer provides a lower bound on the ratio of the partition function when $\eps N^{\beta(T)}$ is bigger than $1$. The idea is that although we cannot directly control the ratio of the partition function, we can divide $\lamn$ into small areas and show that with high $\P$-probability adding an external field on each small area perturbs the ratio by at most a factor of $2$. We summarize as the following Lemma.
\begin{lem}\cite[Lemma 3.4]{DHX23}\label{lem: small perturbation for partition function new}
    Let $\Omega$ be an $M$-box such that $\Omega\subset\Lambda_{N}$, $\gamma\in\{-1,1\}^{\pari\lamn}$ be the boundary condition and $g\in \mathbb{R}^{\lamn\setminus\Omega}$ be the external field on $\lamn\setminus\Omega$. Let $\cH^{\gamma,g}_{\emptyset}\subset\R^{\Omega}$ denote the collection of the external field $h$ such that \begin{equation}\label{eq: good external field for partition function new}
         \Bigg|\Bigg\langle ~ \prod_{v\in\Omega}\left[1+ \sigma_v\tanh\left(\frac{\eps h_v}{T}\right)\right]~\Bigg\rangle^{\gamma,g}_{p,\lamn}-1\Bigg|\le \sqrt{\eps M^{\beta(T)}}.
    \end{equation} 
There exist constants $c_1,c_2>0$ depending only on $T$ such that $$\P(\cH^{\gamma,g}_{\emptyset})\ge 1-c_1\exp\left(-c_1^{-1}\sqrt{\eps^{-1}M^{-\beta(T)}}\right)$$ for all disorder strength {$\epsilon\leq c_2M^{-\beta(T)}$}.
\end{lem}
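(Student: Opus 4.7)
The plan is to mimic the chaos expansion argument of Lemma~\ref{lem: small perturbation for partition function} (essentially \cite[Lemma 3.4]{DHX23}), handling the auxiliary external field $g$ on $\lamn\setminus\Omega$ by an averaging/conditioning step. First, expand the product inside the expectation:
\[
\prod_{v\in\Omega}\Big[1+\sigma_v\tanh\big(\tfrac{\eps h_v}{T}\big)\Big]-1=\sum_{k=1}^{|\Omega|}\sum_{\substack{J\subset\Omega\\|J|=k}}\langle\sigma^J\rangle^{\gamma,g}_{p,\lamn}\prod_{v\in J}\tanh\big(\tfrac{\eps h_v}{T}\big).
\]
Viewing the right-hand side as a random variable in $h$ and using $|\tanh(x)|\le |x|$ together with \cite[Lemma~B.1]{DHX23} (the Gaussian chaos tail bound used in Lemma~\ref{lem: small perturbation for partition function}), the concentration of this random variable at $0$ is controlled by the quantities
\[
A_k:=\eps\Bigg[\sum_{\substack{J\subset\Omega\\|J|=k}}\big(\langle\sigma^J\rangle^{\gamma,g}_{p,\lamn}\big)^2\Bigg]^{\frac{1}{2k}}.
\]
Thus it suffices to show $\sum_k A_k^{2k}\lesssim (\eps M^{\beta(T)})^k/k!$ uniformly in the boundary data $(\gamma,g)$; applying Markov/exponential Markov to the chaos expansion then produces the desired $\sqrt{\eps M^{\beta(T)}}$ bound with the stated failure probability.

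The main step is bounding $A_k$ uniformly in $g$ in each temperature regime. The trick is to condition on $\tau:=\sigma|_{\lamn\setminus\Omega}$: under the joint measure, the pair $(\gamma,g)$ induces a random measure on the spins on $\pari\Omega$, and by the DMP the conditional law of $\sigma|_{\Omega}$ given $\tau$ is the zero-field Ising measure on $\Omega$ with boundary condition $\tau$. Writing $\langle\sigma^J\rangle^{\gamma,g}_{p,\lamn}=\sum_\tau p(\tau)\,\langle\sigma^J\rangle^{\tau,0}_{p,\Omega}$ and applying Jensen,
\[
\sum_{|J|=k}\big(\langle\sigma^J\rangle^{\gamma,g}_{p,\lamn}\big)^2\le \sum_\tau p(\tau)\sum_{|J|=k}\big(\langle\sigma^J\rangle^{\tau,0}_{p,\Omega}\big)^2,
\]
so it suffices to bound the inner sum uniformly in $\tau$. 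For $T<T_c$ we use the trivial bound $|\langle\sigma^J\rangle|\le 1$, giving $A_k\le\eps\binom{M^2}{k}^{1/(2k)}\lesssim M/(k!)^{1/(2k)}$. For $T>T_c$ we invoke Edwards--Sokal together with the exponential decay of cluster diameters (valid under any boundary condition since $p<p_c$, cf.\ \eqref{eq: exponential decay for high temperature}) exactly as in the proof of Lemma~\ref{lem: small perturbation for partition function}, yielding $A_k\lesssim \eps M/\sqrt{k/e}+\sqrt{C}\eps$. For $T=T_c$ we invoke the magnetization estimate at criticality used in \cite[Lemma 3.4]{DHX23}, which again applies uniformly in the (arbitrary) boundary condition $\tau$.

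The main obstacle is precisely this uniformity in $\tau$: the off-critical regimes are straightforward because exponential decay of connection probabilities in the FK representation is insensitive to the boundary, but at criticality one must rely on the polynomial decay estimate of $\langle\sigma^J\rangle$ that is available uniformly in boundary conditions. Once $A_k$ is controlled, the concluding step is standard: the Gaussian polynomial chaos bound combined with an optimization over $k$ gives
\[
\P\bigg(\Big|\Big\langle\prod_{v\in\Omega}\big[1+\sigma_v\tanh(\tfrac{\eps h_v}{T})\big]\Big\rangle^{\gamma,g}_{p,\lamn}-1\Big|>\sqrt{\eps M^{\beta(T)}}\bigg)\le c_1\exp\big(-c_1^{-1}\sqrt{\eps^{-1}M^{-\beta(T)}}\big),
\]
provided $\eps\le c_2 M^{-\beta(T)}$, which is the claim.
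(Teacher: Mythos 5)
The paper does not prove this lemma: it is cited verbatim from [DHX23, Lemma~3.4], and the text immediately following the statement simply invokes it (via a telescoping/induction over $M$-boxes) to deduce $\P(\cH_\diamond^1(2^{|I|}))\ge 0.9$. So there is no ``paper's own proof'' to compare against; what you have done is reconstruct a proof by adapting the argument that the paper \emph{does} spell out for Lemma~\ref{lem: small perturbation for partition function} ($\Omega=\lamn$, $g=0$).

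Your one genuinely new ingredient---conditioning on $\tau=\sigma|_{\lamn\setminus\Omega}$, using the DMP to collapse $\langle\cdot\rangle^{\gamma,g}_{p,\lamn}$ restricted to $\Omega$ into a mixture of zero-field Ising measures $\langle\cdot\rangle^{\tau,0}_{T,\Omega}$, and then applying Jensen to reduce $\sum_{|J|=k}(\langle\sigma^J\rangle^{\gamma,g}_{p,\lamn})^2$ to $\sup_\tau\sum_{|J|=k}(\langle\sigma^J\rangle^{\tau,0}_{T,\Omega})^2$---is a clean and correct way to make the auxiliary field $g$ disappear, reducing the question to a uniform-in-boundary bound on a sub-box. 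After that, the three temperature regimes follow the scheme of Lemma~\ref{lem: small perturbation for partition function}. The plan is sound and buys you something the paper does not make explicit, namely a self-contained extension to $T\neq T_c$ (note that [DHX23] is concerned only with $T=T_c$, so the cited lemma, read literally, covers only $\beta(T_c)=7/8$ and the off-critical $\beta$ values are implicitly an extension).

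Two points are glossed over and deserve to be filled in. First, for $T>T_c$: the identity $\langle\sigma^J\rangle^{\gamma,0}=\phi^{\gamma,0}(\mathtt{Even}_J)$ used in the paper holds for free-type boundary; for a general spin boundary $\tau\in\{-1,1\}^{\pare\Omega}$ the FK representation is more delicate (the $+$- and $-$-sites are wired into two separate ghost vertices and the correlation carries signs). The exponential bound still goes through because $|\langle\sigma^J\rangle^{\tau,0}_{T,\Omega}|$ is dominated by a product of connection probabilities that decay exponentially subcritically under any boundary condition, but you should say this. Second, at $T=T_c$, the needed uniformity over $\tau$ of the polynomial decay of $\langle\sigma^J\rangle^{\tau,0}_{T_c,\Omega}$---roughly, that $|\langle\sigma^J\rangle^{\tau,0}|$ is controlled by wired one-arm probabilities for an arbitrary mixed $\pm$ boundary, which then feed into the $A_k$ bound of [DHX23, Lemma~B.1]---is plausible but is the crux of the matter and should either be argued (via CBC/Griffiths to push any $\tau$ up to the $+$ boundary componentwise) or cited very precisely. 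With those two clarifications your sketch is a correct reconstruction.
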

For any disorder strength such that $\eps\le \theta N^{-\alpha(T)}$, let $M=\lfloor C_1\eps^{-\frac{1}{\alpha(T)}}\rfloor$ with $C_1>0$ small enough. Let $\{B_i\}_{i\in I}$ be a partition of $\lamn$ by $M$-boxes, then we can conduct an induction by Lemma~\ref{lem: small perturbation for partition function new} and obtain that $$\P(\cH_\diamond^1(2^{|I|}))\ge1-c_1|I|\exp\left(-c_1^{-1}\sqrt{\eps^{-1} M^{-\beta(T)}}\right).$$ Note that $|I|=\frac{N^2}{M^2}\le M^{-2}(\theta\eps^{-1})^{\frac{2}{\alpha(T)}}$, letting $C_1$ small enough shows that $\P\left({\cH_\diamond^1(2^{|I|})}\right)\ge 0.9$. The rest of the proof is similar to that in Section \ref{sec: TV exponential decay to 0} and thus we omit further details.

\section{Singularity in strong-disorder regime}\label{sec:singularity}
This section is devoted to the proof of singularity results in Theorem~\ref{thm-all-temperature}. It combines a coarse-graining argument with some near-critical results that have be shown in Section~\ref{sec: near-critical behavior}. The proof idea is quite similar to Section~\ref{sec:RFIM-singularity}, however, careful analysis of the random cluster configuration is involved since the RFFKIM lacks the nice domain Markov property (DMP) possessed by the RFIM.

\subsection{Critical-temperature and high-temperature cases}\label{sec: crit+high coarse-graining}

Critical-temperature and high-temperature regimes can be analyzed jointly. The core intuition is that in both cases, random clusters remain bounded in size – equivalently, dual paths are abundant.

More precisely, let us consider the boxes inside $\lamn$ that are surrounded by closed edges.
Let $M$ be an integer with order  $O(\eps^{-\frac{1}{\alpha(T)}})$  whose value will be defined later and without loss of generality, we assume $N$ is divisible by $2M$. Let $B_i~(i=1,\cdots,n)$ be a partition of $\lamn$ by $2M$-boxes and let $u_i$ denote the center of $B_i$ (i.e. $B_i=\Lambda_{2M}(u_i)$ {and $n=\Big(\frac{N}{2M}\Big)^2$}). 
\begin{defi}\label{def: outmost close region}
    For any configuration $\omega\in \{0,1\}^{\edge}$, we say $\calO=(\Omega_1,\Omega_2,\cdots,\Omega_n)$ is the outmost close region of $\omega$ if \begin{enumerate}[(i)]
        \item $\Omega_i\subset B_i$ and either $\Lambda_M(u_i)\subset\Omega_i$ or $\Omega_i=\emptyset$ $(i=1,\cdots,n)$.
        \item $\forall e=\{x,y\}\in\edge$, $x\in\Omega_i,y\notin\Omega_i\,$, $e$ is close in $\omega$.\label{item: condition 2}
        \item There does not exists a subset $\Lambda_M(u_i)\subset\Omega_i'\subset \Lambda_{2M}(u_i)$ satisfying \eqref{item: condition 2} such that $\Omega_i\subsetneqq\Omega_i'$.
    \end{enumerate} See Figure~\ref{fig: outmost close region} for  illustration.
    We use the notation $\calO=\mathrm{Out}(\omega)$ to denote the event that $\calO=(\Omega_1,\Omega_2,\cdots,\Omega_n)$ is the outmost close region of $\omega$. Let $\eta(\calO)$ denote the number of nonempty regions $\Omega_i$, i.e. $\eta(\calO)=\sum_{i=1}^n\1_{\Omega_i\neq\emptyset}$ and $\Omega(\calO)=\bigcup_{i=1}^n\Omega_i$ denote the union of the regions.
\end{defi}

\begin{figure}[htbp]
\vspace{-10pt}
\centering
\includegraphics[scale=0.12]{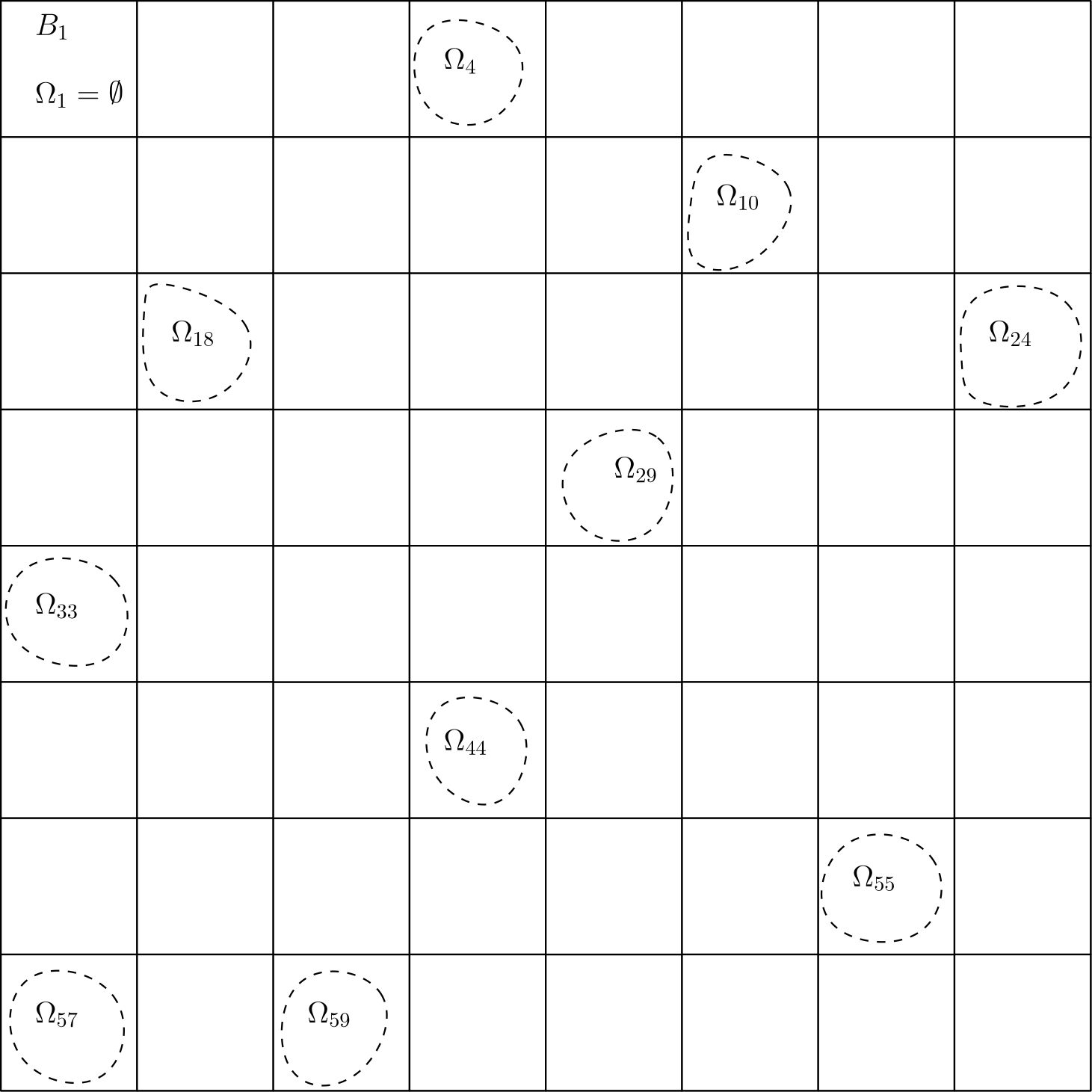} 
\vspace{-10pt} 
\caption{An illustration of the outmost close region. Dotted lines: the dual circuits.} 
\label{fig: outmost close region}
\vspace{-5pt} 
\end{figure}

For any $\calO=(\Omega_1,\Omega_2,\cdots,\Omega_n)$, let $\fkh\gamma(\calO)=\sum_{\omega:\calO=\mathrm{Out}(\omega)}\fkh\gamma(\omega)$ be the marginal distribution on the outmost close region. We emphasize that the event $\mcc O=\mathrm{Out}(\omega)$ is measurable under $\omega_{|E(\lamn)\setminus E(\Omega(\calO))}$. We further define $\Phi_{p,\calO}^h$ to be the product measure of $\phi^{\f,h}_{p,\Omega_i}~(i=1,2,\cdots,n)$. More precisely, let $\omega^\calO\in\{0,1\}^{E(\Omega(\calO))}$ be an edge configuration and $\omega_i^\calO=\omega^\calO_{|\Omega_i},$ then we define
$\Phi_{p,\calO}^h(\omega^\calO)=\prod_{i=1}^n\phi_{p,\Omega_i}^{\f,h}(\omega_i^\calO).$ By applying DMP, we obtain that for any configuration $\omega$ and $\calO=\mathrm{Out}(\omega)$, \begin{align}\label{eq: decomposition to xi and small region}
\fk\gamma(\omega)&=\fk\gamma(\calO)\cdot\Phi_{p,\calO}^0(\omega^\calO),~~~~
\fkh\gamma(\omega)=\fkh\gamma(\calO)\cdot\Phi_{p,\calO}^{\eps h}(\omega^\calO).
\end{align} 
We first claim that in order to show $\TV{\fkh\gamma}{\fk\gamma}$ is close to $1$, it suffices to show that $\TV{\Phi_{p,\calO}^{\eps h}}{\Phi_{p,\calO}^0}$ is close to $1$ with {high probability under the measure $\fk\gamma$ for overwhelming proportion of
external field $\eps h$. To see this,  for any fixed external field $\eps h,$
let $\mathscr{O}=\mathscr{O}(\eps h)$ denote the set of $\calO$ such that \begin{equation}
        \label{eq: good xi condition}\TV{\Phi^0_{p,\calO}}{\Phi^{\eps h}_{p,\calO}}\ge 1-c_1\exp(-c_1^{-1}n).
    \end{equation} Similar to \eqref{eq: ising singularity decouple} and \eqref{eq: ising singularity final}, we apply \eqref{eq: decomposition to xi and small region} to obtain  that \begin{align}
        &1-\TV{\fkh\gamma}{\fk\gamma}\,{=}\sum_{\mcc O}\sum_{\omega:\calO=\mathrm{Out}(\omega)}\min\left\{\fk\gamma(\calO)\cdot\Phi_{p,\calO}^0(\omega),\fkh\gamma(\calO)\cdot\Phi_{p,\calO}^{\eps h}(\omega)\right\}\nonumber\\
        \le& \sum_{\mcc O\in\mss O}\big(\fk\gamma(\calO)+\fkh\gamma(\calO)\big)\cdot\sum_{\omega:\calO=\mathrm{Out}(\omega)}\min\left\{\Phi_{p,\calO}^0(\omega),\Phi_{p,\calO}^{\eps h}(\omega)\right\}\nonumber+\nonumber\\&\sum_{\mcc O\in\mss O^c}\sum_{\omega:\calO=\mathrm{Out}(\omega)}\fk\gamma(\calO)\cdot\Phi_{p,\calO}^0(\omega)\nonumber\\=~&\sum_{\mcc O\in\mss O}\big(\fk\gamma(\calO)+\fkh\gamma(\calO)\big)\cdot\big(1-\TV{\Phi^0_{p,\calO}}{\Phi^{\eps h}_{p,\calO}}\big)+\fk\gamma(\mss O^c).\label{eq: total variation distance expansion}
    \end{align}

This naturally leads to the following definition:
\begin{defi}\label{def: good external field for product measure}
    We use the definition $\cH_{\diamond}(c)$ to denote the set of external field such that \begin{equation}\label{eq: good xi under good external field}
        \fk\gamma\Big(\big\{\calO : \TV{\Phi^0_{p,\calO}}{\Phi^{\eps h}_{p,\calO}}\ge 1-c\exp(-c^{-1}n)\big\}\Big)\ge 1-c\exp(-c^{-1}n).
    \end{equation}
\end{defi}
\begin{lem}\label{lem: good external field for product measure}
    Fix temperature $T\ge T_c$ and $\theta>0$,
    there exist constants $c_1,c_2>0$ only relies on $T,\theta$ such that for any disorder strength $\theta M^{-\alpha(T)}\le \eps\le 2\theta M^{-\alpha(T)}$, we have $$\P\big(\cH_{\diamond}(c_1)\big)\ge 1-c_2\exp\left(-c_2^{-1}\left(\frac{N}{2M}\right)^2\right).$$
\end{lem}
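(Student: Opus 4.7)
The plan is to reduce the statement to Corollary~\ref{cor: product measure total variation distance lower bound} applied to the product measures $\Phi_{p,\calO}^{0}=\bigotimes_i\phi_{p,\Omega_i}^{\f,0}$ and $\Phi_{p,\calO}^{\eps h}=\bigotimes_i\phi_{p,\Omega_i}^{\f,\eps h}$, and then to upgrade the resulting pointwise-$\calO$ statement into one about $\fk\gamma$-typical $\calO$'s via Fubini and Markov. For this I need two inputs: (i) the random cardinality $\eta(\calO)$ is of order $n=(N/2M)^2$ with overwhelming $\fk\gamma$-probability, and (ii) for every admissible $\Omega_i$ (with $\Lambda_M(u_i)\subset\Omega_i\subset\Lambda_{2M}(u_i)$) the expected small-box TV distance is bounded below by a constant $a(T,\theta)>0$ uniformly in $\Omega_i$. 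The independence hypothesis of Corollary~\ref{cor: product measure total variation distance lower bound} will be used through the fact that the $B_i$'s are disjoint, so the $h|_{\Omega_i}$'s are mutually independent once $\calO$ is fixed.

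For input (i), I would observe that $\{\Omega_i\neq\emptyset\}$ is equivalent to the existence of a closed dual circuit in the annulus $B_i\setminus\Lambda_M(u_i)$ surrounding $\Lambda_M(u_i)$, which is a decreasing event in $\omega|_{B_i}$. By DMP and CBC, conditionally on $\omega|_{\lamn\setminus B_i}$, the probability of this event is at least $\phi^{\w,0}_{p,B_i}$ of the same event, and the latter is bounded below by an absolute constant $c(T)>0$ by RSW estimates at $T=T_c$ and by exponential decay of cluster diameters at $T>T_c$ (see the bounds used in Section~\ref{sec: TV exponential decay to 0}). A sequential conditioning along an enumeration of the boxes, combined with the Azuma-Hoeffding inequality for martingale differences, then gives $\fk\gamma(\eta(\calO)\ge cn/2)\ge 1-C\exp(-C^{-1}n)$. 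For input (ii), I would invoke the near-critical estimate proved in Section~\ref{sec: near-critical behavior}: since $\eps M^{\alpha(T)}\in[\theta,2\theta]$ lies exactly at the scale where the TV distance first becomes non-negligible on a box of side comparable to $M$, one has
\[
  \mbb E\,\TV{\phi_{p,\Omega_i}^{\f,0}}{\phi_{p,\Omega_i}^{\f,\eps h}}\ge a(T,\theta)>0,
\]
uniformly over admissible $\Omega_i$.

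With both inputs in hand, fix any $\calO$ with $\eta(\calO)\ge cn/2$. Corollary~\ref{cor: product measure total variation distance lower bound} applied with $n$ replaced by $\eta(\calO)$ produces
\[
  \mbb P\Big(\TV{\Phi_{p,\calO}^{0}}{\Phi_{p,\calO}^{\eps h}}\ge 1-c_1\exp(-c_1^{-1}\eta(\calO))\Big)\ge 1-c_1\exp(-c_1^{-1} a^2\,\eta(\calO)).
\]
Taking expectation over $\calO\sim\fk\gamma$ and using input (i) to discard the small set $\{\eta(\calO)<cn/2\}$, I get
\[
  \mbb E_h\,\fk\gamma\Big(\big\{\calO:\TV{\Phi_{p,\calO}^{0}}{\Phi_{p,\calO}^{\eps h}}<1-c_1\exp(-c_1^{-1}n)\big\}\Big)\le C'\exp(-(C')^{-1}n).
\]
Markov's inequality with threshold $c_1\exp(-c_1^{-1}n)$, for a suitable choice of $c_1$ compared to $C'$, then yields the lemma. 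The only genuine difficulty in this proposal is input (ii), the uniform non-degeneracy of the small-box TV distance at the correlation-length scale $M\simeq\eps^{-1/\alpha(T)}$; all other ingredients (RSW/subcritical decay, DMP, CBC, and the product-measure lemma) are essentially off-the-shelf.
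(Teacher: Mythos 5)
Your proposal is correct and follows essentially the same approach as the paper's proof: RSW (at $T=T_c$) or subcritical exponential decay (at $T>T_c$) plus DMP give $\fk\gamma(\eta(\calO)\ge cn)\ge 1-C\exp(-C^{-1}n)$, the near-critical lower bounds of Propositions~\ref{prop: anti-absolute continuity at critical temperature} and \ref{prop: anti-absolute continuity at high temperature} supply the uniform small-box TV lower bound $a(T,\theta)>0$, Corollary~\ref{cor: product measure total variation distance lower bound} is then applied conditionally on $\calO$, and Markov's inequality finishes. The only cosmetic difference is that you invoke Azuma--Hoeffding via a sequential martingale, whereas the paper uses the Bernstein-type Lemma~\ref{lem: concentration inequality with finite exponential moment}; these are interchangeable here.
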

\begin{proof}
    Recall that $\{B_i\}_{1\le i \le n}$ is a partition of $\lamn$ by $2M$-boxes. Applying the RSW theory {(see \cite{DT20} and also e.g. \cite{DHN11, Tassion16, KT23} for other remarkable progress on the RSW theory)}, there exists a constant $C_1>0$ such that $\phi^{\xi,0}_{p,B_i}(\Omega_i\neq\emptyset)\ge C_1$ where $\Omega_i$ is the region satisfying Definition \ref{def: outmost close region} and $\xi$ is an arbitrary boundary condition on $\pari B_i$. Thus, we obtain by DMP and Lemma~\ref{lem: concentration inequality with finite exponential moment} that \begin{equation}\label{eq: good xi}
        \fk\gamma\left(\left\{\eta(\calO)\ge \frac{C_1n}{2}\right\}\right)\ge 1-C_2\exp({-}C_2^{-1}n).
    \end{equation}
    Let $\cA$ denote the event that $1-\TV{\Phi^0_{p,\calO}}{\Phi^{\eps h}_{p,\calO}}\le C_3\exp({-}C_3^{-1}n)$  with some $C_3>0$ to be determined later.
Note that $\Phi^0_{p,\calO}$ and $\Phi^{\eps h}_{p,\calO}$ are product measures of $\phi^{\f,0}_{p,\Omega_i}$ and $\phi^{\f,\eps h}_{p,\Omega_i}$ and Propositions~\ref{prop: anti-absolute continuity at critical temperature} and \ref{prop: anti-absolute continuity at high temperature} show that $\E\TV{\phi^{\f,0}_{p,\Omega_i}}{\phi^{\f,\eps h}_{p,\Omega_i}}\ge C_4>0$. Thus, applying Corollary~\ref{cor: product measure total variation distance lower bound} gives that for any $\mcc O$ such that $\eta(\calO)\ge \frac{C_1n}{2}$ \begin{equation}\label{eq: good event under good xi}
    \P\otimes\fk\gamma(\cA\mid \{\mcc O=\mathrm{Out}(\omega)\})\ge 1-C_5\exp({-}C_5^{-1}n).
\end{equation}Note that we also fix $C_3$ here to be the constant given by Corollary~\ref{cor: product measure total variation distance lower bound}.

    Integrating \eqref{eq: good event under good xi} over $\{\eta(\calO)\ge \frac{C_1n}{2}\}$ and combining with \eqref{eq: good xi}, we get that 
    \begin{equation}\label{eq: bad event probability}
        \P\otimes\fk\gamma(\cA^c)\le C_6\exp({-}C_6^{-1}n).
    \end{equation} Thus, the desired result comes from Markov inequality and letting $c_1>\max\{C_3,C_6\}$.
\end{proof}
With Lemma~\ref{lem: good external field for product measure}, we are ready to prove the second criterion of Theorem~\ref{thm-all-temperature}.
\begin{proof}[Proof of Theorem~\ref{thm-all-temperature} (high-temperature and critical-temperature)~part~two]
    Let $M$ be an integer such that $\theta M^{-\alpha(T)}\le \eps \le 2\theta M^{-\alpha(T)}$  and we can simply choose $\theta=1$.
    We first consider $h\in\cH_{\diamond}(c_1)$ where $c_1$ was defined in Lemma~\ref{lem: good external field for product measure}. 
    Combining \eqref{eq: total variation distance expansion} with \eqref{eq: good xi under good external field} and \eqref{eq: good xi condition}, we get that \begin{align}
        1-\TV{\fkh\gamma}{\fk\gamma}&\le \sum_{\mcc O\in\mss O}\big(\fk\gamma(\calO)+\fkh\gamma(\calO)\big)\cdot c_1\exp(-c_1^{-1}n)+c_1\exp(-c_1^{-1}n)\nonumber\\&\le 3c_1\exp(-c_1^{-1}n).\nonumber
    \end{align}The desired result comes from Lemma~\ref{lem: good external field for product measure}.
   \end{proof} 
    
\subsection{The low-temperature case}\label{sec: coarse graining at low temperature}
Now we consider the low-temperature case. Let $M<N$ be a fixed integer with order $O(\eps^{-1})$ to be defined later.  The main difference from Section \ref{sec: crit+high coarse-graining} is that \eqref{eq: good xi} does not hold since with high probability there does not exist a dual circuit in $\Lambda_{2M}(u_i)\setminus\Lambda_M(u_i)$. Thus, we need to consider an open circuit instead of a dual circuit, which provides a wired boundary condition instead of a free boundary condition. Further, we emphasize that the second equality in \eqref{eq: decomposition to xi and small region} fails since the external field outside $\Omega_i$ also influences the measure. Nevertheless, the influence of outside external field is manageable if the open circuit is connected to a unique large cluster.

We start with the construction of the desired unique large cluster.

\begin{defi}
\label{def:unique-large-cluster}
    Recall that $\{B_i=\Lambda_{2M}(u_i):i=1,2,\cdots,n\}$ is a partition of $\lamn$ by $2M$-boxes. Without loss of generality, we assume $M$ is an even number and $N$ is divisible by $2M$.  Define $ \Lambda:=\bigcup_{i=1}^n(\Lambda_{2M}(u_i)\setminus\Lambda_{M}(u_i))$ to be the union of the annuli in each $B_i$ and we point out that $\Lambda$ is connected.
    
    An edge configuration $\omega$ is called \textbf{well-connected} if there exists a unique connected cluster in $\omega_{|\Lambda}$ such that its diameter is not less than $N/2.$ And we will call this cluster the \textbf{main} cluster in $\omega$ for convenience. Besides, we write $\mcc W\subset\{0,1\}^{\mbb E(\Lambda_N)}$ for the set of well-connected configurations.
\end{defi}

\begin{rmk}
  We highlight that in the definition of a well-connected configuration $\omega$, the uniqueness of the main cluster is confined to $\omega_{\mid\Lambda}$. In other words, despite this restriction, there remains the possibility that certain small clusters within $\omega_{\mid\Lambda}$ may be connected to a large cluster in $\omega$ that differs from the main cluster. This clarification is solely intended to enhance readers' understanding and does not impact the proof in any way.
\end{rmk}

Then we will explore the open circuits in each $B_i$ that is connected to the main cluster. The following definition comes from a similar idea as Definition~\ref{def: outmost close region}.
\begin{defi}\label{def: outmost open region}
For any well-connected configuration $\omega\in\mcc W$ and any  external field $h$, we say $\calO=(\Omega_1,\Omega_2,\cdots,\Omega_n,\tilde\omega)$ is the outmost good open region of $\omega$ if
   \begin{enumerate}[(i)] 

 \item  For all $i=1,2,\cdots,n,$
 either $\Omega_i=\emptyset$ or $\Lambda_{M/2}(u_i)\subset \Omega_i\subset\Lambda_M(u_i)$ such that
the circuit boundary of $\Omega_i$ is open and it is connected to the main cluster of $\omega.$

 \label{item: condition 2'}

 \item If $\Omega_i\neq\emptyset$, then $\eps h$ is \textbf{$c$-good} with respect to $\Omega_i$ where the $c$-good external field will be defined in the Definition~\ref{def: good external field for product measure low decomposition} below.
 \label{item: condition 4}
\item There does not exists a subset $\Lambda_{M/2}(u_i)\subset\Omega'_i\subset \Lambda_{M}(u_i)$ satisfying {\eqref{item: condition 2'} } and {\eqref{item: condition 4} }such that $\Omega_i\subsetneqq\Omega'_i$.
 \item Let $\Gamma=\cup_{i=1}^n(B_i\setminus\Omega_i)$, then $\tilde\omega=\omega_{|\Gamma}$.

   \end{enumerate}
    See Figure~\ref{fig: outmost good open region} for illustration. Let $\cC$ be the main cluster of $\tilde\omega$ in $\Lambda$. Let $\cC'$ be the cluster that contains $\cC$ of $\tilde\omega$ in $\Gamma$. Note that $\cC\subset\cC'$. We slightly abuse the notation and call $\cC'$ the main cluster in $\tilde\omega$.
\end{defi}

\begin{figure}[htbp]
\vspace{-10pt}
\centering
\includegraphics[scale=0.1]{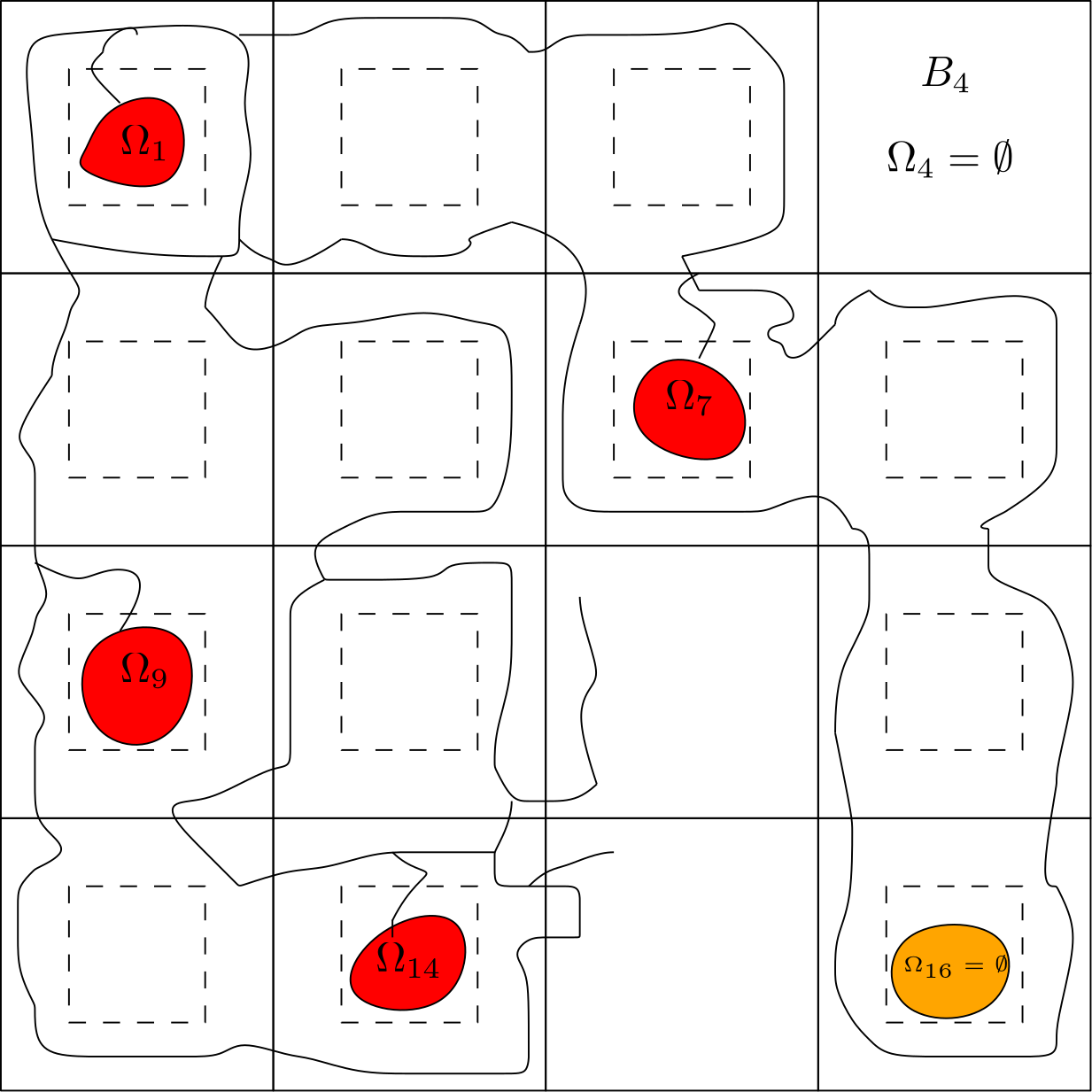} 
\vspace{-10pt} 
\caption{An illustration of the outmost good open region. The dotted squares are $\Lambda_M(u_i)$ where $\Lambda_{2M}(u_i)=B_i$. We do not draw the box $\Lambda_{M/2}(u_i)$, but each red and orange region contains a box of such size. The red color denotes that $h$ is $c$-good with respect to the region, and  the orange color denotes that $h$ is not $c$-good with respect to the region.} 
\label{fig: outmost good open region}
\vspace{-5pt} 
\end{figure}

 We use the notation $\calO=\mathrm{Out}(\omega,h)$ to denote that $\calO=(\Omega_1,\Omega_2,\cdots,\Omega_n,\tilde\omega)$ is the outmost good open configuration of $\omega$. Note that for $\calO=\mathrm{Out}(\omega,h)$, $\omega$ is well-connected if and only if $\tilde{\omega}$ is well-connected. Thus, we get that $\{\omega: \mathrm{Out}(\omega,h)=(\Omega_1,\Omega_2,\cdots,\Omega_n,\tilde\omega)\}=\tilde{\omega}\otimes\{0,1\}^{E(\cup_{i=1}^{n}\Omega_i)}$.
 Let $\eta(\calO)$ denote the number of nonempty regions $\Omega_i$. We define $\Phi_{p,\calO}^{\eps h}$ to be FK measure $\fkh\gamma(\cdot\mid\tilde\omega)$ conditioned on $\omega_{|\Gamma}=\tilde\omega$.  By calculations in \eqref{eq: total variation distance expansion}, it suffices to prove a lower bound for $\TV{\Phi^{\eps h}_{p,\calO}}{\Phi^0_{p,\calO}}$ as in \eqref{eq: good xi condition}. Unlike Section~\ref{sec: crit+high coarse-graining}, $\Phi^{\eps h}_{p,\calO}$ is not a product measure of measures supported on $\Omega_i$ because the external field outside $\Omega_i$ will influence the measure on $\Omega_i$. Therefore, we introduce the following definition of good external field.
\begin{defi}\label{def: good external field for product measure low decomposition} For any region $\Lambda_{M/2}\subset\Omega\subset\Lambda_{M}$, consider the FK-Ising measure with wired boundary condition on $\Omega$. Let  $\cC_*$ be the boundary cluster, we say the external field $\eps h$ is $c$-good with respect to $\Omega$ if the following holds.\begin{enumerate}[(i)]
        \item $\Big\langle\exp\left(\frac{\eps|h_{\cC_*}|}{T}\right)\Big\rangle_{p,\Omega}^{\w,0}\le \exp(\eps^{-1}M^{-1}).$\label{item: good external field 1}
        \item $\Big\langle\exp\left(\frac{\eps|h_{\cC_*}|}{T}\right)\cdot\prod_{\cC\in\fC\setminus\{\cC_*\}}\cosh\left(\frac{\eps h_{\cC}}{T}\right)\Big\rangle_{p,\Omega}^{\w,0}\le \exp(\eps^{-1}M^{-1}).$\label{item: good external field 2}
        \item $\phi_{p,\Omega}^{\w,0}\left(\sum_{\cC\in\mathfrak{C}\setminus\{\cC_*\}}f(\frac{\eps h_{\cC}}{T})-\frac{\eps^2|\Omega|-|\cC_*|}{2T^2}\le M^{-0.1}\right)\ge 1-M^{-0.1}.$\label{item: good external field 3}
        \item $\var_{\phi}\big(\eps h_{\cC_*}\big)\ge c$ where $\var_{\phi}$ denotes the variance operator under $\phi_{p,\Omega}^{\w,0}$.\label{item: good external field 4}
\end{enumerate}
\end{defi}
The following lemma provides a useful anticoncentration fact about the good external field with the proof postponed to the end of this section.

\begin{lem}\label{lem: anticoncentration for good external field}
    There exists a constant $\theta_0>0$ such that for any $0<\theta<\theta_0$ and $\theta M^{-1}\le \eps\le 2\theta M^{-1}$, we have the following. For any region $\Lambda_M\subset\Omega\subset\Lambda_{2M}$ and any $c$-good external field $\eps h$ with respect to $\Omega$, let $X^\pm=\sum_{\cC\in\fC\setminus\{\cC_*\}}f\left(\frac{\eps h_{\cC}}{T}\right)\pm\frac{\eps h_{\cC_*}}{T}$. Then we have  $$\langle X^\pm\rangle^{\w,0}_{p,\Omega}\le \ln\big(\langle \exp(X^\pm)\rangle^{\w,0}_{p,\Omega}\big)-c_1$$ for some $c_1>0$ depending only on $\theta$  and $c$.
\end{lem}

Then we claim that ${\eta(\calO)}$ is large for high $\P$-probability and also postpone the proof {to the end of this section.}
\begin{defi}\label{def: good external field for number of xi}
    We use the definition $\cH_{\Box}(c)$ to denote the set of external field such that \begin{equation}\label{eq: good xi under good external field low}
        \fk\gamma\left(\left\{\calO=\mathrm{Out}(\omega,h):{\eta(\calO)}\ge \frac{cN^2}{M^2}\right\}\right)\ge 1-c^{-1}\exp\left(-\frac{cN}{M}\right).
    \end{equation}
\end{defi}
\begin{lem}\label{lem: good external field for number of xi}
    Fix $T< T_C$ and $\theta<\theta_0$ where $\theta_0$ is defined in Lemma~\ref{lem: anticoncentration for good external field}. Then there exist constants $c_1,c_2>0$ such that for any disorder strength $\theta M^{-1}\le \eps\le 2\theta M^{-1}$, we have $\P\big(\cH_{\Box}(c_1)\big)\ge 1-c_2\exp\big(-c_2^{-1}\frac{N}{M}\big).$
\end{lem}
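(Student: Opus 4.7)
The plan is to prove this lemma by constructing a box-level proxy event that forces the global quantity $\eta(\calO)$ to be large, and whose per-box probability is uniformly bounded below. By the usual Markov's inequality trick (as in Section~\ref{sec: Concentration for the Radon Nikodym derivative}), it suffices to show that under the product measure $\P\otimes \fk\gamma$, the event $\{\eta(\calO)<c_1 N^2/M^2\}$ has probability at most $\exp(-c_3 N^2/M^2)$; taking the square root of this exponential decay then yields the statement with rate $N/M$ on both sides.

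For each box $B_i$, the plan is to introduce a \emph{local good event} $G_i$ that depends only on $\omega|_{\Lambda_M(u_i)\setminus\Lambda_{M/2}(u_i)}$ and $h|_{\Lambda_M(u_i)}$, namely: there exists an open circuit in the annulus $\Lambda_M(u_i)\setminus\Lambda_{M/2}(u_i)$ and, letting $\Omega_i$ denote the region enclosed by its outmost realization, $h$ is $c$-good with respect to $\Omega_i$ in the sense of Definition~\ref{def: good external field for product measure low decomposition}. Two properties must be verified for $G_i$. First, since $T<T_c$, RSW and finite-energy arguments give open circuits in annuli with positive $\fk\gamma$-probability, uniformly over boundary conditions imposed at $\pari B_i$. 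Second, conditional on the annulus configuration producing such a circuit enclosing $\Omega_i$, the $\P$-probability that $h|_{\Omega_i}$ is $c$-good should be at least some $c_*>0$, uniformly over $\Omega_i$. For conditions (i)--(ii) of $c$-goodness, since $|\cC_*|\le |\Omega_i|\le 4M^2$ and $\eps=\Theta(M^{-1})$, the random variable $\eps h_{\cC_*}$ has magnitude $O(\theta)$, which is bounded, so the claimed expectation bounds hold by Gaussian concentration. For condition (iii), the same chaos-expansion scheme used in Lemmas~\ref{lem: small perturbation for partition function} and \ref{lem: good external field for product of hyperbolic cosine}, restricted to clusters other than $\cC_*$, gives the required concentration with probability tending to one. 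For (iv), one notes that at low temperature the boundary cluster $\cC_*$ fluctuates in shape under $\phi_{p,\Omega}^{\w,0}$ by symmetric differences of positive density in $|\Omega|$; against a typical realisation of $h$, these fluctuations force $\var_\phi(\eps h_{\cC_*})$ to be of constant order (since $\eps\sqrt{|\Omega|}=\Theta(\theta)$).

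Once each $G_i$ is lower-bounded in $\P\otimes\fk\gamma$-probability by some $c_0>0$, the plan is to exploit conditional independence: reveal the edge configurations in $\bigcup_i (\Lambda_M(u_i)\setminus\Lambda_{M/2}(u_i))$ together with all of $h$, then apply DMP to observe that the residual randomness splits across disjoint sub-regions. Since the $h|_{B_i}$ are globally independent, and the $\omega$-events defining each $G_i$ are measurable in disjoint annuli given some conditioning, a concentration bound of the Hoeffding type (Lemma~\ref{lem: concentration inequality with finite exponential moment}) gives $\sum_i \mathbf 1_{G_i}\ge (c_0/2)(N/2M)^2$ off an event of $\P\otimes\fk\gamma$-probability at most $\exp(-c_3 N^2/M^2)$. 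Combining this with the well-connectedness event (which holds with $\fk\gamma$-probability at least $1-\exp(-cN)$ at low temperature by duality and RSW) ensures that each annulus event upgrades into an actual contribution to $\eta(\calO)$: the local open circuit automatically connects to the unique main cluster in $\Lambda$ because the main cluster touches every annulus with high probability at low temperature.

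The main obstacle is the verification of condition (iv) of Definition~\ref{def: good external field for product measure low decomposition}. Unlike the concentration conditions (i)--(iii), which are handled by standard Gaussian and chaos-expansion estimates, the variance lower bound is an \emph{anti}-concentration statement and requires showing that under $\phi_{p,\Omega}^{\w,0}$ the random sum $h_{\cC_*}=\sum_{v\in\cC_*}h_v$ does not degenerate as $\omega$ varies. I would address this by exhibiting a pair of $\omega$-configurations, each of positive probability under $\phi_{p,\Omega}^{\w,0}$, whose boundary clusters differ on a symmetric-difference set of macroscopic size, say $\Theta(M)$ vertices. Against a $\P$-typical $h$ such a shift contributes $\Theta(\sqrt{M})$ to $|h_{\cC_*}|$, and after multiplication by $\eps=\Theta(1/M)$ this is $\Theta(M^{-1/2})$; stacking $\Theta(M)$ such independent modifications through local edge switches upgrades the variance lower bound to $\Omega(1)$, which is precisely the constant required in (iv).
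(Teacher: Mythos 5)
Your overall architecture matches the paper's: use Markov's inequality to reduce the claim to a bound on the product measure $\P\otimes\fk\gamma$, prove well-connectedness with high $\fk\gamma$-probability via RSW and coarse-graining, show each box independently produces a $c$-good region with positive $\P\otimes\fk\gamma$-probability, and stitch these into a Bernstein-type bound (Lemma~\ref{lem: concentration inequality with finite exponential moment}) to get $\eta(\calO)\ge c_1 n$ with high probability. This is essentially the paper's proof; the paper simply cites Lemmas~\ref{lem: anticoncentrate external field at T small0}–\ref{lem: anticoncentrate external field at T small2} for the $c$-goodness and then applies Bernstein and Markov.

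Two points deserve attention. First, you are carrying the wrong rate through the Markov step: the bottleneck under $\P\otimes\fk\gamma$ is the well-connectedness event, whose failure probability after coarse-graining to $2M$-boxes and domination by site percolation (\cite{LSS97}) is of order $\exp(-cN/M)$, not $\exp(-cN)$ as you assert; the Bernstein piece alone gives the better $\exp(-cN^2/M^2)$, but the union with the well-connectedness failure forces the overall exponent to be $N/M$. Your phrase ``taking the square root of this exponential decay'' does not describe a valid manipulation of the exponent; what actually happens is a standard Markov split that preserves the $N/M$ rate on both sides. Second, and more substantively, your treatment of condition~(iv) in Definition~\ref{def: good external field for product measure low decomposition} departs from the paper and is not internally consistent. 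You first assert the boundary cluster fluctuates by a symmetric difference of positive density in $|\Omega|$ (i.e.\ $\Theta(M^2)$) and then, a few lines later, work with a symmetric difference of $\Theta(M)$ and attempt to ``stack $\Theta(M)$ independent modifications'' from local edge switches. That requires justifying independence across the switches, and the single-switch shift $\Theta(M^{-1/2})$ does not obviously aggregate to $\Omega(1)$ without that independence; moreover, local switches typically change $\cC_*$ by $\Theta(1)$ vertices, not $\Theta(M)$. The paper instead proves Lemma~\ref{lem: anticoncentrate external field at T small2} cleanly via Paley–Zygmund, computing first and second moments of $\var_\phi(\eps h_{\cC_*})$ over $h$, with the first-moment lower bound $\Omega(\eps^2 N^2)$ furnished by the symmetric-difference estimate in Lemma~\ref{lem: low temperature maximal cluster deviation bound} (which really does have density $\Theta(1)$); you should replace your edge-surgery sketch with that route, or fully develop the independence and scaling of the surgery argument.
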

Now we are ready to give a similar bound as \eqref{eq: good xi condition}, as incorporated in the following Lemma~\ref{prop: exponential decay through coarse graining low}.
\begin{lem}\label{prop: exponential decay through coarse graining low}
    For any $\calO$ such that $\eta(\calO)\ge c_1n$, we have $$\TV{\Phi^{0}_{p,\calO}}{\Phi^{\eps h}_{p,\calO}}\ge 1-c_2\exp(-c_2^{-1}n)$$ where $c_2>0$ does not depend on $\mcc O$ or $\eps h$.
\end{lem}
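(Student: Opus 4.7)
The strategy mirrors the coarse-graining of Section~\ref{sec: crit+high coarse-graining}, with one essential new difficulty: once we condition on $\omega|_\Gamma=\tilde\omega$, the measure $\Phi^{\eps h}_{p,\calO}$ is \emph{not} a product over the $\Omega_i$'s, because the boundary clusters $\cC_*^{(i)}\subset\Omega_i$ all merge with the main cluster $\cC'\subset\Gamma$, and the single $\cosh$-factor attached to this merged cluster couples all the interior configurations. The plan is to convert this coupling into a mixture of product measures, tensorize via the Bhattacharyya (Hellinger) affinity, and then handle the mixture by convexity. Concretely, using $2\cosh y=e^{y}+e^{-y}$ with $y=\tfrac{\eps}{T}\bigl(h_{\cC'}+\sum_i h_{\cC_*^{(i)}}\bigr)$, I would rewrite
\[
\Phi^{\eps h}_{p,\calO}\;=\;\alpha\,\psi^{+}+(1-\alpha)\,\psi^{-},
\]
where $\alpha\in(0,1)$ depends on $\tilde\omega$, and each $\psi^{\pm}=\bigotimes_i\psi^{\pm}_i$ is a genuine product measure whose $i$-th factor satisfies $\psi^{\pm}_i(\omega_i)\propto\phi^{\w,0}_{p,\Omega_i}(\omega_i)\exp(X^{\pm}_i(\omega_i))$ for exactly the $X_i^{\pm}$ of Lemma~\ref{lem: anticoncentration for good external field}; the reference measure $\Phi^{0}_{p,\calO}=\bigotimes_i\phi^{\w,0}_{p,\Omega_i}$ is already a product.

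The core step is a per-factor Bhattacharyya bound: for each nonempty $\Omega_i$ I would prove
\[
\rho^{\pm}_i\;:=\;\sum_{\omega_i}\sqrt{\phi^{\w,0}_{p,\Omega_i}(\omega_i)\,\psi^{\pm}_i(\omega_i)}\;=\;\frac{\langle e^{X^{\pm}_i/2}\rangle^{0}_{p,\Omega_i}}{\sqrt{\langle e^{X^{\pm}_i}\rangle^{0}_{p,\Omega_i}}}\;\le\;1-c,
\]
for some $c>0$ depending only on $T$ and $\theta$. With $\Lambda_i(t):=\log\langle e^{tX_i^{\pm}}\rangle^{0}_{p,\Omega_i}$ this amounts to the midpoint gap $\Lambda_i(1)/2-\Lambda_i(1/2)\ge c/8$, which I would obtain from a uniform lower bound on $\Lambda_i''(t)=\var_t(X_i^{\pm})$ throughout $t\in[0,1]$. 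Given that gap, tensorization yields $\sum\sqrt{\Phi^{0}_{p,\calO}\,\psi^{\pm}}=\prod_{i:\Omega_i\ne\emptyset}\rho^{\pm}_i\le(1-c)^{\eta(\calO)}\le\exp(-cc_1 n)$ under the hypothesis $\eta(\calO)\ge c_1 n$. Using $\sqrt{\alpha a+(1-\alpha)b}\le\sqrt{\alpha a}+\sqrt{(1-\alpha)b}$ pointwise then gives $\sum\sqrt{\Phi^{0}_{p,\calO}\,\Phi^{\eps h}_{p,\calO}}\le 2\exp(-cc_1 n)$, and the elementary inequality $1-\TV{\mu}{\nu}=\sum\min(\mu,\nu)\le\sum\sqrt{\mu\nu}$ delivers the desired bound.

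The hard part will be the per-factor Bhattacharyya gap. Lemma~\ref{lem: anticoncentration for good external field} is only a Kullback--Leibler statement, $\Lambda_i(1)-\Lambda_i'(0)\ge c_1$, i.e.\ $\int_0^1(1-u)\Lambda_i''(u)\,du\ge c_1$; this could easily hold with $\Lambda_i''$ concentrated near $u=0$, in which case $\Lambda_i(1)/2-\Lambda_i(1/2)=\tfrac12\int_0^1\min(u,1-u)\Lambda_i''(u)\,du$ can be made arbitrarily small. Conditions (i)--(iv) in Definition~\ref{def: good external field for product measure low decomposition} are designed precisely to preclude this pathology: (iv) provides $\Lambda_i''(0)=\var_0(X_i^{\pm})\ge c$, (iii) localizes the bulk part $\sum_{\cC\ne\cC_*}f(\eps h_\cC/T)$ near its deterministic value, and the exponential-moment bounds (i) and (ii) prevent the tilted measures $\propto\phi^{\w,0}_{p,\Omega_i}e^{tX^{\pm}_i}$ with $t\in[0,1]$ from concentrating on rare configurations with huge $|h_{\cC_*}|$. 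Combined, these should propagate the variance lower bound across the whole segment $[0,1]$ and produce the midpoint gap.
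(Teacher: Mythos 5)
Your proposal is correct in outline and takes a genuinely different route from the paper, though the load-bearing step is only sketched. Both arguments begin from the identity $2\cosh y=e^y+e^{-y}$ and the same auxiliary variables $X_i^\pm$; the divergence is in what one does next. The paper works with the Radon--Nikodym derivative $\Phi^{\eps h}_{p,\calO}/\Phi^0_{p,\calO}$ directly: since $\Phi^0_{p,\calO}$ is a product and the $X_i^\pm$ are independent with uniform exponential moments, Bernstein's inequality (Lemma~\ref{lem: concentration inequality with finite exponential moment}) concentrates $\sum_{i\in I}X_i^\pm$ near $\sum_{i\in I}\langle X_i^\pm\rangle$, which by the per-factor KL gap of Lemma~\ref{lem: anticoncentration for good external field} sits a distance $\ge c_1|I|$ below $\sum_i\log\langle e^{X_i^\pm}\rangle$; this makes \emph{both} summands in \eqref{eq: radon derivative expansion} exponentially small on a $\Phi^0$-high-probability set (crucially, the sign-unknown quantity $a$ cancels), and the overlap $\sum\min(\Phi^0,\Phi^{\eps h})$ is then small. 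You instead recast $\Phi^{\eps h}_{p,\calO}$ as a two-component mixture $\alpha\psi^++(1-\alpha)\psi^-$ of genuine product measures, tensorize the Bhattacharyya coefficient, and control the mixture by subadditivity of $\sqrt{\cdot}$; this cleanly avoids any direct handling of $a$.

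The step you call ``the hard part'' is real: the per-factor midpoint gap $\tfrac12\Lambda_i(1)-\Lambda_i(\tfrac12)\ge c$ (equivalently $\rho_i^\pm\le 1-c$) is strictly stronger than the KL statement $\Lambda_i(1)-\Lambda_i'(0)\ge c_1$ provided by Lemma~\ref{lem: anticoncentration for good external field}, since $\tfrac12\Lambda(1)-\Lambda(\tfrac12)=\tfrac12\int_0^1\min(u,1-u)\Lambda''(u)\,du$ weights the curvature away from $0$ while $\Lambda(1)-\Lambda'(0)=\int_0^1(1-u)\Lambda''(u)\,du$ weights it near $0$. Your intended fix---a uniform-in-$t$ variance bound $\Lambda_i''(t)\ge c'$ on $[0,1]$---does appear to be provable from the same hypotheses: property~(ii) of Definition~\ref{def: good external field for product measure low decomposition} gives $\langle e^{|X_i^\pm|}\rangle\le e^{1/\theta}$, and the variance lower bound $\var_\phi(X_i^\pm)\ge C_4$ established inside the proof of Lemma~\ref{lem: anticoncentration for good external field} produces, via Lemma~\ref{lem: dis concentration using variance}, an anti-concentration pair $\Omega_1,\Omega_2$ of uniform $\phi$-mass on which $X_i^\pm$ is separated; the exponential-moment bound forces the median of $X_i^\pm$ to be bounded below, so $\Omega_1,\Omega_2$ carry a uniform fraction of $\langle e^{tX_i^\pm}\rangle$ for every $t\in[0,1]$, which propagates the variance lower bound across $[0,1]$. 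So the route is sound, but this propagation argument is a nontrivial additional lemma that must be written out; without it the proposal is incomplete, whereas the paper's approach needs only the KL gap and then sidesteps the per-factor issue entirely by working with the concentrated sum.
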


\begin{proof} Fix any external field $\eps h$ and $\calO,$
    let $I$ denote the set of indices such that $\Omega_i\neq\emptyset$ (thus $\eps h$ is good with respect to $\Omega_i$). For any $\mathrm{Out}(\omega,h)=\calO$ and $i\in I$, let $\fC^i$ be the collection of clusters in $\omega_{\mid\Omega_i}$ and let $\cC_*^i$ be the boundary cluster in $\Omega_i$. Let $$X_i^\pm(\omega)=\sum_{\cC\in\fC^i\setminus\{\cC_*^i\}}f\left(\frac{\eps h_{\cC}}{T}\right)\pm\frac{\eps h_{\cC_*^i}}{T}.$$ Recall Definitions \ref{def:unique-large-cluster} and \ref{def: outmost open region}, we get that $\cC_*^i$ are connected by the main cluster of $\omega$. Then we can compute the Radon-Nikodym derivative between $\Phi^{0}_{p,\calO}$ and $\Phi^{\eps h}_{p,\calO}$ \begin{align}
        \frac{\Phi^{\eps h}_{p,\calO}(\omega)}{\Phi^{0}_{p,\calO}(\omega)}&=\frac{\prod_{i\in I}\prod_{\cC\in\fC^i\setminus\{\cC_*^i\}}\cosh(\frac{\eps h_{\cC}}{T})\cdot\cosh\left(\sum_{i\in I}\frac{\eps h_{\cC_*^i}}{T}+a\right)}{\Big\langle\prod_{i\in I}\prod_{\cC\in\fC^i\setminus\{\cC_*^i\}}\cosh\left(\frac{\eps h_{\cC}}{T}\right)\cdot\cosh\left(\sum_{i\in I}\frac{\eps h_{\cC_*^i}}{T}+a\right)\Big\rangle^{0}_{p,\calO}}\nonumber\\&=\frac{\exp\left(\sum_{i\in I}X_i^++a\right)+\exp\left(\sum_{i\in I}X_i^--a\right)}{\Big\langle\exp\left(\sum_{i\in I}X_i^++a\right)+\exp\left(\sum_{i\in I}X_i^--a\right)\Big\rangle^{0}_{p,\calO}}\nonumber\\&=\frac{\exp\left(\sum_{i\in I}X_i^++a\right)+\exp\left(\sum_{i\in I}X_i^--a\right)}{\exp(a)\cdot\prod_{i\in I}\big\langle\exp(X_i^+)\big\rangle^{\w,0}_{p,\Omega_i}+\exp(-a)\cdot\prod_{i\in I}\big\langle\exp(X_i^-)\big\rangle^{\w,0}_{p,\Omega_i}}\label{eq: radon derivative expansion}
    \end{align} where $a$ is the sum of the external field on the main cluster of $\tilde\omega$. Recalling Definition \ref{def: good external field for number of xi} and combining with Lemma~\ref{lem: anticoncentration for good external field}, we get that $$\langle X_i^\pm\rangle^0_{p,\Omega_i}\le \ln\left(\langle \exp(X_i^\pm)\rangle^0_{p,\Omega_i}\right)-C_1$$ for some $C_1>0$. Let $\sS^\pm$ denote the collection of configurations such that $$\big|\sum_{i\in I}X_i^\pm-\langle X_i^\pm\rangle^0_{p,\Omega_i}\big|\ge \frac{C_1n}{2}.$$ Noticing that $\Phi^0_{p,\calO}$ is the product measure of $\phi^{\w,0}_{p,\Omega_i}$,  by Lemma~\ref{lem: concentration inequality with finite exponential moment} and $|I|\ge c_1n$ we obtain that \begin{equation}\label{eq: concentration for Xi}
        \Phi^0_{p,\calO}\Big(\sS^+\cup \sS^-\Big)\le C_2^{-1}\exp(-C_2n).
    \end{equation}
     Further for any $\omega\in (\sS^+)^c\cap(\sS^-)^c$, we have that \begin{align}
         \exp\left(\sum_{i\in I}X_i^\pm\pm a\right)&\le\exp\left(\sum_{i\in I}\left\langle X_i^\pm\right\rangle^{\w,0}_{p,\Omega_i}\pm a+\frac{C_1n}{2}\right)\nonumber\\&\le \prod_{i\in I}\left\langle \exp(X_i^\pm)\right\rangle^{\w,0}_{p,\Omega_i}\cdot\exp\left(\pm a-\frac{C_1n}{2}\right).\label{eq: concentration into radon}
     \end{align} Combining \eqref{eq: concentration into radon} with \eqref{eq: radon derivative expansion} shows that for any $\omega\in (\sS^+)^{c}\cap(\sS^-)^{c}$
     \begin{align}\label{eq: tv exponential decay low}
        \frac{\Phi^{\eps h}_{p,\calO}(\omega)}{\Phi^{0}_{p,\calO}(\omega)}\le \exp\left(-\frac{C_1n}{2}\right).
    \end{align}The desired result comes from combining \eqref{eq: tv exponential decay low} with \eqref{eq: concentration for Xi}.
\end{proof}

With Lemma \ref{lem: good external field for number of xi} and Lemma \ref{prop: exponential decay through coarse graining low} in place of Lemma~\ref{lem: good external field for product measure}, the proof of the singularity in {the} low-temperature regime
is the same to that of high-temperature and critical-temperature regimes. Thus, we omit the details here.

Finally, we come back to the proof of Lemmas~\ref{lem: anticoncentration for good external field} and \ref{lem: good external field for number of xi}.

\begin{proof}[Proof of Lemma~\ref{lem: anticoncentration for good external field}]
Combining \eqref{item: good external field 1} and \eqref{item: good external field 4} in Definition~\ref{def: good external field for product measure low decomposition} with Lemma~\ref{lem: dis concentration using variance}, we get that there exist two collections of configurations $\sS_1$ and $\sS_2$, such that $\phi_{p,\Omega}^{\w,0}(\sS_i)\ge C_1$ and for any $\omega_1\in\sS_1,\omega_2\in\sS_2$ we have $|\frac{\eps h_{\cC_*(\omega_1)}}{T}-\frac{\eps h_{\cC_*(\omega_2)}}{T}|\ge C_2$ where $C_1,C_2>0$ are constants depending only on $\theta$ and $c$. Let $\sS_*$ denote the collection of configurations such that 
$$\sum_{\cC\in\mathfrak{C}\setminus\{\cC_*\}}f\left(\frac{\eps h_{\cC}}{T}\right)-\frac{\eps^2|\Omega|-|\cC_*|}{2T^2}\le M^{-0.1}\text{ and }\big|~|\cC_*|-\langle|\cC_*|\rangle~\big|\le M^{1.5}.$$ 
Then letting $M$ big enough and combining \eqref{item: good external field 3} in Definition \ref{def: good external field for product measure low decomposition} with Lemma \ref{lem: MDP for boundary cluster at low temperature} shows that $\phi_{p,\Omega}^{\w,0}(\sS_i\cap\sS_*)\ge C_3$. Thus, for any $\omega_1\in\sS_1,\omega_2\in\sS_2$ we have \begin{align*}
    &\big|X^+(\omega_1)-X^+(\omega_2)\big|\ge \Big|\frac{\eps h_{\cC_*(\omega_1)}}{T}-\frac{\eps h_{\cC_*(\omega_2)}}{T}\Big|-\Bigg|\sum_{\cC\in\mathfrak{C}(\omega_1)\setminus\{\cC_*(\omega_1)\}}f\left(\frac{\eps h_{\cC(\omega_1)}}{T}\right)-\frac{\eps^2|\Omega|-|\cC_*(\omega_1)|}{2T^2}\Bigg|\\ &\hspace{5em}-\Bigg|\sum_{\cC\in\mathfrak{C}(\omega_2)\setminus\{\cC_*(\omega_2)\}}f\left(\frac{\eps h_{\cC}}{T}\right)-\frac{\eps^2|\Omega|-|\cC_*(\omega_2)|}{2T^2}\Bigg|-\Bigg|\frac{\eps^2 (\big||\cC_*(\omega_1)|-|\cC_*(\omega_2)|\big|)}{2T^2}\Bigg|\ge \frac{C_2}{2}.
\end{align*}
Therefore, we get that $\var_{\phi}(X^+)\ge C_4$ for some $C_4>0$ depending only on $\theta$ and $c$. Combined with \eqref{item: good external field 2} in Definition \ref{def: good external field for product measure low decomposition} and Lemma \ref{lem: inverse Jenson}, it yields that $$\langle X^+\rangle^0_{p,\Omega}\le \ln\big(\langle \exp(X^+)\rangle^0_{p,\Omega}\big)-C_5.$$ By symmetry, the same result holds for $X^-$.
\end{proof}
\begin{proof}[Proof of Lemma~\ref{lem: good external field for number of xi}]
    We first prove that with high $\phi^{\gamma,0}_{p,\Lambda_N}$ probability, an edge configuration $\omega$ is well-connected. It is a typical result for the low-temperature FK-Ising model thus we just sketch the proof here. 
    
    For a box $B_i=\Lambda_{2M}(u_i),$ let $\mcc G(u)$ denote the event that there exist two open circuits in the annuli $\Lambda_{3M}(u)\setminus\Lambda_{2M}(u)$ and $\Lambda_{2M}(u)\setminus\Lambda_{M}(u)$ respectively, and these two circuits are connected in $\omega{_{\mid\Lambda_{3M}(u)\setminus\Lambda_M(u)}}.$ Then by the RSW theory (see \cite{DT20} for example), there exists a constant $C_1>0$ relying only on $p$ such that for any boundary condition $\gamma$ on $\pari\Lambda_{3M}(u_i),$ $\phi^{\gamma,0}_{p,\Lambda_{3M}(u_i)}(\mcc G(u))\geq 1-\exp(-C_1M).$
    Therefore, we define 
    $\mcc Q(\omega)=\left(Q_x=\1_{\mcc G(2Mx)}:x\in\Lambda_{\frac{N}{2M}}\right)$ to be a random vertex configuration on the box $\Lambda_{\frac{N}{2M}},$ and then by \cite{LSS97}, we get that $\mcc Q$ dominates a Bernoulli site percolation $\mbb P_{q},$ where $q$ can be arbitrary close to 1 as $M$ goes to infinity.
    We define $\cA$ to be the event that the following holds:

\textbf{(1)} There exists a unique cluster $C_{\mcc Q}$ in $\mcc Q$ with diameter at least $\frac{N}{4M}$ and volume at least $\frac{C_2N^2}{M^2}$ in $\mcc Q(\omega)$.

\textbf{(2)} For any connected subset $S$ in $\Lambda_{\frac{N}{2M}}$ with diameter at least $\frac{N}{100M}$, we have $S\cap C_{\mcc Q}\neq \emptyset.$

Next we will prove for any $\omega\in \cA$, we have $\omega$ is well-connected. It is easy to see that $C_{\mcc Q}$ induces a cluster $\cC$ with diameter at least $N/2$ in $ \Lambda=\bigcup_{i=1}^n(\Lambda_{2M}(u_i)\setminus\Lambda_{M}(u_i))$. So it suffices to prove that this is the unique cluster in $\Lambda$ with diameter at least $N/2$. For any cluster $\cC^\prime$ in $\Lambda$ with diameter at least $N/2$, let $S=\{x\in\Lambda_{\frac{N}{2M}}: \Lambda_{M}(2Mx){\cap\mcc C'\neq\emptyset} \}$. By \textbf{(2)}, we get that there exists $x\in S\cap C_{\mcc Q}$. Since $Q_x=1$ and $x\in{S}$, $\cC^\prime$ must intersect with the circuit in $\Lambda_{2M}(u)\setminus\Lambda_{M}(u)$. Thus we get $\cC^\prime$ intersects with $\cC$ and $\omega$ is well-connected.
    By standard percolation result, under the measure $\mbb P_p,$ $\cA$ has probability $1-\exp(-C_3\frac{N}{M}).$   
    Thus, we get $\phi^{\gamma,0}_{p,\Lambda_N}(\omega \text{ is well-connected})\geq 1-\exp\left (-C_4\frac{N}{M}\right)$ by noticing that a connected component in $\mcc Q$ induces an open cluster in $\omega.$

\begin{figure}[htbp]
\vspace{-5pt}
\centering
\includegraphics[scale=0.3   ]{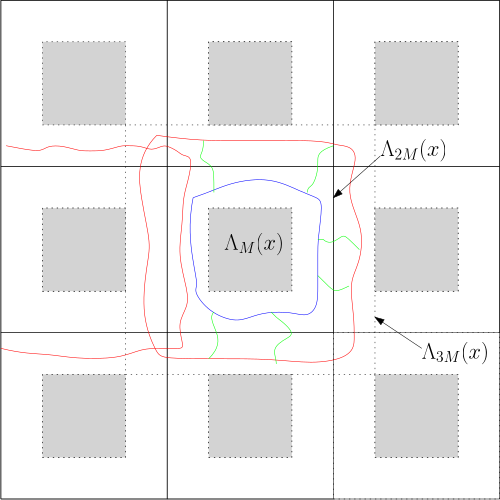} 
\vspace{-5pt} 
\caption{Illustration of the event $\mcc{A}$. The blue circuit lies in $\Lambda_{2M}(x)\setminus\Lambda_{M}(x)$,  the red circuit in $\Lambda_{3M}(x)\setminus\Lambda_{2M}(x)$, and the green path(s) connect these two circuits. Neighbouring red circuits intersect with each other.} 
\label{}
\vspace{-5pt} 
\end{figure}

We choose $\theta=1$ and $M$ such that $\theta M^{-1}\le \eps\le 2\theta M^{-1}$. Furthermore, we choose $c$ in Definition~\ref{def: good external field for product measure low decomposition} to be the first constant in Lemma \ref{lem: anticoncentrate external field at T small2}. Then by the results in Section~\ref{sec: anti concentration low} (see Lemmas~\ref{lem: anticoncentrate external field at T small0}, \ref{lem: anticoncentrate external field at T small1} and \ref{lem: anticoncentrate external field at T small2}), we get that the external field on the region $\Omega$ is good with a positive probability.
   Hence by Lemma~\ref{lem: concentration inequality with finite exponential moment}, we get that   $$\P\otimes\fk\gamma\big(\eta(\calO)\ge C_5n\big)\ge 1- C_5^{-1}\exp\left(-C_5\frac{N}{M}\right)$$ and the desired result comes from Markov's inequality.
\end{proof}

\appendix
\renewcommand{\appendixname}{Appendix~\Alph{section}}
\section{Some LDP results for FK-Ising model at criticality}\label{sec: LDP at critical}

In this section,  we fix $T=T_c$ and omit $p_c$ from the notation.

The following theorem will be the starting point of this section, which extends the large deviation bound of the largest cluster (\cite{Kiss14}) for the Bernoulli percolation model to the FK-Ising model. 
\begin{thm}\label{thm:LDP for maiximal cluster}
There exists a constant $c_1>0$ such that    $$\phi_{\lamn}^{\w,0}\left(\max_{\mcc C\in\fC}|\cC|\ge N^{\frac{15}{8}}x\right)\le c_1\exp\left(-c_1^{-1}x^{16}\right)$$
holds for all positive integers $N$.
\end{thm}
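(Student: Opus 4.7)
The strategy adapts the multi-scale argument of Kiss \cite{Kiss14} for critical Bernoulli percolation to the FK-Ising setting. Since the event $\{\max_{\mathcal{C}\in\mathfrak{C}}|\mathcal{C}|\ge t\}$ is increasing in the configuration, CBC allows me to replace any boundary condition by the wired one, so it suffices to work under $\phi_{\Lambda_N}^{\mathrm{w},0}$. The core observation is that a cluster of volume $\ge x N^{15/8}$ cannot be too localized: since the typical largest cluster at scale $L$ has volume $\sim L^{15/8}$ (coming from the one-arm exponent $1/8$), an atypically large cluster forces many sub-regions of $\Lambda_N$ to produce atypical macroscopic connections, each of which corresponds to a one-arm event at the appropriate dyadic scale.

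Concretely, I would partition $\Lambda_N$ into sub-boxes of side length $L=L(x)$ and extract, via a pigeonhole on the cluster volume, a collection of $\sim k$ disjoint sub-boxes in each of which the large cluster performs a crossing from the interior to the boundary. The RSW theory for critical FK-Ising (\cite{DT20}) supplies a uniform one-arm estimate of the form $\phi^{\gamma,0}_{\Lambda_L}(0\leftrightarrow\partial\Lambda_L)\le CL^{-1/8}$, and the Domain Markov Property combined with FKG decouples these sub-box arm events into an approximate product, producing a joint probability bounded by $(CL^{-1/8})^{k}$. Choosing $L$ and $k$ as explicit functions of $x$, balanced so that the product encodes the required $x^{16}$ decay, yields the stated tail bound.

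\textbf{Main obstacle.} The key difficulty is the pigeonhole step that converts the volume lower bound $|\mathcal{C}|\ge xN^{15/8}$ into a quantitative count of disjoint arm events at a common scale. Critical clusters are fractal rather than area-filling, so a volume lower bound does not immediately yield many arm events; a hierarchical multi-scale decomposition is needed, in which at each dyadic scale $2^{-j}N$ a sub-box containing an abnormally large fraction of the cluster is further refined until one can read off an arm. Matching the exponent $16$ on the nose --- rather than some other power --- rests on the precise balance between the typical scale-$L$ volume $L^{15/8}$ and the one-arm decay rate $L^{-1/8}$, which is the signature of Kiss's original calculation. The additional FK-Ising-specific challenge, beyond the Bernoulli case, is the absence of genuine independence: BK-type product bounds must be replaced by FKG and the Domain Markov Property, which yield an approximate product for arm events in well-separated regions at the cost of harmless multiplicative constants. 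Keeping the RSW estimates uniform across scales and boundary conditions is the final piece, and is available in the cited literature.
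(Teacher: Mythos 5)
Your overall plan matches the spirit of the paper's argument, which also adapts Kiss's multi-scale machinery.  However, there is a genuine gap that would break the proof as described: you do not distinguish the cluster attached to the wired boundary from the interior clusters, and for the boundary cluster the volume-to-arm-event pigeonhole fails outright.  The pigeonhole you envision ("a large cluster performs many interior-to-boundary crossings, each costing $L^{-1/8}$") works because a cluster of diameter $\sim D$ can only intersect a bounded number of scale-$D$ sub-boxes, so one of them must carry an abnormally large fraction of the volume.  Under $\phi_{\Lambda_N}^{\mathrm{w},0}$ the boundary cluster $\cC_*$ contains all of $\partial_{\mathrm{int}}\Lambda_N$, hence has diameter $\sim N$ and meets essentially every sub-box at every dyadic scale; in sub-boxes adjacent to the boundary the "crossing'' is free because of the wired frame and carries no RSW cost.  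The paper resolves this by splitting into two parts: interior clusters are handled by the dyadic-box pigeonhole you describe, while the boundary cluster requires a separate nested-annular decomposition $(\sT_l)_l$ (its Figure 4) that quarantines the cheap near-boundary contribution (shown to total at most $N^{15/8}x/2$ vertices) and forces the remaining mass to pay an arm cost in each shell, with groupings of the shell boxes chosen so that CBC plus DMP genuinely decouples them.  Without an analogue of this second part your argument only bounds the interior maximum.

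A secondary but still substantive issue is that you plan to decouple arm events "via FKG and the Domain Markov Property.''  FKG on its own gives the wrong inequality for an upper bound on an intersection of increasing (arm) events; what one actually uses is an iterated CBC--DMP exploration of disjoint regions, or, as in the paper, one imports the factorial moment estimate $\langle\binom{|\cV_n|}{k}\rangle^{\w}_{\Lambda_{2n}}\le (c n^{15/8}/k^{15/16})^k$ from Kiss's Theorem 1.5 (with the paper's Assumptions (i) and (ii) supplying quasi-multiplicativity and the $L^{-1/8}$ one-arm decay) and then converts it to an exponential moment bound.  That exponential moment bound, combined with Markov's inequality at each scale and the $x\le N^{1/8}$ truncation, is what produces exactly the exponent $16$, and it is where the balance "typical volume $L^{15/8}$ vs.\ one-arm decay $L^{-1/8}$'' is made rigorous.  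You are right that the final exponent comes from this balance, but you should make explicit whether you are going to re-derive the moment bound via arm events (in which case you need a BK-substitute, not FKG) or use Kiss's statement as a black box as the paper does; either way, handling $\cC_*$ separately is unavoidable.
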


As we remarked above, a continuous version of Theorem~\ref{thm:LDP for maiximal cluster} has been shown in \cite{CJN20appendix}, and  their proof cannot be applied to the discrete settings. So we follow the framework of \cite{Kiss14} to prove Theorem~\ref{thm:LDP for maiximal cluster}.

    First, for any positive integers $m<n$, we define $\mcc A(m,n)$ to be the annulus $\Lambda_n\setminus \Lambda_m,$ and define $C(m,n)$ to be the event that there is an open path around $\mcc A(m,n),$ that is, this open path separates $\pare\Lambda_m$ and $\pari\Lambda_n.$ Also we define $D(m,n)$ to be the event that there exists an open path crossing $\mcc A(m,n),$ or equivalently speaking, connecting $\pari\Lambda_n$ and $\pare\Lambda_m.$ Finally, define
    $$\pi(m,n)=\phi^{\w,0}_{\mcc A(m,n)}(D(m,n)).$$ We point out that here the wired boundary condition on $\mcc A(m,n)$ means that $\pare \Lambda_m$ and $\pari\Lambda_n$ are wired into one point respectively, but they are not wired together. And we write $\pi(n)=\pi(1,n)$ for short.

    Given this notation, we want to check Assumptions 1.1 and 1.2 in \cite{Kiss14}. That is 
    \begin{enumerate}[(i)]
        \item There exists a constant $C_1>0$ such that for any  $0<k<l<m$, 
        $$\pi(k,l)\pi(l,m)\leq C_1\pi(k,m).$$
        \item There exists  positive constants $C_2,\alpha>0$ such that for all $n>m\geq 1,$
        $$\frac{\pi(n)}{\pi(m)} \geq C_2\left(\frac{n}{m}\right)^{-\alpha}.$$
    \end{enumerate}

Assumption (ii) is for true for $\alpha=\frac{1}{8},$ since in \cite{On44} (see also \cite{CDH16,DHN11,GW20}) they have shown that there exists a constant $c>0$ such that for any $n>0$
    \begin{eqnarray}\label{estimate-pi(n)}
         c^{-1}n^{-\frac{1}{8}}\leq \pi(n)\leq cn^{-\frac{1}{8}}.
    \end{eqnarray}
Then we check assumption (i).

First, if $m\geq 2l\geq 4k,$ then we get that
\begin{equation}\label{assumption1-1}
      \pi(k,m)=\phi^{\w,0}_{\mcc A(k,m)}\left(D(k,m)\right)\geq \phi^{\w,0}_{\mcc A(k,m)}\left(D(k,m)\mid C(l,2l)\right)\times \phi^{\w,0}_{\mcc A(k,m)}\left(C(l,2l)\right).  
\end{equation}

\begin{figure}[htbp]
\vspace{-10pt}
\centering
\includegraphics[scale=0.1]{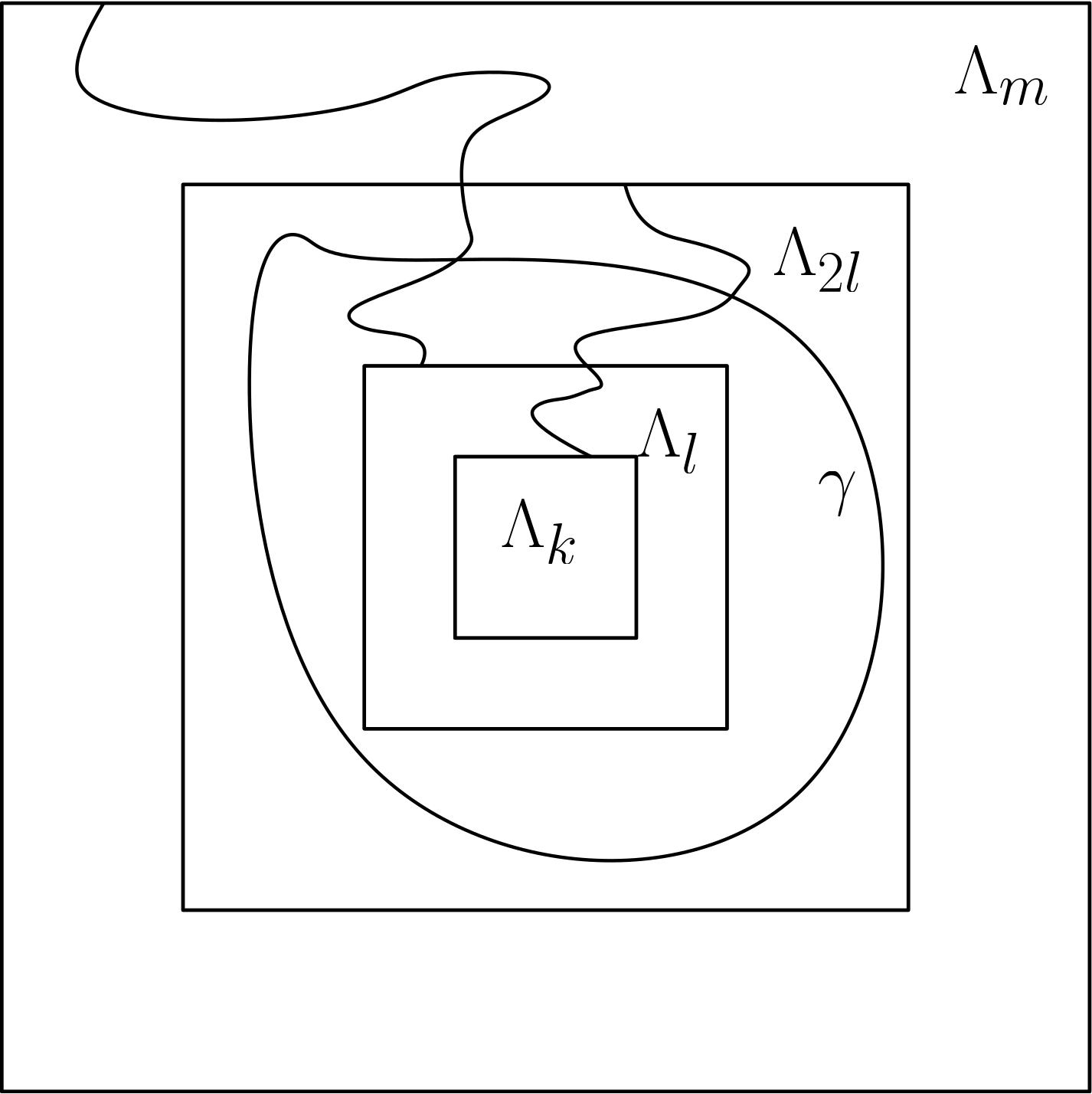} 
\vspace{-10pt} 
\caption{An illustration of how $\gamma$ connects the two crossing.} 
\label{fig: box crossing}
\vspace{-5pt} 
\end{figure}

Given the event $C(l,2l)$ happens, there is an open path $\gamma$ in $\mcc A(l,2l)$ separating its inner and outer boundaries, thus $\gamma$ also separates $\pare\Lambda_k$ and $\pari\Lambda_m.$ As a result, if both $D(k,2l)$ and $D(l,m)$ happen, this two crossing should be connected together by $\gamma$ (See Figure~\ref{fig: box crossing} for illustration). Combined with CBC, we have
\begin{equation}\label{assumption1-2}
    \phi^{\w,0}_{\mcc A(k,m)}(D(k,m)\mid C(l,2l))\geq \pi(k,2l)\pi(l,m).
\end{equation}
Similarly, we have 
\begin{equation}\label{assumption1-3}
    \pi(k,2l)\geq \pi(k,l)\pi\left(\Big\lfloor\frac{l}{2}\Big\rfloor,2l\right)\phi_{\mcc A(k,2l)}^{\w,0}\left(C\left(\Big\lfloor\frac{l}{2}\Big\rfloor,2l\right)\right).
\end{equation}

Finally, by the RSW theory (see \cite[Theorem 1.1]{DHN11}), we know there is a positive constant $c'>0$ such that
\begin{align}\label{assumption1-4}
    \phi_{\mcc A(m,n)}^{\w,0}\left(C(m,n)\right)&\geq c' \text{ if } n\geq 2m;\nonumber\\
    \phi_{\mcc A(m,n)}^{\w,0}\left(D(m,n)\right)&\geq c' \text{ if } n\leq 8m.    
\end{align}

Now putting \eqref{assumption1-1}, \eqref{assumption1-2}, \eqref{assumption1-3} and \eqref{assumption1-4} together, we have
$$\pi(k,m)\geq (c')^3\pi(k,l)\pi(l,m).$$

Furthermore, \eqref{assumption1-3} actually proves that $\pi(k,2l)\geq (c')^2\pi(k,l)$. Combining with the fact that $\pi(a,b)$ increases with $a$ and decreases with $b$, we can solve the case $m<2l$ and $l<2k$ in a similar but simpler way. 

Define
\begin{equation*}
  \cV_n := \left\{v \in \Lambda_n\, : \, v\leftrightarrow
\pari \Lambda_{2n}\right\} \label{eq:def_V_n}
\end{equation*}
to be the set of vertices in $\Lambda_n$ which are connected to $\pari\Lambda_{2n}$.
Then \cite[Theorem 1.5]{Kiss14} and \eqref{estimate-pi(n)} show 
\begin{lem}\label{lem: moment bound}
   There is a constant $c_2$ such that for all $n>0$
\begin{equation}\label{eq: moment bound in LDP}
  \left\langle\binom{\left|\cV_n\right|}{k}\right\rangle_{\Lambda_{2n}}^{\w}  \leq (\frac{c_2n^{\frac{15}{8}}}{k^{\frac{15}{16}}}
)^k. 
\end{equation}
\end{lem}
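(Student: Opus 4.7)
The plan is to establish the factorial moment bound by expanding it as a sum over $k$-point connection probabilities and then controlling that sum via a hierarchical (dyadic) decomposition, following the framework Kiss developed in the Bernoulli percolation case. By linearity of expectation,
$$\left\langle\binom{|\cV_n|}{k}\right\rangle_{\Lambda_{2n}}^{\w} = \sum_{\{v_1,\ldots,v_k\}\subset\Lambda_n}\phi^{\w,0}_{\Lambda_{2n}}\bigl(v_1,\ldots,v_k\leftrightarrow\pari\Lambda_{2n}\bigr),$$
so the problem reduces to bounding the $k$-point arm event and then performing a combinatorial sum over $k$-tuples.

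For the $k$-point arm event I would use a dyadic-tree decomposition. Given a fixed $k$-tuple, build a binary tree whose leaves are $v_1,\ldots,v_k$ with internal nodes recording the dyadic scales at which subsets of points first coalesce into a common box of side $2^j$. Inside each such box the arm event is localized, and arm events in disjoint boxes at the same level can be decoupled using the DMP combined with CBC (pushing each boundary to wired, which stochastically dominates for the increasing arm event). The quasi-multiplicativity property (Assumption~(i) of the excerpt, verified earlier via RSW and the sharp one-arm estimate $\pi(m)\asymp m^{-1/8}$) then splices the arm pieces at successive dyadic annuli, producing a bound of the form
$$\phi^{\w,0}_{\Lambda_{2n}}\bigl(v_1,\ldots,v_k\leftrightarrow\pari\Lambda_{2n}\bigr)\leq C^{k}\prod_{\text{internal nodes}}\frac{\pi(r_{\text{parent}})}{\pi(r_{\text{child}})},$$
where the product runs over the internal nodes of the tree.

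Once this is in hand, the sum over $k$-tuples is done by first fixing a tree topology and then counting compatible geometric placements. At dyadic scale $r$ only $O((n/r)^2)$ disjoint $r$-boxes fit inside $\Lambda_n$; plugging in $\pi(r)/\pi(n)\asymp (n/r)^{1/8}$ and optimizing over tree shapes yields the target $(c_2 n^{15/8}/k^{15/16})^k$. The $n^{15/8}$ factor is the typical mean $n^2\pi(n)$ of $|\cV_n|$, while the $k^{-15k/16}$ improvement arises because $k$ points confined to $\Lambda_n$ cannot all cluster; each merger in the tree produces an extra $(r/n)^{1/8}$ factor that improves on the naive bound $(n^{15/8})^k$.

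The main obstacle will be the decoupling step. Kiss's original argument relies on the BK inequality to separate multiple arm events into essentially independent single arms, and BK is unavailable for the FK-Ising model. I would replace it by a combination of RSW (producing dual circuits or wired interfaces that separate the dyadic boxes), the DMP applied across such interfaces, and CBC to compare with wired-boundary arm events on each subbox. Carrying this through without losing the sharp $k^{-15k/16}$ exponent is the delicate point, but the RSW estimates and quasi-multiplicativity already established in the excerpt are the essential ingredients.
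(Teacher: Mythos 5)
Your plan is essentially the argument the paper relies on: the paper verifies Kiss's Assumptions 1.1 and 1.2 (quasi-multiplicativity of $\pi(m,n)$ via RSW gluing, and the power law coming from $\pi(n)\asymp n^{-1/8}$) and then invokes \cite[Theorem 1.5]{Kiss14} directly, remarking that its proof---a hierarchical decomposition of the $k$-point boundary-connection probabilities whose only percolation-specific input is spatial independence/BK-type decoupling---extends to the FK-Ising model, and your dyadic-tree decomposition with DMP and CBC (wired domination for the increasing arm events) replacing that input is precisely this extension. The one step you would still have to carry out in full, and which the paper also leaves entirely to \cite{Kiss14}, is the combinatorial bookkeeping over tree shapes and box placements that produces the sharp $k^{-\frac{15}{16}k}$ factor with a constant uniform in $k$ and $n$.
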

\begin{rmk}
    Although \cite{Kiss14} only considers the percolation model, their proof on Theorem 1.5 (in \cite{Kiss14}) can be naturally extended to the FK-Ising model.
\end{rmk}
\begin{cor}\label{cor: exponential moment bound}
For $t>n^{-\frac{15}{8}}$, we have 
    $$ \left\langle(1+t)^{|\cV_n|}\right\rangle_{\Lambda_{2n}}^{\w}\le c_3\exp(c_2t^{\frac{16}{15}}n^2).$$
\end{cor}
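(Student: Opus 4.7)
The plan is to expand the left-hand side by the binomial theorem, apply Lemma~\ref{lem: moment bound} term by term, and perform a Laplace-type analysis of the resulting series. Since $\cV_n\subset\Lambda_n$, the exponent $|\cV_n|$ is deterministically bounded, so I may freely interchange expectation with the finite sum to obtain
$$\Big\langle(1+t)^{|\cV_n|}\Big\rangle_{\Lambda_{2n}}^{\mathrm{w}}=\sum_{k\ge 0}\Big\langle\binom{|\cV_n|}{k}\Big\rangle_{\Lambda_{2n}}^{\mathrm{w}}t^k\le 1+\sum_{k\ge 1}a_k,$$
where I set $A:=c_2 n^{15/8}t$ (so that $A\ge c_2$ by the standing hypothesis $t>n^{-15/8}$) and $a_k:=\bigl(A/k^{15/16}\bigr)^k$.

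I would then locate the peak of $(a_k)_{k\ge 1}$ by ordinary calculus: the continuous function $f(x):=x\log A-\tfrac{15}{16}x\log x$ is concave with unique maximum at $x^\ast=A^{16/15}/e$ and maximal value $f(x^\ast)=\tfrac{15 x^\ast}{16}=\tfrac{15}{16e}c_2^{16/15}\,n^2t^{16/15}$, so $\max_{k\ge 1}a_k\le\exp(c\,n^2t^{16/15})$ for a universal $c>0$. For the tail, the explicit ratio
$$\frac{a_{k+1}}{a_k}=\frac{A}{(k+1)^{15/16}}\Bigl(\tfrac{k}{k+1}\Bigr)^{15k/16}$$
is at most $1/(2e)<\tfrac12$ as soon as $k\ge k_0:=\lceil(2eA)^{16/15}\rceil$, since then $(k+1)^{15/16}\ge 2eA$ and the second factor is $\le 1$. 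This gives geometric decay $\sum_{k\ge k_0}a_k\le 2a_{k_0}\le 2\exp(c\,n^2t^{16/15})$, while the head is bounded brutally by $\sum_{1\le k<k_0}a_k\le k_0\cdot\exp(c\,n^2t^{16/15})$.

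Combining, $\sum_{k\ge 1}a_k\le(k_0+2)\exp(c\,n^2t^{16/15})$. Using $k_0\asymp n^2t^{16/15}$ together with the elementary inequality $y\le e^y$ for $y\ge 0$ (applied to $y=c''n^2t^{16/15}$, which is bounded below by a positive constant thanks to $t>n^{-15/8}$), the polynomial prefactor is absorbed into the exponential at the cost of enlarging the constant in the exponent, yielding the stated bound $c_3\exp(c_2\,t^{16/15}n^2)$ (the symbol $c_2$ being overloaded as in the statement). The hypothesis $t>n^{-15/8}$ enters only at this final absorption step to keep the constants clean; the main technical point is the simple ratio-test bookkeeping for the tail, which is standard and presents no real difficulty once the maximizer $x^\ast$ has been identified.
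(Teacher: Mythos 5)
Your proposal is correct and follows essentially the same strategy as the paper: expand $(1+t)^{|\cV_n|}$, plug in Lemma~\ref{lem: moment bound}, and split the resulting series at a threshold of order $n^2t^{16/15}$. The minor differences are technical execution rather than a new idea: you bound the head by ``(number of terms)~$\times$~(maximum term)'' after a calculus peak analysis of $k\mapsto(A/k^{15/16})^k$ and then absorb the polynomial prefactor, whereas the paper directly dominates each head term by $\frac{(c_2t^{16/15}n^2)^k}{k!}$ to sum the exponential series; and you control the tail via a ratio test giving geometric decay past $k_0$, whereas the paper groups the tail into blocks of length $t^{16/15}n^2$. Both are valid, and your argument cleanly uses the hypothesis $t>n^{-15/8}$ exactly where it is needed (to make $n^2t^{16/15}\ge 1$ so the polynomial factor can be absorbed).
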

\begin{proof}
We expand $(1+t)^{|\cV_n|}$ and divide it into two parts at $k=t^{\frac{16}{15}}n^2$
    \begin{align}
  \left\langle(1+t)^{|\cV_n|}\right\rangle_{\Lambda_{2n}}^{\w}
  & = \sum_{k=0}^\infty t^k \left\langle\binom{\left|\cV_n\right|}{k}\right\rangle_{\Lambda_{2n}}^{\w} \stackrel{\eqref{eq: moment bound in LDP}}{\leq} \sum_{k=0}^\infty \left(t\cdot \frac{c_2n^{\frac{15}{8}}}{k^{\frac{15}{16}}}\right)^k\nonumber\\
 &= \sum_{k=0}^{t^{\frac{16}{15}}n^2} \left(t\cdot \frac{c_2n^{\frac{15}{8}}}{k^{\frac{15}{16}}}\right)^k + \sum_{k=t^{\frac{16}{15}}n^2+1}^\infty \left(t\cdot \frac{c_2n^{\frac{15}{8}}}{k^{\frac{15}{16}}} \right)^k.\label{eq:exponential expansion for t^Vn}
\end{align}
We now compute the two terms separately. For $k\le t^{\frac{16}{15}}n^2$, we have $t\cdot \frac{c_2n^{\frac{15}{8}}}{k^{\frac{15}{16}}}\le \frac{c_2t^{\frac{16}{15}}n^2}{k}$, thus 
\begin{equation}\label{eq:exponential expansion term 1}
    \sum_{k=0}^{t^{\frac{16}{15}}n^2} \left(t\cdot \frac{c_2n^{\frac{15}{8}}}{k^{\frac{15}{16}}}\right)^k\le \sum_{k=0}^{t^{\frac{16}{15}}n^2} \left(\frac{c_2t^{\frac{16}{15}}n^2}{k}\right)^k\le \sum_{k=0}^{t^{\frac{16}{15}}n^2} \frac{(c_2t^{\frac{16}{15}}n^2)^k}{k!}\le \exp(c_2t^{\frac{16}{15}}n^2).
\end{equation}
For the other term, we group each consecutive $ t^{\frac{16}{15}}n^2$ elements and apply a union bound since $\left(t\cdot \frac{c_2n^{\frac{15}{8}}}{k^{\frac{15}{16}}} \right)^k$ is decreasing, then we obtain
\begin{equation}\label{eq:exponential expansion term 2}
    \sum_{k=t^{\frac{16}{15}}n^2+1}^\infty \left(t\cdot \frac{c_2n^{\frac{15}{8}}}{k^{\frac{15}{16}}} \right)^k\le t^{\frac{16}{15}}n^2\sum_{l=1}^\infty (l/c_3)^{-\frac{15 }{16}t^{\frac{16}{15}}n^2l}\le t^{\frac{16}{15}}n^2\cdot C_1.
\end{equation}
Combining \eqref{eq:exponential expansion term 1} and \eqref{eq:exponential expansion term 2} into \eqref{eq:exponential expansion for t^Vn} gives
\begin{equation*}
    \left\langle(1+t)^{|\cV_n|}\right\rangle_{\Lambda_{2n}}^{\w}\le C_2\exp\left(c_2t^{\frac{16}{15}}n^2\right).\qedhere
\end{equation*}
\end{proof}
\begin{proof}[Proof of Theorem~\ref{thm:LDP for maiximal cluster}]
Without loss of generality we only consider the case that $N^{\frac{15}{8}}x\leq N^2,$ i.e. $x\leq N^{\frac{1}{8}}$ since the volume of any cluster inside $\lamn$ is no more than $N^2$ . Furthermore, we assume $x>\max\{K_1,K_2\}$ where $K_1,K_2>1$ are two large constants depending only on the constants $c_2$ and $c_3$ in Corollary~\ref{cor: exponential moment bound} to be determined later.

We divide the proof into two parts. 

\vspace{2em}
\textit{PART 1: Controlling the bulk cluster}

In the first part, we show that all the clusters except for the boundary cluster cannot be too large, 
the proof is really similar to the proof of \cite[Proposition 6.3]{BCKS99}. In order to be self-contained, we give a short proof here. 

Without loss of generality, we may assume that $N=2^{n}$ and define 
$\mss B_l:=\{ \Lambda_{2^{l-1}}(u): u\in (2^{l}\mbb Z)^2 \}$ to be a box partition of $\lamn$ with side length $2^l.$ Then we write $\mss B:=\bigcup_{l=1}^n \mss B_l. $ Now for a box $\Lambda_{2^{l-1}(u)}\in\mss B_l,$ we write $L=2^{l-1}$ for short and define
$\mcc V_{L}(u):=\{v\in \Lambda_{2^{l-1}(u)}: v\leftrightarrow \Lambda_{2^{l}(u)}\}.$
We will call the box $\Lambda_{2^{l-1}(u)}$ is {\bf crowded} if $|\mcc V_L(u)|\geq \frac{1}{289}N^{\frac{15}{8}}x$ and we have the following claim:

\textit{ If there exists a cluster in $\lamn$ with volume at least $N^{\frac{15}{8}}x$ and it is not the boundary cluster, then
at least one box in $\mss B$ is { crowded}.}
The proof is relatively straightforward, use $\mcc C$ to denote this cluster then there must exist an integer $l$ such that $2^{l+1}\le diam(\cC)<2^{l+2}$. Then $\cC$ will intersect with at most 289 boxes in $\mss B_l$ (this explains why we require that $\cC$ is not the boundary cluster).  Therefore, at least one of them is crowded since $diam(\cC)<2^{l+2}$ implies for any $x\in \cC$, $x$ connects to some vertex $2^l$ away. 
Therefore, all we need to do is to upper-bound the probability that there is a crowded box in $\mss B$.
Using Corollary~\ref{cor: exponential moment bound} and Markov inequality shows \begin{align}
    &\phi_{\Lambda_{2L}(u)}^{\w,0}\Big(|\cV_{L}(u)|\ge \frac{1}{289}N^{\frac{15}{8}}x\Big)\le (1+t)^{-\frac{1}{289}N^{\frac{15}{8}}x}\left\langle(1+t)^{|\cV_L|}\right\rangle^{\w}_{\Lambda_{2L}}\nonumber\\&\le (1+t)^{-\frac{1}{289}N^{\frac{15}{8}}x} c_3\exp\left(c_2t^{\frac{16}{15}}{L}^2\right).\label{eq:  LDP for maximal cluster single choice bound 0}
\end{align} Choosing $t=\min\{\frac{N^{\frac{225}{8}}x^{15}}{K_1L^{30}},1\}$ (recall that $1<K_1<x$ is the constant defined at the beginning of the proof), then $t>N^{-\frac{15}{8}}$ since $L=2^{l-1}\leq N$. Applying a simple bound $(1+t)^{-\frac{1}{289}N^{\frac{15}{8}}x}\le \exp(-\frac{t}{2\cdot289}N^{\frac{15}{8}}x)$ to \eqref{eq:  LDP for maximal cluster single choice bound 0}, we get that for $\frac{N^{\frac{225}{8}}x^{15}}{K_1L^{30}}<1$, \begin{align*}
    &\phi_{\Lambda_{2L}(u)}^{\w,0}\Big(|\cV_{L}(u)|\ge \frac{1}{289}N^{\frac{15}{8}}x\Big)\le c_3\exp\left(-\frac{1}{2\cdot289K_1}\frac{N^{30}x^{16}}{L^{30}}+\frac{c_2}{K_1^{\frac{16}{15}}}\frac{N^{30}}{L^{30}}\right).
\end{align*}For $\frac{N^{\frac{225}{8}}x^{15}}{K_1L^{30}}>1$, we have \begin{align*}
    \phi_{\Lambda_{2L}(u)}^{\w,0}\Big(|\cV_{L}(u)|\ge \frac{1}{289}N^{\frac{15}{8}}x\Big)&\le c_3\exp\left(-\frac{1}{2\cdot289}N^{\frac{15}{8}}x+c_2{L}^2\right)\\&\le c_3\exp\left(-\frac{1}{2\cdot289}N^{\frac{15}{8}}x+\frac{c_2}{K_1^{\frac{1}{15}}}N^{\frac{15}{8}}x\right).
\end{align*}
Let $l_0$ denote the maximal integer such that $\frac{N^{\frac{225}{8}}x^{15}}{K_1}>2^{30l_0}$.
Letting $K_1$ big enough depending on $c_2$ and $c_3$, we get that \begin{align}
    \phi_{\Lambda_{2L}(u)}^{\w,0}\Big(|\cV_{L}(u)|\ge \frac{1}{289}N^{\frac{15}{8}}x\Big)\le C_1\exp\left(-\frac{1}{C_1}\frac{N^{30}x^{16}}{L^{30}}\right)~~\mbox{if }L= 2^{l-1},~l>l_0;\label{eq: LDP for maximal cluster single choice bound 1} \\
    \phi_{\Lambda_{2L}(u)}^{\w,0}\Big(|\cV_{L}(u)|\ge \frac{1}{289}N^{\frac{15}{8}}x\Big)\le C_1\exp\left(-\frac{1}{C_1}N^{\frac{15}{8}}x\right)~~\mbox{if }L= 2^{l-1},~l\le l_0.\label{eq: LDP for maximal cluster single choice bound 2}
\end{align} At the same time, there are at most $(\frac{N}{L})^2$ elements in $\mss B_l$. The desired result follows from taking a union bound, summing \eqref{eq: LDP for maximal cluster single choice bound 1} and \eqref{eq: LDP for maximal cluster single choice bound 2} over all boxes in $\mss B$ and recalling $L=2^{l-1}\leq N,$ $x\leq N^{\frac{1}{8}}$:
\begin{align}
    \sum_{l=2}^{l_0}C_1\exp\left(-\frac{1}{C_1}N^{\frac{15}{8}}x\right)\cdot \left(\frac{N}{L}\right)^2+\sum_{l=l_0+1}^{\lfloor\log_2N\rfloor}C_1\exp\left(-\frac{N^{30}x^{16}}{C_1L^{30}}\right)\cdot \left(\frac{N}{L}\right)^2\le C_2\exp\left(-C_2^{-1}x^{16}\right).\nonumber
\end{align}

\vspace{2em}
\textit{PART 2: Controlling the boundary cluster}

For the second part, we are going to upper-bound the probability that the boundary cluster is too large. We are not able to repeat the proof above since with the wired boundary, the boundary cluster with a small diameter can intersect with a large number of boxes. As a result, we may exploit a multilevel partition of $\lamn$ instead of a uniform partition. Without loss of generality, we assume $N=2^{8k}$ and $\lfloor\log_2(x)\rfloor=a$. Let $\sT_{8k-2}$ denote the set of boxes with side length $2^{8k-1}$ with one corner at the origin $o$. Let $\sT_{l}(l=7k+a-1,7k+a,\cdots,8k-3)$ be the collection of boxes with side length $2^{l+1}$ and forms a disjoint contour surrounding boxes in $\sT_{l+1}$ (see Figure \ref{fig:Multilevel partition} for illustration).

For convenience, we use $\cT_l(l=7k+a-1,\cdots,8k-2)$ to denote the union of boxes in $\sT_l.$ Since the number of points not contained in $\cup_{l=7k+a-1}^{8k-2}\cT_{l}$ is $N^{\frac{15}{8}}2^{a-1}<\frac{N^{\frac{15}{8}}x}{2}$, we only need to consider the intersection of $\cC_*$ and $\cup_{l=7k+a-1}^{8k-2}\cT_{l}$. Let $C_3$ be an absolute constant such that $\sum_{l=7k+a-1}^{8k-2}C_32^{\frac{-8k+l}{2}}\le 1$. Then we have \begin{equation}
    \phi_{\Lambda_{N}}^{\w,0}\left(|\cC_*|\ge N^{\frac{15}{8}}x\right)\le \sum_{l=7k+a-1}^{8k-3}\fkc{\w}(|\cC_*\cap\cT_l|\ge C_3N^{\frac{15}{8}}x\cdot 2^{\frac{-8k+l}{2}}).\label{eq: boundary cluster LDP multilevel bound}
\end{equation}
Next we want to control the probability $\fkc{\w}(|\cC_*\cap\cT_l|\ge C_3N^{\frac{15}{8}}x\cdot 2^{\frac{-8k+l}{2}})$. Here we remark that the factor $2^{\frac{-8k+l}{2}}$ is used to ensure the sequence is summable.

\begin{figure}
    \centering
    \begin{tikzpicture}
    \draw (-4,-4) rectangle (4,0);
    \draw (-2,-2) rectangle (0,0);
    \draw (0,0) rectangle (2,-2);
    \node at (-3.75,-1) {\tiny$\cdots$};
    \node at (3.75,-1) {\tiny$\cdots$};
    \node at (0,-3.75) {\tiny$\vdots$};
    \node at (1,-1) {$\sT_{8k-2}^4$};
    \node at (-1,-1) {$\sT_{8k-2}^1$};
    \node at (0.15,-0.15) {$o$};
    \draw (-3,-3) rectangle (-3+1,-3+1) node at (-3+0.5,-3+0.5) {$\sT_{8k-3}^1$};
    \draw (-3,-2) rectangle (-3+1,-2+1) node at (-3+0.5,-2+0.5) {$\sT_{8k-3}^4$};
    \draw (-3,-1) rectangle (-3+1,-1+1) node at (-3+0.5,-1+0.5) {$\sT_{8k-3}^3$};
     \draw (2,-3) rectangle (2+1,-3+1) node at (2+0.5,-3+0.5) {$\sT_{8k-3}^2$};
    \draw (2,-2) rectangle (2+1,-2+1) node at (2+0.5,-2+0.5) {$\sT_{8k-3}^3$};
    \draw (2,-1) rectangle (2+1,-1+1) node at (2+0.5,-1+0.5) {$\sT_{8k-3}^4$};
     \draw (-2,-3) rectangle (-2+1,-3+1)node at (-2+0.5,-3+0.5) {$\sT_{8k-3}^2$};
     \draw (-1,-3) rectangle (-1+1,-3+1)node at (-1+0.5,-3+0.5) {$\sT_{8k-3}^3$};
     \draw (0,-3) rectangle (+1,-3+1)node at (+0.5,-3+0.5) {$\sT_{8k-3}^4$};
     \draw (1,-3) rectangle (1+1,-3+1)node at (1+0.5,-3+0.5) {$\sT_{8k-3}^1$};
    \foreach \i in {-3.5,3}{
    \foreach \j in {-3.5,-3,-2.5,-2,-1.5,-1,-0.5}{
    \draw (\i,\j) rectangle (\i+0.5,\j+0.5);
    }
    }
    \foreach \j in {-3.5}{
    \foreach \i in {-3,-2.5,-2,-1.5,-1,-0.5,0,0.5,1,1.5,2,2.5}{
    \draw (\i,\j) rectangle (\i+0.5,\j+0.5);
    }
    }
\end{tikzpicture}
    \caption{Multilevel partition of $\lamn$}
    \label{fig:Multilevel partition}
\end{figure}
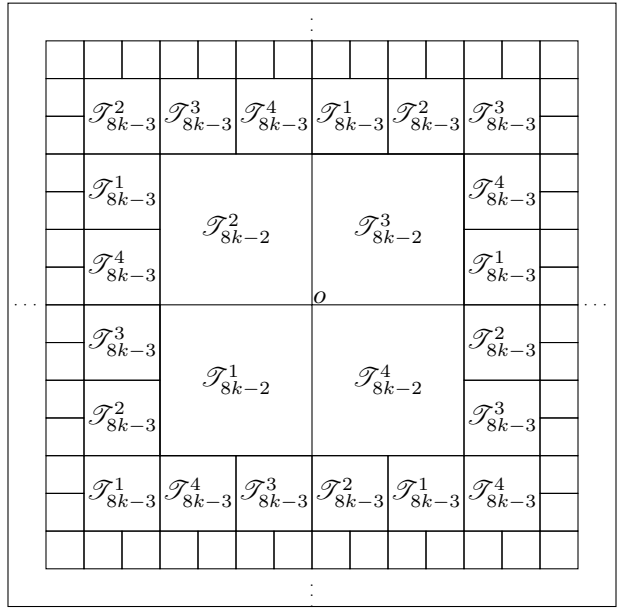

Since $\cC_*$ is the boundary cluster, we get that for each box $\Lambda_{2^l}(u)$ in $\sT_l$, if it contains a point in $\cC_*$, then that point is connected to $\Lambda_{2^{l+1}}(u)$. Since the number of boxes in $\mss T_l$ is divisible by $4,$  we can partition the boxes in $\sT_l$ into $4$ groups $\sT_l^1,\cdots\sT_l^{4}$ (also use $\cT_l^1,\cdots,\cT_l^{4}$ to denote the union of the boxes) such that for any  two different boxes $\Lambda_{2^l}(u),\Lambda_{2^l}(v)$ in a group, we have $\Lambda_{2^{l+1}}(u)\cap \Lambda_{2^{l+1}}(v)=\emptyset$ (see the superscripts in Figure \ref{fig:Multilevel partition} above). Now we can control the number of points connected to the boundary in each group by applying CBC and Corollary~\ref{cor: exponential moment bound}~:
\begin{align}
        &\phi_{\Lambda_{N}}^{\w,0}\left(|\cC_*\cap\cT_l^i|\ge C_3N^{\frac{15}{8}}x\cdot 2^{\frac{-8k+l}{2}}\right)\nonumber\\
        \le ~&\phi_{\Lambda_{N}}^{\w,0}\left(\sum_{\Lambda_{2^l}(u)\in \sT_l^i}|\mcc V_{2^l}(u)|\ge C_3N^{\frac{15}{8}}x\cdot 2^{\frac{-8k+l}{2}}\right)\nonumber\\
        \le ~& (1+t)^{-C_3N^{\frac{15}{8}}x\cdot 2^{\frac{-8k+l}{2}}}\times\prod_{\Lambda_{2^l}(u)\in \sT_l^i}\left\langle(1+t)^{|\cV_{2^l}(u)|}\right\rangle_{\Lambda_{2^{l+1}}(u)}^{\w}\nonumber\\
        \le ~& (1+t)^{-C_3N^{\frac{15}{8}}x\cdot 2^{\frac{-8k+l}{2}}}\times c_{ 3}\exp\left(c_{ 2}|\cT_l^i|t^{\frac{16}{15}}2^{2l}\right).\label{eq:l_level boundary cluster deviation control}
\end{align}
Letting $t=N^{-\frac{15}{8}}x^{15}2^{\frac{15(8k-l)}{16}}/K_2<1$ (recall that $1<K_2<x$ is the constant in the beginning of the proof and thus, $t>n^{-\frac{15}{8}}$) in \eqref{eq:l_level boundary cluster deviation control} and noting that $|\cT_l^i|\le 2^{8k-l}$, we obtain that $$\phi_{\Lambda_{N}}^{\w,0}\left(|\cC_*\cap\cT_l^i|\ge C_3N^{\frac{15}{8}}x\cdot 2^{\frac{-8k+l}{2}}\right)\le \exp\left(-C_3x^{16}2^{\frac{7(8k-l)}{16}}/K_2\right)\times c_3\exp\left(c_2x^{16}/K_2^{\frac{16}{15}}\right).$$
Letting $K_2$ big enough depending on $c_2$ and $c_3$ gives $$\phi_{\Lambda_{N}}^{\w,0}(|\cC_*\cap\cT_l^i|\ge C_3N^{\frac{15}{8}}x\cdot 2^{\frac{-8k+l}{2}})\le C_4\exp(-C_4^{-1}x^{16}\cdot2^{\frac{7(8k-l)}{16}}).$$ Combined with \eqref{eq: boundary cluster LDP multilevel bound}, it completes the proof:
\begin{equation*}
    \phi_{\Lambda_{N}}^{\w,0}\left(|\cC_*|\ge N^{\frac{15}{8}}x\right)\le \sum_{l=7k+a-1}^{8k-3} C_4\exp\left(-C_4^{-1}x^{16}\cdot 2^{\frac{7(8k-l)}{16}}\right)\le C_5\exp\left(-C_5^{-1}x^{16}\right).\qedhere
\end{equation*}
\end{proof}

For a configuration $\omega\in\Xi$, define
$\mss M(\omega):=\sum_{\mcc C\in\fC(\omega)} |\cC|^2,$
to be the summation of the squares of the number of clusters in $\omega.$

Our main goal here is to show the following LDP result:

\begin{thm}\label{thm:LDP for sum of cluster squares}
 There exists a positive constant $c_4>0$ such that for any $N$ we have
 $$\phi_{\lamn}^{\w,0}\left(\mss M(\omega)\geq N^{\frac{15}{4}}x\right)\leq c_4\exp\left(-c_4^{-1}x^{8}\right).$$
\end{thm}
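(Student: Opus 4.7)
The plan is to split $\mss M(\omega) = |\cC_*|^2 + \mss M'$, where $\cC_*$ denotes the boundary cluster and $\mss M' = \sum_{\cC\in\fC\setminus\{\cC_*\}}|\cC|^2$. On the event $\{\mss M\ge N^{15/4}x\}$ at least one of $\{|\cC_*|^2\ge N^{15/4}x/2\}$ or $\{\mss M'\ge N^{15/4}x/2\}$ must hold, so by the union bound it suffices to estimate these two probabilities separately.

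The first event is equivalent to $\{|\cC_*|\ge N^{15/8}\sqrt{x/2}\}$, and since $\cC_*\in\fC$, Theorem~\ref{thm:LDP for maiximal cluster} applied with $\sqrt{x/2}$ in place of $x$ bounds its probability by $c_1\exp\bigl(-c_1^{-1}(\sqrt{x/2})^{16}\bigr) = c_1\exp(-c_1^{-1}x^8/256)$, already of the desired form.

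For the non-boundary contribution $\mss M'$ the plan is to adapt Part~1 of the proof of Theorem~\ref{thm:LDP for maiximal cluster}. Group non-boundary clusters by diameter level: a cluster with diameter in $[2^{l-1},2^l)$ is contained in a $2^l\times 2^l$ box, so its volume is at most $C_0\cdot 4^l$. Writing $V_l$ for the total number of vertices lying in non-boundary clusters of diameter level $l$, this yields
\[
\mss M' \,\le\, C_0 \sum_{l\ge 0} 4^l\, V_l.
\]
For each $l$, using the same dyadic box structure $\mss B_l$ as in the proof of Theorem~\ref{thm:LDP for maiximal cluster} (with a small index shift so that a cluster of diameter level $l$ truly escapes the enclosing box), each such cluster contributes its vertices in any $\Lambda_{2^{l-1}}(u)\in\mss B_l$ to $\cV_{2^{l-1}}(u)$, so $V_l \le C_1\sum_{u\in\mss B_l}|\cV_{2^{l-1}}(u)|$ up to a bounded overlap constant. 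Distributing the budget $N^{15/4}x/2$ across levels via a summable weight sequence $\{c_l\}_{l\ge 0}$ reduces the task at each $l$ to bounding $\phi^{\w,0}_{\lamn}\bigl(\sum_{u\in\mss B_l}|\cV_{2^{l-1}}(u)|\ge c_l N^{15/4}x/(C_0 C_1\cdot 4^l)\bigr)$. Each such probability is controlled via CBC (wiring the boundary of each $\Lambda_{2^l}(u)$ so that the factors decorrelate into an independent product) combined with exponential Markov and the moment bound of Corollary~\ref{cor: exponential moment bound} with an optimal choice of the parameter $t$, and summing the resulting estimates over $l$ produces the target bound $c_4\exp(-c_4^{-1}x^8)$.

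The main obstacle is the joint tuning of the parameters $t$ and $\{c_l\}$: the Markov exponent must dominate both $x^8$ and the prefactor $N_l\log c_3 = (N/2^{l-1})^2 \log c_3$ coming from the number of boxes at level $l$, uniformly in $l\in\{0,1,\ldots,\lfloor\log_2 N\rfloor\}$. At the largest values of $l$ (boxes of linear size comparable to $N$) Corollary~\ref{cor: exponential moment bound} itself becomes too weak, and one instead appeals directly to Theorem~\ref{thm:LDP for maiximal cluster} to handle the rare event of a macroscopic non-boundary cluster whose squared size alone accounts for the bulk of $\mss M'$.
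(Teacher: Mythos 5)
The split $\mss M = |\cC_*|^2 + \mss M'$ and the treatment of the first term via Theorem~\ref{thm:LDP for maiximal cluster} are fine, but the proposed control of $\mss M'$ has a gap that I believe is fatal, and it occurs already at the very first reduction. Bounding a cluster of diameter level $l$ by $|\cC|\le C_0 4^l$ and writing $\mss M' \le C_0\sum_l 4^l V_l$ is correct as an inequality, but at the critical point it is lossy by a factor $2^{l/8}$ per level: a typical level-$l$ cluster has volume $\approx 2^{15l/8}$, not $4^l$, so $4^l V_l$ over-counts the level-$l$ contribution to $\mss M'$ by roughly $4^l/2^{15l/8}=2^{l/8}$. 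Quantitatively, one expects $V_l\approx N^2 2^{-l/8}$, so $\sum_l 4^l V_l\approx N^2\sum_l 2^{15l/8}\approx N^{31/8}$, which is an extra factor of $N^{1/8}$ above the typical value $N^{15/4}$ of $\mss M'$. Consequently, for any $x\lesssim N^{1/8}$ the level-wise events $\{4^l V_l\ge c_l N^{15/4}x\}$ at the top diameter scales ($2^l$ close to $N$) have probability close to $1$ regardless of how you tune the summable weights $\{c_l\}$, and the union bound over levels produces nothing. Compare this with the paper's decomposition by cluster \emph{size} threshold, $\mss M\le\sum_k 2^{k+1}\sum_{|\cC|\ge 2^k}|\cC|$, which is tight up to an absolute constant (it's just dyadic layer-cake over $|\cC|$ rather than over $\mathrm{diam}(\cC)$) and therefore does not suffer this loss.

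A secondary issue, even where the moment-bound step applies: after optimizing the Markov parameter in Corollary~\ref{cor: exponential moment bound} with boxes of side $\approx 2^l$, the attainable exponent is $\approx N^{30}x^{16}/4^{16l}$, which falls below $x^8$ once $2^l\gtrsim N^{15/16}x^{1/4}$; and the proposed fallback --- appealing to Theorem~\ref{thm:LDP for maiximal cluster} to exclude a \emph{single} cluster with $|\cC|\gtrsim N^{15/8}\sqrt{x}$ --- only kicks in at $2^l\gtrsim N^{15/8}\sqrt{x}$ and says nothing about the accumulation of many clusters of large diameter but sub-$N^{15/8}\sqrt{x}$ volume in the substantial range of diameters in between. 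What the paper does instead (and what the proposal would have to emulate) is \emph{decouple the box scale from the cluster scale}: for a given size threshold $2^k$ it chooses a box side $M(k)\approx 2^{8k/15}x^{-4/15}$, which is much smaller than the diameter of a cluster of that size, applies the max-cluster LDP so that with probability $1-\exp(-\Omega(x^8))$ no box has a non-boundary cluster of size $\ge 2^k$, reduces the contribution to the wired boundary cluster of each small box, and finishes with a counting argument over boxes whose boundary cluster is large. Your proposal ties the box scale to the cluster diameter; this tie is precisely what prevents it from reaching the right exponent in the middle range.
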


\begin{proof}
    By a simple computation, we have
\begin{align*}
   \mss M(\omega)=\sum_{l=1}^{N^2}\sum_{\mcc C\in\fC,|\mcc C|\geq l}|\mcc C| 
   \leq \sum_{k=0}^{\lfloor 2\log_2 N\rfloor} 2^{k+1} \sum_{\mcc C\in\fC, |\mcc C|\geq 2^k}|\mcc C|.
\end{align*}

We may divide the summation into three pieces with respect to the level $k$. First for $2^k< N^{\frac{7}{4}}x/32$, since $\sum_{|\mcc C|\geq 2^k}|\mcc C|\le 4N^2$, we get that \begin{equation}\label{eq:LDP case 1}
    \sum_{2^{k+5}< N^{\frac{7}{4}}}2^{k+1} \sum_{\mcc C\in\fC, |\mcc C|\geq 2^k}|\mcc C|\le N^{\frac{15}{4}}x/4.
\end{equation}
Secondly, for $2^k\ge N^{\frac{15}{8}}\sqrt{x}$, applying Theorem \ref{thm:LDP for maiximal cluster} gives \begin{equation}\label{eq:LDP case 2}
\begin{aligned}
        &\phi_{\lamn}^{\w,0}\left(\sum_{2^k\ge N^{\frac{15}{8}}\sqrt{x}}2^{k+1}\sum_{\mcc C\in\fC,|\mcc C|\geq 2^k}|\mcc C|\ge N^{\frac{15}{4}}x/4\right)\\
        \le ~&\phi_{\lamn}^{\w,0}\left(\max\left(|\cC_j|\ge N^{\frac{15}{8}}\sqrt{x}\right)\right)
        \leq  c_1\exp\left(-c_1^{-1}x^8\right).
\end{aligned}
\end{equation}
The third part is much more complicated. For $N^{\frac{7}{4}}x/32\le 2^k< N^{\frac{15}{8}}\sqrt{x}$, we want to prove that 
\begin{equation}\label{eq:LDP case 3}
    \phi_{\lamn}^{\w,0}\left(\sum_{\mcc C\in\fC,|\mcc C|\geq 2^k}|\mcc C|\ge C_0N^{\frac{15}{4}}x2^{-k}2^{\frac{k-k_0}{2}}\right)\le C\exp\left(-C^{-1}x^8\cdot 2^{k_0-k}\right),
\end{equation}
where $k_0=\max\{k\in\mathbb{N}:2^k<N^{\frac{15}{8}}\sqrt{x}\}$. As in the proof of Theorem~\ref{thm:LDP for maiximal cluster}, the factor $2^{\frac{k-k_0}{2}}$ is introduced to ensure the sequence is summable.
We will first consider a dividing procedure of $\lamn$. To be precise, we fix an integer $M=M(k)>0$ and consider the following horizontal and vertical line segments inside $\lamn$:
$$\mss L_M:=\Big\{[-N,N]\times \{(2i+1)M\}, \{(2i+1)M\}\times [-N,N]: \text{for all } i \text{ with } |(2i+1)M|\leq N\Big\}.$$

Without loss of generality, we can assume that $(2L+1)M=N$ for some positive integer $L.$ Then $\lamn$ is divided into $(2L+1)^2$ boxes of size $M\times M$. We will use $\mss B_M$ to denote the set of all these boxes. Furthermore, we define $B(i,j):=[(2i-1)M,(2i+1)M]\times [(2j-1)M,(2j+1)M]$ to denote the box in $\mss B_M.$

Now we consider a new measure, where we artificially restrict all segments in $\sL$ to be open. Then the new measure will be a product measure of FK-Ising measure $\phi_{B(i,j)}^{\w,0}(\cdot)$ on $\lamn,$ for each $B(i,j)$, denoted as $\Psi$.

So every cluster $\mcc C$ with $|\mcc C|\geq 2^k$ will either be contained in some $B(i,j)$ or intersect with at least one line segment $\mss L_M$. 
Thus, we can define $\mss C_{(i,j)}$ to be all the clusters in $B(i,j)$ which are not connected to $\pare B(i,j),$ and define $\mcc C^*_{(i,j)}$ to be the cluster connected to the boundary in $B(i,j)$. Furthermore, we define $$\mcc S_{(i,j)}(k,M)=\sum_{\mcc C\in\mss C_{(i,j)}, |\mcc C|\geq 2^k}|\mcc C|+|\mcc C^*_{(i,j)}|.$$

By monotonicity, we know $\sum_{\mcc C\geq 2^k}|\mcc C|$ is stochastically dominated by 
$$\mss S(k,M):=\sum_{B(i,j)\in\mss B_M}\mcc S_{(i,j)}(k,M).$$ 
Now define the event $A_{(i,j)}$ to be the event that the largest cluster in $B(i,j)$ is less than $2^k,$ by Theorem~\ref{thm:LDP for maiximal cluster} we have 
$$\Psi(A_{(i,j)})\geq 1-c_1\exp\left(-\frac{2^{16k}}{c_1M^{30}}\right).$$
Taking a union bound then gives 
$$\Psi(A^*)\geq 1-c_1(2L+1)^2\exp\left(-\frac{2^{16k}}{c_1M^{30}}\right),$$
where $A^*$ is the intersection of all $A_{(i,j)}.$ Thus, we may choose $M(k)=\lfloor2^{\frac{8k}{15}-\frac{k_0-k}{30}}x^{-\frac{4}{15}}\rfloor$ such that $\frac{2^{16k}}{M^{30}}\ge x^82^{k_0-k}$. Note here we restrict $N^{\frac{7}{4}}x/32\le 2^k< N^{\frac{15}{8}}\sqrt{x}$ and thus $M>1$ is well defined. Recall the definition of $k_0$, we have $2L+1=\frac{N}{M}\le C_12^{\frac{17(k_0-k)}{30}}$, thus we obtain that \begin{equation*}
    \Psi(A^*)\geq 1-C_2\exp\left(-C_2^{-1}x^82^{k_0-k}\right).
\end{equation*}

Now it suffices to consider events in $A^*$. Given $A_{(i,j)}$ happens, we have that $\mcc S_{(i,j)}(k,M)$ equals to $|\mcc C_{(i,j)}^*|$. Furthermore, $A_{(i,j)}$ is a decreasing event, so for any real number $S$, we have by FKG $$\Psi(\{\sT(k,M)>S\}\cap A^*)\leq \Psi(\{\sum |\mcc C^*_{(i,j)}|>S\}\cap A^*).$$ Therefore, it suffices to prove that \begin{equation*}
    \Psi(\{\sum |\mcc C^*_{(i,j)}|\geq C_0N^{\frac{15}{4}}x2^{-k}2^{\frac{k-k_0}{2}}\}\cap A^*)\leq C\exp\left(-C^{-1}x^8\right).
\end{equation*} 
We say a box $B(i,j)$ to be nice if $|\mcc C^*_{(i,j)}|\ge V=N^{\frac{7}{4}}x2^{-k}2^{\frac{k-k_0}{2}}M^2$, denoted as $\cN(i,j)$. then applying Theorem \ref{thm:LDP for maiximal cluster} shows \begin{equation}\label{eq:LDP nice probability}
    \phi_{B(i,j)}^{\w,0}(\cN(i,j))\le c_1\exp(-c_1^{-1}[VM^{-\frac{15}{8}}]^{16})=c_1\exp(-c_1^{-1}V^{16}M^{-30}).
\end{equation}
Recall the definition of $A^*$, we get that  $|\mcc C^*_{(i,j)}|\le 2^k$ for any configuration $\omega\in A^*$. Thus, we have \begin{equation}\label{eq:LDP nice boxes number}
    \Psi\left(\left\{\sum |\mcc C^*_{(i,j)}|\geq C_0\frac{VN^2}{M^2}\right\}\cap A^*\right)\le \Phi\left(\text{at least }C_0\frac{VN^2}{2^kM^2}~ \cN(i,j)\text{ happens}\right).
\end{equation}
Combining \eqref{eq:LDP nice probability} and \eqref{eq:LDP nice boxes number} shows \begin{align}
        &\Psi\left(\left\{\sum |\mcc C^*_{(i,j)}|\geq C_0\frac{VN^2}{M^2}\right\}\cap A^*\right)\nonumber\\
        \le~&\Big[c_1\exp\left(-c_1^{-1}V^{16}M^{-30}\right)\Big]^{C_0\frac{VN^2}{2^kM^2}}\times\Big(\frac{N^2}{M^2}\Big)^{C_0\frac{VN^2}{2^kM^2}}.\label{eq:LDP Bernoulli bound}
\end{align}
Since $M\ge2^{\frac{8k}{15}-\frac{k_0-k}{30}}x^{-\frac{4}{15}}/2$ and $2^k\le 2^{k_0}\le N^{\frac{15}{8}}\sqrt{x}$, we calculate that $V^{16}M^{-30}\ge \frac{N^2}{16M^2}$. Thus, we obtain that there exists some constant $C_3>0$ such that 
$$\frac{\exp\left(-c_1^{-1}V^{16}M^{-30}\right)}{N^2/M^2}\ge \exp\left(-C_3V^{16}M^{-30}\right).$$

Thus, we finish the proof of \eqref{eq:LDP case 3} by computing the exponent in \eqref{eq:LDP Bernoulli bound} and combining with the fact that $2^k\le 2^{k_0}< N^{\frac{15}{8}}\sqrt{x}$.
Combining now \eqref{eq:LDP case 1}, \eqref{eq:LDP case 2} and \eqref{eq:LDP case 3} gives the desired result. 
\end{proof}

To finish this subsection, we will give a lower bound for the LDP to show that the exponent $16$ and $8$ in Theorem~\ref{thm:LDP for maiximal cluster} and Theorem~\ref{thm:LDP for sum of cluster squares} are optimal.

\begin{thm}\label{thm:LDP lower bound for maiximal cluster}
There exists a constant $c>0$ such that    
$$\phi_{\lamn}^{\w,0}\left(\max_{\mcc C\in\fC}|\cC|\ge N^{\frac{15}{8}}x\right)\ge c\exp\left(-c^{-1}x^{16}\right)$$
holds for all positive integers $N$.
\end{thm}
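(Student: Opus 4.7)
The plan is to construct an explicit event of probability at least $\exp(-c\,x^{16})$ that forces a cluster of volume at least $c'\,x\,N^{15/8}$, thereby matching the upper bound of Theorem~\ref{thm:LDP for maiximal cluster}. The heuristic is that a cluster of volume $x\,N^{15/8}$ is most efficiently produced by merging $K\sim x^{16}$ ``typical'' sub-clusters, each of volume $\sim N^{15/8}/x^{15}$, living in disjoint sub-boxes of side length $\ell\sim N x^{-8}$; the product of $K$ constant RSW-type probabilities then yields the required $\exp(-cK)$.

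We may restrict to $1\le x\le c_0 N^{1/8}$ for a small constant $c_0$: the case $x\le 1$ is handled by a Paley--Zygmund argument applied to $|\cV_N|$, using the first-moment bound $\langle|\cV_N|\rangle\gtrsim N^{15/8}$ (from RSW and \eqref{estimate-pi(n)}) together with the second-moment bound $\langle|\cV_N|^2\rangle\lesssim N^{15/4}$ coming from Lemma~\ref{lem: moment bound} with $k=2$; the case $x>c_0 N^{1/8}$ is either vacuous ($x N^{15/8}>|\lamn|$) or reduces to $x=c_0 N^{1/8}$. Set $K:=\lceil x^{16}\rceil$ and $\ell:=\lfloor N/\lceil\sqrt{K}\rceil\rfloor$, so that $\ell\asymp N x^{-8}$, and partition $\lamn$ (up to a negligible boundary adjustment) into $K$ sub-boxes $B_1,\ldots,B_K$ of side length $\ell$. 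On each $B_i$ designate four ``port'' vertices, one at the midpoint of each side, so that the right port of $B_i$ and the left port of any horizontally adjacent $B_j$ are the two endpoints of a single edge $e_{ij}$ of $\mathbb Z^2$ (and analogously for vertical adjacency). Introduce the increasing events
\begin{equation*}
A_i:=\Big\{\text{the four ports of }B_i\text{ lie in a common open cluster inside }B_i\text{ of volume}\ge c_1\ell^{15/8}\Big\}
\end{equation*}
and $F:=\{\text{every port edge }e_{ij}\text{ is open}\}$.

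Combining RSW inside $B_i$ --- producing both a horizontal and a vertical open crossing through the prescribed boundary ports, hence a plus-shape cluster containing all four ports --- with a Paley--Zygmund argument on the volume of this cluster using Lemma~\ref{lem: moment bound}, one obtains $\phi^{\f,0}_{p_c,B_i}(A_i)\ge c_2>0$ uniformly in~$i$. Trivially $\phi^{\w,0}_{p_c,\lamn}(F)\ge p_c^{O(K)}$. Since the events $A_i$ depend only on edges inside the disjoint $B_i$'s and $F$ depends only on the disjoint set of port edges, CBC (applied sub-box by sub-box via DMP) together with FKG yields
\begin{equation*}
\phi^{\w,0}_{p_c,\lamn}\Big(F\cap\bigcap_{i=1}^K A_i\Big)\;\ge\;\phi^{\w,0}_{p_c,\lamn}(F)\prod_{i=1}^K\phi^{\f,0}_{p_c,B_i}(A_i)\;\ge\; p_c^{O(K)}\, c_2^{K}\;\ge\;\exp(-c\,x^{16}).
\end{equation*}
On this joint event, the cluster realising $A_i$ contains the four ports of $B_i$, and each port is linked through the open edge $e_{ij}$ to the matching port of the adjacent sub-box; hence all $K$ sub-box clusters merge into a single cluster of volume at least $K\cdot c_1\ell^{15/8}\ge c_3\, x\, N^{15/8}$, and rescaling $x$ by the constant $c_3^{-1}$ completes the proof.

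The main delicate point is establishing the uniform bound $\phi^{\f,0}_{p_c,B_i}(A_i)\ge c_2>0$: four-connectedness through specified boundary vertices is standard RSW~\cite{DHN11}, but the macroscopic volume lower bound $\ge c_1\ell^{15/8}$ requires coupling a first-moment estimate coming from the one-arm exponent $1/8$ with the second-moment upper bound from Lemma~\ref{lem: moment bound} in a Paley--Zygmund argument. Some care is needed here since cluster volumes are not directly monotone in the boundary condition, so one passes from the wired-BC moment bounds of Lemma~\ref{lem: moment bound} to the free-BC setting via CBC applied to appropriate indicator-type events before carrying out the Paley--Zygmund step.
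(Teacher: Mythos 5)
Your proposal is a coarse‑grained construction in the same spirit as Kiss's proof of \cite[Theorem~1.4]{Kiss14}, which is all the paper cites: produce $K\sim x^{16}$ sub‑boxes of side $\ell\sim Nx^{-8}$, force in each a single cluster of volume $\gtrsim\ell^{15/8}$ with constant probability, and glue the $K$ clusters so that constant$^{K}=\exp(-cx^{16})$ pays for a cluster of volume $\gtrsim x N^{15/8}$. The arithmetic ($K\cdot\ell^{15/8}\asymp xN^{15/8}$, $K\asymp x^{16}$), the restriction to $1\le x\lesssim N^{1/8}$, and the FKG/CBC/DMP bookkeeping that reduces to free‑BC single‑box estimates are all fine.

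There is, however, a genuine gap at the ``delicate point'' you flag. You propose to prove $\phi^{\f,0}_{p_c,B_i}(A_i)\ge c_2$ by producing a horizontal and a vertical crossing through the four ports (so a ``plus''-shape cluster) and then applying Paley--Zygmund, using Lemma~\ref{lem: moment bound}, ``to the volume of this cluster.'' But Lemma~\ref{lem: moment bound} controls the moments of $|\cV_\ell|$, the set of \emph{all} vertices connected to the box boundary --- not the volume of the specific cluster carrying your plus-crossing. A second‑moment/Paley--Zygmund bound on $|\cV_\ell|$ only gives $\phi(|\cV_\ell|\ge c\ell^{15/8})\ge c'$; without additional structure, the vertices of $\cV_\ell$ can be scattered across many distinct clusters, none of which needs to be your plus-cluster, let alone of volume $\gtrsim\ell^{15/8}$. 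The standard fix is to add to $A_i$ the event that there is an open circuit in an intermediate annulus $\Lambda_{\ell}(u_i)\setminus\Lambda_{\ell/2}(u_i)$ (still an increasing event with RSW-uniform positive probability): on that circuit event, every vertex of $\cV_{\ell/2}(u_i)$ lies in the circuit's cluster, which also contains your four ports once you connect the circuit to them. Then Paley--Zygmund on $|\cV_{\ell/2}|$ does give a lower bound on the volume of a single cluster, and FKG combines the circuit, the four connections, and the volume event into a constant. With that modification your construction closes; as written, the Paley--Zygmund step does not establish what you claim.
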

The proof of Theorem~\ref{thm:LDP lower bound for maiximal cluster} is the same as the proof of \cite[Theorem 1.4]{Kiss14}.
\section{Some analytic results}\label{sec: analytic results}
In this section, we present some analytic results on concentration and anticoncentration of random variables.

\begin{lem}\label{lem: dis concentration using variance}
    Let $X$ be a random variable with  $\var(X)\ge c_1>0$ and $\E X^4\le c_2$, then there exists a constant $c_3>0$ depending only on $c_1$ and $c_2$ such that there exist $\Omega_1,\Omega_2\subset\Omega$ with $\P(\Omega_1),\P(\Omega_2)\ge c_3$ such that \begin{align}
        |X(\omega_1)-X(\omega_2)|\ge \frac{\sqrt{c_1}}{2},~~~\forall\omega_1\in \Omega_1,\omega_2\in \Omega_2.\label{eq: var + moment bound 1}
    \end{align}
\end{lem}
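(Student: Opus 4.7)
The plan is to produce the two sets by locating a gap around the median of $X$, using Paley--Zygmund applied to $(X-m)^2$ to extract an $\Omega(1)$ probability of a deviation of size $\Omega(\sqrt{c_1})$ from $m$, and then taking the opposite half-line as the second set.

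First I would let $m$ denote a median of $X$, and observe that Markov's inequality applied to $X^4$ together with the defining property of a median forces $|m|^4 \le 2c_2$. This immediately controls the centered fourth moment via $(X-m)^4 \le 16(X^4 + m^4)$, giving $\E(X-m)^4 \le 48c_2$. Meanwhile the identity $\E(X-a)^2 \ge \var(X)$ for every $a$ yields $\E(X-m)^2 \ge c_1$. So the nonnegative random variable $Y = (X-m)^2$ has first moment bounded below by $c_1$ and second moment bounded above by a constant depending only on $c_1, c_2$.

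Next I would apply the Paley--Zygmund inequality to $Y$, which gives
\begin{equation*}
\P\bigl(|X-m| \ge \sqrt{c_1/2}\bigr) \;=\; \P\bigl(Y \ge \E Y/2\bigr) \;\ge\; \tfrac{1}{4}\,\frac{(\E Y)^2}{\E Y^2} \;\ge\; \frac{c_1^2}{192\,c_2} \;=:\; 2\delta.
\end{equation*}
Hence at least one of $\P(X \ge m+\sqrt{c_1/2})$ and $\P(X \le m-\sqrt{c_1/2})$ is at least $\delta$. By symmetry of the roles of $\Omega_1$ and $\Omega_2$, I may assume the first. Then I set $\Omega_2 := \{X \ge m+\sqrt{c_1/2}\}$ and $\Omega_1 := \{X \le m\}$, the latter having probability at least $1/2$ by definition of the median. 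Since $\sqrt{c_1/2} > \sqrt{c_1}/2$, the separation $X(\omega_2) - X(\omega_1) \ge \sqrt{c_1/2}$ is stronger than needed, and both sets have probability bounded below by a constant $c_3$ depending only on $c_1,c_2$.

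There is no real obstacle here; the only mildly delicate point is making sure the center used to separate the two halves has bounded location, and this is handled by the Markov-type bound on $|m|$ derived from $\E X^4 \le c_2$. Centering at the mean instead of the median would also work but would make the probability estimate of one of the two sides slightly more awkward, so using the median is the cleanest choice.
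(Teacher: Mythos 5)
Your proof is correct, and it takes a genuinely different route from the paper's. The paper truncates $X$ at level $K$ to get a bounded variable $X_K$, shows that $\var(X_K)\ge c_1/2$ for $K$ large enough by controlling the contribution from the tail using $\E X^4\le c_2$, and then picks $\Omega_1,\Omega_2$ to be the top and bottom $\frac{c_1}{64K^2}$-quantiles of $X_K$: plugging into the decomposition $2\var(X_K)=\E(X_K(\omega_1)-X_K(\omega_2))^2$ and splitting on whether either $\omega_i$ lands in a quantile set shows the two quantile blocks must be separated by at least $\sqrt{c_1}/2$, and a final correction removes the measure-$O(K^{-4})$ set where $X\neq X_K$. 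Your argument instead works with a median $m$, first bounding $|m|$ by Markov, then applying Paley--Zygmund to $(X-m)^2$ (whose first moment is $\ge c_1$ and whose second moment is $\le 48c_2$) to get $\P(|X-m|\ge\sqrt{c_1/2})\ge c_1^2/(192c_2)$, picking the better half-line as $\Omega_2$, and taking the corresponding median half-line $\{X\le m\}$ (or $\{X\ge m\}$) as $\Omega_1$. This is shorter and avoids both the truncation bookkeeping and the quantile-set construction; the cost is that the argument leans on a one-sided separation rather than exhibiting both extremes of $X$, but for the stated conclusion this is immaterial. One small wrinkle to note for cleanliness: the equality $\P(|X-m|\ge\sqrt{c_1/2})=\P\bigl((X-m)^2\ge\E(X-m)^2/2\bigr)$ you write is really an inequality $\ge$ (since $\E(X-m)^2/2\ge c_1/2$), but the direction is the one you need, so nothing breaks.
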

\begin{proof}
    Consider $X_K=X\1_{|X|\le K}$ for any $K>0$. Since  $\E X^4\le c_2$, 
    we compute that for $K>\max\{2c_1,2\}$ \begin{align*}
      \var(X_K)&\ge \var(X)-\E~(X^1-X^2)^2\1_{\{|X^1|\ge K\text{ or }|X^2|\ge K\}}\\  &
      =\var(X)-2\E(X^1-X^2)^2\1_{\{|X^1|\geq K, |X^2|<K\}}-\E(X^1-X^2)^2\1_{\{|X^1|\geq K, |X^2|\geq K\}}\\
      &\geq \var(X)-2\E(|X^1|+K)^2\1_{\{|X^1|\geq K\}}-2\E[(X^1)^2+(X^2)^2]\1_{\{|X^1|\geq K, |X^2|\geq K\}}\\
      &\geq \var(X)-2\E(2|X^1|)^2\1_{\{|X^1|\geq K\}}-4\E[(X^1)^2]\1_{\{|X^1|\geq K\}}\\&
      \geq \var(X)-12c_1/K^2,
    \end{align*}where $X^1,X^2$ are sampled independently with the same law as $X$. Then there exists a constant $K_0=K_0(c_1,c_2)$ such that for any $K>K_0$, $\var(X_K)\ge \frac{c_1}{2}$. 

   Now choose $\Omega_1$ such that 
    $$\P(\Omega_1)=\frac{c_1}{64K^2}\text{ and }\forall \omega\in\Omega_1, \omega'\not\in\Omega_1, X_K(\omega)\geq X_K(\omega');$$
    choose $\Omega_2$ such that 
    $$\P(\Omega_2)=\frac{c_1}{64K^2}\text{ and }\forall \omega\in\Omega_2, \omega'\not\in\Omega_2, X_K(\omega)\leq X_K(\omega').$$
Therefore we have 
\begin{align*}&\var(X_K)=\E(X_K(\omega_1)-X_K(\omega_2))^2 \\
=&\E(X_K(\omega_1)-X_K(\omega_2))^2\1_{\{\omega_1\in\Omega_1\cup\Omega_2\text{ or }\omega_2\in\Omega_1\cup\Omega_2\}}+\E(X_K(\omega_1)-X_K(\omega_2))^2\1_{\{\omega_1,\omega_2\in\Omega_1^c\cap\Omega_2^c\}}\\
\leq & 4K^2\times \frac{4c_1}{64K^2}+(\min_{\omega_1\in\Omega_1}X_K(\omega_1)-\max_{\omega_2\in \Omega_2}X_K(\omega_2))^2.
\end{align*}
Combined with the fact that $\var(X_K)\geq \frac{c_1}{2}$ ,it yields that $\big|\min_{\omega_1\in\Omega_1}X_K(\omega_1)-\max_{\omega_2\in \Omega_2}X_K(\omega_2)\big|\\ \ge \sqrt{\frac{c_1}{4}}.$Thus, we have
    for any $\omega_1\in \Omega_1,\omega_2\in \Omega_2$ that
    \begin{equation}\label{eq: var + moment bound 2}
        |X_K(\omega_1)-X_K(\omega_2)|\ge \sqrt{\frac{c_1}{4}}.
    \end{equation} 
    Let $K$ large enough such that $c_2K^{-4}\le \frac{c_1}{64K^2}$ and $\Omega_1'=\Omega_1\cap\{X\le K\},\Omega_2'=\Omega_2\cap\{X\le K\}$, then we have 
    $$\min\left\{\P(\Omega_1'),\P(\Omega_2')\right\}\ge \min\left\{\P(\Omega_1),\P(\Omega_2)\right\}-c_2K^{-4}\ge \frac{c_1}{64K^2}.$$
    Since $X=X_K$ on $\Omega_1'\cup\Omega_2'$, \eqref{eq: var + moment bound 1} follows directly from \eqref{eq: var + moment bound 2}.
\end{proof}
\begin{rmk}
    It is easy to see that with $\E~e^{|X|}\le c_2$ in place of $\E X^4\le c_2$, the same result still holds.
\end{rmk}
\begin{cor}\label{cor: dis concentration using variance}
    Let $X$ be a random variable with  $\var(X)\ge c_1>0$ and $\E~e^{|X|}\le c_2$,, there exists a constant $c_3>0$ depending only on $c_1,c_2$ such that there exist $\Omega'\subset\Omega$ with $\P(\Omega')\ge c_3$ and for any $\omega\in \Omega'$, we have 
    \begin{align}
        \big|X(\omega)-\ln\left(\E~e^X\right)\big|\ge \frac{\sqrt{c_1}}{4}.\nonumber
    \end{align}
\end{cor}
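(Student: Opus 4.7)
The plan is to deduce the corollary directly from Lemma~\ref{lem: dis concentration using variance} via the triangle inequality, using the fact that the corollary's conclusion concerns distance to a \emph{fixed} constant $a := \log(\langle e^X\rangle)$.

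First, I would invoke Lemma~\ref{lem: dis concentration using variance}, using the version with $\E\, e^{|X|}\le c_2$ in place of the fourth-moment hypothesis (which is justified by the remark following that lemma, since the exponential moment bound trivially implies a bound on $\E X^4$). This supplies two sets $\Omega_1,\Omega_2\subset\Omega$ with $\P(\Omega_i)\ge c_3'$ for some $c_3'=c_3'(c_1,c_2)>0$ and
\[
|X(\omega_1)-X(\omega_2)|\ge \tfrac{\sqrt{c_1}}{2}\quad\text{for all }\omega_1\in\Omega_1,\ \omega_2\in\Omega_2.
\]

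Next, set $a:=\log\bigl(\E\, e^X\bigr)$, which is a deterministic constant (finite by the assumption $\E\, e^{|X|}\le c_2$). I claim that at least one of $\Omega_1,\Omega_2$ has the property that every one of its points lies at distance at least $\sqrt{c_1}/4$ from $a$. Suppose for contradiction that this fails for both sets: then there exist $\omega_1\in\Omega_1$ and $\omega_2\in\Omega_2$ with $|X(\omega_i)-a|<\sqrt{c_1}/4$ for $i=1,2$. By the triangle inequality,
\[
|X(\omega_1)-X(\omega_2)|\le |X(\omega_1)-a|+|X(\omega_2)-a|<\tfrac{\sqrt{c_1}}{2},
\]
contradicting the separation property above. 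Hence we may take $\Omega'\in\{\Omega_1,\Omega_2\}$ to be the ``good'' set, and its probability is at least $c_3'$, yielding the corollary with $c_3=c_3'$.

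There is no real obstacle here; the only mildly subtle point is noticing that the conclusion quantifies over all $\omega\in\Omega'$ (not just existence of one such $\omega$), so the dichotomy argument needs the triangle inequality applied to a \emph{hypothetical} close-to-$a$ point in each $\Omega_i$ rather than to an arbitrary pair. Once this is set up, the rest is immediate from Lemma~\ref{lem: dis concentration using variance}.
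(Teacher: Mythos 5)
Your argument is correct and is exactly the natural derivation: the paper states the corollary without proof, treating it as an immediate consequence of Lemma~\ref{lem: dis concentration using variance} (together with the remark that the exponential-moment hypothesis may replace the fourth-moment one), and your pigeonhole-plus-triangle-inequality dichotomy is precisely how one fills in that step.
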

\begin{lem}\label{lem: inverse Jenson}
    For any constant $c_1>0$, there exists a constant $c_2>0$ only relies on $c_1$ such that for any random variable $X$ with $\E\exp(X)< \infty$ and $\var(X)\ge c_1$, we have $\ln\big(\E\exp(X)\big)\ge \E X+c_2$.
\end{lem}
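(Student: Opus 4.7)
The plan is to reduce to a centered variable and apply a pointwise Taylor-type inequality. Write $\mu = \E X$ and $Z = X - \mu$; then $\log \E e^X - \E X = \log \E e^Z$, so since $\E Z = 0$ the conclusion becomes $\E e^Z \ge 1 + c_3'$ for some $c_3' = c_3'(c_1,c_2) > 0$ (then $c_3 = \log(1+c_3')$).

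First I would establish the universal pointwise lower bound
\begin{equation*}
e^z - 1 - z \;\ge\; c_0 \min(z^2, |z|) \qquad \text{for all } z \in \R,
\end{equation*}
with some absolute $c_0 > 0$. This follows from a second-order Taylor estimate on $|z| \le 1$ (where $e^z - 1 - z \ge z^2/(2e)$) and from the crude bounds $e^z - 1 - z \ge z$ for $z \ge 1$ and $e^z - 1 - z \ge |z| - 1 \ge |z|/2$ for $z \le -2$ (with a brief separate check on $-2 \le z \le -1$). Integrating against the law of $Z$ and using $\E Z = 0$ gives $\E e^Z - 1 \ge c_0\, \E \min(Z^2, |Z|)$, and it therefore suffices to lower-bound $\E \min(Z^2, |Z|)$ by a quantity depending only on $c_1,c_2$.

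For this, I would split $\var Z = \E Z^2 \1_{|Z|\le 1} + \E Z^2 \1_{|Z|>1} \ge c_2$. If $\E Z^2 \1_{|Z|\le 1} \ge c_2/2$, then $\E \min(Z^2,|Z|) \ge c_2/2$ and we are done. Otherwise $\E Z^2 \1_{|Z|>1} \ge c_2/2$, and one must convert this into a lower bound for $\E |Z| \1_{|Z|>1}$. The right tail is manageable using the hypothesis: by Jensen, $\mu \le \log c_1$, and since $z^2 \le e^z$ for $z \ge 1$ we get $\E Z^2 \1_{Z>1} \le e^{-\mu} \E e^X \le c_1^2$, so $\E|Z|\1_{Z>1}$ is controlled in terms of $c_1$.

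The hard part, and the main obstacle, is the left-tail contribution $\{Z < -1\}$. The hypothesis $\E e^X \le c_1$ imposes no constraint on the negative tail of $X$, so there is no analogue of the right-tail inequality above, and the step of passing from $\E Z^2 \1_{Z<-1} \ge c_2/2$ to a lower bound on $\E |Z|\1_{Z<-1}$ cannot be closed. In fact this obstruction is genuine: taking $X = -K$ with probability $2c_2/K^2$ and $X=0$ otherwise yields $\E e^X \le 1$ and $\var X \ge c_2$ (for $K$ large), yet a direct computation gives $\log \E e^X - \E X = 2c_2(1/K - 1/K^2) \to 0$, so no constant $c_3 = c_3(c_1,c_2)$ can suffice under these hypotheses alone. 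If instead one assumes the stronger two-sided hypothesis $\E e^{|X|} \le c_1$ (which is exactly what is supplied at the single point of application, via property \eqref{item: good external field 2} in the proof of Lemma~\ref{lem: anticoncentration for good external field}), then the same Markov argument gives $\P(X \le -t) \le c_1 e^{-t}$, so $\E Z^2 \1_{Z<-1}$ and $\E |Z| \1_{Z<-1}$ are both bounded uniformly in terms of $c_1$, and the dichotomy above closes with a positive $c_3(c_1,c_2)$.
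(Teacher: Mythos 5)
Your proposal is essentially correct, and what it identifies is a defect in the lemma itself rather than a gap in your argument. The counterexample is valid: with $X=-K$ with probability $2c_2/K^2$ and $X=0$ otherwise, one has $\E e^{X}\le 1$, $\var(X)=2c_2-4c_2^2/K^2\ge c_2$ once $K\ge 2\sqrt{c_2}$, while $0<\log\E e^{X}-\E X\le 2c_2/K\to 0$, so no constant $c_3(c_1,c_2)$ can exist under the one-sided hypothesis $\E e^{X}\le c_1$. This also pinpoints the flaw in the paper's own one-line proof: it rests on the pointwise bound $e^{x}\ge 1+x+\frac{x^2}{3}$ asserted for all real $x$, which fails on the negative axis (at $x=-2$ one has $e^{-2}\approx 0.135<\frac13$, and the inequality fails for all sufficiently negative $x$). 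Note moreover that the paper's argument never uses $\E e^{X}\le c_1$ and would yield $c_3=\log(1+c_2/3)$ independent of $c_1$, which your example rules out outright.

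Your proposed repair is also the right one, and it matches how the lemma is actually used. Under the two-sided hypothesis $\E e^{|X|}\le c_1$ the statement is true: since $|\E X|\le\E|X|\le\E e^{|X|}\le c_1$, the centered variable $Z=X-\E X$ has $\E e^{|Z|}\le c_1e^{c_1}$, so one may choose $A=A(c_1,c_2)\ge 1$ with $\E\,Z^2\1_{\{|Z|>A\}}\le c_2/2$ (using $z^2e^{|z|/2}\le Ce^{|z|}$), whence $\E e^{Z}-1\ge c_0\,\E\min(Z^2,|Z|)\ge \frac{c_0}{A}\E\,Z^2\1_{\{|Z|\le A\}}\ge \frac{c_0c_2}{2A}$; equivalently, the paper's Taylor bound becomes legitimate once applied on the truncated range $|Z|\le A$. (Your pointwise inequality $e^{z}-1-z\ge c_0\min(z^2,|z|)$ holds for a small absolute $c_0$, though the intermediate claim $e^{z}-1-z\ge z$ for $z\ge 1$ just fails at $z=1$; the bound $z^2/2\ge z/2$ suffices there.) Finally, as you observe, the stronger hypothesis is exactly what is available at the lemma's only point of use: since $f\ge 0$, item (ii) of Definition~\ref{def: good external field for product measure low decomposition} bounds $\langle e^{|X^{\pm}|}\rangle_{p,\Omega}^{\w,0}$, so restating the lemma with $\E e^{|X|}\le c_1$ (in line with the hypotheses already used in Corollary~\ref{cor: dis concentration using variance}) leaves the proof of Lemma~\ref{lem: anticoncentration for good external field} and everything downstream intact.
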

\begin{proof}
    Note that $e^x\ge 1+x+\frac{x^2}{3}$. 
    Thus, we compute 
    \begin{equation}
        \E\exp(X)\ge \exp(\E X)\left(1+\E \left[(X-\E X)+\frac{(X-\E X)^2}{3}\right]\right)=\exp(\E X)\Big(1+\frac{\var(X)}{3}\Big).\nonumber
    \end{equation}The desired result thus holds since  $\var(X)\ge c_2$.
\end{proof}
\begin{lem}\label{lem: chi fourth power tail bound}
    Let $X_1,X_2,\cdots,X_n$ be i.i.d Gaussian variables with mean $0$ and variance $1$. Let $Y_i=X_i^2-1$ and $Z_i=X_i^4-6X_i^2+3$. Then there exists a constant $c>0$ such that for any $a_i\ge 0~(i=1,2,\cdots,n)$, we have 
        \begin{align}
            &\P\left(\Bigg|\sum_{i=1}^na_iY_i\Bigg|\ge x\right)\le c\exp\left(-\frac{x^{\frac{1}{2}}}{c(\sum_{i=1}^na_i^2)^{\frac{1}{4}}}\right)\label{eq: wiener chaos input from DHX 1}\\&\P\left(\Bigg|\sum_{i=1}^na_iZ_i\Bigg|\ge x\right)\le c\exp\left(-\frac{x^{\frac{1}{4}}}{c(\sum_{i=1}^na_i^2)^{\frac{1}{8}}}\right).\label{eq: wiener chaos input from DHX 2}
        \end{align}
    In particular, if $x\ge 3\sum_{i=1}^na_i+\sqrt{\sum_{i=1}^na_i^2}$, we have \begin{equation}\label{eq: chi fourth power tail bound}
        \P\left(\Bigg|\sum_{i=1}^na_iX_i^4\Bigg|\ge x\right)\le c\exp\left(-\frac{\left(x-3\sum_{i=1}^na_i\right)^{\frac{1}{4}}}{c(\sum_{i=1}^na_i^2)^{\frac{1}{8}}}\right).
    \end{equation}
\end{lem}
\begin{proof}
    The proofs of \eqref{eq: wiener chaos input from DHX 1} and \eqref{eq: wiener chaos input from DHX 2} are similar to that of \cite[Lemma B.1]{DHX23} (see \cite[Lemma 1]{LM00} for an alternative proof to the first inequality), so we only give a sketch for \eqref{eq: wiener chaos input from DHX 2} here. We start from controlling the moments of $\Big|\sum_{i=1}^na_iZ_i\Big|$. By hypercontractivity for Guassian variables \cite{Nel73} (see also \cite[Theorem 1.4.1]{Nua06}), we get that \begin{equation}\label{eq: hypercontractivity for gaussian polynomials} \E\Big|\sum_{i=1}^na_iZ_i\Big|^p \le (p-1)^{2p} \Big(\E(\sum_{i=1}^na_iZ_i)^2\Big)^{\frac{p}{2}}=(p-1)^{2p} \Big(\sum_{i=1}^na_i^2\Big)^{\frac{p}{2}}.
\end{equation} By \cite[Equation (B.8)]{DHX23}, we can control the exponential moment of $\Big|\sum_{i=1}^na_iZ_i\Big|^{\frac{1}{4}}$ \begin{equation*}\label{eq: exponential moment for general wiener chaos}\begin{aligned}
    \exp(t\Big|\sum_{i=1}^na_iZ_i\Big|^{\frac{1}{4}}) = \sum_{p=0}^{\infty}\E\Big|\sum_{i=1}^na_iZ_i\Big|^{\frac{p}{4}}\cdot  t^p/p!&\le  \sum_{p=0}^{\infty} (\sum_{i=1}^na_i^2)^{\frac{p}{8}} ( t)^p\\&=\frac{1}{1-(\sum_{i=1}^na_i^2)^{\frac{1}{8}}\cdot t}
\end{aligned}
\end{equation*} where $t\in(0,\frac{1}{(\sum_{i=1}^na_i^2)^{\frac{1}{8}}})$. Choosing $t=\frac{1}{2(\sum_{i=1}^na_i^2)^{\frac{1}{8}}}$ and applying an exponential version of the Markov inequality completes the proof of \eqref{eq: wiener chaos input from DHX 2}. The proof of \eqref{eq: chi fourth power tail bound} comes from the fact that $X_i^4=3+6Y_i+Z_i$ and combining with \eqref{eq: wiener chaos input from DHX 1} and \eqref{eq: wiener chaos input from DHX 2}.
\end{proof}
\begin{lem}[Bernstein’s inequality]\label{lem: concentration inequality with finite exponential moment}
    Let $X_1,X_2,\cdots,X_n$ be independent random variables with a uniform exponential moment bound, i.e., $\E \exp(|X_i|)\le C_1$ for $1\leq i\leq n$. Then for any $x>0$, there exists a constant $c>0$ relies on $C_1$ but not $x$ such that
    $$\P\left(\Bigg|\sum_{i=1}^nX_i-\E X_i\Bigg|\ge x\right)\le \exp(-\min\{\frac{x^2}{nc},\frac{x}{c}\}).$$ 
\end{lem}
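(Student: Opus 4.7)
The plan is to deduce Bernstein's inequality from a standard Chernoff argument, using only the uniform exponential moment bound $\E\exp(|X_i|)\le C_1$ as input. By symmetry (replacing $X_i$ with $-X_i$) and a union bound it suffices to control the one-sided deviation $\P(\sum (X_i-\E X_i)\ge c_2 n)$.

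First I would show that for every $t\in[0,1/2]$ there is an absolute constant $M=M(C_1)$ with
\[
\E\exp\bigl(t(X_i-\E X_i)\bigr)\le 1+Mt^2\le \exp(Mt^2).
\]
The key observation is that the hypothesis $\E e^{|X_i|}\le C_1$ implies uniform control on all polynomial moments: expanding $e^{|x|}=\sum_{k\ge 0}|x|^k/k!$ termwise yields $\E|X_i|^k\le C_1 k!$ for all $k$. In particular $|\E X_i|$ is bounded by a constant depending only on $C_1$, and so is $\var(X_i)$. Writing $Y_i=X_i-\E X_i$, these moment bounds propagate to $\E|Y_i|^k\le C' k!$ for some $C'=C'(C_1)$. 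The Taylor expansion of $\E e^{tY_i}$ then gives
\[
\E e^{tY_i}=1+\sum_{k\ge 2}\frac{t^k\E Y_i^k}{k!}\le 1+C'\sum_{k\ge 2}t^k=1+\frac{C't^2}{1-t}\le 1+2C't^2
\]
for $t\le 1/2$, which is the desired quadratic-exponential bound with $M=2C'$.

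Second, I would apply Chernoff's inequality: for any $t\in(0,1/2]$,
\[
\P\Bigl(\sum_{i=1}^n Y_i\ge c_2 n\Bigr)\le e^{-c_2 tn}\prod_{i=1}^n\E e^{tY_i}\le \exp\bigl(-c_2 tn+Mt^2 n\bigr).
\]
Choosing $t=\min\{c_2/(2M),1/2\}$, both a positive constant depending only on $c_2$ and $C_1$, the exponent becomes $-c_3 n$ with $c_3=c_2 t/2>0$. Combining with the analogous bound on the other tail yields the lemma.

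No part of this argument is difficult; the only mildly delicate point is keeping track of the fact that $c_3$ depends only on $c_2$ and $C_1$ (not on $n$ or on the particular laws of the $X_i$), which is immediate from the uniform moment bound derived in the first step. There is no real obstacle to overcome.
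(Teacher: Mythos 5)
Your proof is correct and complete; the main points (uniform control of all moments of $Y_i = X_i-\E X_i$ from the exponential moment bound via term-by-term Taylor expansion, then a quadratic upper bound on the log-MGF for $t\le 1/2$, then Chernoff with $t$ chosen of order $c_2$) are all carried out cleanly and the constant-tracking is sound. The only thing worth flagging is the factor of $2$ from the union bound over the two tails: $2\,e^{-c_3'n}\le e^{-c_3 n}$ fails for small $n$, and indeed the lemma as literally stated can break at $n=1$ (take $X_1=\pm c_2$ with equal probability; then $\P(|X_1-\E X_1|\ge c_2)=1$). This imprecision is, however, already present in the lemma's own statement and in the reference it cites (Vershynin, Thm.\ 2.8.1, which also carries the prefactor $2$), so it is not a gap you introduced. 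The paper's own proof is a one-line citation to Vershynin; you have instead given a self-contained elementary Chernoff argument, which proves the same thing from scratch and makes the dependence of $c_3$ on $c_2$ and $C_1$ explicit. Both are valid; yours buys transparency and self-containment at the cost of not quoting the sharpest known form of Bernstein.
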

\begin{proof}
    See for example \cite[Theorem 2.8.1]{vershynin2018high}.
\end{proof}

\section{Some results on FK-Ising model without disorder}\label{sec: FK Ising not critical}
In this section, we prove some large deviation results for the FK-Ising model at low-temperature and high-temperature.
\begin{lem}\label{lem: low temperature cluster bound}
    Fix $p>p_c$. Let $\mathfrak{C}(\omega)$ denote the collection of clusters under the configuration $\omega$ and $\cC_{\diamond}$ denote the maximal cluster. Then for any constant $\alpha>0$, there exists a constant $c_1=c_1(\alpha,p)>0$ such that 
    $$\phi^{\w,0}_{p,\Omega}\left(\max_{\cC\in\mathfrak{C}\setminus\{\cC_{\diamond}\}}|\cC|\ge N^{\alpha}\right)\le c_1^{-1}\exp\left(-c_1N^{c_1}\right)$$
    holds for any positive integer $N$ and domain $\lamn\subset\Omega\subset\Lambda_{2N}$.
    Furthermore, there exists a constant $c_2=c_2(p)>0$ such that 
    $$\phi^{\w,0}_{p,\Omega}\left(\{\pari\Omega\subset\cC_{\diamond}\}\right)\ge 1- c_2^{-1}\exp\left(-c_2\sqrt{N}\right)$$
    holds for any positive integer $N$ and domain $\lamn\subset\Omega\subset\Lambda_{2N}$. 
\end{lem}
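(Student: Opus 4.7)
My plan is to prove the second statement first and then derive the first statement from it together with a short dual-circuit argument. Both parts rely on the following duality input: since $p>p_c$, the dual of $\phi^{\w,0}_{p,\Omega}$ on $\Omega^\diamond$ with free boundary condition is a subcritical FK-Ising measure at parameter $p^\diamond<p_c$. By CBC applied to compare with the infinite-volume free measure and the standard subcritical exponential decay of connectivities (see \cite{DT20}), there exists $\xi=\xi(p)>0$ such that, uniformly in $\Omega$,
\[
\phi^{\w,0}_{p,\Omega}\Big(\text{the open dual cluster of a fixed } x^\diamond \text{ has diameter} \geq L\Big)\leq e^{-\xi L}.
\]
Moreover, any cluster $\cC$ of $\omega$ not connected to $\pari\Omega$ is enclosed in $\Omega^\diamond$ by an open dual circuit, and this circuit (hence the dual cluster containing it) has diameter at least $\mathrm{diam}(\cC)\geq\sqrt{|\cC|}-1$ by the trivial $\Z^2$ bound $|\cC|\leq(\mathrm{diam}(\cC)+1)^2$.

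For the second statement, a union bound over the $\leq CN^2$ possible starting dual vertices gives
\[
\phi^{\w,0}_{p,\Omega}\big(\exists\text{ dual cluster with diameter}\geq\sqrt{N}\big)\leq CN^2 e^{-\xi\sqrt{N}}\leq C' e^{-c\sqrt{N}}.
\]
On the complement, every cluster not connected to $\pari\Omega$ has volume strictly less than $N$. Since the wired boundary ensures $|\cC_*|\geq|\pari\Omega|$ and the inclusion $\lamn\subset\Omega$ forces the inner boundary to have at least the perimeter of $\lamn$, i.e.\ $|\pari\Omega|\geq 8N-O(1)$, for $N$ large enough $\cC_*$ is the strictly largest cluster, so $\cC_*=\cC_\diamond$ and $\pari\Omega\subset\cC_\diamond$.

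For the first statement, fix $\alpha>0$. If some $\cC\in\mathfrak{C}\setminus\{\cC_\diamond\}$ has $|\cC|\geq N^\alpha$, there are two cases. Either $\cC$ is not connected to $\pari\Omega$, in which case the same dual argument with $L=\tfrac12 N^{\alpha/2}$ in place of $\sqrt{N}$ yields probability at most $C\exp(-c N^{\alpha/2})$; or $\cC=\cC_*$, which means $\cC_*\neq\cC_\diamond$ and falls in the complement of the second statement, whose probability is at most $C\exp(-c\sqrt{N})$. Combining both cases gives the stated bound with exponent $c_1=\min(\alpha,1)/2$.

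The only non-elementary ingredient is the uniformity (in $\Omega$) of the subcritical exponential decay of dual connectivities, which comes from CBC-comparison with the infinite-volume free dual measure together with the planar subcritical estimates of \cite{DT20}. Everything else is two-dimensional lattice combinatorics: the isoperimetric-type inequality $\mathrm{diam}(\cC)\geq\sqrt{|\cC|}-1$ and the fact that a finite cluster in $\Omega$ not meeting $\pari\Omega$ is enclosed by an open dual circuit in $\Omega^\diamond$.
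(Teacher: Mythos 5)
Your proof is correct, but it takes a genuinely different route from the paper's. The paper argues on the primal side via coarse--graining: it tiles $\Omega$ with $2M$-boxes with $M\sim N^{\alpha/2}$, uses supercritical RSW (from \cite{DT20}) together with CBC to show that with probability $1-e^{-cM}$ each annulus $\Lambda_{2M}(u)\setminus\Lambda_M(u)$ contains an open circuit hooked onto the inner box boundary, and observes that on this event the circuits from overlapping annuli all join up into one giant cluster which must equal $\cC_\diamond$ and contain $\pari\Omega$, while every other cluster is trapped inside one $2M$-box, so $|\cC|\leq 16M^2<N^{\alpha}$. You instead work in the dual: since $p>p_c$ the dual measure is subcritical, so the one-arm probability decays exponentially; any primal cluster $\cC$ not meeting $\pari\Omega$ is enclosed by an open dual circuit, the elementary bound $|\cC|\leq(\mathrm{diam}(\cC)+1)^2$ converts the volume assumption into a large dual-circuit diameter, and a union bound over $\lesssim N^2$ dual seed points finishes. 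Both arguments rest only on exponential decay, but you use it directly in the subcritical dual, while the paper derives it on the primal side through RSW; yours is arguably more elementary because it sidesteps RSW entirely, and the second statement drops out from a cardinality comparison rather than from explicitly building the boundary cluster. Two small points to tidy: first, the claim $|\pari\Omega|\geq 8N-O(1)$ is not quite right for arbitrary $\Omega$ with $\Lambda_N\subset\Omega\subset\Lambda_{2N}$ (for each column $\{i\}\times\mathbb Z$ with $|i|\leq N$ the topmost point of $\Omega$ lies in $\pari\Omega$, giving $|\pari\Omega|\geq 2N+1$; combining with rows one gets a bit more), but any linear-in-$N$ lower bound beats $(\sqrt N+1)^2$ for $N$ large, so the argument survives. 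Second, for the CBC comparison you should compare with the \emph{wired} infinite-volume dual rather than the free one to upper-bound the increasing connectivity event; for subcritical FK the infinite-volume free and wired measures coincide, so the conclusion is the same, but the direction of the comparison matters.
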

\begin{proof}
    
    Let $M=\min\left\{\frac{N}{6},\frac{N^{\frac{\alpha}{2}}}{16}\right\}$. Let $\{B_i=\Lambda_{M}(u_i) \}_{i=1}^k$ be a disjoint $2M$-box covering of $\Omega$. Similar to the proof of Lemma~\ref{lem: good external field for number of xi}, let $\mcc G(u)$ denote the event that there exists an open circuit in the annuli $\Lambda_{2M}(u)\setminus\Lambda_{M}(u)$ and it is connected to $\pari\Lambda_M(u)$ in $\omega.$ Then by the RSW theory,  
    there exists a constant $C_1>0$ that only relies on $p$ such that for any boundary condition $\gamma$ on $\pari\Lambda_{2M}(u_i),$ 
    $$\phi^{\gamma,0}_{p,\Lambda_{2M}(u_i)}\left(\mcc G(u_i)\right)\geq 1-\exp(-C_1M).$$
    Therefore, let $\mcc G=\cap_{i=1}^k\mcc G(u_i)$, then we have by CBC that \begin{align}
        \phi^{\w,0}_{p,\Omega}(\mcc G)\geq 1-\sum_{i=1}^k\phi^{\w,0}_{p,\Omega}(\mcc G(u_i)^c)\stackrel{(*)}{\ge} 1-\sum_{i=1}^k\phi^{\f,0}_{p,\Lambda_{2M}(u_i)}(\mcc G(u_i)^c)&\ge 1-\frac{N^2}{M^2}\exp(-C_1M)\nonumber\\&\ge 1-C_2^{-1}\exp(-C_2M),\label{eq: coarse graining using circuit}
    \end{align}where the last inequality holds since $M=\min\{\frac{N}{6},\frac{N^{\frac{\alpha}{2}}}{16}\}$. Note that in $(*)$, we do not restrict $\Lambda_{2M}(u_i)\subset\Omega$.

    For $\omega\in\mcc G$, the circuits in the annulus $\Lambda_{2M}(u_i)\setminus\Lambda_{M }(u_i)$ are connected, and we denote the cluster containing those circuits as $\cC_0$.
    In addition, we get that any cluster with diameter larger than $4M$ will be connected to one of the circuits in the annuli $\Lambda_{2M}(u)\setminus\Lambda_{M}(u)$, thus $\cC_{0}=\cC_\diamond$ is the maximal cluster in $\omega$ and $\max_{\cC\in\mathfrak{C}\setminus\{\cC_{\diamond}\}}|\cC|< 16M^2$. In conclusion, we get from \eqref{eq: coarse graining using circuit} that 
    $$\phi^{\w,0}_{p,\Omega}\left(\max_{\cC\in\mathfrak{C}\setminus\{\cC_{\diamond}\}}|\cC|\ge N^{\alpha}\right)\le\phi^{\w,0}_{p,\Omega}(\mcc G^c)\le\exp(-C_2M).$$
    Furthermore, let $\alpha=1$, then for $\omega\in\mcc G$, since $\cC_{0}$ is the maximal cluster in $\omega$ and $\pari\Omega\subset\cC_0$, we get that $\pari\Omega\subset\cC_{\diamond}$. Combined with \eqref{eq: coarse graining using circuit}, it yields that \begin{equation*}
        \phi^{\w,0}_{p,\Omega}\left(\{\pari\Omega\subset\cC_{\diamond}\}\right)\ge 1- \exp(-C_3\sqrt{N}).\qedhere
    \end{equation*}
\end{proof}
\begin{lem}\label{lem: low temperature maximal cluster deviation bound}
    Fix $p>p_c$. Then there exists a constant $c=c(p)>0$ such that for any positive integer $N$ and domain $\lamn\subset\Omega\subset\Lambda_{2N}$, we have $$\langle|\cC_*|\rangle-\langle|\cC_*\cap\tilde\cC_*|\rangle\ge cN^2$$ where $\cC_*$ and $\tilde\cC_*$ are the boundary clusters of two independently sampled configurations under measure $\fklow$.
\end{lem}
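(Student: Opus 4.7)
The plan is to start from the pointwise identity
\begin{equation*}
\langle|\cC_*|\rangle - \langle|\cC_*\cap\tilde\cC_*|\rangle \;=\; \sum_{x\in\Omega}\bigl(p_x - p_x^{2}\bigr),\qquad p_x := \phi_{p,\Omega}^{\w,0}(x\in\cC_*),
\end{equation*}
which follows from Fubini and the independence of the two samples (so that $\langle\1_{x\in\cC_*}\1_{x\in\tilde\cC_*}\rangle=\langle\1_{x\in\cC_*}\rangle\langle\1_{x\in\tilde\cC_*}\rangle=p_x^{2}$). Thus the lemma is reduced to exhibiting a positive density of vertices in $\Omega$ for which both $p_x$ and $1-p_x$ are bounded below by constants depending only on $p$.

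For the upper bound $p_x\le 1-(1-p)^{4}$, fix any $x\in\Omega\setminus\pari\Omega$ and note that on the event that all four primal edges incident to $x$ are closed, the cluster of $x$ is the singleton $\{x\}$, disjoint from $\pari\Omega$, so $x\notin\cC_*$. This event is decreasing, so by iterated FKG its probability is at least $\prod_{e\ni x}\phi_{p,\Omega}^{\w,0}(e\text{ closed})$. A direct computation for the $q=2$ random-cluster model gives
\begin{equation*}
\phi_{p,\Omega}^{\w,0}(e\text{ open}\mid\text{rest}) \;\in\;\{p,\;p/(2-p)\}\;\le\; p,
\end{equation*}
so each single-edge marginal of closedness is at least $1-p$, and hence $1-p_x\ge(1-p)^{4}$.

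The main step is the matching lower bound $\sum_{x\in\Omega}p_x \ge c_1 N^{2}$. I would obtain it by a coarse-graining in the spirit of the proof of Lemma~\ref{lem: low temperature cluster bound}, but with $M=M(p)$ chosen as a \emph{constant} rather than a growing scale. For each box $B_i=\Lambda_{2M}(u_i)\subset\Omega$, supercritical RSW (see e.g.\ \cite{DT20}) shows that the annulus-circuit event $\cG(u_i)$ has $\phi^{\f,0}_{p,B_i}$-probability at least some $q_0=q_0(p)>0$; a second RSW estimate applied to concentric sub-boxes shows that, conditionally on $\cG(u_i)$, any vertex $x\in\Lambda_{M/2}(u_i)$ is connected to the circuit in the annulus with probability at least $r_0=r_0(p)>0$. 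Combining via DMP and CBC (the boundary condition on $\pari B_i$ induced from $\phi_{p,\Omega}^{\w,0}$ dominates the free one in FKG order), we obtain $\phi_{p,\Omega}^{\w,0}(x\text{ lies in the cluster of that circuit})\ge q_0 r_0$ for every such interior $x$. The second assertion of Lemma~\ref{lem: low temperature cluster bound} identifies this circuit's cluster with $\cC_\diamond=\cC_*$ up to an error of order $\exp(-c\sqrt N)$, so $p_x\ge q_0 r_0/2$ for the $\Theta(N^{2})$ vertices $x$ with $\Lambda_M(x)\subset\Omega$. This yields $\sum_x p_x\ge c_1 N^{2}$.

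Combining the two bounds, since $1-p_x\ge(1-p)^{4}$ for every $x\in\Omega\setminus\pari\Omega$,
\begin{equation*}
\sum_{x\in\Omega}p_x(1-p_x) \;\ge\; (1-p)^{4}\sum_{x\in\Omega\setminus\pari\Omega}p_x \;\ge\; (1-p)^{4}\bigl(c_1N^{2}-|\pari\Omega|\bigr)\;\ge\; cN^{2}
\end{equation*}
for $N$ large, with $c=c(p)>0$. The decomposition and the FKG-based upper bound are straightforward; the main obstacle is the density lower bound $\langle|\cC_*|\rangle\ge c_1 N^{2}$, which requires bootstrapping the coarse-graining of Lemma~\ref{lem: low temperature cluster bound} with an additional RSW layer that ties an arbitrary interior vertex into the backbone of circuits making up $\cC_\diamond$.
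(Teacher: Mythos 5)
Your decomposition is exactly the paper's: the paper rewrites $\langle|\cC_*|\rangle-\langle|\cC_*\cap\tilde\cC_*|\rangle$ as $\sum_x \fklow(x\in\cC_*)\,\fklow(x\notin\tilde\cC_*)$ and then asserts two facts, that $\fklow(x\in\cC_*)$ is bounded below by a constant since $p>p_c$, and that $\fklow(x\notin\cC_*)$ is bounded below by a constant by finite energy. Your identity $\sum_x p_x(1-p_x)$ is the same thing, and your FKG/finite-energy argument giving $1-p_x\ge(1-p)^4$ is a correct and explicit version of the second assertion.

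The place where your elaboration of the first assertion goes wrong is the sentence that claims to finish the lower bound on $p_x$: you obtain a constant-scale circuit in $\Lambda_{2M}(u_i)\setminus\Lambda_M(u_i)$ with $M=M(p)$ fixed, connect $x$ to it, and then invoke the second assertion of Lemma~\ref{lem: low temperature cluster bound} to claim this circuit's cluster ``is'' $\cC_\diamond=\cC_*$ up to $\exp(-c\sqrt N)$. That lemma only says $\pari\Omega\subset\cC_\diamond$ with high probability; it does not say that an arbitrary constant-diameter circuit lies in $\cC_\diamond$. Indeed for fixed $M$ many such circuits sit in finite clusters that never reach $\pari\Omega$, so $\phi(x\text{ in the circuit's cluster})\ge q_0 r_0$ does not yield $p_x\ge q_0 r_0/2$. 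The fix is either to take $M$ growing so that neighbouring circuits overlap and form the macroscopic backbone (this is exactly the mechanism inside the proof of Lemma~\ref{lem: low temperature cluster bound}), or, more cleanly, to just use stochastic domination: $\phi_{p,\Omega}^{\w,0}$ dominates the infinite-volume free measure, and for $p>p_c$ an infinite cluster through $x$ must exit $\Omega$, whence $p_x\ge\theta^0(p)>0$. Either repair plugs into your computation and recovers the paper's conclusion; the paper itself merely asserts this supercritical density fact without proof.
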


\begin{proof} 
    We can rewrite the left-hand side as $\langle M\rangle$ where
    \begin{align*} 
    M&=\sum_{x\in\Omega}\1_{\{x\stackrel{\omega}{\longleftrightarrow}\pari\Omega\}}-\sum_{x}\1_{\{x\stackrel{\omega}{\longleftrightarrow}\pari\Omega\}}\1_{\{x\stackrel{\tilde{\omega}}{\longleftrightarrow}\pari\Omega\}}
    =\sum_{x}\1_{\{x\stackrel{\omega}{\longleftrightarrow}\pari\Omega\}}\1_{\{x\stackrel{\tilde{\omega}}{\centernot\longleftrightarrow}\pari\Omega\}}.
    \end{align*}
    Taking expectation w.r.t. $\fklow$ we have 
    $$\langle M\rangle=\sum_{x}\fklow(x\in \mcc C_*)\fklow(x\not\in \tilde{\mcc C}_{*}).$$
    Noticing the fact that both probabilities have positive lower bounds only relying on $p$ completes the proof. More precisely, a point has a positive probability of connecting to the boundary since $p>p_c$, and a positive probability of not connecting to the boundary due to finite energy property.
\end{proof}
\begin{lem}\label{lem: MDP for boundary cluster at low temperature}
    Fix $p>p_c$. Then there exists a constant $c=c(p)>0$ such that for any positive integer $N$ and domain $\lamn\subset\Omega\subset\Lambda_{2N}$, we have $$\phi_{p,\Omega}^{\w,0}\Big(\big||\cC_{\diamond}|-\langle|\cC_{\diamond}|\rangle_{p,\Omega}^{\w,0}\big|\ge N^{1.5}\Big)\le N^{-0.8}.$$
\end{lem}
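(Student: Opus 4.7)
The approach is via Chebyshev's inequality applied to a variance bound for $|\cC_\diamond|$. By the second part of Lemma~\ref{lem: low temperature cluster bound}, the event $\{\pari\Omega\subset\cC_\diamond\}$ holds with probability at least $1-c_2^{-1}\exp(-c_2\sqrt N)$, and on this event $\cC_\diamond$ coincides with the boundary cluster $\cC_*$. Since $|\cC_\diamond|\le 4N^2$, the mean $\langle|\cC_\diamond|\rangle_{p,\Omega}^{\w,0}$ differs from $\langle|\cC_*|\rangle_{p,\Omega}^{\w,0}$ by at most $O(N^2 e^{-c\sqrt N})$, and the rare event itself contributes $e^{-c\sqrt N}\ll N^{-0.8}$ to the deviation probability. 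Hence the lemma reduces to proving
\begin{equation*}
\var_\phi(|\cC_*|)\le C\, N^{2.2}
\end{equation*}
for some $C=C(p)>0$, after which Chebyshev's inequality at threshold $N^{1.5}$ delivers the desired bound.

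To bound this variance, I would write $|\cC_*|=\sum_{x\in\Omega}\1_{A_x}$ with $A_x:=\{x\longleftrightarrow\pari\Omega\}$ and expand
\begin{equation*}
\var_\phi(|\cC_*|)=\sum_{x,y\in\Omega}\cov_\phi(\1_{A_x},\1_{A_y})=\sum_{x,y\in\Omega}\cov_\phi(\1_{A_x^c},\1_{A_y^c}).
\end{equation*}
The key ingredient is the exponential decay of truncated correlations in the supercritical 2D FK-Ising model: for $p>p_c$ there exist $C',c'>0$ depending only on $p$ such that
\begin{equation*}
\bigl|\cov_\phi(\1_{A_x^c},\1_{A_y^c})\bigr|\le C'\exp(-c'\|x-y\|).
\end{equation*}
Summing over $x,y$ then yields $\var_\phi(|\cC_*|)\le C'' N^2$, far stronger than the needed $N^{2.2}$, with the bound even uniform in the domain $\Omega$.

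The main obstacle is establishing the covariance decay. The event $A_x^c$ is equivalent to the existence of a dual closed circuit surrounding $x$ and separating it from $\pari\Omega$. Since $p>p_c$, the dual FK measure is subcritical (with dual parameter $p^*<p_c$), so the exponential decay results of \cite{DT20} give an exponential tail for the diameter of the finite cluster $\fC_x$ containing $x$. The plan is to reveal $\fC_x$ via a standard exploration from $x$, apply DMP to the exterior with the boundary condition imposed by the exposed edges, and observe that when $\|x-y\|$ exceeds twice the diameter of the exposed region the indicator $\1_{A_y^c}$ becomes measurable with respect to a disjoint set of edges. Combining this with FKG and CBC to absorb the changing boundary condition, together with the exponential tail on $\mathrm{diam}(\fC_x)$, yields the required estimate. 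These ingredients are standard in supercritical FK percolation, but care is needed to handle the modified boundary condition produced by the exploration.
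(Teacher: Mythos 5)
Your skeleton matches the paper's proof exactly: reduce to the boundary cluster $\cC_*$ using Lemma~\ref{lem: low temperature cluster bound}, bound $\var_\phi(|\cC_*|)$, and finish with Chebyshev at threshold $N^{1.5}$. The gap is in the covariance bound, which is the substance of the argument. You claim a uniform estimate $|\cov_\phi(\1_{A_x^c},\1_{A_y^c})|\le C'e^{-c'\|x-y\|}$, call its ingredients standard, and propose to obtain it by exploring $\fC_x$, applying DMP to the exterior, and absorbing the changed boundary condition with FKG and CBC. But FKG and CBC give only one-sided comparisons: after revealing $\fC_x$ (on $A_x^c$), the exterior law is $\phi^{\mathrm{mixed}}$ with free boundary along $\pare\fC_x$ and wired along $\pari\Omega$, and to conclude $\phi^{\mathrm{mixed}}(A_y^c)\approx\phi^{\w,0}_{p,\Omega}(A_y^c)$ you need a \emph{two-sided} decoupling, not a sandwich of inequalities all pointing the same way. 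The standard way to supply that decoupling is precisely an RSW circuit around $y$ connected to $\pari\Omega$, which is what the paper does up front: it defines events $\cE_x,\cE_y$ that open circuits exist in annuli at scale $M=\max\{d_{x,y}/2,N^{0.1}\}$ and connected to $\pari\Omega$, under which the indicators factorize, obtaining $|\cov|\le 4\exp(-C_2M)$. Notice the paper does \emph{not} prove your stronger statement: for points near $\pari\Omega$ (where $d_{x,y}$ is small because $\mathrm{dist}(x,\pari\Omega)$ is small), it only gets $\exp(-C_2N^{0.1})$, not decay in $\|x-y\|$; and for $\|x-y\|<2N^{0.1}$ it effectively uses the trivial bound. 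Both of these are what produce the $N^{2.2}$ variance, which is still enough for Chebyshev. Your cleaner $CN^2$ bound would require proving exponential decay of the truncated correlation uniformly over arbitrary $\Omega$ and for points arbitrarily close to the boundary — plausible, but it is extra work rather than something you can cite as standard, and it is not supplied by \cite{DT20}, which gives exponential decay of connection probabilities, not of truncated two-point correlations with respect to boundary connectivity. The fix is easy: replace the claimed $e^{-c'\|x-y\|}$ with the paper's circuit events at scale $M=\max\{d_{x,y}/2,N^{0.1}\}$ (using Lemma~\ref{lem: low temperature cluster bound}'s RSW input) and settle for $N^{2.2}$.
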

\begin{proof}

Let $\mathcal{E}$ denote the event that $\pari\Omega\subset\cC_{\diamond}$ and let $\cC_{*}$ denote the boundary cluster. Then we calculate by Lemma~\ref{lem: low temperature cluster bound} that \begin{equation}\nonumber
\Big|\langle|\cC_{\diamond}|-|\cC_{*}|\rangle_{p,\Omega}^{\w,0}\Big|\le \P(\cE^c)\cdot N^2\le\exp(-C_1\sqrt{N})\cdot N^2.
\end{equation} Combined with Lemma~\ref{lem: low temperature cluster bound} again, it suffices to show that \begin{equation}
    \phi_{p,\Omega}^{\w,0}\Big(\big||\cC_{*}|-\langle|\cC_{*}|\rangle_{p,\Omega}^{\w,0}\big|\ge N^{1.5}/2\Big)\le N^{-0.8}/2.\nonumber
\end{equation}Next, we calculate the variance of $|\cC_{*}|$ under the measure $  \phi_{p,\Omega}^{\w,0}$. \begin{align}
    \var_{p,\Omega}^{\w,0}(|\cC_{*}|)&=\sum_{x,y\in\Omega}\phi_{p,\Omega}^{\w,0}({x,y\in\cC_{*}})-\sum_{x,y\in\Omega}\phi_{p,\Omega}^{\w,0}({x\in\cC_{*}})\cdot\phi_{p,\Omega}^{\w,0}({y\in\cC_{*}}).\label{eq: variance calculation}
\end{align}For any $x,y\in\Omega,$
let $d_{x,y}=\min\{dist(x,y),dist(x,\pari \Omega),dist(y,\pari \Omega)\}$ and $M=\max\{d_{x,y}/2,\\N^{0.1}\}$.
Let $\cE_x$  and $\cE_y$  denote the event that there is a circuit in $\Lambda_{2M}(x)\setminus\Lambda_{M}(x)$ and $\Lambda_{2M}(y)\setminus\Lambda_{M}(y)$ which is connected to $\pari\Omega$ respectively. By the proof of Lemma~\ref{lem: low temperature cluster bound}, we get that \begin{equation}\label{eq: connect to a big box boundary and boundary}
    \phi_{p,\Omega}^{\w,0}(\cE_x\cap\cE_y)\ge 1-\exp(-C_2M).
\end{equation} Under $\cE_x\cap\cE_y$, the events $x\in\cC_{*}$ and $y\in\cC_{*}$ are independent, thus we calculate that \begin{align}
    &\phi_{p,\Omega}^{\w,0}({x,y\in\cC_{*}})-\phi_{p,\Omega}^{\w,0}({x\in\cC_{*}})\cdot\phi_{p,\Omega}^{\w,0}({y\in\cC_{*}})\nonumber\\
\le ~&\phi_{p,\Omega}^{\w,0}(\cE_x^c\cup\cE_y^c)+\phi_{p,\Omega}^{\w,0}(\cE_x\cap\cE_y)\cdot\phi_{p,\Omega}^{\w,0}({x,y\in\cC_{*}}\mid\cE_x\cap\cE_y)\nonumber\\&-[\phi_{p,\Omega}^{\w,0}({x\in\cC_{*}}\mid\cE_x)\cdot\left(1-\phi_{p,\Omega}^{\w,0}(\cE_x^c)\right)]\cdot [\phi_{p,\Omega}^{\w,0}({y\in\cC_{*}}\mid\cE_y)\cdot\left(1-\phi_{p,\Omega}^{\w,0}(\cE_y^c)\right)]\nonumber\\
    \le ~&\phi_{p,\Omega}^{\w,0}(\cE_x^c\cup\cE_y^c)+\phi_{p,\Omega}^{\w,0}({x,y\in\cC_{*}}\mid\cE_x\cap\cE_y)-\phi_{p,\Omega}^{\w,0}({x\in\cC_{*}}\mid\cE_x)\cdot \phi_{p,\Omega}^{\w,0}({y\in\cC_{*}}\mid\cE_y)\nonumber\\&+\phi_{p,\Omega}^{\w,0}(\cE_x^c)
    +\phi_{p,\Omega}^{\w,0}(\cE_y^c)\nonumber\\
    \le ~&2\phi_{p,\Omega}^{\w,0}(\cE_x^c)+2\phi_{p,\Omega}^{\w,0}(\cE_y^c).\nonumber
\end{align}Combined with \eqref{eq: connect to a big box boundary and boundary}, it yields that \begin{equation}\label{eq: truncated correlation function in a region}
    \phi_{p,\Omega}^{\w,0}({x,y\in\cC_{*}})-\phi_{p,\Omega}^{\w,0}({x\in\cC_{*}})\cdot\phi_{p,\Omega}^{\w,0}({y\in\cC_{*}})\le 4\exp(-C_2M).
\end{equation}
Plugging \eqref{eq: truncated correlation function in a region} into \eqref{eq: variance calculation}, we get that \begin{equation}
    \var_{p,\Omega}^{\w,0}(|\cC_{*}|)\le C_4N^{2.2}.
\end{equation}
The desired bound thus comes from a Markov inequality.
\end{proof}
\begin{lem}\label{lem: LDP for high temp}
    Fix $p<p_c$. Then there exists a constant $c_1=c_1(p)>0$ such that for any domain $\lamn\subset\Omega\subset\Lambda_{2N}$ and $x>c_1$, we have $\fkhof(\sum_{\cC\in\mathfrak{C}}|\cC|^2\ge N^2x)\le c_1\exp(-c_1^{-1}N\sqrt{x})$ and $\fkhof(\sum_{\cC\in\mathfrak{C}}|\cC|^4\ge N^2x)\le c_1\exp(-c_1^{-1}\sqrt{N}x^{1/4})$. Furthermore, for any $y\ge N^{0.01}$, we have $\fkhof(\max_{\cC\in\mathfrak{C}}|\cC|\ge y)\le c_2\exp(-c_2\sqrt{y})$.
\end{lem}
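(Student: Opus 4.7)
The plan is to derive all three estimates from a common input: the exponential tail of cluster volumes in the subcritical 2D FK-Ising model,
\[
\phi^{\gamma,0}_{p,\Omega}(|\cC(o)|\ge v)\le Ce^{-cv}, \qquad p<p_c,
\]
which sharpens the two-point decay in \eqref{eq: exponential decay for high temperature}. Such a bound follows from subcritical sharpness for random-cluster models with $q\ge 1$ (Beffara--Duminil-Copin, Duminil-Copin--Raoufi--Tassion), combined with a disjoint-occurrence argument \emph{\`a la} Aizenman--Newman adapted to FK through the domain Markov property and CBC to compensate for the van~den Berg--Kesten inequality (which is not directly available for $q>1$).

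Given this input, the maximal-cluster bound is an immediate union bound over $o\in\Omega$: the estimate $\phi^{\f,0}_{p,\Omega}(\max_\cC|\cC|\ge y)\le 4N^2Ce^{-cy}$ combined with the hypothesis $y\ge N^{0.01}$ (so $\sqrt{y}\ge N^{0.005}\gg\log N$) absorbs the prefactor $4N^2$ into half of the exponential, leaving the advertised $c_2\exp(-c_2\sqrt{y})$. For the sum-of-squares bound, I decompose $\sum_{\cC}|\cC|^2 = |\cC_\diamond|^2 + \sum_{\cC\ne\cC_\diamond}|\cC|^2$, where $\cC_\diamond$ is the maximal cluster. Since $\sum_{\cC\ne\cC_\diamond}|\cC|^2 \le |\cC_{(2)}|\cdot|\Omega|$, with $\cC_{(2)}$ the second-largest cluster, the event $\{\sum|\cC|^2\ge N^2x\}$ is contained in $\{|\cC_\diamond|\ge N\sqrt{x/2}\}\cup\{|\cC_{(2)}|\ge x/16\}$. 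The first event is controlled directly by the maximal-cluster estimate. For the second, I condition on $\cC_\diamond$ and invoke DMP and CBC to reduce the measure on $\Omega\setminus\cC_\diamond$ to a standard FK-Ising measure at parameter $p$ with a suitable boundary condition, for which the same cluster-size tail applies; a second union bound then controls $|\cC_{(2)}|$. The sum-of-fourth-powers bound is obtained by the analogous decomposition with threshold $\sqrt{N}\,x^{1/4}/16$ in place of $N\sqrt{x}/16$.

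The main obstacle is the sharpening from the easy bound $\phi(|\cC(o)|\ge v)\le Ce^{-c\sqrt{v}}$ (immediate from $\mathrm{diam}(\cC)\ge\sqrt{|\cC|}$ plus two-point decay) to the linear tail $e^{-cv}$; without this strengthening, the single-cluster scenario only yields the weaker exponents $N^{1/2}x^{1/4}$ and $N^{1/4}x^{1/8}$, short of the advertised $N\sqrt{x}$ and $\sqrt{N}\,x^{1/4}$. A secondary difficulty, the absence of BK for FK-Ising, is bypassed by the conditional/DMP argument above, which reduces the two-disjoint-large-clusters event to a product of single-cluster tails at only a polynomial cost that is absorbed into the exponential rate.
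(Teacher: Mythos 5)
Your decomposition into $\{|\cC_\diamond|\ge N\sqrt{x/2}\}\cup\{|\cC_{(2)}|\ge x/16\}$ breaks down precisely in the regime that matters: $x$ a (large) constant. When $x>c_1$ is near the threshold, the target is $\exp(-c_1^{-1}N\sqrt{x})\approx e^{-cN}$, but your second event $\{|\cC_{(2)}|\ge x/16\}$ asks only that the second-largest cluster have size $\gtrsim\text{const}$, which happens with probability essentially $1$ in a box of volume $\sim N^2$; a union bound over pairs of roots and DMP gives only $N^4e^{-c x}$, which grows with $N$. The crude inequality $\sum_{\cC\ne\cC_\diamond}|\cC|^2\le|\cC_{(2)}|\cdot|\Omega|$ discards the fact that the event $\sum_{\cC}|\cC|^2\ge N^2x$ is really a \emph{sum-concentration} event---the typical value of $\sum_{\cC}|\cC|^2=\sum_{u,v}\1_{\{u\leftrightarrow v\}}$ is already $\asymp N^2$, and the deviation is driven by collective fluctuations of many $O(1)$-sized clusters, not by one anomalously large second cluster. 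No single-cluster or two-cluster tail bound, however sharp, can see this.

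The paper instead uses a moment method: writing $\sum_{\cC}|\cC|^2=\sum_{u,v}\1_{\{u\leftrightarrow v\}}$, it bounds $\langle(\sum_{\cC}|\cC|^2)^k\rangle\le C^{2k}(N+k)^{2k}$ via the $\mathtt{Even}_J$ reduction from \eqref{eq: second moment for Ising connecting probability 2}, then derives $\langle\exp(t\sqrt{\sum_{\cC}|\cC|^2})\rangle\le e^{CN}+2$ for a suitable constant $t$, and concludes by Markov. This captures exactly the sub-Gaussian-like concentration of $\sqrt{\sum_\cC|\cC|^2}$ around scale $N$. The same device handles $\sum_\cC|\cC|^4$. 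Note also that your effort to upgrade the diameter tail $e^{-c\sqrt v}$ to a volume tail $e^{-cv}$ is unnecessary: for the max-cluster estimate the paper uses only the two-point decay $\phi(x\leftrightarrow y)\le e^{-c\,\mathrm{dist}(x,y)}$, which already gives the stated exponent $\exp(-c_2\sqrt y)$ after a union bound and the assumption $y\ge N^{0.01}$ to absorb the polynomial prefactor.
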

\begin{proof}
    We first compute the moments of $\sum_{\cC\in\mathfrak{C}}|\cC|^2$. Since $\sum_{\cC\in\mathfrak{C}}|\cC|^2=\sum_{u,v\in\Omega}\1_{\{u\longleftrightarrow v\}}$, we compute that for any positive integer $k$ 
    \begin{equation}
\Big\langle\Big(\sum_{\cC\in\mathfrak{C}}|\cC|^2\Big)^k\Big\rangle=\Big\langle\sum_{u_i,v_i\in\Omega}\prod_{i=1}^k\1_{\{u_i\longleftrightarrow v_i\}}\Big\rangle=\sum_{u_i,v_i\in\Omega}\fklow\big({\cap_{i=1}^k\{u_i\longleftrightarrow v_i\}}\big).
    \end{equation}
    
Recalling the definition of $\mathtt{Even}_{\{u_1,\cdots,u_k,v_1,\cdots,v_k\}}$ from the proof of Lemma~\ref{lem: small perturbation for partition function}, we get that $\cap_{i=1}^k\{u_i\longleftrightarrow v_i\}\subset\mathtt{Even}_{\{u_1,\cdots,u_k,v_1,\cdots,v_k\}}$. By a similar reason to \eqref{eq: second moment for Ising connecting probability} and \eqref{eq: second moment for Ising connecting probability 2}, we get that
\begin{equation}
    \sum_{u_i,v_i\in\Omega}\fklow\big({\cap_{i=1}^k\{u_i\longleftrightarrow v_i\}}\big)\le C_1^{2k}\prod_{j=1}^k (N+j)^{2}\le C_1^{2k}(N+k)^{2k}.\label{eq: high temperature moment k point function bound}
\end{equation}
With the moment bounds, we are able to compute an exponential moment for $\sqrt{\sum_{\cC\in\mathfrak{C}}|\cC|^2}$. For any $t>0$, we have \begin{align}
\Bigg\langle\exp\left(t\sqrt{\sum_{\cC\in\mathfrak{C}}|\cC|^2}\right)\Bigg\rangle\le\sum_{k=0}\frac{t^{2k}\Big\langle\Big(\sum_{\cC\in\mathfrak{C}}|\cC|^2\Big)^k\Big\rangle}{(2k)!}+\sum_{k=0}\frac{t^{2k+1}\Big\langle\Big(\sum_{\cC\in\mathfrak{C}}|\cC|^2\Big)^{\frac{2k+1}{2}}\Big\rangle}{(2k+1)!}.\label{eq: exponential moment bound 0}
\end{align} By \eqref{eq: high temperature moment k point function bound}, we get that \begin{equation}
\frac{t^{2k}\Big\langle\Big(\sum_{\cC\in\mathfrak{C}}|\cC|^2\Big)^k\Big\rangle}{(2k)!}\le \frac{(2C_1)^{2k}(N^{2k}+k^{2k})}{(2k)!}\le \frac{(2C_1)^{2k}N^{2k}}{(2k)!}+C_2^{2k}.\label{eq: exponential moment bound 1}
\end{equation}By Cauchy inequality and \eqref{eq: high temperature moment k point function bound} again, we get that \begin{align}
&\frac{\Big\langle\Big(\sum_{\cC\in\mathfrak{C}}|\cC|^2\Big)^{\frac{2k+1}{2}}\Big\rangle}{(2k+1)!}\le \frac{\sqrt{\Big\langle\Big(\sum_{\cC\in\mathfrak{C}}|\cC|^2\Big)^{k}\Big\rangle\cdot\Big\langle\Big(\sum_{\cC\in\mathfrak{C}}|\cC|^2\Big)^{k+1}\Big\rangle}}{(2k+1)!}\nonumber\\\le~&\frac{\sqrt{C_1^{2k}(N+k)^{2k}C_1^{2k+2}(N+k+1)^{2k+2}}}{(2k+1)!}\le\frac{C_1^{2k+1}(N+k+1)^{2k+1}}{(2k+1)!}\nonumber\\\le~& \frac{(2C_1)^{2k+1}(N^{2k+1}+(k+1)^{2k+1})}{(2k+1)!}\le \frac{(2C_1)^{2k+1}N^{2k+1}}{(2k+1)!}+C_2^{2k+1}.\label{eq: exponential moment bound 2}
\end{align}
Plugging \eqref{eq: exponential moment bound 1} and \eqref{eq: exponential moment bound 2} into \eqref{eq: exponential moment bound 0} and choosing $t=\frac{1}{2C_2}$, we get that\begin{align}
\Bigg\langle\exp\left(t\sqrt{\sum_{\cC\in\mathfrak{C}}|\cC|^2}\right)\Bigg\rangle&\le\sum_{k=0}\frac{(2C_1Nt)^{2k}}{(2k)!}+\frac{(2C_1Nt)^{2k+1}}{(2k+1)!}+(C_2t)^{2k}+(C_2t)^{2k+1}\nonumber\\&=\exp\left(\frac{C_1}{C_2}N\right)+2.
\end{align}
The desired result thus comes from the Markov inequality and letting $c_1$ large enough.\color{black} For $\sum_{\cC\in\mathfrak{C}}|\cC|^4$, the result comes in a similar way. For the last criterion, we have by \cite{DT20} that \begin{align*}
    \fkhof\left(\max_{\cC\in\mathfrak{C}}|\cC|\ge y\right)\le\sum_{\dist(x,y)\ge \sqrt y}\fkhof(x\longleftrightarrow y)\le N^2y\exp(-C_3\sqrt{y}).
\end{align*}
The desired result follows from the fact that $y\ge N^{0.01}$.
\end{proof}
\begin{cor}\label{cor: sum of squares of clusters all temperature}
For any $T>0,$ let $\mss S=\{\omega:\sum_{\cC\in\fC{(\omega)}}|\cC|^2\le \eps^{-1}N^{3\alpha(T)}\}.$ For any $p>0$, there exists a constant $c>0$ such that
    \begin{equation}\label{eq:sum of squares of clusters all temperature}
    \fk\gamma(\sS)\ge 1-c\exp\left(-c^{-1}\eps^{-1}N^{-\alpha(T)}\right).
\end{equation}
\end{cor}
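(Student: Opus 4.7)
We split the argument along the three values of $\alpha(T)$. In every case, the deterministic bound
$\sum_{\cC\in\fC}|\cC|^2\le|\lamn|^2\le C_0N^4$ forces $\sS^c=\emptyset$ whenever $\eps\le C_0^{-1}N^{3\alpha(T)-4}$, so only the complementary range needs an actual tail estimate. For $T<T_c$ (so $\alpha(T)=1$) the deterministic range $\eps\le C_0^{-1}N^{-1}$ already covers everything meaningful, since for $\eps>C_0^{-1}N^{-1}$ we have $\eps^{-1}N^{-1}\le C_0$ and the target right-hand side $c\exp(-c^{-1}\eps^{-1}N^{-1})$ automatically exceeds $1$ once $c$ is chosen large.

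At criticality ($\alpha(T)=15/16$) we apply Theorem~\ref{thm:LDP for sum of cluster squares} with the choice $x=\eps^{-1}N^{-15/16}$, which yields $\fkc\w(\sS^c)\le c\exp\bigl(-c^{-1}(\eps^{-1}N^{-15/16})^{8}\bigr)$; since the eighth power dominates the first power in the nontrivial regime $\eps^{-1}N^{-15/16}\ge 1$ (and the claim is vacuous otherwise), this gives the desired bound under wired boundary condition. To pass to an arbitrary $\gamma$, use the FKG monotone coupling $\omega^\gamma\le\omega^\w$ realizing $\fk\gamma\preceq\fkc\w$: every $\gamma$-cluster of $\omega^\gamma$ lies inside a $\w$-cluster of $\omega^\w$ (the wired boundary condition identifies every vertex of $\pari\lamn$, refining any identification $\gamma$ could create), so the elementary inequality $\sum_i a_i^2\le(\sum_i a_i)^2$ applied within each $\w$-cluster gives the pointwise domination $\sum|\cC(\omega^\gamma)|^2\le\sum|\cC(\omega^\w)|^2$, whence $\fk\gamma(\sS^c)\le\fkc\w(\sS^c)$. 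At high temperature ($\alpha(T)=1/2$) we apply the first tail bound of Lemma~\ref{lem: LDP for high temp} with $x=\eps^{-1}N^{-1/2}$, obtaining a bound $c\exp(-c^{-1}\eps^{-1/2}N^{3/4})$ which dominates the target $c\exp(-c^{-1}\eps^{-1}N^{-1/2})$ exactly on the range $\eps\ge N^{-5/2}$, matching (up to constants absorbed into $c$) the deterministic range on the other side.

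The main obstacle is the extension from free to arbitrary $\gamma$ in the high-temperature case. The proof of Lemma~\ref{lem: LDP for high temp} uses only (i) the Edwards--Sokal identity $\phi(\mathtt{Even}_J)=\langle\sigma^J\rangle$, (ii) the inclusion $\bigcap_i\{u_i\leftrightarrow v_i\}\subset\mathtt{Even}_{\{u_i,v_i\}}$, and (iii) the boundary-condition-uniform exponential decay $\phi_{p,\Omega}^{\gamma,0}(x\leftrightarrow\pari\Lambda_m(x))\le e^{-cm}$ from \eqref{eq: exponential decay for high temperature}, all three of which survive for any $\gamma$. Combining (iii) with DMP over the disjoint boxes $\cR_x$ from the proof of Lemma~\ref{lem: small perturbation for partition function} and CBC inside each $\cR_x$ yields $\langle\sigma^J\rangle^{\gamma,0}_{T,\Omega}\le\prod_{x}e^{-cd_x}$ under any $\gamma$, which is exactly the input needed for the moment bound \eqref{eq: high temperature moment k point function bound}. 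The subsequent Cauchy--Schwarz chain of inequalities that produces the exponential moment of $\sqrt{\sum|\cC|^2}$ in the proof of Lemma~\ref{lem: LDP for high temp} then goes through verbatim with $\fk\gamma$ in place of $\fkhof$, completing the argument.
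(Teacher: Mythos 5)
Your proof follows the same three-way temperature split as the paper, invokes the same two appendix results (Theorem~\ref{thm:LDP for sum of cluster squares} at $p=p_c$, Lemma~\ref{lem: LDP for high temp} at $p<p_c$), and uses the same deterministic $\sum|\cC|^2\le|\lamn|^2$ argument at $p>p_c$; your exponent bookkeeping ($x=\eps^{-1}N^{-15/16}$, $x=\eps^{-1}N^{-1/2}$, the transition at $\eps\asymp N^{-5/2}$) is correct. What you add beyond the paper's two-line proof is the boundary-condition reconciliation, and this is a genuine gap you correctly spotted: the corollary asserts a bound for arbitrary $\gamma$, while Theorem~\ref{thm:LDP for sum of cluster squares} is stated for $\w$ and Lemma~\ref{lem: LDP for high temp} for $\f$, and the paper asserts ``the desired result comes from'' those without comment. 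Your monotone-coupling argument at criticality is right: with $\omega^\gamma\le\omega^\w$ realizing $\fk\gamma\preceq\fkc\w$, every $\gamma$-cluster of $\omega^\gamma$ sits inside a $\w$-cluster of $\omega^\w$ (fewer open edges and fewer boundary identifications both refine the cluster partition), so $\sum|\cC(\omega^\gamma)|^2\le\sum|\cC(\omega^\w)|^2$ pointwise. Your observation that this monotonicity runs the \emph{wrong} way at high temperature (free being the minimal boundary) is also correct and explains why the $p<p_c$ case cannot be dispatched by CBC alone; reworking the moment bound of Lemma~\ref{lem: LDP for high temp} for arbitrary $\gamma$ is the right fix. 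One small imprecision: the Edwards--Sokal identity $\fk\gamma(\mathtt{Even}_J)=\langle\sigma^J\rangle$ is clean only for the free/zero boundary pair, and does not ``survive for any $\gamma$'' in the form you state. It is also not needed: the moment bound follows directly from the configuration-level inclusion $\bigcap_i\{u_i\leftrightarrow v_i\}\subset\bigcap_{x\in J}\{x\leftrightarrow\pari\cR_x\}$ (a boundary-mediated connection $u_i\leftrightarrow v_i$ must still exit every strict sub-box $\cR_{u_i}\subsetneq\Omega$), followed by DMP and CBC applied directly to $\fk\gamma$ on the disjoint $\cR_x$'s, without passing through Ising correlations at all. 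With that clarification your argument is complete and, unlike the paper's, actually proves the stated corollary.
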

\begin{proof}
    We divide the proof of this lemma into three parts according to the temperature. When $p=p_c$, then the desired result comes from Theorem~\ref{thm:LDP for sum of cluster squares}.

    When $p>p_c$, then $\alpha(T)=1$. If $\eps N\ge 1$, then we have $\eps^{-1}N^{-\alpha(T)}\le 1$. thus we can choose $c$ large enough such that $c\exp(-c)>1$ and then the desired result holds. If $\eps N\ge 1$, then we have $\eps^{-1}N^{3\alpha(T)}\ge N^4$ and thus $\fk\gamma(\sS)\ge 1$.

    When $p<p_c$, then the desired result comes from Lemma~\ref{lem: LDP for high temp}.
\end{proof}
\begin{lem}\label{lem: high temperature all cluster deviation bound}
    Fix $p<p_c$. Then there exists a constant $c=c(p)>0$ such that for any positive integer $N$ and domain $\lamn\subset\Omega\subset\Lambda_{2N}$, we have $$\Big\langle\sum_{\cC\in\mathfrak{C}}|\cC|^2\Big\rangle-\Big\langle\sum_{\cC\in\mathfrak{C},\tilde{\cC}\in\tilde{\mathfrak{C}}}|\cC\cap\tilde\cC|^2\Big\rangle\ge cN^2$$
    where $\mathfrak{C}$ and $\tilde{\mathfrak{C}}$ are the collections of all clusters of two independently sampled configurations under measure $\fkhof$.
\end{lem}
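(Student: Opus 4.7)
The plan is to rewrite both sums as double sums over pairs of vertices and then to reduce the desired bound to a uniform lower bound on the ``connection entropy'' $\phi(u\leftrightarrow v)(1-\phi(u\leftrightarrow v))$ for neighboring pairs $u\sim v$.

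First, I would observe the identity
\begin{equation*}
\sum_{\cC\in\mathfrak{C}}|\cC|^2 \;=\; \sum_{u,v\in\Omega}\1_{\{u\stackrel{\omega}{\leftrightarrow}v\}}, \qquad
\sum_{\cC\in\mathfrak{C},\tilde{\cC}\in\tilde{\mathfrak{C}}}|\cC\cap\tilde\cC|^2 \;=\; \sum_{u,v\in\Omega}\1_{\{u\stackrel{\omega}{\leftrightarrow}v\}}\1_{\{u\stackrel{\tilde\omega}{\leftrightarrow}v\}}.
\end{equation*}
Since $\omega$ and $\tilde\omega$ are independent copies sampled from $\fkhof$, taking expectations and subtracting gives
\begin{equation*}
\Big\langle\sum_{\cC\in\mathfrak{C}}|\cC|^2\Big\rangle - \Big\langle\sum_{\cC\in\mathfrak{C},\tilde{\cC}\in\tilde{\mathfrak{C}}}|\cC\cap\tilde\cC|^2\Big\rangle
\;=\; \sum_{u,v\in\Omega} \fkhof(u\leftrightarrow v)\bigl(1-\fkhof(u\leftrightarrow v)\bigr).
\end{equation*}
Thus it suffices to produce $\Theta(N^2)$ pairs $(u,v)$ for which $\fkhof(u\leftrightarrow v)$ is bounded away from both $0$ and $1$ uniformly in $\Omega$ and $N$.

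I will take $(u,v)$ to range over ordered neighboring pairs with $u,v\in\Lambda_{N-1}\cap\Omega$, of which there are at least $cN^2$ for some $c>0$. For such a pair, the lower bound $\fkhof(u\leftrightarrow v)\ge c_1(p)>0$ follows from the finite-energy property of the FK-Ising measure: conditionally on the configuration on all other edges, the edge $\{u,v\}$ is open with probability at least $\tfrac{p}{p+2(1-p)}$. For the upper bound $\fkhof(u\leftrightarrow v)\le 1-c_2(p)$, I would note that $\{u\not\leftrightarrow v\}$ contains the event that all four edges incident to $u$ are closed; each of these edges is closed with conditional probability at least $1-p$ by finite energy, so by FKG applied to this intersection of decreasing events one gets
\begin{equation*}
\fkhof(u\not\leftrightarrow v) \;\ge\; (1-p)^4 \;>\;0.
\end{equation*}
(Alternatively one can apply the finite-energy bound four times successively.)

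Putting these two uniform bounds together, each such pair contributes at least $c_1(1-c_1)\wedge c_2(1-c_2)$, hence
\begin{equation*}
\sum_{u,v\in\Omega}\fkhof(u\leftrightarrow v)\bigl(1-\fkhof(u\leftrightarrow v)\bigr) \;\ge\; c\,N^2,
\end{equation*}
which is the desired conclusion. There is no real obstacle here; the only subtle point is to correctly invoke the finite-energy property in the FK setting (where conditional edge probabilities depend on whether the endpoints are already connected) to guarantee that neighboring vertices have a positive probability of being disconnected, uniformly in the surrounding configuration and the domain $\Omega$.
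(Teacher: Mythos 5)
Your proposal is correct and follows essentially the same route as the paper: both rewrite the difference as $\sum_{u,v}\fkhof(u\leftrightarrow v)\,\fkhof(u\not\leftrightarrow v)$, restrict to neighboring pairs, and invoke the finite-energy property to bound the connection probability away from $0$ and $1$. You simply spell out the finite-energy estimates that the paper leaves implicit.
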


\begin{proof}
    Using the same idea as the proof of Lemma~\ref{lem: low temperature maximal cluster deviation bound}, we have 
    $$\Big\langle\sum_{\cC\in\mathfrak{C}}|\cC|^2\Big\rangle-\Big\langle\sum_{\cC\in\mathfrak{C},\tilde{\cC}\in\tilde{\mathfrak{C}}}|\cC\cap\tilde\cC|^2\Big\rangle=\sum_{x\in\Omega}\sum_{y\in\Omega}\fkhof\left(x\stackrel{\omega}{\longleftrightarrow}y\right)\fkhof\left(x\stackrel{\tilde\omega}{\centernot\longleftrightarrow}y\right).$$
    Only considering the pairs of neighboring $x,y$ and applying  finite energy property, we get  $cN^2$ as a lower bound.
\end{proof}

\section*{Acknowledgements}
We thank Jian Ding for several insightful discussions.
We thank Xinyi Li and Jo\~ao Maia for reading an early version of this paper and for providing many valuable suggestions. We thank Federico Camia and Jianping Jiang for reminding us the continuous version of Theorem \ref{thm:LDP for maiximal cluster}.
F. Huang is supported by the Beijing Natural Science Foundation Undergraduate Initiating Research Program (Grant No.\ QY23006). 
All authors are partially supported by the National Natural Science Foundation of China (NSFC) through the Tianyuan Mathematics Key Program (Grant No.\ 12226001) and the NSFC Key Program (Grant No.\ 12231002).
\small
\bibliography{myref}
\bibliographystyle{abbrv}
\end{document}